\documentclass[11pt]{article}
\usepackage[utf8]{inputenc}
\usepackage{amsmath}
\usepackage{amsthm}
\usepackage{amssymb}
\usepackage{enumerate}
\usepackage{hyperref}
\usepackage{aliascnt}
\usepackage{tikz}

\theoremstyle{plain}
\newtheorem{thm}{Theorem}[section]
\newtheorem*{thm*}{Theorem}
\newaliascnt{lem}{thm}
\newtheorem{lem}[lem]{Lemma}
\aliascntresetthe{lem}
\newaliascnt{pro}{thm}
\newtheorem{pro}[pro]{Proposition}
\aliascntresetthe{pro}
\newaliascnt{cor}{thm}

\aliascntresetthe{cor}
\newaliascnt{que}{thm}

\aliascntresetthe{que}
\newtheorem{clm}{Claim}[thm]
\newaliascnt{conj}{thm}

\aliascntresetthe{conj}

\theoremstyle{definition}
\newaliascnt{exm}{thm}

\aliascntresetthe{exm}

\newtheorem*{con}{Convention}

\newcommand{\M}{\mathcal{M}}
\newcommand{\N}{\mathcal{N}}
\newcommand{\NN}{\mathbb{N}}

\renewcommand{\P}{\mathcal{P}}
\newcommand{\Q}{\mathcal{Q}}
\newcommand{\R}{\mathcal{R}}
\renewcommand{\S}{\mathcal{S}}

\newcommand{\W}{\mathcal{W}}
\newcommand{\X}{\mathcal{X}}
\newcommand{\Y}{\mathcal{Y}}

\DeclareMathOperator{\id}{id}

\DeclareMathOperator{\tw}{tw}
\DeclareMathOperator{\coloneq}{\mathrel{\mathop:}=}

\begin{document}
\title{Linkages in Large Graphs of Bounded Tree-Width}
\author{
Jan-Oliver Fröhlich\footnote{Fachbereich Mathematik, Universität Hamburg, Hamburg, Germany.\newline jan-oliver.froehlich@math.uni-hamburg.de} 
\and Ken-ichi Kawarabayashi\footnote{National Institute of Informatics, Tokyo, Japan.}~\footnote{JST, ERATO, Kawarabayashi Large Graph Project.}
\and Theodor Müller\footnote{Fachbereich Mathematik, Universität Hamburg, Hamburg, Germany.}
\and Julian Pott\footnote{Fachbereich Mathematik, Universität Hamburg, Hamburg, Germany.}
\and Paul Wollan\footnote{Department of Computer Science, University of Rome "La Sapienza", Rome, Italy. wollan@di.uniroma1.it. Research supported by the European Research Council under the European Unions Seventh Framework Programme (FP7/2007 - 2013)/ERC Grant Agreement no.~279558.}}
\date{February 2014}
\maketitle

\begin{abstract}
We show that all sufficiently large $(2k+3)$-connected graphs of bounded tree-width are $k$-linked.
Thomassen has conjectured that all sufficiently large $(2k+2)$-connected graphs are $k$-linked.
\end{abstract}

\section{Introduction}\label{sec:introduction}

Given an integer $k\geq 1$, a graph $G$ is \emph{$k$-linked} if for any choice of $2k$ distinct vertices $s_1,\ldots, s_k$ and $t_1,\ldots, t_k$ of $G$ there are disjoint paths $P_1,\ldots, P_k$ in $G$ such that the end vertices of $P_i$ are $s_i$ and $t_i$ for $i=1,\ldots, k$.
Menger's theorem implies that every $k$-linked graph is $k$-connected.

One can conversely ask how much connectivity (as a function of $k$) is required to conclude that a graph is $k$-linked.
Larman and Mani \cite{LM} and Jung \cite{J} gave the first proofs that a sufficiently highly connected graph is also $k$-linked.
The bound was steadily improved until Bollob\'as and Thomason \cite{BT} gave the first linear bound on the necessary connectivity, showing that every $22k$-connected graph is $k$-linked.
The current best bound shows that $10k$-connected graphs are also $k$-linked \cite{TW}.

What is the best possible function $f(k)$ one could hope for which implies an $f(k)$-connected graph must also be $k$-linked?  Thomassen \cite{thomassen} conjectured that $(2k+2)$-connected graphs are $k$-linked.  However, this was quickly proven to not be the case by J\o rgensen with the following example \cite{thomassen_personal}. 
Consider the graph obtained from $K_{3k-1}$ obtained by deleting the edges of a matching of size $k$.
This graph is $(3k-3)$-connected but is not $k$-linked.
Thus, the best possible function $f(k)$ one could hope for to imply $k$-linked would be $3k-2$.
However, all known examples of graphs which are roughly $3k$-connected but not $k$-linked are similarly of bounded size, and it is possible that Thomassen's conjectured bound is correct if one assumes that the graph has sufficiently many vertices.  

In this paper, we show Thomassen's conjectured bound is almost correct with the additional assumption that the graph is large and has bounded tree-width.  This is the main result of this article.
\begin{thm}\label{thm:main}
For all integers $k$ and $w$ there exists an integer $N$ such that a graph $G$ is $k$-linked if
\[\kappa(G)\geq 2k+3,\qquad \tw(G)<w,\quad\text{and}\quad |G|\geq N.\]
where $\kappa$ is the connectivity of the graph and $\tw$ is the tree-width.
\end{thm}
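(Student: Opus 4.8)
The plan is to exploit the combination of bounded tree-width and high connectivity to find, for any prescribed terminals, a small ``core'' part of the graph where the linkage can be routed, and then use the remaining vast portion of the graph to repair any conflicts. Concretely, I would first fix a tree-decomposition $(T,(V_t)_{t\in T})$ of $G$ of width $<w$ and adhesion $<w$. Since $G$ is $(2k+3)$-connected and $|G|$ is enormous, $T$ must be a long, thin object: there is a path-like part of $T$ along which the separators of size $<w$ cut $G$ into many pieces, almost all of which are ``generic'' in the sense that they are separated from the rest of $G$ by two separators of size $<w$ and contain no terminal. The first key step is a pigeonhole/Ramsey argument: among the linearly-in-$N$ many generic pieces, infinitely many (or at least a number growing with $N$) are isomorphic, as graphs with two distinguished boundary separators, to a single ``tile'' $H$. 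This gives us a long ``cylinder'' of repeated tiles glued in sequence, flanked on both sides by the bounded-size part of $G$ that carries all the terminals.

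The second step is to analyse what flows through such a cylinder. Each copy of the tile $H$, together with the gluing, induces a bounded ``transition structure'' describing which subsets of the left boundary can be linked to which subsets of the right boundary inside one tile (and with what parity / order constraints). Composing $n$ identical tiles corresponds to taking the $n$-th power of a bounded transition monoid; since the monoid is finite, the reachable transition stabilises (becomes idempotent) once $n$ exceeds some bound $N_0(k,w)$. Hence a cylinder of $N_0$ tiles behaves, for the purpose of routing $\le k$ disjoint paths across it, exactly like a cylinder of arbitrarily many tiles. The high connectivity $2k+3>2k$ is what guarantees that across each separator of size $<w$ we can in fact push $\min(w-1,\,\text{demand})$ disjoint paths through, and in particular at least $k$ of them — so the cylinder is genuinely ``transparent'' to a $k$-linkage as long as the two ends can absorb the paths.

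With this structure in hand, the third step reduces the problem to a bounded graph. Cut the cylinder in the middle and contract each half down to its idempotent transition gadget; since $G$ is large we have at least two full idempotent cylinders available, so we may assume the terminals lie in two bounded-size ``caps'' $L$ and $R$, joined by a transparent cylinder in a way that, by the idempotency, is equivalent to joining $L$ and $R$ by a fixed bounded gadget $\Gamma$. The resulting graph $G' = L \cup \Gamma \cup R$ has bounded order, bounded tree-width, and connectivity still $\ge 2k+3$ (the reduction preserves the relevant Menger cuts, using again $\kappa(G)\ge 2k+3$). Now invoke J\o rgensen-type / known finite results, or a direct argument, to see that a $(2k+3)$-connected graph of order at most some fixed bound $b(k,w)$ is $k$-linked — and crucially the only obstructions of the J\o rgensen kind have order $O(k)$, which we can rule out by choosing $N$ (hence $b$) large enough that $G'$ still properly contains one whole spare tile that can be used to reroute around any would-be obstruction. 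Lifting the linkage in $G'$ back to $G$ by replacing $\Gamma$ with the honest transparent cylinder finishes the proof.

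\medskip

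The main obstacle I expect is the second step: making precise and controlling the ``transition structure'' of a single tile and proving that the composition monoid is finite \emph{and} that its idempotent power faithfully captures $k$-linkability across the cylinder — including the order in which the $k$ paths enter and leave, and the interaction between paths that enter the cylinder, turn around, and come back out the same side. Handling these ``bouncing'' sub-paths, and ensuring that the bounded adhesion $<w$ really does limit how many of them can coexist (here $w$ and $k$ both enter the bound on the monoid), is the delicate technical heart of the argument; everything before it is pigeonhole and everything after it is a finite case analysis made safe by the extra ``$+1$'' in $2k+3$ and by the abundance of spare tiles guaranteed by $|G|\ge N$.
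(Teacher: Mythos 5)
Your high-level plan correctly identifies that one should find a long path decomposition and study the ``transition behaviour'' across it, and in spirit the paper's auxiliary graph $\Gamma(\W,\P)$ with its token game is close to your transition monoid. But there are two concrete failures. First, the pigeonhole step does not exist: the parts of a bounded-tree-width decomposition of a large graph have bounded \emph{adhesion}, not bounded \emph{order}, so there is no reason for any two ``tiles'' to be isomorphic, and no pigeonhole on a bounded alphabet applies. The paper gets around this by never assuming the bags are alike; instead it reroutes the foundational linkage (\autoref{thm:tutte_bridges}, \autoref{thm:stablepd}) until only the \emph{bridge-adjacency pattern} $\Gamma(\W,\P)$ — a graph on at most $w$ vertices — is invariant across bags, and that pattern, not an isomorphism type of the tile, is what the token game is played on.

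Second, and more seriously, your endgame is unsupported: there is no ``J\o rgensen-type / known finite result'' that a $(2k+3)$-connected graph of bounded order is $k$-linked, and indeed the J\o rgensen example $K_{3k-1}$ minus a matching shows that even $(3k-3)$-connectivity does not force $k$-linkedness in bounded graphs, so you cannot simply contract the cylinder to a gadget and wave at a finite case analysis. The genuinely hard part of the proof is exactly what happens when the transition structure $\Gamma(\W,\P)$ is too sparse for the token game to succeed: the paper's \autoref{sec:relinkage} and \autoref{sec:rural_societies} show that in that case certain subgraphs between adjacent foundational paths form rural (planar) societies, and then Euler's formula forces an upper bound on degrees that contradicts $\kappa(G)\geq 2k+3$ (via the ``richness'' computation in \autoref{clm:main_triangle} and \autoref{clm:main_largeblock}). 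None of this is a bounded-graph case analysis; it is a global degree-counting argument inside the unbounded $G$, and it is where the value $2k+3$ (rather than $2k+2$) is actually used.
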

The tree-width of the graph is a parameter commonly arising in the theory of graph minors;
we will delay giving the definition until \autoref{sec:outline} where we give a more in depth discussion of how tree-width arises naturally in tackling the problem.  The value $2k+2$ would be best possible;
see \autoref{sec:discussion} for examples of arbitrarily large graphs which are $(2k+1)$-connected but not $k$-linked.

Our work builds on the theory of graph minors in large, highly connected graphs begun by Böhme, Kawarabayashi, Maharry and Mohar \cite{kak}.
Recall that a graph $G$ contains $K_t$ as a \emph{minor} if there $K_t$ can be obtained from a subgraph of $G$ by repeatedly contracting edges. Böhme et al.\ showed that there exists an absolute constant $c$ such that every sufficiently large $ct$-connected graph contains $K_t$ as a minor.
This statement is not true without the assumption that the graph be sufficiently large, as there are examples of small graphs which are $(t \sqrt{\log t})$-connected but still have no $K_t$ minor \cite{Kostochka, Thomason}.
In the case where we restrict our attention to small values of $t$, one is able to get an explicit characterisation of the large $t$-connected graphs which do not contain $K_t$ as a minor.
\begin{thm}[Kawarabayashi et al.~\cite{k6large}]\label{thm:kntw}
There exists a constant $N$ such that every 6-connected graph $G$ on $N$ vertices either contains $K_6$ as a minor or there exists a vertex $v \in V(G)$ such that $G-v$ is planar.
\end{thm}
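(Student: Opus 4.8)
The plan is to show that a $6$-connected graph $G$ with no $K_6$ minor must be apex (i.e.\ have a vertex $v$ with $G-v$ planar) as soon as $|G|$ is sufficiently large; the bound on $|G|$ will fall out of the argument. The main engine is the graph minor structure theorem of Robertson and Seymour, applied to the class of $K_6$-minor-free graphs: there are absolute constants $g,h$ such that $G$ has a tree-decomposition of adhesion less than $h$ in which the torso at every node is $h$-nearly embeddable in a surface of Euler genus at most $g$ — that is, obtained from a graph drawn in such a surface by adding at most $h$ apex vertices and attaching at most $h$ vortices of depth at most $h$. Since $G$ is $6$-connected, every nontrivial adhesion set of this decomposition is a separator of size at least $6$; a careful analysis of the decomposition, using this fact together with the hypothesis that $|G|$ is large, reduces to the case where $G$ itself is $h'$-nearly embeddable in a single surface $\Sigma$ of Euler genus at most $g$, for an absolute constant $h'$. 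This is the ``large and highly connected implies structured'' philosophy of \cite{kak}. Fix such a near-embedding: an apex set $A\subseteq V(G)$ with $|A|\le h'$, a subgraph $G_0\subseteq G-A$ drawn in $\Sigma$, and at most $h'$ vortices of depth at most $h'$ attached to faces of $G_0$, with $G-A$ the union of $G_0$ and the vortices.

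The next step is to force $\Sigma$ to be the sphere $S^2$ and the vortices to be trivial. Choose the near-embedding so that the Euler genus of $\Sigma$ is smallest possible. If $\Sigma$ is not the sphere, consider the face-width (representativity) of the drawing of $G_0$. If it is small, there is a short noncontractible curve meeting few vertices of $G_0$; combined with the minimality of the genus and the $6$-connectivity of $G$, this yields either a near-embedding in a surface of strictly smaller genus or directly a $K_6$ minor — either way a contradiction. If the face-width is large, then the handles or crosscaps of $\Sigma$, together with the $6$-connectivity of $G$, let us route six pairwise-linked connected subgraphs — absorbing the apices and vortices into these subgraphs — and so $G$ has a $K_6$ minor, again a contradiction. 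Hence $\Sigma=S^2$. A similar dichotomy handles the vortices: a vortex of positive depth inside a large $6$-connected graph either yields a $K_6$ minor or can be emptied into $A$. We conclude that $G-A$ is planar with $|A|\le h'$ an absolute constant.

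It remains to analyse large $6$-connected graphs $G$ for which there is a set $A$, $|A|\le h'$, with $G-A$ planar. If $|A|=0$ then $G$ is planar, which is impossible since a planar graph has a vertex of degree at most $5$. If $|A|\le 1$ then $G$ is apex and we are done. So suppose $|A|\ge 2$; the heart of the proof is to show that then, provided $|G|$ is large, $G$ contains $K_6$ as a minor. The key leverage is that $6$-connectivity prevents the neighbourhoods of the apex vertices from all being crowded into one small region of the planar graph $G-A$, so that $G-A$ contains a large, well-linked planar subgraph (a large wall, say) whose attachment vertices are spread out enough that two of the apices each reach the four branch sets of a planar $K_4$-minor model and can be joined to one another — furnishing the two missing branch sets of a $K_6$ minor. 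The \emph{obstruction} is exactly that the apices might attach in a localised fashion, or that the planar part might be ``pinched'' at a small separator; but the size bound on $A$ together with $6$-connectivity controls this, and $|G|$ being large supplies the room to route everything, outside of a bounded list of small configurations (of which $K_{2,2,2,2}$ is the prototype). Turning this into a stable, ``large-graph'' version of the proof of J\o rgensen's conjecture for planar-plus-few-apices graphs, and pinning down the finite list of small non-apex exceptions, is the main obstacle; I expect it to be by far the longest and most delicate part of the argument — and it is genuinely necessary, since the statement is false without the lower bound on $|G|$.
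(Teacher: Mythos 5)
The paper does not prove this theorem; it is quoted from Kawarabayashi, Norine, Thomas, and Wollan~\cite{k6large}, so your sketch cannot be compared against an in-paper argument and must be judged on its own merits.

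Judged that way, the sketch has a genuine gap at the crucial step, and you in effect concede as much. After the structure-theoretic reduction you arrive at: $G$ is $6$-connected, arbitrarily large, and there is a set $A$ of bounded size with $G-A$ planar; you must now show that $|A|\geq 2$ forces a $K_6$ minor. Your description of this step --- that $6$-connectivity ``prevents the apices from attaching locally'' so that two apices plus a planar $K_4$ model give $K_6$ --- is a plan, not an argument. Nothing in the hypotheses stops an apex from having all of its neighbours on a single face or a short path of $G-A$; controlling how each apex spreads its attachments over the unbounded planar part, keeping the two apices joined to one another, and keeping their branch sets disjoint from the four planar ones, is precisely the delicate core of the real proof, and you explicitly state that you do not have it. A second, smaller omission is the passage from the structure theorem's tree-decomposition (bounded adhesion, torsos near-embeddable) to ``$G$ itself is near-embeddable in a single surface'': bounded adhesion together with $6$-connectivity does not by itself collapse the decomposition to one dominant torso, and showing that a single torso must be large and that the rest can be absorbed is another step that needs, and here lacks, an actual argument. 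Finally, your closing claim that the lower bound on $|G|$ ``is genuinely necessary, since the statement is false without'' it is unjustified: the paper itself notes, immediately after the theorem, that J\o rgensen conjectured the statement to hold for \emph{all} $6$-connected graphs, so no small counterexample is known; the lower bound is an artefact of the proof technique, not a known obstruction.
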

Jorgensen \cite{jorg} conjectures that \autoref{thm:kntw} holds for all graphs without the additional restriction to graphs on a large number of vertices.  
In 2010, Norine and Thomas \cite{Thomas} announced that \autoref{thm:kntw} could be generalised to arbitrary values of $t$ to either find a $K_t$ minor in a sufficiently large $t$-connected graph or alternatively, find a small set of vertices whose deletion leaves the graph planar.
They have indicated that their methodology could be used to show a similar bound of $2k+3$ on the connectivity which ensures a large graph is $k$-linked.

\section{Outline}\label{sec:outline}
In this section, we motivate our choice to restrict our attention to graphs of bounded tree-width and give an outline of the proof of \autoref{thm:main}.

We first introduce the basic definitions of tree-width.
A \emph{tree-de\-com\-pos\-ition} of a graph $G$ is a pair $(T, \X)$ where $T$ is a tree and $\X = \{X_t \subseteq V(G): t \in V(T)\}$ is a collection of subsets of $V(G)$ indexed by the vertices of $T$.  Moreover, $\X$ satisfies the following properties.
\begin{enumerate}

\item
$\bigcup_{t \in V(T)} X_t = V(G)$,

\item
for all $e \in E(G)$, there exists $t \in V(T)$ such that both ends of $e$ are contained in $X_t$, and

\item
for all $v \in V(G)$, the subset $\{t \in V(T): v \in X_t\}$ induces a connected subtree of $T$.
\end{enumerate}

The sets in $\X$ are sometimes called the \emph{bags} of the decomposition.
The \emph{width} of the decomposition is $\max_{t \in V(T)} |X_t| -1$, and the \emph{tree-width} of $G$ is the minimum width of a tree-decomposition.

Robertson and Seymour showed that if a $2k$-connected graph contains $K_{3k}$ as a minor, then it is $k$-linked \cite{RS13}.
Thus, when one considers $(2k+3)$-connected graphs which are not $k$-linked, one can further restrict attention to graphs which exclude a fixed clique minor.
This allows one to apply the excluded minor structure theorem of Robertson and Seymour \cite{RS16}.
The structure theorem can be further strengthened if one assumes the graph has large tree-width \cite{structlargetw}.
This motivates one to analyse separately the case when the tree-width is large or bounded.
The proofs of the main results in \cite{kak} and \cite{k6large} similarly split the analysis into cases based on either large or bounded tree-width.  

We continue with an outline of how the proof of \autoref{thm:main} proceeds.
Assume \autoref{thm:main} is false, and let $G$ be a $(2k+3)$-connected graph which is not $k$-linked.
Fix a set $\{s_1, \dots, s_k, t_1, \dots, t_k\}$ such that there do not exist disjoint paths $P_1, \dots, P_k$ where the ends of $P_i$ are $s_i$ and $t_i$ for all $i$.
Fix a tree-decomposition $(T, \X)$ of $G$ of minimal width $w$.  

We first exclude the possibility that $T$ has a high degree vertex.  Assume $t$ is a vertex of $T$ of large degree.
By Property~3 in the definition of a tree-decomposition, if we delete the set $X_t$ of vertices from $G$, the resulting graph must have at least $\deg_T (t)$ distinct connected components.
By the connectivity of $G$, each component contains $2k+3$ internally disjoint paths from a vertex $v$ to $2k+3$ distinct vertices in $X_t$.
If the degree of $t$ is sufficiently large, we conclude that the graph $G$ contains a subdivision of $K_{a, 2k+3}$ for some large value $a$.
We now prove that that if a graph contains such a large complete bipartite subdivision and is $2k$-connected, then it must be $k$-linked (\autoref{thm:link}).  

We conclude that the tree $T$ does not have a high degree vertex, and consequently contains a long path.
It follows that the graph $G$ has a long \emph{path decomposition}, that is, a tree-decomposition where the tree is a path.
As the bags of the decomposition are linearly ordered by their position on the path, we simply give the path decomposition as a linearly ordered set of bags $(B_1, \dots, B_t)$ for some large value $t$.
At this point in the argument, the path-decomposition $(B_1, \dots, B_t)$ may not have bounded width, but it will have the property that $|B_i \cap B_j|$ is bounded, and this will suffice for the argument to proceed.
\autoref{sec:stable_decompositions} examines this path decomposition in detail and presents a series of refinements allowing us to assume the path decomposition satisfies a set of desirable properties.
For example, we are able to assume that $|B_i \cap B_{i+1}|$ is the same for all $i$, $1 \le i < t$.
Moreover, there exist a set $\P$ of $|B_1 \cap B_2|$ disjoint paths starting in $B_1$ and ending in $B_t$.
We call these paths the \emph{foundational linkage} and they play an important role in the proof.
A further property of the path decomposition which we prove in \autoref{sec:stable_decompositions} is that for each $i$, $1 < i < t$, if there is a bridge connecting two foundational paths in $\P$ in $B_i$, then for all $j$, $1 < j < t$, there exists a bridge connecting the same foundational paths in $B_j$.
This allows us to define an auxiliary graph $H$ with vertex set $\P$ and two vertices of $\P$ adjacent in $H$ if there exists a bridge connecting them in some $B_i$ $1 < i < t$.

Return to the linkage problem at hand;
we have $2k$ terminals $s_1, \dots, s_k$ and $t_1, \dots, t_k$ which we would like to link appropriately, and $B_1, \dots, B_t$ is our path decomposition with the foundational linkage running through it.
Let the set $B_i \cap B_{i+1}$ be labeled $S_i$.
As our path decomposition developed in the previous paragraph is very long, we can assume there exists some long subsection $B_i, B_{i+1}, \dots, B_{i+a}$ such that no vertex of $s_1, \dots, s_k, t_1, \dots, t_k$ is contained in $\bigcup_i^{i+a} B_i - (S_{i-1} \cup S_{i+a})$ for some large value $a$.
By Menger's theorem, there exist $2k$ paths linking $s_1, \dots, s_k, t_1, \dots, t_k$ to the set $S_{i-1} \cup S_{i+a}$.
We attempt to link the terminals by continuing these paths into the subgraph induced by the vertex set $B_i \cup \dots \cup B_{i+a}$.
More specifically, we extend the paths along the foundational paths and attempt to link up the terminals with the bridges joining the various foundational paths in each of the $B_j$.  
By construction, the connections between foundational paths are the same in $B_j$ for all $j$, $1 < j < t$;
thus we translate the problem into a token game played on the auxiliary graph $H$.
There each terminal has a corresponding token, and the desired linkage in $G$ will exist if it is possible to slide the tokens around $H$ in such a way to match up the tokens  of the corresponding pairs of terminals.
The token game is rigorously defined in \autoref{sec:tokenmovements}, and we present a characterisation of what properties on $H$ will allow us to find the desired linkage in $G$.

The final step in the proof of \autoref{thm:main} is to derive a contradiction when $H$ doesn't have sufficient complexity to allow us to win the token game.
In order to do so, we use the high degree in $G$ and a theorem of Robertson and Seymour on crossing paths.
We give a series of technical results in preparation in \autoref{sec:relinkage} and \autoref{sec:rural_societies} and present the proof of \autoref{thm:main} in \autoref{sec:mainproof}.

\section{Stable Decompositions}\label{sec:stable_decompositions}
In this section we present a result which, roughly speaking, ensures that a highly connected, sufficiently large graph of bounded tree-width either contains a subdivision of a large complete bipartite graph or has a long path decomposition whose bags all have similar structure.

Such a theorem was first established by B\"ohme, Maharry, and Mohar in~\cite{kakbdtw} and extended by Kawarabayashi, Norine, Thomas, and Wollan in~\cite{k6bdtw}, both using techniques from~\cite{typical}.
We shall prove a further extension based on the result by Kawarabayashi et al.\ from~\cite{k6bdtw} so our terminology and methods will be close to theirs.

We begin this section with a general Lemma about nested separations.
Let $G$ be a graph.
A \emph{separation} of $G$ is an ordered pair $(A,B)$ of sets $A,B\subseteq V(G)$ such that $G[A]\cup G[B]=G$.
If $(A,B)$ is a separation of $G$, then $A\cap B$ is called its \emph{separator} and $|A\cap B|$ its \emph{order}.
Two separations $(A,B)$ and $(A',B')$ of $G$ are called \emph{nested} if either $A\subseteq A'$ and $B\supseteq B'$ or $A\supseteq A'$ and $B\subseteq B'$.
In the former case we write $(A,B)\leq (A',B')$ and in the latter $(A,B)\geq (A',B')$.
This defines a partial order $\leq$ on all separations of $G$.
A set $\S$ of separations is called \emph{nested} if the separations of $\S$ are pairwise nested, that is, $\leq$ is a linear order on $\S$.
To avoid confusion about the order of the separations in $\S$ we do not use the usual terms like smaller, larger, maximal, and minimal when talking about this linear order but instead use \emph{left}, \emph{right}, \emph{rightmost}, and \emph{leftmost}, respectively (we still use \emph{successor} and \emph{predecessor} though).
To distinguish $\leq$ from $<$ we say `left' for the former and `strictly left' for the latter (same for $\geq$ and right).

If $(A,B)$ and $(A',B')$ are both separations of $G$, then so are $(A\cap A', B\cup B')$ and $(A\cup A', B\cap B')$ and a simple calculation shows that the orders of $(A\cap A', B\cup B')$ and $(A\cup A', B\cap B')$ sum up to the same number as the orders of $(A,B)$ and $(A',B')$.
Clearly each of $(A\cap A', B\cup B')$ and $(A\cup A', B\cap B')$ is nested with both, $(A,B)$ and $(A',B')$.

For two sets $X,Y\subseteq V(G)$ we say that a separation $(A,B)$ of $G$ is an \emph{$X$--$Y$ separation} if $X\subseteq A$ and $Y\subseteq B$.
If $(A,B)$ and $(A',B')$ are $X$--$Y$ separations in $G$, then so are $(A\cap A', B\cup B')$ and $(A\cup A', B\cap B')$.
Furthermore, if $(A,B)$ and $(A',B')$ are $X$--$Y$ separations of $G$ of minimum order, say $m$, then so are $(A\cap A', B\cup B')$ and $(A\cup A', B\cap B')$ as none of the latter two can have order less than $m$ but their orders sum up to $2m$.

\begin{lem}\label{thm:nested_separations}
Let $G$ be a graph and $X,Y,Z\subseteq V(G)$.
If for every $z\in Z$ there is an $X$--$Y$ separation of $G$ of minimal order with $z$ in its separator, then there is a nested set $\S$ of $X$--$Y$ separations of minimal order such that their separators cover $Z$.
\end{lem}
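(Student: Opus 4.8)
The plan is to prove the lemma by induction on $|Z|$. Write $m$ for the minimum order of an $X$--$Y$ separation of $G$, and for each $z\in Z$ let $\F_z$ denote the set of $X$--$Y$ separations of order $m$ whose separator contains $z$; by hypothesis every $\F_z$ is nonempty. Two facts recorded just before the statement will be used repeatedly: the meet $(A\cap A',B\cup B')$ and the join $(A\cup A',B\cap B')$ of two $X$--$Y$ separations of order $m$ are again $X$--$Y$ separations of order $m$, and if some $z$ lies in both of their separators then it lies in the separator of each of these two. To these I would add one elementary observation: \textbf{if $(A,B)$ and $(A',B')$ are $X$--$Y$ separations of the same order and $A\subseteq A'$, then $B\supseteq B'$}; in particular any two order-$m$ $X$--$Y$ separations one of whose $A$-sides contains the other are automatically nested. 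This holds because $(A\cap A',B\cup B')=(A,B\cup B')$ again has order $m$, so $A\cap B$ and $A\cap(B\cup B')$ have the same size and the former is contained in the latter, whence $A\cap B'\subseteq B$; since $A\cup B=V(G)$ this forces $B'\subseteq B$. (This observation is the one point where it matters that $m$ is the \emph{minimum} order: that is what pins the meet back down to order exactly $m$.)

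The base case $Z=\emptyset$ is trivial, with $\S=\emptyset$. For the inductive step, since $G$ is finite I would pick a separation $(A_0,B_0)\in\bigcup_{z\in Z}\F_z$ with $|A_0|$ minimum, together with some $z_0\in Z$ such that $(A_0,B_0)\in\F_{z_0}$. Put $Z_0=Z\cap(A_0\cap B_0)$ and $Z_1=Z\setminus Z_0$; then $z_0\in Z_0$, so $|Z_1|<|Z|$. The crucial claim is that \textbf{every vertex of $Z_1$ lies in $B_0\setminus A_0$}. Suppose not, say $z\in Z_1$ with $z\in A_0\setminus B_0$, and choose $(A,B)\in\F_z$. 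The meet $(A_0\cap A,B_0\cup B)$ is an $X$--$Y$ separation of order $m$ whose separator contains $z$ (because $z\in A_0\cap A$ and $z\in B$), hence it lies in $\F_z$; its $A$-side $A_0\cap A$ is contained in $A_0$, so minimality of $|A_0|$ gives $A_0\cap A=A_0$, i.e.\ $A_0\subseteq A$. Then $(A_0,B_0\cup B)$ is this same meet, so it has order $m$, and applying the observation above to $(A_0,B_0)$ and $(A_0,B_0\cup B)$ yields $B\subseteq B_0$; but $z\in B$, contradicting $z\notin B_0$.

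Granting the claim, I would apply the induction hypothesis to $Z_1$ to obtain a nested set $\S_1$ of $X$--$Y$ separations of order $m$ whose separators cover $Z_1$, and then replace every $(A,B)\in\S_1$ by its join $(A_0\cup A,B_0\cap B)$ with $(A_0,B_0)$, calling the result $\S_1'$. Each member of $\S_1'$ is again an $X$--$Y$ separation of order $m$ and lies to the right of $(A_0,B_0)$, and $\S_1'$ is still a chain, since joining a chain with one fixed separation preserves the order between its members. Coverage of $Z_1$ is preserved: if $z\in Z_1$ lay in the separator of $(A,B)\in\S_1$ then $z\in A\cap B$, and $z\in B_0$ by the claim, so $z$ lies in $(A_0\cup A)\cap(B_0\cap B)$. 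Hence $\S=\S_1'\cup\{(A_0,B_0)\}$ is a nested set of $X$--$Y$ separations of order $m$ whose separators cover $Z_0\cup Z_1=Z$, which finishes the induction.

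The step I expect to be the real obstacle is recognising that one cannot just choose, for each $z\in Z$ independently, a ``leftmost'' order-$m$ separation with $z$ in its separator: two such separations can fail to be nested. The induction above avoids this by stripping off one \emph{globally} leftmost separation $(A_0,B_0)$ at a time and exploiting the claim — that all of the remaining terminals sit on its $B_0$-side — to slide the rest of the nested family past $(A_0,B_0)$ by taking joins, which is exactly what keeps the separators covering everything.
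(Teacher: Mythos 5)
Your proof is correct, and it takes a genuinely different route from the one in the paper. The paper argues by contradiction from a \emph{maximal} nested set $\S$ of minimum-order $X$--$Y$ separations: if some $z\in Z$ is not covered, it splits $\S$ into $\S_L$ (those with $z\in B$) and $\S_R$ (those with $z\in A$), takes a separation $(A,B)$ with $z$ in its separator, ``uncrosses'' it first with the rightmost element of $\S_L$ (via a join) and then with the leftmost element of $\S_R$ (via a meet), and shows the result is nested with everything in $\S$ but not in $\S$ — contradicting maximality. Your proof instead inducts on $|Z|$, peeling off at each step a globally leftmost order-$m$ separation $(A_0,B_0)$ covering some terminal; your key claim that every remaining uncovered terminal lies strictly on the $B_0$-side then lets you shift the inductively-obtained nested family to the right of $(A_0,B_0)$ by taking joins with it, without losing any separator coverage. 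The two proofs use the same underlying submodularity machinery (meets and joins of minimum-order separations stay minimum-order), but the extremal argument of the paper handles all of $Z$ at once with one local repair, whereas yours builds the chain one ``layer'' at a time and is a bit more constructive; either is a reasonable exposition, with roughly the same total work. Your parenthetical remark pinning down exactly where minimality of $m$ is used — to force the meet $(A,B\cup B')$ back to order exactly $m$ in the auxiliary observation — is a nice touch, and your closing comment correctly identifies why one cannot simply pick a leftmost separation independently for each $z$.
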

\begin{proof}
Let $\S$ be a maximal nested set of $X$--$Y$ separations of minimal order in $G$ (as $\S$ is finite the existence of a leftmost and a rightmost element in any subset of $\S$ is trivial).
Suppose for a contradiction that some $z\in Z$ is not contained in any separator of the separations of $\S$.
	
Set $\S_L\coloneq \{(A,B)\in\S\mid z\in B\}$ and $\S_R\coloneq \{(A,B)\in\S\mid z\in A\}$.
Clearly $\S_L\cup \S_R = \S$ and $\S_L\cap\S_R=\emptyset$.
Moreover, if $\S_L$ and $\S_R$ are both non-empty, then the rightmost element $(A_L,B_L)$ of $\S_L$ is the predecessor of the leftmost element $(A_R,B_R)$ of $\S_R$ in $\S$.
Loosely speaking, $\S_L$ and $\S_R$ contain the separations of $\S$ ``on the left'' and ``on the right'' of $z$, respectively, and $(A_L,B_L)$ and $(A_R,B_R)$ are the separations of $\S_L$ and $\S_R$ whose separators are ``closest'' to $z$.
	
By assumption there is an $X$--$Y$ separation $(A,B)$ of minimal order in $G$ with $z\in A\cap B$.
Set
\[(A',B')\coloneq(A\cup A_L, B\cap B_L)\quad	\text{and}\quad (A'',B'')\coloneq(A'\cap A_R, B'\cup B_R)\]
(but $(A',B')\coloneq(A,B)$ if $\S_L=\emptyset$ and $(A'',B'')\coloneq(A',B')$ if $\S_R=\emptyset$).
As $(A_L,B_L)$, $(A,B)$, and $(A_R,B_R)$ are all $X$--$Y$ separations of minimal order in $G$ so must be $(A',B')$ and $(A'',B'')$.
Moreover, we have $z\in A''\cap B''$ and thus $(A'',B'')\notin\S$.

By construction we have $(A_L,B_L)\leq (A',B')$ and $(A'',B'')\leq (A_R,B_R)$.
To verify that $(A_L,B_L)\leq (A'',B'')$ we need to show $A_L\subseteq A'\cap A_R$ and $B_L \supseteq B'\cup B_R$.
All required inclusions follow from $(A_L,B_L)\leq (A',B')$ and $(A_L,B_L)\leq (A_R,B_R)$.
So by transitivity $(A'',B'')$ is right of all elements of $\S_L$ and left of all elements of $\S_R$, in particular, it is nested with all elements of $\S$, contradicting the maximality of the latter.
\end{proof}

We assume that every path comes with a fixed linear order of its vertices.
If a path arises as an $X$--$Y$ path, then we assume it is ordered from $X$ to $Y$ and if a path $Q$ arises as a subpath of some path $P$, then we assume that $Q$ is ordered in the same direction as $P$ unless explicitly stated otherwise.

Given a vertex $v$ on a path $P$ we write $Pv$ for the initial subpath of $P$ with last vertex $v$ and $vP$ for the final subpath of $P$ with first vertex $v$.
If $v$ and $w$ are both vertices of $P$, then by $vPw$ or $wPv$ we mean the subpath of $P$ that ends in $v$ and $w$ and is ordered from $v$ to $w$ or from $w$ to $v$, respectively.
By $P^{-1}$ we denote the path $P$ with inverse order.

Let $\P$ be a set of disjoint paths in some graph $G$.
We do not distinguish between $\P$ and the graph $\bigcup\P$ formed by uniting these paths;
both will be denoted by $\P$.
By a \emph{path of $\P$} we always mean an element of $\P$, not an arbitrary path in $\bigcup\P$.

Let $G$ be a graph.
For a subgraph $S\subseteq G$ an \emph{$S$-bridge in $G$} is a connected subgraph $B\subseteq G$ such that $B$ is edge-disjoint from $S$ and either $B$ is a single edge with both ends in $S$ or there is a component $C$ of $G-S$ such that $B$ consists of all edges that have at least one end in $C$.
We call a bridge \emph{trivial} in the former case and \emph{non-trivial} in the latter.
The vertices in $V(B)\cap V(S)$ and $V(B)\setminus V(S)$ are called the \emph{attachments} and the \emph{inner vertices} of $B$, respectively.
Clearly an $S$-bridge has an inner vertex if and only if it is non-trivial.
We say that an $S$-bridge $B$ \emph{attaches} to a subgraph $S'\subseteq S$ if $B$ has an attachment in $S'$.
Note that $S$-bridges are pairwise edge-disjoint and each common vertex of two $S$-bridges must be an attachment of both.

A \emph{branch vertex} of $S$ is a vertex of degree $\neq 2$ in $S$ and a \emph{segment} of $S$ is a maximal path in $S$ such that its ends are branch vertices of $S$ but none of its inner vertices are.
An $S$-bridge $B$ in $G$ is called \emph{unstable} if some segment of $S$ contains all attachments of $B$, and \emph{stable} otherwise.
If an unstable $S$-bridge $B$ has at least two attachments on a segment $P$ of $S$, then we call $P$ a \emph{host} of $B$ and say that $B$ is \emph{hosted by} $P$.
For a subgraph $H\subseteq G$ we say that two segments of $S$ are \emph{$S$-bridge adjacent} or just \emph{bridge adjacent} in $H$ if $H$ contains an $S$-bridge that attaches to both.

If a graph is the union of its segments and no two of its segments have the same end vertices, then it is called \emph{unambiguous} and \emph{ambiguous} otherwise.
It is easy to see that a graph $S$ is unambiguous if and only if all its cycles contain a least three branch vertices.
In our application $S$ will always be a union of disjoint paths so its segments are precisely these paths and $S$ is trivially unambiguous.

Let $S\subseteq G$ be unambiguous.
We say that $S'\subseteq G$ is a \emph{rerouting} of $S$ if there is a bijection $\varphi$ from the segments of $S$ to the segments of $S'$ such that every segment $P$ of $S$ has the same end vertices as $\varphi(P)$ (and thus $\varphi$ is unique by the unambiguity).
If $S'$ contains no edge of a stable $S$-bridge, then we call $S'$ a \emph{proper rerouting} of $S$.
Clearly any rerouting of the unambiguous graph $S$ has the same branch vertices as $S$ and hence is again unambiguous.

The following Lemma states two observations about proper reroutings.
The proofs are both easy and hence we omit them.
\begin{lem}\label{thm:proper_rerouting}
Let $S'$ be a proper rerouting of an unambiguous graph $S\subseteq G$ and let $\varphi$ be as in the definition.
Both of the following statements hold.
\begin{enumerate}[(i)]

\item\label{itm:pr_segments}
Every hosted $S$-bridge has a unique host.
For each segment $P$ of $S$ the segment $\varphi(P)$ of $S'$ is contained in the union of $P$ and all $S$-bridges hosted by $P$.

\item\label{itm:pr_stable}
For every stable $S$-bridge $B$ there is a stable $S'$-bridge $B'$ with $B\subseteq B'$.
Moreover, if $B$ attaches to a segment $P$ of $S$, then $B'$ attaches to $\varphi(P)$.
\end{enumerate}
\end{lem}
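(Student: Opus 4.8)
The plan is to prove the statements \ref{itm:pr_segments} and \ref{itm:pr_stable} in turn, relying throughout on two observations. First, $S$ is unambiguous and hence so is $S'$, so any two distinct segments of $S$ (or of $S'$) meet in at most one vertex, which is then a common branch vertex of both. Second, since $S'$ is a \emph{proper} rerouting, every edge of $S'$ lies either in $S$ or in some \emph{unstable} $S$-bridge.

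\emph{Statement \ref{itm:pr_segments}.} Host uniqueness is immediate: were an $S$-bridge $B$ hosted by segments $P\neq P'$, it would have at least two attachments on each of $P$ and $P'$, yet being unstable all of its attachments lie on a single segment $Q$; then $Q$ shares at least two vertices with each of $P$ and $P'$, forcing $P=Q=P'$ by unambiguity. For the inclusion I would traverse the path $\varphi(P)$ and assign to each of its edges $e$ an \emph{associated segment} $\mu(e)$ of $S$: the segment containing $e$ when $e\in E(S)$, and otherwise the segment containing all attachments of the unstable $S$-bridge that carries $e$. This is well defined because a maximal subpath of $\varphi(P)$ lying inside a single $S$-bridge has two distinct endpoints, each of which is an attachment of that bridge. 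A short case check at each internal vertex $x$ of $\varphi(P)$ --- using that $x$ is not a branch vertex of $S$, and hence lies on at most one segment and is an inner vertex of at most one $S$-bridge --- shows that $\mu$ is constant along $\varphi(P)$, with value some segment $Q$. Looking at the first edge of $\varphi(P)$ incident with the branch vertex $u$ (an end of $P$) shows $u\in V(Q)$, and likewise the other end $v$ of $P$ lies on $Q$; as $Q$ and $P$ then share both ends, unambiguity gives $Q=P$. Finally any $S$-bridge contributing an edge to $\varphi(P)$ has two distinct attachments (by the maximal-subpath remark), both on $Q=P$, and hence is hosted by $P$; this places $\varphi(P)$ inside the union of $P$ with the $S$-bridges hosted by $P$, the degenerate case of a one-vertex $\varphi(P)$ being trivial.

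\emph{Statement \ref{itm:pr_stable}.} Since $S'$ is proper and $B$ is stable, $B$ is edge-disjoint from $S'$. I would first check that no inner vertex $w$ of $B$ lies in $V(S')$: every edge of $G$ at $w$ lies in $B$, hence not in $S'$, so $w$ would be isolated in $S'$, i.e.\ a one-vertex segment of $S'$, whose $\varphi$-preimage would be a one-vertex segment of $S$ with end vertex $w$ --- impossible, as $w\notin V(S)$. Hence the inner vertices of $B$ lie in a single component $C'$ of $G-S'$, so $B$ is contained in the $S'$-bridge $B'$ determined by $C'$ (a trivial $B$, having no inner vertex, being directly contained in a single $S'$-bridge). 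To see that $B'$ is stable, suppose for a contradiction that all attachments of $B'$ lie on one segment $\sigma'$ of $S'$, and put $\sigma:=\varphi^{-1}(\sigma')$; I would show that every attachment $a$ of $B$ lies on $\sigma$, contradicting the stability of $B$. If $a$ is a branch vertex it persists in $V(S')$, and as an attachment of $B'$ on $\sigma'$ it is an end of $\sigma'$, hence of $\sigma$. If $a$ is interior to its segment $P$ and lies in $V(S')$, then, by the auxiliary fact that an interior vertex of $P$ lying in $V(S')$ must lie on $\varphi(P)$ --- which follows from the inclusion in \ref{itm:pr_segments} --- $a$ is an interior attachment of $B'$ on $\varphi(P)$, so $\sigma'=\varphi(P)$ and $P=\sigma$. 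If $a$ is interior to $P$ but $a\notin V(S')$, I walk from $a$ along $P$ in both directions; each step stays inside $B'$ until the walk first reaches a vertex of $P$ lying in $V(S')$, which happens at the latest at an end of $P$, and this exhibits an attachment of $B'$ on $\varphi(P)$: either an interior vertex of $P$ lying in $V(S')$ (again forcing $\sigma'=\varphi(P)$), or, in the extreme case that the whole interior of $P$ has vanished from $V(S')$, both ends of $P$, which then lie on $\sigma'$ and force $\sigma'=\varphi(P)$ by unambiguity. In every case $P=\sigma$ and $a\in V(\sigma)$. The same walk proves the ``moreover'' clause: starting from an attachment of $B$ on a segment $P$ and walking to the first vertex of $P$ in $V(S')$ yields an attachment of $B'$ on $\varphi(P)$.

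I expect the main obstacle to be the branch-vertex bookkeeping in statement \ref{itm:pr_stable}: an attachment of $B$ on a segment $P$ may disappear from $V(S')$ under the rerouting, so $B'$ need not meet $\varphi(P)$ at the vertices where $B$ met $P$, and one is forced to walk along $P$ --- using that the ends of $P$ are branch vertices and so persist --- in order to locate a surviving attachment of the enlarged bridge $B'$.
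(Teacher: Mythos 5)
Your proof is correct and complete. The paper omits the proof of this lemma, describing both statements as ``easy,'' so there is no published argument to compare against; your constant associated-segment trace along $\varphi(P)$ for (i), and for (ii) the auxiliary fact extracted from (i) together with the walk along $P$ to locate a surviving attachment of the enlarged bridge $B'$ (including the extreme case where the entire interior of $P$ vanishes from $V(S')$, where you rightly fall back on both ends of $P$ and unambiguity), are sound and cover the necessary edge cases. One small streamlining: an inner vertex $w$ of a stable $S$-bridge lying in $V(S')$ would be isolated in $S'$, hence a branch vertex of $S'$ and therefore of $S$, contradicting $w\notin V(S)$ directly, without passing through $\varphi^{-1}$.
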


%
%
%
%

Note that \autoref{thm:proper_rerouting} (\ref{itm:pr_stable}) implies that no unstable $S'$-bridge contains an edge of a stable $S$-bridge.
Together with (\ref{itm:pr_segments}) this means that being a proper rerouting of an unambiguous graph is a transitive relation.

The next Lemma is attributed to Tutte;
we refer to \cite[Lemma 2.2]{k6bdtw} for a proof\footnote{
To check that Lemma 2.2 in \cite{k6bdtw} implies our \autoref{thm:tutte_bridges} note that if $S'$ is obtained from $S$ by ``a sequence of proper reroutings'' as defined in \cite{k6bdtw}, then by transitivity $S'$ is a proper rerouting of $S$ according to our definition.
And although not explicitly included in the statement, the given proof shows that no trivial $S'$-bridge can be unstable.
}.

\begin{lem}\label{thm:tutte_bridges}
Let $G$ be a graph and $S\subseteq G$ unambiguous.
There exists a proper rerouting $S'$ of $S$ in $G$ such that if $B'$ is an $S'$-bridge hosted by some segment $P'$ of $S'$, then $B'$ is non-trivial and there are vertices $v,w\in V(P')$ such that the component of $G-\{v,w\}$ that contains $B'-\{v,w\}$ is disjoint from $S'- vP'w$.
\end{lem}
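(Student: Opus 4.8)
The plan is to argue by extremality. Among all proper reroutings of $S$ in $G$ — a nonempty set, since $S$ is one of them — I would pick one, $S'$, that minimises a suitable complexity measure, and then show that $S'$ already has the claimed property, for otherwise a further rerouting would decrease the measure. This is legitimate: by \autoref{thm:proper_rerouting}~(\ref{itm:pr_segments}) each segment of a proper rerouting of $S$ lies inside the union of the corresponding segment of $S$ with finitely many $S$-bridges, so there are only finitely many proper reroutings of $S$, and by the transitivity remark following \autoref{thm:proper_rerouting} any proper rerouting of $S'$ is again a proper rerouting of $S$. A natural first guess for the measure is the number of edges of $S'$; I expect that this alone may not suffice and that a lexicographic refinement — first the number of hosted $S'$-bridges, then the number of edges lying in hosted $S'$-bridges — will be needed, but the skeleton of the argument is the same.

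So let $S'$ be extremal and let $B'$ be an $S'$-bridge hosted by a segment $P'$ of $S'$. Since $B'$ is hosted it is unstable, and because distinct segments of an unambiguous graph share at most one vertex, all attachments of $B'$ lie on $P'$; let $v$ and $w$ be the leftmost and rightmost attachment of $B'$ on $P'$, which are distinct as a hosted bridge has at least two attachments on its host. The first step is that $B'$ is non-trivial. Otherwise $B'$ would be a single chord $e$ of $P'$ with ends $v$ and $w$; replacing the subpath $vP'w$ of $P'$ by $e$ gives a rerouting $S''$ of $S'$ whose edge set is that of $S'$ with $e$ added and $E(vP'w)$ removed, so none of its edges lies in a stable $S'$-bridge ($e$ itself being an unstable $S'$-bridge), whence $S''$ is a proper rerouting of $S'$, hence of $S$ by transitivity. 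As we may assume $G$ is simple — the multigraph case needs only a small separate remark — $vP'w$ has at least two edges, so $S''$ has strictly fewer edges than $S'$, contradicting minimality. Thus $B'$ is non-trivial, and its set $K$ of inner vertices is nonempty and connected, so $B'-\{v,w\}$ is connected and contains $K$.

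It remains to establish the separation property. Let $C$ be the component of $G-\{v,w\}$ that contains $B'-\{v,w\}\supseteq K$, and suppose for contradiction that $C$ meets $S'-vP'w$, say in a vertex $z$. As all attachments of $B'$ lie on $vP'w$, the vertex $z$ is not in $V(B')$, so there is a path $R$ in $G-\{v,w\}$ from $z$ to $V(B')\cup V(\mathrm{int}(vP'w))$ that is internally disjoint from $S'\cup B'$ (take a shortest such path). Using $R$, together with the facts that $B'$ is connected and non-trivial and attaches to both $v$ and $w$, one constructs a new route — for the segment of $S'$ through $z$, or for $P'$ itself when $z$ lies on $P'$ outside $vP'w$ — that bypasses $v$ or $w$, and one argues that the result is a proper rerouting of $S$ of strictly smaller measure, the desired contradiction.

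This last step is where the real work lies, and I expect it to be the main obstacle. One must write the reroute down explicitly, verify that it is genuinely a rerouting (same segment endpoints, a well-defined bijection $\varphi$) and that it is proper — the key point being that it absorbs only edges of unstable $S'$-bridges, so that by \autoref{thm:proper_rerouting}~(\ref{itm:pr_stable}) and transitivity no edge of a stable $S$-bridge is picked up — and, crucially, check that the chosen measure strictly decreases. A single rerouting moves some edges out of segments into bridges and some edges out of bridges into segments, and it can create new hosted bridges; controlling this bookkeeping, and in particular calibrating the complexity measure so that it always comes out in our favour, is the crux. Since the statement is essentially the classical bridge lemma of Tutte, an alternative to carrying this out is simply to invoke \cite[Lemma 2.2]{k6bdtw} and apply transitivity of proper reroutings, which is the route the paper takes.
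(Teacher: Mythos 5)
The paper itself does not prove this lemma; it cites \cite[Lemma~2.2]{k6bdtw} and adds a footnote reconciling the two formulations. You recognise this and offer the citation as a fallback, which is indeed the paper's route.

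Your from-scratch sketch, however, has a genuine gap exactly where you yourself flag it. The non-triviality step is sound provided the measure is edge count: replacing $vP'w$ by the chord $e$ yields a proper rerouting of $S$ (by transitivity) with strictly fewer edges. But the separation step is never carried out --- after constructing the path $R$ you do not write down the competing rerouting, do not verify it is a proper rerouting of $S$, and do not check that the measure strictly decreases --- and this deferred bookkeeping is where essentially all of the content of Tutte's bridge lemma resides. The extremality scheme is also left internally unresolved: you float plain edge count and also a lexicographic count of hosted $S'$-bridges as candidate measures, but the non-triviality argument you actually give is an edge-count argument and does not obviously decrease the hosted-bridge count, since shortcutting $vP'w$ removes the chord bridge but can simultaneously promote the suppressed subpath $vP'w$ into a new hosted bridge. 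Until a single measure is fixed and shown to decrease under every reroute invoked, the extremality argument remains a plan rather than a proof; the citation is the complete answer here.
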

This implies that the segments of $S'$ are induced paths in $G$ as trivial $S'$-bridges cannot be unstable and no two segments of $S'$ have the same end vertices.

Let $G$ be a graph.
A set of disjoint paths in $G$ is called a \emph{linkage}.
If $X,Y\subseteq V(G)$ with $k\coloneq |X|=|Y|$, then 
a set of $k$ disjoint $X$--$Y$ paths in $G$ is called an \emph{$X$\!--$Y$ linkage} or a \emph{linkage from $X$ to $Y$}.
Let $\W = (W_0,\ldots, W_l)$ be an ordered tuple of subsets of $V(G)$.
Then $l$ is the \emph{length} of $\W$, the sets $W_i$ with $0\leq i\leq l$ are its \emph{bags}, and the sets $W_{i-1}\cap W_i$ with $1\leq i\leq l$ are its \emph{adhesion sets}.
We refer to the bags $W_i$ with $1\leq i\leq l-1$ as \emph{inner} bags.
When we say that a bag $W$ of $\W$ \emph{contains} some graph $H$, we mean $H\subseteq G[W]$.
Given an inner bag $W_i$ of $\W$, the sets $W_{i-1}\cap W_{i}$ and $W_{i}\cap W_{i+1}$ are called the \emph{left} and \emph{right} adhesion set of $W_i$, respectively.
Whenever we introduce a tuple $\W$ as above without explicitly naming its elements, we shall denote them by $W_0, \ldots, W_l$ where $l$ is the length of $\W$.
For indices $0\leq j\leq k\leq l$ we use the shortcut $W_{[j,k]}\coloneq \bigcup_{i=j}^k W_i$.

The tuple $\W$ with the following five properties is called a \emph{slim decomposition} of $G$.
\begin{enumerate}[(L1)]

\item
$\bigcup\W = V(G)$ and every edge of $G$ is contained in some bag of $\W$.

\item
If $0\leq i\leq j\leq k\leq l$, then $W_i\cap W_k\subseteq W_j$.

\item
All adhesion sets of $\W$ have the same size.

\item
No bag of $\W$ contains another.

\item
$G$ contains a $(W_0\cap W_1)$--$(W_{l-1}\cap W_l)$ linkage.
\newcounter{enumi_saved}\setcounter{enumi_saved}{\value{enumi}}
\end{enumerate}

The unique size of the adhesion sets of a slim decomposition is called its \emph{adhesion}.
A linkage $\P$ as in (L5) together with an enumeration $P_1,\ldots, P_q$ of its paths is called a \emph{foundational linkage} for~$\W$ and its members are called \emph{foundational paths}.
Each path $P_{\alpha}$ contains a unique vertex of every adhesion set of $\W$ and we call this vertex the \emph{$\alpha$-vertex} of that adhesion set.
For an inner bag $W$ of $\W$ the $\alpha$-vertex in the left and right adhesion set of $W$ are called the \emph{left} and \emph{right} $\alpha$-vertex of $W$, respectively.
Note that $\P$ is allowed to contain trivial paths so $\bigcap \W$ may be non-empty.

The enumeration of a foundational linkage $\P$ for $\W$ is a formal tool to compare arbitrary linkages between adhesion sets of $\W$ to $\P$ by their `induced permutation' as detailed below.
When considering another foundational linkage $\Q =\{Q_1,\ldots, Q_q\}$ for $\W$ we shall thus always assume that it induces the same enumeration as $\P$ on $W_0\cap W_1$, in other words, $Q_{\alpha}$ and $P_{\alpha}$ start on the same vertex.

Suppose that $\W$ is a slim decomposition of some graph $G$ with foundational linkage $\P$.
Then any $\P$-bridge $B$ in $G$ is contained in a bag of $\W$, and this bag is unique unless $B$ is trivial and contained in one or more adhesion sets.

We say that a linkage $\Q$ in a graph $H$ is \emph{$p$-attached} if each path of $\Q$ is induced in $H$ and if some non-trivial $\Q$-bridge $B$ attaches to a non-trivial path $P$ of $\Q$, then either $B$ attaches to another non-trivial path of $\Q$ or there are at least $p-2$ trivial paths $Q$ of $\Q$ such that $H$ contains a $\Q$-bridge (which may be different from $B$) attaching to $P$ and $Q$.

We call a pair~$(\W,\P)$ of a slim decomposition $\W$ of $G$ and a foundational linkage $\P$ for $\W$ a \emph{regular decomposition of attachedness $p$ of $G$} if there is an integer $p$ such that the axioms (L6), (L7), and (L8) hold.
\begin{enumerate}[(L1)]\setcounter{enumi}{\value{enumi_saved}}

\item
$\P[W]$ is $p$-attached in $G[W]$ for all inner bags $W$ of $\W$.

\item
A path $P\in\P$ is trivial if $P[W]$ is trivial for some inner bag $W$ of $\W$.

\item
For every $P,Q\in\P$, if some inner bag of $\W$ contains a $\P$-bridge attaching to $P$ and $Q$, then every inner bag of $\W$ contains such a $\P$-bridge.
\addtocounter{enumi}{1}
\setcounter{enumi_saved}{\value{enumi}}
\end{enumerate}
The integer $p$ is not unique:
A regular decomposition of attachedness $p$ has attachedness $p'$ for all integers $p'\leq p$.
Note that $\P$ satisfies (L7) if and only if every vertex of $G$ either lies in at most two bags of $\W$ or in all bags.
This means that either all foundational linkages for $\W$ satisfy (L7) or none.

The next Theorem follows\footnote{
The statement of Lemma~3.1 in~\cite{k6bdtw} only asserts the existence of a minor isomorphic to $K_{a,p}$ rather than a subdivision of $K_{a,p}$ like we do.
But its proof refers to an argument in the proof of~\cite[Theorem 3.1]{typical} which actually gives a subdivision.
}
from the Lemmas~3.1, 3.2, and 3.5 in~\cite{k6bdtw}.

\begin{thm}[Kawarabayashi et al.~\cite{k6bdtw}]\label{thm:regularpd}
For all integers $a,l,p,w\geq 0$ there exists an integer $N$ with the following property.
If $G$ is a $p$-connected graph of tree-width less than $w$ with at least $N$ vertices, then either $G$ contains a subdivision of $K_{a,p}$, or $G$ has a regular decomposition of length at least $l$, adhesion at most $w$, and attachedness $p$.
\end{thm}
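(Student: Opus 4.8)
The result combines three existing ingredients from~\cite{k6bdtw} and~\cite{typical}. The plan is to start from the tree-decomposition machinery already available: apply Lemma~3.1 of~\cite{k6bdtw} to obtain, from a large $p$-connected graph of tree-width less than $w$, either a subdivision of $K_{a,p}$ or a long ``linear'' structure — a path-like decomposition with bounded adhesion. If we are in the first case we are done immediately, so assume we have the linear structure. The remaining work is to massage this structure through Lemmas~3.2 and~3.5 of~\cite{k6bdtw} so that it satisfies axioms (L1)--(L8), i.e.\ so that it becomes a regular decomposition of attachedness $p$.

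\textbf{Key steps, in order.} First, from Lemma~3.1 of~\cite{k6bdtw} extract a tuple $\W=(W_0,\ldots,W_l)$ of length at least $l$ whose adhesion sets have size at most $w$ and which satisfies the basic path-decomposition properties (L1) and (L2); the connectivity of $G$ together with Menger's theorem gives a $(W_0\cap W_1)$--$(W_{l-1}\cap W_l)$ linkage, which is (L5), provided we first discard the trivial possibility that some adhesion set is small enough to be avoided. Second, normalize the adhesion sets to a common size: pad shorter adhesion sets using the linkage paths (each foundational path meets every adhesion set in exactly one vertex once (L5) holds), or alternatively delete a bounded number of ``bad'' bags and reindex; this is a routine but slightly fussy argument that yields (L3). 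Third, enforce (L4) — no bag contains another — by contracting any offending bag into its neighbour, which only shortens the tuple by a bounded additive amount, so we keep length $\ge l$ after starting with length $\ge l'$ for a suitably larger constant $l'$ absorbed into $N$. Fourth, to get (L6) apply Tutte's Lemma (\autoref{thm:tutte_bridges}) bag-by-bag, rerouting the foundational paths inside each inner bag so that the $\P$-bridges hosted by a foundational path are non-trivial and ``locally separated''; then a counting/pigeonhole argument using $p$-connectivity forces the $p$-attachedness condition — if a non-trivial bridge attaches only to one non-trivial path $P$ and to fewer than $p-2$ trivial paths, one can find a separator of $G$ of order less than $p$, a contradiction. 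Fifth, (L7) is essentially automatic given (L5) and the definition, as the paper notes: a foundational path is trivial precisely when it is trivial in some bag, because a vertex lies in all bags or in at most two. Sixth, to get (L8) — uniform bridge-adjacency pattern across all inner bags — use the long length: there are only $\binom{q}{2}\le\binom{w}{2}$ possible pairs of foundational paths, so by pigeonhole a long enough sub-interval of bags realizes exactly the same set of bridge-adjacent pairs; restrict $\W$ to that sub-interval, which still has length $\ge l$ if we started long enough.

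\textbf{Main obstacle.} The delicate point is step four: establishing (L6), i.e.\ $p$-attachedness of $\P[W]$ in each inner bag $G[W]$. Tutte's Lemma gives good control of \emph{hosted} (unstable) bridges within a single bag, but turning the \emph{stable} bridge structure into the quantitative ``$p-2$ trivial partners'' guarantee requires genuinely using the global $p$-connectivity of $G$ — a local argument inside $G[W]$ alone does not see enough vertices. The argument must show that a bridge attaching to only one non-trivial foundational path and few trivial ones, together with the adhesion sets bounding $W$ off from the rest of $G$, produces a small cut; one has to be careful that the adhesion sets themselves (size $\le w$, but we need $<p$-many relevant vertices) do not inflate the cut beyond $p-1$. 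Reconciling this with the fact that $w$ may exceed $p$ is where the real care lies, and it is presumably why~\cite{k6bdtw} needs the separate Lemmas 3.2 and 3.5 rather than a one-line reduction; since those lemmas are cited as available, the proof here is ultimately an assembly, but the assembly order (normalize adhesion, then reroute, then pigeonhole the bridge pattern) matters because each step can only be performed once the previous invariants are in place.

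Finally, one bookkeeping remark: all the ``bounded loss'' in length at each step (deleting bad bags, merging for (L4), restricting for (L8)) is a function of $w$, $p$, and the constants in Lemmas 3.2 and 3.5 only, so choosing $N$ large enough — via the $N$ from Lemma~3.1 applied with a sufficiently large target length $l'=l'(a,l,p,w)$ — guarantees the final regular decomposition has length at least $l$, completing the proof.
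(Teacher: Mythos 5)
The paper does not prove this statement: it is attributed verbatim to Kawarabayashi, Norine, Thomas, and Wollan and derived from their Lemmas~3.1, 3.2, and 3.5, with only a footnote clarifying that the proof of their Lemma~3.1 (via~\cite{typical}) actually yields a subdivision rather than merely a minor. Your outline is consistent with that pointer -- extract a long bounded-adhesion linear structure or a $K_{a,p}$-subdivision, then refine to meet (L1)--(L8) -- and your step four, using \autoref{thm:tutte_bridges} plus a Menger/fan argument for $p$-attachedness, is precisely the mechanism this paper itself uses in \autoref{thm:bridge_stabilisation}(\ref{itm:bs_attached}).

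Two details in your sketch are nonetheless off. First, you cannot ``pad shorter adhesion sets using the linkage paths'' to enforce (L3): one instead restricts, by pigeonhole, to a long subinterval whose adhesion sets all have the minimum occurring size. Moreover this normalization must logically precede or accompany (L5), not follow it as in your ordering -- an $X$--$Y$ linkage in the paper's sense presupposes $|X|=|Y|$, which for the two extreme adhesion sets is exactly what (L3) supplies, so ``(L5) first, then (L3)'' is circular. Second, (L7) is not ``essentially automatic'': as the paper explicitly remarks, (L7) holds if and only if every vertex of $G$ lies in at most two bags or in all bags, which is a nontrivial structural property of $\W$ that the refinement has to arrange (e.g.\ by further passing to a subinterval and absorbing offending vertices into the permanent intersection $\bigcap\W$), not something read off from (L5) and the definitions. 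Neither issue undermines the overall strategy, but both are places where, in an actual write-up, the argument would need to be repaired.
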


Note that~\cite{k6bdtw} features a stronger version of \autoref{thm:regularpd}, namely Theorem~3.8, which includes an additional axiom (L9).
We omit that axiom since our arguments do not rely on it.

Let $(\W,\P)$ be a slim decomposition of adhesion $q$ and length $l$ for a graph $G$.
Suppose that $\Q$ is a linkage from the left adhesion set of $W_i$ to the right adhesion set of $W_j$ for two indices $i$ and $j$ with $1\leq i \leq j < l$.
The enumeration $P_1,\ldots, P_q$ of $\P$ induces an enumeration $Q_1,\ldots, Q_q$ of $\Q$ where $Q_{\alpha}$ is the path of $\Q$ starting in the left $\alpha$-vertex of $W_i$.
The map $\pi : \{1,\ldots, q\} \to \{1,\ldots, q\}$ such that $Q_{\alpha
}$ ends in the right $\pi(\alpha)$-vertex of $W_j$ for $\alpha=1,\ldots, q$ is a permutation because $\Q$ is a linkage.
We call it the \emph{induced permutation of $\Q$}.
Clearly the induced permutation of $\Q$ is the composition of the induced permutations of $\Q[W_i]$, $\Q[W_{i+1}]$, \ldots, $\Q[W_j]$.
For any permutation $\pi$ of $\{1,\ldots, q\}$ and any graph $\Gamma$ on $\{1,\ldots, q\}$ we write $\pi\Gamma$ to denote the graph $(\{\pi(\alpha)\mid \alpha\in V(\Gamma)\},\{\pi(\alpha)\pi(\beta)\mid \alpha\beta\in E(\Gamma)\})$.
For a subset $X\subseteq \{1,\ldots, q\}$ we set $\Q_X\coloneq\{Q_{\alpha}\mid \alpha\in X\}$.

Keep in mind that the enumerations $\P$ induces on linkages $\Q$ as above always depend on the adhesion set where the considered linkage starts.
For example let $\Q$ be as above and for some index $i'$ with $i<i'\leq j$ set $\Q'\coloneq \Q[W_{[i',j]}]$.
Then $Q_{\alpha}[W_{[i',j]}]$ need not be the same as $Q'_{\alpha}$.
More precisely, we have $Q_{\alpha}[W_{[i',j]}]=Q'_{\tau(\alpha)}$ where $\tau$ denotes the induced permutation of $\Q[W_{[i,i'-1]}]$.

For some subgraph $H$ of $G$ the \emph{bridge graph of $\Q$ in $H$}, denoted $B(H, \Q)$, is the graph with vertex set $\{1,\ldots, q\}$ in which $\alpha\beta$ is an edge if and only if $Q_{\alpha}$ and $Q_{\beta}$ are $\Q$-bridge adjacent in $H$.
Any $\Q$-bridge $B$ in $H$ that attaches to $Q_{\alpha}$ and $Q_{\beta}$ is said to \emph{realise} the edge $\alpha\beta$.
We shall sometimes think of induced permutations as maps between bridge graphs.

For a slim decomposition $\W$ of length $l$ of $G$ with foundational linkage $\P$ we define the \emph{auxiliary graph} $\Gamma(\W,\P)\coloneq B(G[W_{[1,\ l-1]}], \P)$.
Clearly $B(G[W],\P[W])\subseteq\Gamma(\W,\P)$ for each inner bag $W$ of $\W$ and if $(\W,\P)$ is regular, then by (L8) we have equality.

Set $\lambda\coloneq \{\alpha\mid P_{\alpha}\text{ is non-tivial}\}$ and $\theta\coloneq\{\alpha\mid P_{\alpha}\text{ is trivial}\}$.
Given a subgraph $\Gamma\subseteq \Gamma(\W,\P)$ and some foundational linkage $\Q$ for $\W$, we write $G_{\Gamma}^{\Q}$ for the graph obtained by deleting $\Q\setminus \Q_{V(\Gamma)}$ from the union of $\Q$ and those $\Q$-bridges in inner bags of $\W$ that realise an edge of $\Gamma$ or attach to $\Q_{V(\Gamma)\cap\lambda}$ but to no path of $\Q_{\lambda\setminus V(\Gamma)}$.
For a subset $V\subseteq\{1,\ldots, q\}$ we write $G_{V}^{\Q}$ instead of $G_{\Gamma(\W,\P)[V]}^{\Q}$.
Note that $\Q_{\theta}=\P_{\theta}$.
Hence $G_{\lambda}^{\P}$ and $G_{\lambda}^{\Q}$ are the same graph and we denote it by $G_{\lambda}$.

A regular decomposition $(\W,\P)$ of a graph $G$ is called \emph{stable} if it satisfies the following two axioms where $\lambda\coloneq\{\alpha\mid P_{\alpha}\text{ is non-trivial}\}$.
\begin{enumerate}[(L1)]\setcounter{enumi}{\value{enumi_saved}}

\item
If $\Q$ is a linkage from the left to the right adhesion set of some inner bag of $\W$, then its induced permutation is an automorphism of $\Gamma(\W,\P)$.

\item
If $\Q$ is a linkage from the left to the right adhesion set of some inner bag $W$ of $\W$, then every edge of $B(G[W], \Q)$ with one end in $\lambda$ is also an edge of $\Gamma(\W,\P)$.
\end{enumerate}
 
Given these definitions we can further expound our strategy to prove the main theorem:
We will reduce the given linkage problem to a linkage problem with start and end vertices in $W_0 \cup W_l$ for some stable regular decomposition $(\W,\P)$ of length $l$.
The stability ensures that we maximised the number of edges of $\Gamma(\W,\P)$, i.e.\ no rerouting of $\P$ will give rise to new bridge adjacencies.
We will focus on a subset $\lambda_0\subseteq\lambda$ and show that the minimum degree of $G$ forces a high edge density in $G_{\lambda_0}^{\P}$, leading to a high number of edges in $\Gamma(\W,\P)[\lambda_0]$.
Using combinatoric arguments, which we elaborate in \autoref{sec:tokenmovements}, we show that we can find linkages using segments of $\P$ and $\P$-bridges 
in $G_{\lambda_0}^{\P}$ to realise any matching of start and end vertices in $W_0\cup W_l$, showing that $G$ is in fact $k$-linked.

We strengthen \autoref{thm:regularpd} by the assertion that the regular decomposition can be chosen to be stable.
We like to point out that, even with the left out axiom (L9) included in the definition of a regular decomposition, \autoref{thm:stablepd} would hold.
By almost the same proof as in \cite{k6bdtw} one could also obtain a stronger version of (L8) stating that for every subset $\R$ of $\P$ if some inner bag of $\W$ contains a $\P$-bridge attaching every path of $\R$ but to no path of $\P\setminus\R$, then every inner bag does.

\begin{thm}\label{thm:stablepd}
\newcounter{thm_saved}
\setcounter{thm_saved}{\value{thm}}
For all integers $a,l,p,w\geq 0$ there exists an integer $N$ with the following property.
If $G$ is a $p$-connected graph of tree-width less than $w$ with at least $N$ vertices, then either $G$ contains a subdivision of $K_{a,p}$, or $G$ has a stable regular decomposition of length at least $l$, adhesion at most $w$, and attachedness $p$.
\end{thm}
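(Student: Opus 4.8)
The plan is to follow the proof of \autoref{thm:regularpd} in \cite{k6bdtw}, refining only the extremal choice of decomposition so that the outcome additionally satisfies the stability axioms (L9) and (L10). First I would apply \autoref{thm:regularpd} with its length parameter replaced by a sufficiently large integer $l^\ast$ (an iterated function of $l$, $p$, $w$, pinned down at the end), which either yields the desired subdivision of $K_{a,p}$---and then we are done---or a regular decomposition of $G$ of length at least $l^\ast$, adhesion at most $w$, and attachedness $p$. Call an edge of $\Gamma(\W,\P)$ \emph{$\lambda$-incident} if it has an end in $\lambda:=\{\alpha:P_\alpha\text{ is non-trivial}\}$. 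To a regular decomposition $(\W,\P)$ I then attach the pair
\[
\mu(\W,\P)\ \coloneq\ \bigl(\,\#\{\lambda\text{-incident edges of }\Gamma(\W,\P)\},\ \#E(\Gamma(\W,\P))\,\bigr),
\]
ordered lexicographically, and among all regular decompositions of $G$ of length at least $l$, adhesion at most $w$ and attachedness $p$ (a non-empty family, since $l^\ast\geq l$) I would pick one maximising $\mu$; this is possible since $\mu$ is bounded by $\bigl(\binom{w}{2},\binom{w}{2}\bigr)$. I claim the chosen decomposition is stable; otherwise the failure of (L9) or of (L10) will produce a regular decomposition of length at least $l$ with strictly larger $\mu$, a contradiction, and choosing $l^\ast$ large enough makes the length survive the at most $\binom{w}{2}^2$ such improvements.

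Suppose (L9) fails: some inner bag carries a linkage between its two adhesion sets whose induced permutation $\pi$ is not an automorphism of $\Gamma:=\Gamma(\W,\P)$. A short argument using disjointness of that linkage shows $\pi$ fixes the set of trivial foundational paths pointwise, so it preserves $\lambda$, and hence $\sigma\Gamma$ has the same number of $\lambda$-incident edges as $\Gamma$ for every $\sigma$ in the subgroup $\langle\pi\rangle$ of $S_q$. Now I would exploit the length of the decomposition: pigeonholing over the bounded set of \emph{types} of inner bags---a type recording the induced permutations realisable by linkages between the bag's adhesion sets together with their bridge graphs---and merging maximal blocks of bags of one type into single super-bags (which preserves (L1)--(L8) with the same auxiliary graph, up to routine checks that merged bags stay $p$-attached and bridge-uniform), I would reach a regular decomposition of length at least $\max\{l,\operatorname{ord}(\pi)+1\}$ each of whose inner bags carries a linkage realising $\pi$, with bridge graph containing $\pi^{-1}\Gamma$ (the latter coming from the $\P$-parts inside the super-bags). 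Rerouting $\P$ through these linkages in every inner bag at once and carrying the resulting cyclic relabelling along the decomposition, the new auxiliary graph contains $\bigcup_{\sigma\in\langle\pi\rangle}\sigma\Gamma$, which strictly dominates $\Gamma$ in edge count (as $\pi\Gamma\neq\Gamma$) while retaining at least as many $\lambda$-incident edges; so $\mu$ strictly increases, possibly after one more merging pass to restore (L8).

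The step for (L10) is parallel. Suppose some inner bag carries a linkage $\Q$ between its adhesion sets realising an adjacency $\alpha\beta$ with $\alpha\in\lambda$ and $\alpha\beta\notin\Gamma$. By the already-proved (L9) the induced permutation of $\Q$ is an automorphism of $\Gamma$; moreover, using \autoref{thm:tutte_bridges} and the transitivity of proper rerouting one may take $\Q$ so that rerouting through it creates the adjacency $\alpha\beta$ without destroying any $\lambda$-incident adjacency of $\Gamma$. Spreading a linkage of this type over a long uniform stretch and rerouting as in the (L9)-step produces a regular decomposition of length at least $l$ whose auxiliary graph still contains every $\lambda$-incident edge of $\Gamma$ and in addition $\alpha\beta$, so the first coordinate of $\mu$ strictly increases---the final contradiction. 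The value of $l^\ast$ can then be read off from the length lost in the boundedly many improvement steps.

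The step I expect to be the main obstacle is exactly this global rerouting: performing the reroutings simultaneously in all inner bags of a long stretch while preserving every regularity axiom---most delicately (L8), bridge-uniformity, and (L6), $p$-attachedness---and without dropping below length $l$. This is the one place where largeness of $G$ is genuinely used: it supplies a decomposition long enough to pigeonhole over the bounded number of bag types and still merge down to length $l$. It is also essentially the only place the argument departs from, and is slightly heavier than, the proof of \autoref{thm:regularpd} in \cite{k6bdtw}; the remaining verifications go through as there.
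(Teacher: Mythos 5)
Your extremal strategy---maximise a potential over regular decompositions of sufficient length, then contradict maximality by rerouting through a disturbance---is the paper's strategy, and your lexicographic potential $\mu$ (counting $\lambda$-incident edges first) is a reasonable variant of the paper's ``validity'' bookkeeping, which instead indexes the required length by the number of $\lambda$-incident \emph{non}-edges. You are also right that the induced permutation of a disturbance fixes $\theta$ pointwise, so the new edges produced are $\lambda$-incident, and that bridge stabilisation (\autoref{thm:tutte_bridges}, \autoref{thm:bridge_stabilisation}) must be invoked so that a bridging disturbance survives rerouting. So the shape of the argument is correct.

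The step you flag as the main obstacle, however, is not a detail: it is the heart of the proof, and your plan for it does not yet work. Rerouting through a disturbance realising $\pi$ in \emph{every} inner bag of a long stretch destroys (L8): bag $j$ then has local bridge graph $\pi^{-(j-1)}B$, and these disagree for different $j$ unless $\pi$ already is an automorphism. Your proposed fix---``one more merging pass to restore (L8)''---is not a contraction of a regular decomposition (the intermediate object is not regular), so \autoref{thm:contraction} does not apply, and you would also need to explain how the contracted super-bags regain $p$-attachedness (L6). The paper avoids the cyclic relabelling entirely by a different device: using the pigeonhole principle over the group $S_q$ (order at most $w!$), it picks a bounded run of consecutive disturbances whose permutations \emph{compose to the identity}. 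After bridge-stabilising each disturbance to restore $p$-attachedness (\autoref{thm:bridge_stabilisation}(iii)), the composite linkage has identity permutation and, crucially, its bridge graph properly contains $\Gamma(\W,\P)$ regardless of whether the first disturbance was twisting or bridging (see \autoref{clm:sp_identity_bag})---so twisting and bridging need not be treated separately, as you do. The composite can then be packed into a single contracted bag without relabelling, a second pigeonhole over the $2^{\binom{w}{2}}$ candidate bridge graphs yields many such super-bags with the \emph{same} local bridge graph, and only then is $\P$ replaced there, so that (L8) holds for free in the resulting decomposition. These two devices---bridge stabilisation for (L6), and composing to the identity for (L8)---are exactly what is missing from your sketch, and are where the largeness of $G$ is actually spent.
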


Before we start with the formal proof let us introduce its central concepts: disturbances and contractions.
Let $(\W,\P)$ be a regular decomposition of a graph $G$.
A linkage $\Q$ is called a \emph{twisting $(\W,\P)$-disturbance} if it violates (L10) and it is called a \emph{bridging $(\W,\P)$-disturbance} if it violates (L11).
By a \emph{$(\W,\P)$-disturbance} we mean either of these two and a disturbance may be twisting and bridging at the same time.
If the referred regular decomposition is clear from the context, then we shall not include it in the notation and just speak of a disturbance.
Note that a disturbance is always a linkage from the left to the right adhesion set of an inner bag of $\W$.

Given a disturbance $\Q$ in some inner bag $W$ of $\W$ which is neither the first nor the last inner bag of $\W$, it is not hard to see that replacing $\P[W]$ with $\Q$ yields a foundational linkage $\P'$ for $\W$ such that $\Gamma(\W,\P')$ properly contains $\Gamma(\W,\P)$ and we shall make this precise in the proof.
As the auxiliary graph can have at most $\binom{w}{2}$ edges, we can repeat this step until no disturbances (with respect to the current decomposition) are left and we should end up with a stable regular decomposition, given that we can somehow preserve the regularity.

This is done by ``contracting'' the decomposition in a certain way.
The technique is the same as in~\cite{kakbdtw} or~\cite{k6bdtw}.
Given a regular decomposition $(\W,\P)$ of length $l$ of some graph $G$ and a subsequence $i_1,\ldots, i_n$ of $1,\ldots, l$, the \emph{contraction of $(\W,\P)$ along $i_1,\ldots,i_n$} is the pair $(\W',\P')$ defined as follows.
We let $\W' \coloneq (W_0', W_1',\ldots, W_n')$ with $W_0'\coloneq W_{[0,\ i_1-1]}$,
\[W_j' \coloneq W_{[i_j,\ i_{j+1}-1]}\quad\text{for}\quad j=1,\ldots, n-1,\]
$W_{n}\coloneq W_{[i_n, l]}$, and $\P' = \P[W_{[1,\ n-1]}']$ (with the induced enumeration).

\begin{lem}\label{thm:contraction}
Let $(\W',\P')$ be the contraction of a regular decomposition $(\W,\P)$ of some graph $G$ of adhesion $q$ and attachedness $p$ along the sequence $i_1,\ldots,i_n$.
Then the following two statements hold.
\begin{enumerate}[(i)]

\item
$(\W', \P')$ is a regular decomposition of length $n$ of $G$ of adhesion $q$ and attachedness $p$, and $\Gamma(\W',\P') = \Gamma(\W,\P)$.

\item
The decomposition $(\W',\P')$ is stable if and only if none of the inner bags $W_{i_1}, W_{i_1+1},\ldots, W_{i_n-1}$ of~$\W$ contains a $(\W,\P)$-disturbance.
\end{enumerate}
\end{lem}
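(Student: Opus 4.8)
The plan is to handle the two parts in turn. Throughout, write $q$ for the common adhesion and $W_0',\ldots,W_n'$ for the bags of $\W'$, so $W_j'=W_{[i_j,\,i_{j+1}-1]}$ for $1\le j\le n-1$ and $W_{[i_1,\,i_n-1]}=W'_{[1,n-1]}$. First I would isolate one \emph{confinement fact} that drives everything: for $i_1\le t\le i_n-2$ the adhesion set $W_t\cap W_{t+1}$ of $\W$ has all $q$ of its vertices on $\P'$, so any $\P'$-bridge of $G[W'_{[1,n-1]}]$ is contained in $G[W_m]$ for a single inner bag $W_m$ of $\W$; and if $\Q$ is a linkage from the left to the right adhesion set of an inner bag $W_j'$ of $\W'$, then $\Q\subseteq G[W_j']$, $\Q$ meets each of $W_{i_j-1}\cap W_{i_j},\,W_{i_j}\cap W_{i_j+1},\ldots,W_{i_{j+1}-1}\cap W_{i_{j+1}}$ in exactly one vertex per path, whence $\Q$ decomposes as a concatenation $\Q=\Q^{(i_j)}\cdots\Q^{(i_{j+1}-1)}$ in which $\Q^{(t)}\subseteq G[W_t]$ is a linkage from the left to the right adhesion set of the inner bag $W_t$, the induced permutation of $\Q$ is the composition of those of the $\Q^{(t)}$, and every $\Q$-bridge of $G[W_j']$ lies in a single $G[W_m]$. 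All of this follows from (L2) for $\W$ (which makes $W_t\cap W_{t+1}$ a separator of $G[W_{[0,t]}]$ from $G[W_{[t+1,l]}]$) together with the elementary fact that $q$ disjoint paths crossing a $q$-set cross it exactly once each, so that $q$-set lies entirely on the paths. I would also record two routine preliminaries: each $P_\alpha$ runs through the bags of $\W$ in their natural order (again by (L2)), so $P_\alpha':=P_\alpha[W_{[i_1,\,i_n-1]}]$ is exactly the subpath of $P_\alpha$ between its $\alpha$-vertices in $W_{i_1-1}\cap W_{i_1}$ and $W_{i_n-1}\cap W_{i_n}$; and the induced permutation of any linkage between adhesion sets of $\W$ fixes $\theta$ pointwise, since a trivial foundational vertex lies in every adhesion set and so the path of the linkage starting there must end there as well, lest two paths of the linkage meet — hence such a permutation restricts to a permutation of $\lambda$.

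\emph{Part (i).} Properties (L1)--(L4) for $\W'$ are immediate from (L2) for $\W$ and the fact that the blocks $W_0',\ldots,W_n'$ arise from the intervals $[0,i_1-1],[i_1,i_2-1],\ldots,[i_n,l]$ partitioning $\{0,\ldots,l\}$; in particular the adhesion sets of $\W'$ are precisely the adhesion sets $W_{i_m-1}\cap W_{i_m}$ ($1\le m\le n$) of $\W$, so (L3) holds with adhesion $q$. By the first preliminary, $\P'=\{P_\alpha'\}$ is a $(W_0'\cap W_1')$--$(W_{n-1}'\cap W_n')$ linkage, giving (L5) and making $\P'$ a foundational linkage for $\W'$ with the induced enumeration. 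For (L7): if $P_\alpha'[W_j']$ is trivial for an inner bag $W_j'$, then $P_\alpha[W_{i_j}]$ is trivial, hence $P_\alpha$ is trivial by (L7) for $\W$, hence $P_\alpha'$ is trivial. For (L6), (L8) and the equality $\Gamma(\W',\P')=\Gamma(\W,\P)$: fix an inner bag $W_j'$; by the confinement fact every $\P'[W_j']$-bridge of $G[W_j']$ lies in $G[W_m]$ for a single inner bag $W_m$ of $\W$, where it is a $\P[W_m]$-bridge (as $\P'[W_m]=\P[W_m]$), and conversely every $\P[W_m]$-bridge of $G[W_m]$ sits inside a $\P'[W_j']$-bridge of $G[W_j']$, with matching attachments on corresponding paths. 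Since $(\W,\P)$ is regular, $\P[W_m]$ is $p$-attached in $G[W_m]$ and $B(G[W_m],\P[W_m])=\Gamma(\W,\P)$ for each such $W_m$; transporting $p$-attachedness across this bridge correspondence gives (L6) for $\W'$, and the resulting equalities $B(G[W_j'],\P'[W_j'])=\Gamma(\W,\P)$ for all inner bags $W_j'$ give (L8) for $\W'$ as well as $\Gamma(\W',\P')=B(G[W'_{[1,n-1]}],\P')=\Gamma(\W,\P)$. Hence $(\W',\P')$ is a regular decomposition of length $n$, adhesion $q$, attachedness $p$.

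\emph{Part (ii).} For ($\Leftarrow$), assume none of $W_{i_1},\ldots,W_{i_n-1}$ contains a disturbance, i.e. for each $m\in\{i_1,\ldots,i_n-1\}$ every linkage between the adhesion sets of the inner bag $W_m$ obeys (L10) and (L11). Let $W_j'$ be an inner bag of $\W'$ and $\Q$ a linkage from its left to its right adhesion set; by the confinement fact $\Q$ decomposes into linkages $\Q^{(t)}\subseteq G[W_t]$, $i_j\le t\le i_{j+1}-1$, each between the adhesion sets of the inner bag $W_t$ and hence obeying (L10), (L11). So each $\Q^{(t)}$ has induced permutation an automorphism of $\Gamma(\W,\P)$, hence so does their composition, the induced permutation of $\Q$; as $\Gamma(\W',\P')=\Gamma(\W,\P)$, this is (L10) for $\W'$. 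For (L11) for $\W'$, a $\Q$-bridge of $G[W_j']$ realizing $\alpha\beta$ with $\alpha\in\lambda$ lies in a single $G[W_m]$ and there realizes the edge $\sigma(\alpha)\sigma(\beta)$ of $B(G[W_m],\Q^{(m)})$, where $\sigma$ is the composition of the induced permutations of $\Q^{(i_j)},\ldots,\Q^{(m-1)}$; then $\sigma$ is an automorphism of $\Gamma(\W,\P)$ and $\sigma(\alpha)\in\lambda$ (each factor permutes $\lambda$), so (L11) at $W_m$ gives $\sigma(\alpha)\sigma(\beta)\in E(\Gamma(\W,\P))$, and applying $\sigma^{-1}$ yields $\alpha\beta\in E(\Gamma(\W,\P))=E(\Gamma(\W',\P'))$, which is (L11) for $\W'$; so $(\W',\P')$ is stable. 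For ($\Rightarrow$), suppose some $W_m$ with $i_j\le m\le i_{j+1}-1$ contains a disturbance $\Q^0$ with induced permutation $\pi^0$. Using the foundational paths outside $W_m$, extend $\Q^0$ to a linkage $\Q$ from the left to the right adhesion set of $W_j'$: for each $\alpha$ let $Q_\alpha$ concatenate the subpath of $P_\alpha$ from the $\alpha$-vertex of $W_{i_j-1}\cap W_{i_j}$ to the $\alpha$-vertex of $W_{m-1}\cap W_m$, the path of $\Q^0$ through $W_m$ starting there (which ends at the $\pi^0(\alpha)$-vertex of $W_m\cap W_{m+1}$), and the subpath of $P_{\pi^0(\alpha)}$ from there to the $\pi^0(\alpha)$-vertex of $W_{i_{j+1}-1}\cap W_{i_{j+1}}$ (the outer pieces being trivial if $m=i_j$ or $m=i_{j+1}-1$). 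One checks that $\Q$ is a linkage in $G[W_j']$ (disjointness from $\P$ being a linkage, $\Q^0\subseteq G[W_m]$, and $\pi^0$ fixing $\theta$), that its induced permutation is $\pi^0$, and that — since $\Q$ agrees with $\P$ on every bag of the block except $W_m$, where $\Q[W_m]=\Q^0$ — any $\Q$-bridge of $G[W_j']$ realizing an edge $\gamma\delta$ with $\gamma\in\lambda$ in $B(G[W_m],\Q^0)$ is contained in a $\Q$-bridge of $G[W_j']$ realizing $\gamma\delta$. By stability of $(\W',\P')$, $\Q$ obeys (L10) and (L11): $\pi^0$ is an automorphism of $\Gamma(\W',\P')=\Gamma(\W,\P)$, and every edge of $B(G[W_j'],\Q)$ with an end in $\lambda$ lies in $\Gamma(\W,\P)$. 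Reading these back, $\Q^0$ obeys (L10) and (L11) with respect to $(\W,\P)$, contradicting that it is a disturbance.

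The main obstacle is not a single deep idea but the permutation bookkeeping in Part (ii): for each sub-linkage and each composition of induced permutations one must keep straight which adhesion set supplies the enumeration, and verify that these permutations interact correctly with $\lambda$ and $\theta$ (in particular that the relevant $\sigma$ preserves $\lambda$ and that the extended linkage $\Q$ has induced permutation exactly $\pi^0$). The technical device that makes both parts go through — and the first thing I would pin down — is the confinement fact: the intermediate adhesion sets have exactly $q$ vertices, all of them on the linkage in question, so every bridge of $\P'$ or of $\Q$ in $G[W_j']$ is trapped inside a single bag $W_m$ of $\W$, which reduces all statements about $(\W',\P')$ to the already-available properties of $(\W,\P)$ at that bag.
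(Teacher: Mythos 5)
Your proposal is correct and follows the same core idea as the paper's proof and the cited reference: the intermediate adhesion sets of $\W$ lie entirely on $\P'$, so every bridge of (and every linkage through) an inner bag $W'_j$ is confined to a single bag $W_m$ of $\W$ and decomposes into per-bag pieces whose induced permutations compose, which reduces the axioms (L6)--(L11) for $(\W',\P')$ to the corresponding properties of $(\W,\P)$ bag by bag. The paper simply cites Lemma~3.3 of~\cite{k6bdtw} for part~(i) and disposes of part~(ii) in two terse sentences; you reprove~(i) from scratch and spell out the permutation and bridge bookkeeping in~(ii), but the underlying argument is the same.
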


\begin{proof}
The first statement is Lemma 3.3 of~\cite{k6bdtw}.
The second statement follows from the fact that an inner bag $W'_j$ of $\W'$ contains a $(\W',\P')$-disturbance if and only if one of the bags $W_i$ of $\W$ with $i_j\leq i< i_{j+1}$ contains a $(\W,\P)$-disturbance (unless $\W'$ has no inner bag, that is, $n=1$).
The ``if'' direction is obvious and for the ``only if'' direction recall that the induced permutation of $\P'[W'_j]$ is the composition of the induced permutations of the $\P[W_i]$ with $i_j\leq i< i_{j+1}$ and every $\P'$-bridge in $W'_j$ is also a $\P$-bridge and hence must be contained in some bag $W_i$ with $i_j\leq i< i_{j+1}$.
\end{proof}

Let $\Q$ be a linkage in a graph $H$ and denote the trivial paths of $\Q$ by $\Theta$.
Let $\Q'$ be the union of $\Theta$ with a proper rerouting of $\Q\setminus\Theta$ obtained from applying \autoref{thm:tutte_bridges} to $\Q\setminus\Theta$ in $H-\Theta$.
We call $\Q'$ a \emph{bridge stabilisation of $\Q$ in $H$}.
The next Lemma tailors \autoref{thm:proper_rerouting} and \autoref{thm:tutte_bridges} to our application.

\begin{lem}\label{thm:bridge_stabilisation}
Let $\Q$ be a linkage in a graph $H$.
Denote by $\Theta$ the trivial paths of $\Q$ and let $\Q'$ be a bridge stabilisation of $\Q$ in $H$.
Let $P$ and $Q$ be paths of $\Q$ and let $P'$ and $Q'$ be the unique paths of $\Q'$ with the same end vertices as $P$ and $Q$, respectively.
Then the following statements hold.
\begin{enumerate}[(i)]

\item\label{itm:bs_segments}
$P'$ is contained in the union of $P$ with all $\Q$-bridges in $H$ that attach to $P$ but to no other path of $\Q\setminus\Theta$.

\item\label{itm:bs_bridges}
If $P$ and $Q$ are $\Q$-bridge adjacent in $H$ and one of them is non-trivial, then $P'$ and $Q'$ are $\Q'$-bridge adjacent in $H$.
	
\item\label{itm:bs_attached}
Let $Z$ be the set of end vertices of the paths of $\Q$. If $p$ is an integer such that for every vertex $x$ of $H-Z$ there is an $x$--$Z$ fan of size $p$, then $\Q'$ is $p$-attached.
\end{enumerate}
\end{lem}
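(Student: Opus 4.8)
The plan is to prove the three assertions in order, each one deriving from the corresponding part of \autoref{thm:proper_rerouting} (for (i) and (ii)) and from the definition of $p$-attachedness together with \autoref{thm:tutte_bridges} (for (iii)). Throughout, write $S\coloneq \Q\setminus\Theta$ for the non-trivial part of $\Q$, which is unambiguous since it is a disjoint union of paths, and let $S'$ be the proper rerouting of $S$ in $H-\Theta$ supplied by \autoref{thm:tutte_bridges}, so that $\Q' = \Theta\cup S'$. The key observation to use repeatedly is that an $S$-bridge in $H-\Theta$ that attaches to at least two segments of $S$ is exactly a $\Q$-bridge in $H$ attaching to at least two non-trivial paths of $\Q$, up to possibly absorbing attachments on the (deleted, hence irrelevant) paths of $\Theta$; one has to be slightly careful that deleting $\Theta$ does not merge bridges, but since the paths of $\Theta$ are single vertices and bridges are edge-disjoint, a component of $(H-\Theta)-S$ is a union of components of $H-\Q$ together with edges to $\Theta$, so bridges only shrink, never merge.

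For (i): if $P\in\Theta$ then $P'=P$ and there is nothing to prove, so assume $P$ is non-trivial, i.e.\ $P$ is a segment of $S$. By \autoref{thm:proper_rerouting}(\ref{itm:pr_segments}), $P'=\varphi(P)$ is contained in the union of $P$ with all $S$-bridges (in $H-\Theta$) hosted by $P$, i.e.\ those whose attachments all lie on the single segment $P$. Translating back to $H$: such a bridge is a $\Q$-bridge whose attachments all lie on $P$ among the non-trivial paths, which is precisely a $\Q$-bridge attaching to $P$ but to no other path of $\Q\setminus\Theta$. (A trivial $\Q$-bridge consisting of a single edge with an endpoint in $\Theta$ is not an $S$-bridge in $H-\Theta$ and cannot contribute; this is fine, and the statement permits bridges that additionally touch $\Theta$.) This gives the claimed containment.

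For (ii): suppose $P,Q$ are $\Q$-bridge adjacent in $H$ with, say, $P$ non-trivial. If $Q$ is also non-trivial, then a $\Q$-bridge realising the adjacency restricts to (or rather corresponds to) an $S$-bridge in $H-\Theta$ attaching to both segments $P$ and $Q$ of $S$; such a bridge is stable, so by \autoref{thm:proper_rerouting}(\ref{itm:pr_stable}) there is a stable $S'$-bridge $B'\supseteq B$ attaching to both $\varphi(P)=P'$ and $\varphi(Q)=Q'$, and $B'$ is a $\Q'$-bridge in $H$ witnessing the adjacency. If $Q$ is trivial, then $Q'=Q$ is the same single vertex, and we must produce a $\Q'$-bridge through $Q$ attaching also to $P'$; the point is that the $\Q$-bridge $B$ through $Q$ and $P$ is edge-disjoint from $S$ but its only attachment on a non-trivial path of $\Q$ is on $P$ (possibly together with several trivial paths), so as an $S$-bridge in $H-(\Theta\setminus Q)$ it is hosted by $P$ — but one must recheck it survives in $\Q'$. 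Here use part (\ref{itm:bs_segments}): $P'\subseteq P\cup(\text{bridges attaching only to }P)$, and $B$ is edge-disjoint from all of these since bridges are edge-disjoint, so $B$ is still a $\Q'$-bridge (as $Q\in\Q'$ unchanged), and it still attaches to $Q$ and to some vertex of $P\subseteq$ — careful: we need an attachment on $P'$, not $P$. I expect this subcase to be the main obstacle: one needs $B$ to have an attachment on $P'$ specifically, and the cleanest route is to note that the attachment of $B$ on $P$ may have been "rerouted away," but $\varphi(P)=P'$ shares its end vertices with $P$ and, by \autoref{thm:tutte_bridges} applied carefully, the segments only differ in their interiors via hosted bridges; if $B$'s attachment on $P$ is a branch vertex (an end of $P$) it lies on $P'$ too, and otherwise one argues that either $B$ itself is a bridge hosted by $P$ (so $B\subseteq$ region used by $P'$, impossible if $B$ also reaches $Q\notin P$) — actually since $B$ reaches the trivial path $Q$, $B$ is \emph{not} hosted by $P$ in $H-\Theta$'s sense once $Q$ is put back, forcing its $P$-attachment to persist. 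I would phrase this via: after adding $Q$ back, $B$ together with $Q$ is a $\Q'$-bridge, and $(\ref{itm:bs_segments})$ guarantees $P'$ and $P$ have the same vertex set outside the hosted bridges, none of which is $B$; this needs the extra fact that $P'$ and $P$ share end vertices, so any $P$-attachment of $B$ that is not an interior vertex consumed by a host is on $P'$.

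For (iii): we must verify $\Q'$ is $p$-attached, i.e.\ each path of $\Q'$ is induced in $H$ and the bridge condition holds. Inducedness of the non-trivial paths is exactly the final remark after \autoref{thm:tutte_bridges} (segments of $S'$ are induced in $H-\Theta$, hence in $H$ since $\Theta$ consists of isolated-in-$\Q$ vertices and adding them back creates no chord within a segment), and trivial paths are trivially induced. For the bridge condition: let $B'$ be a non-trivial $\Q'$-bridge attaching to a non-trivial path $P'$ of $\Q'$, and suppose $B'$ attaches to no other non-trivial path; we must find $p-2$ trivial paths $Q$ with a $\Q'$-bridge attaching to $P'$ and $Q$. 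Pick an inner vertex $x$ of $B'$ (it exists since $B'$ is non-trivial), so $x\in V(H)\setminus Z$; by hypothesis there is an $x$–$Z$ fan of size $p$ in $H$. The $p$ paths of this fan end in $p$ distinct vertices of $Z$ lying on at most — well, they could pile up on $P'$ near its two endpoints; but since $B'$ is a bridge and $x$ is interior to it, at most two of the fan paths leave $B'$ through attachments of $B'$, hence at least $p-2$ of the fan paths reach $Z$ through vertices \emph{not} attachments of $B'$... this is the standard argument, and the conclusion is that $\ge p-2$ of the fan endpoints lie on paths of $\Q'$ other than $P'$, and since $B'$ attaches to no other non-trivial path, the $\Q'$-bridges containing the respective fan-tails attach to $P'$ and to those trivial paths. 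I would write this carefully: a fan path from $x$ either stays inside $B'$ until it hits an attachment $a$ of $B'$ on some path of $\Q'$, or never leaves $B'$ and ends at a vertex of $V(B')\cap Z$; in the first case distinct fan paths use distinct attachments only if... — actually the clean statement is that $B'\cup\Q'$ is connected through these, and a counting argument on the fan gives that all but at most $2$ of the $p$ fan endpoints lie outside $V(P')$, forcing $\ge p-2$ trivial paths among them by the hypothesis on $B'$, each joined to $P'$ by a $\Q'$-bridge (namely the one carrying the corresponding fan segment, which attaches to $P'$ since it meets $B'$ which attaches to $P'$ — if it is a different bridge from $B'$ it must still share an attachment region, and one invokes that these bridges all attach to $P'$ because the fan path from $x\in B'$ to the trivial endpoint passes from $B'$ into that bridge only at a common attachment, which lies on... $P'$). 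This fan-counting step, together with the delicate $P$-vs-$P'$ attachment bookkeeping in (ii), is where the real work sits; the rest is bookkeeping with the definitions. \qed
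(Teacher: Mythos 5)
Your treatment of (i) is fine and matches the paper's. Parts (ii) and (iii), however, each have a genuine gap.

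For (ii), the mixed case (one of $P,Q$ trivial) is indeed where the work is, and you flag it as ``the main obstacle,'' but the resolution you offer is wrong. You argue that ``since $B$ reaches the trivial path $Q$, $B$ is not hosted by $P$ in $H-\Theta$'s sense once $Q$ is put back, forcing its $P$-attachment to persist.'' But the rerouting $S\mapsto S'$ is performed \emph{in $H-\Theta$}, where $Q$ has been deleted; in $H-\Theta$ the $S$-bridge $B-\Theta$ can perfectly well be hosted by $P$ (it attaches to $P$ and to nothing else among segments of $S$), and \autoref{thm:tutte_bridges} may then route $P'$ through it, so the attachment of $B$ on $P$ can genuinely disappear from $P'$. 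Putting $Q$ back afterwards does not change what happened during the rerouting. The paper instead takes a $Q$--$P$ path $R$ inside the witnessing bridge and shows that $R\cup P$ contains the required $Q$--$P'$ path internally disjoint from $\Q'$: the key step is a case analysis on the $(\Q\setminus\Theta)$-bridge containing the interior of $R$ (stable, unstable unhosted, or hosted — necessarily by $P$), using both parts of \autoref{thm:proper_rerouting} to show $R\cap\Q'\subseteq P'$. Your sketch never reaches this argument, and I don't see a way to repair the ``attachment persists'' claim as stated.

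For (iii), the fan-counting you sketch does not give the bound. Your fan ends at $p$ distinct vertices of $Z$, and indeed at most two of those lie on $P'$ (since $Z\cap V(P')$ consists of the two endpoints of $P'$); but the remaining $p-2$ endpoints could just as well be endpoints of \emph{other non-trivial paths} of $\Q'$, and those give you nothing. Your side remark that ``at most two of the fan paths leave $B'$ through attachments of $B'$'' is also unjustified — $B'$ may have many attachments, and the fan paths are internally disjoint so each leaves through a distinct one. The missing ingredient is the separation that \autoref{thm:tutte_bridges} provides for a hosted $\Q'$-bridge: there are $v,w$ on $P'$ such that $\{v,w\}\cup V(\Theta)$ separates $B'$ from the rest of $\Q'$ in $H$. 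Truncating the $x$--$Z$ fan at this separator converts it into an $x$--$(\{v,w\}\cup V(\Theta))$ fan of size $p$ inside the $B'$-side, whence at least $p-2$ of its legs end on $\Theta$; only then does the ``each leg yields a $\Q'$-bridge attaching $P'$ to a distinct trivial path'' step go through. Without this localisation the count you want simply isn't available from the fan hypothesis alone.
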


\begin{proof}~
\begin{enumerate}[(i)]
\item
This is trivial if $P\in\Theta$ and follows easily from \autoref{thm:proper_rerouting} (\ref{itm:pr_segments}) otherwise.

\item
The statement follows directly from \autoref{thm:proper_rerouting} (\ref{itm:pr_stable}) if $P$ and $Q$ are both non-trivial so we may assume that $P=P'\in\Theta$ and $Q$ is non-trivial.
By assumption there is a $P$--$Q$ path $R$ in $H$.
Clearly $R\cup Q$ contains the end vertices of $Q'$.
On the other hand, by (\ref{itm:bs_segments}) it is clear that $Q\cap \Q'\subseteq Q'$.
We claim that $R\cap\Q'\subseteq Q'$.
Since $R$ is internally disjoint from $\Q$ all its inner vertices are inner vertices of some $(\Q\setminus\Theta)$-bridge $B$.
If $B$ is stable or unstable but not hosted by any path of $\Q$ (that is, it has at most one attachment), then \autoref{thm:proper_rerouting} implies that no path of $\Q'$ contains an inner vertex of $B$ and that our claim follows.
If $B$ is hosted by a path of $\Q$, then this path must clearly be $Q$ and thus by \autoref{thm:proper_rerouting} (\ref{itm:pr_segments}) $R\cap\Q'\subseteq Q'$ as claimed.
Hence $R\cup Q$ contains a $P$--$Q'$ path that is internally disjoint from $\Q'$ as desired.

\item
Clearly all paths of $\Q'$ are induced in $H$, either because they are trivial or by \autoref{thm:tutte_bridges}.
Let $B$ be a non-trivial hosted $\Q'$-bridge and let $Q'$ be the non-trivial path of $\Q'$ to which it attaches.
Then by \autoref{thm:tutte_bridges} there are vertices $v$ and $w$ on $Q'$ and a separation $(X,Y)$ of $H$ such that $V(B)\subseteq X$, $X\cap Y\subseteq \{v,w\}\cup V(\Theta)$, and apart from the inner vertices of $vQ'w$ all vertices of $\Q'$ are in $Y$, in particular, $Z\subseteq Y$.
But $B$ has an inner vertex $x$ which must be in $X\setminus Y$.
So by assumption there is an $x$--$\{v,w\}\cup V(\Theta)$ fan of size $p$ in $G[X]$ and thus also an $x$--$\Theta$ fan of size $p-2$.
It is easy to see that this can gives rise to the desired $\Q'$-bridge adjacencies in $H$.
\end{enumerate}
\end{proof}

\begin{proof}[Proof of \autoref{thm:stablepd}]
\newcounter{thm_temp}\setcounter{thm_temp}{\value{thm}}
\setcounter{thm}{\value{thm_saved}}
\setcounter{thm_saved}{\value{thm_temp}}
We will trade off some length of a regular decomposition to gain edges in its auxiliary graph.
To quantify this we define the function $f:\NN_0\to\NN_0$ by $f(m)\coloneq (zlw!)^ml$ where $z\coloneq 2^{\binom{w}{2}}$ and call a regular decomposition $(\W,\P)$ of a graph $G$ \emph{valid} if it has adhesion at most $w$, attachedness $p$, and length at least $f(m)$ where $m$ is the number of edges in the complement of $\Gamma(\W,\P)$ that are incident with at least one non-trivial path of $\P$.
	
Set $\lambda \coloneq f\left(\binom{w}{2}\right)$ and let $N$ be the integer returned by \autoref{thm:regularpd} when invoked with parameters $a$, $\lambda$, $p$, and $w$.
We claim that the assertion of \autoref{thm:stablepd} is true for this choice of $N$.
Let $G$ be a $p$-connected graph of tree-width less than $w$ with at least $N$ vertices and suppose that $G$ does not contain a subdivision of $K_{a,p}$.
Then by the choices of $N$ and $\lambda$ the graph $G$ has a valid decomposition (the foundational linkage has at most $w$ paths so there can be at most $\binom{w}{2}$ non-edges in the auxiliary graph).
Among all valid decompositions of $G$ pick $(\W,\P)$ such that the number of edges of $\Gamma(\W,\P)$ is maximal and denote the length of $(\W,\P)$ by~$n$.

We may assume that for any integer $k$ with $0\leq k\leq n-l$ one of the $l-1$ consecutive inner bags $W_{k+1}, \ldots, W_{k+l-1}$ of $\W$ contains a disturbance.
If not, then by \autoref{thm:contraction}, the contraction of $(\W,\P)$ along the sequence $k+1, k+2,\ldots, k+l$ is a stable regular decomposition of $G$ of length $l$, adhesion at most $w$, and attachedness $p$ as desired.

\begin{clm}\label{clm:sp_identity_bag}
Let $1\leq k\leq k'\leq n-1$ with $k' - k\geq lw!-1$.
Then the graph $H\coloneq G[W_{[k, k']}]$ contains a linkage $\Q$ from the left adhesion set of $W_{k}$ to the right adhesion set of $W_{k'}$ such that $B(H,\Q)$ is a proper supergraph of $\Gamma(\W,\P)$, the induced permutation $\pi$ of $\Q$ is the identity, and $\Q$ is $p$-attached in $H$.
\end{clm}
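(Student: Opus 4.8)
The plan is to build $\Q$ entirely from segments of the foundational linkage $\P$ together with a suitable \emph{consecutive run} of disturbances, choosing that run by a pigeonhole argument so that the induced permutation is forced to be the identity, and first \emph{bridge-stabilising} the disturbances so that $p$-attachedness survives. Write $l$ for the length parameter, $S_j\coloneq W_j\cap W_{j+1}$ for the adhesion sets of $\W$, and recall that the adhesion $q\leq w$, so the symmetric group on $\{1,\dots,q\}$ has at most $w!$ elements. Since $k'-k+1\geq lw!$, the bags $W_k,\dots,W_{k+lw!-1}$ split into $w!$ consecutive blocks of $l$ bags each, the remaining bags $W_{k+lw!},\dots,W_{k'}$ (if any) being destined to carry segments of $\P$. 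Applying the standing assumption of the proof (every $l-1$ consecutive inner bags contain a $(\W,\P)$-disturbance) to the first $l-1$ bags of each block yields, for $j=1,\dots,w!$, a bag $W_{i_j}$ in block $j$ holding a disturbance $\Q^{(j)}$, with consecutive $i_j$ at least two apart. Replacing each $\Q^{(j)}$ by a bridge stabilisation of it in $G[W_{i_j}]$ we may assume $\Q^{(j)}$ is $p$-attached in $G[W_{i_j}]$: the fan condition of \autoref{thm:bridge_stabilisation}~(iii) holds because $S_{i_j-1}\cup S_{i_j}$ separates the interior of $W_{i_j}$ from the rest of $G$, so $p$-connectivity of $G$ supplies an $x$--$(S_{i_j-1}\cup S_{i_j})$ fan of size $p$ inside $G[W_{i_j}]$ for every interior vertex $x$ of $W_{i_j}$ (and there is no such $x$ when $|S_{i_j-1}\cup S_{i_j}|<p$, again by $p$-connectivity, since $W_{i_j}$ is an inner bag and hence $V(G)\setminus W_{i_j}\neq\emptyset$). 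A bridge stabilisation preserves every path's end vertices, hence the induced permutation, and by \autoref{thm:bridge_stabilisation}~(ii) preserves every bridge adjacency involving a non-trivial path; so the stabilised $\Q^{(j)}$ is still a $(\W,\P)$-disturbance.

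Next, let $\rho_j$ be the induced permutation of $\Q^{(j)}$; each $\rho_j$ fixes $\theta$ pointwise, since a trivial foundational path forces a trivial path of any linkage between adhesion sets. Among the $w!+1$ partial products $\pi_0\coloneq\id$ and $\pi_m\coloneq\rho_m\cdots\rho_1$ for $1\leq m\leq w!$, two must coincide, say $\pi_a=\pi_b$ with $a<b$, so that $\rho_b\cdots\rho_{a+1}=\id$. Define $\Q$ to follow $\Q^{(j)}$ inside $W_{i_j}$ for each $j$ with $a<j\leq b$, and the segments of $\P$ in every other bag of $W_{[k,k']}$. Since $\P$ meets every adhesion set in all of it and each $\Q^{(j)}$ meets $S_{i_j-1}$ and $S_{i_j}$ in all of them, the pieces glue along shared adhesion vertices into a genuine linkage from $S_{k-1}$ to $S_{k'}$ inside $H$. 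Tracking the induced enumeration bag by bag, the cumulative permutation of $\Q$ is $\id$ up to $W_{i_{a+1}}$, equals $\rho_j\cdots\rho_{a+1}$ just after $W_{i_j}$ for $a<j\leq b$, and is $\id$ again from $W_{i_b}$ onward; in particular the induced permutation of $\Q$ is $\id$.

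For $p$-attachedness: because $\P$ and every $\Q^{(j)}$ cover all the adhesion sets they meet, and the adhesion sets separate consecutive blocks of bags, every non-trivial $\Q$-bridge in $H$ lies in a single bag $W$, where $\Q\cap G[W]$ is either $\P[W]$ or one of the $\Q^{(j)}$ — both $p$-attached in $G[W]$ (by (L6), respectively the first paragraph) and both with the same trivial paths as $\Q$; reading the $p$-attachedness condition off bag by bag, and noting that every edge of $H$ lies in a bag (so the paths of $\Q$ are induced), yields that $\Q$ is $p$-attached in $H$. Finally, the last bag $W_{k'}$ carries $\P[W_{k'}]$ with matching enumeration, so every edge of $\Gamma(\W,\P)=B(G[W_{k'}],\P[W_{k'}])$ (equality by (L8)) is realised in $B(H,\Q)$, i.e.\ $\Gamma(\W,\P)\subseteq B(H,\Q)$. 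For the strict containment, examine $W_{i_{a+1}}$, which is entered with cumulative permutation $\id$: if $\Q^{(a+1)}$ violates (L11) it realises there an edge with an end in $\lambda$ that is not in $\Gamma(\W,\P)$, and this edge appears in $B(H,\Q)$; otherwise $\rho_{a+1}$ is not an automorphism of $\Gamma(\W,\P)$, and a foundational bag just after $W_{i_{a+1}}$ (entered with cumulative permutation $\rho_{a+1}$) realises $\rho_{a+1}^{-1}\Gamma(\W,\P)$ in $B(H,\Q)$, which then contains an edge missing from $\Gamma(\W,\P)$. Either way $B(H,\Q)$ properly contains $\Gamma(\W,\P)$.

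The crux — and the step I expect to be the main obstacle — is driving the induced permutation all the way down to the identity without sacrificing the extra edge. This is exactly what the hypothesis $k'-k\geq lw!-1$ buys, since it provides the $w!$ blocks needed for the pigeonhole on partial products of the $\rho_j$; and it is also why the disturbances must be bridge-stabilised with care, namely so as to retain both their induced permutations and their disturbance property while simultaneously acquiring the local $p$-attachedness that is needed to assemble the global one.
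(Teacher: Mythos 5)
Your proposal is correct and follows essentially the same strategy as the paper: partition $W_{[k,k']}$ into $w!$ blocks, extract a bridge-stabilised disturbance from each, pigeonhole on partial products of their induced permutations to find a consecutive run composing to the identity, splice these into $\P$, and then derive the extra edge by distinguishing the bridging and twisting cases of the first used disturbance. The only cosmetic differences are that you anchor the inclusion $\Gamma(\W,\P)\subseteq B(H,\Q)$ at the last bag $W_{k'}$ rather than at $W_k$ (the paper starts its blocks at $W_{k+1}$ so that $\Q$ and $\P$ coincide on $W_k$, making this immediate), and you spell out the bag-by-bag verification that $\Q$ is $p$-attached in $H$, which the paper's proof leaves implicit.
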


\begin{proof}
There are indices $k_0\coloneq k, k_1, \ldots, k_{w!}\coloneq k'+1$, such that for $j\in\{1,\ldots, w!\}$ we have $k_{j} - k_{j-1}\geq l$.
For each $j\in\{0,\ldots, w!-1\}$ one of the at least $l-1$ consecutive inner bags $W_{k_j+1}, W_{k_j+2}, \ldots, W_{k_{j+1}-1}$ contains a disturbance $\Q_j$ by our assumption.
Let $W_{i_j}$ be the bag of $\W$ that contains $\Q_j$ and let $\Q'_j$ be the bridge stabilisation of $\Q_j$ in $G[W_{i_j}]$.

If $\Q_j$ is a twisting $(\W,\P)$-disturbance, then so is $\Q'_j$ as they have the same induced permutation.
If $\Q_j$ is a bridging $(\W,\P)$-disturbance, then so is $\Q'_j$ by \autoref{thm:bridge_stabilisation} (\ref{itm:bs_bridges}).
The set $Z$ of end vertices of $\Q_j$ is the union of both adhesion sets of $W_{i_j}$ and clearly for every vertex $x\in W_{i_j}\setminus Z$ there is an $x$--$Z$ fan of size $p$ in $G[W_{i_j}]$ as $G$ is $p$-connected.
So by \autoref{thm:bridge_stabilisation} (\ref{itm:bs_attached}) the linkage $\Q'_j$ is $p$-attached in $G[W_{i_j}]$.

For every $j\in \{0,\ldots, w!-1\}$ denote the induced permutation of $\Q'_j$ by $\pi_j$.
Since the symmetric group $S_q$ has order at most $q!\leq w!$ we can pick\footnote{
Let $(G, \cdot)$ be a group of order $n$ and $g_1,\ldots, g_n\in G$. Then of the $n+1$ products $h_k\coloneq \prod_{i=1}^k g_i$ for $0\leq k\leq n$, two must be equal by the pigeon hole principle, say $h_k = h_l$ with $k<l$. This means $\prod_{i=k+1}^{l}g_{i} = e$, where $e$ is the neutral element of $G$.
}
indices $j_0$ and $j_1$ with $0\leq j_0\leq j_1\leq w!-1$ such that $\pi_{j_1}\circ\pi_{j_1-1}\ldots\circ\pi_{j_0} = \id$.
	
Let $\Q$ be the linkage from the left adhesion set of $W_{k}$ to the right adhesion set of $W_{k'}$ in $H$ obtained from $\P[W_{[k,k']}]$ by replacing $\P[W_{i_j}]$ with $\Q'_j$ for all $j\in\{j_0,\ldots, j_1\}$.
Of all the restrictions of $\Q$ to the bags $W_k,\ldots, W_{k'}$ only $\Q[W_{i_j}]=\Q_{j}$ with $j_0\leq j\leq j_1$ need not induce the identity permutation.
However, the composition of their induced permutations is the identity by construction and therefore the induced permutation of $\Q$ is the identity.

To see that $B(H,\Q)$ is a supergraph of $\Gamma(\W,\P)$ note that $k<i_{j_0}$ so $\Q$ and $\P$ coincide on $W_k$ and hence by (L8) we have
\[
\Gamma(\W,\P) = B(G[W_k],\P[W_k]) \subseteq\linebreak[0] B(H,\Q).
\]

It remains to show that $B(H,\Q)$ contains an edge that is not in $\Gamma(\W,\P)$.
Set $W\coloneq W_{i_{j_0}}$, $W'\coloneq W_{i_{j_0}+1}$, and $\pi\coloneq \pi_{j_0}$.
If $\Q'_{j_0}$ is a bridging disturbance, then $B_0\coloneq B(G[W],\Q[W])$ contains an edge that is not in $\Gamma(\W,\P)$.
Since $\Q$ and $\P$ coincide on all bags prior to $W$ (down to $W_k$) we must have $B_0\subseteq B(H,\Q)$.

If $\Q'_{j_0}$ is a twisting disturbance, then $j_1>j_0$, in particular, $W'$ comes before $W_{i_{j_0+1}}$ (there is at least one bag between $W_{i_{j_0}}$ and $W_{i_{j_0+1}}$, namely $W_{k_{j_0+1}}$).
This means $\Q[W']=\P[W']$ and hence we have
\[
B_1\coloneq B(G[W'],\Q[W']) = B(G[W'],\P[W']) = \Gamma(\W,\P).
\]
On the other hand, the induced permutation of the restriction of $\Q$ to all bags prior to $W'$ is $\pi$ and thus $\pi^{-1}B_1\subseteq B(H,\Q)$.
But $\pi$ is not an automorphism of $\Gamma(\W,\P)$ and therefore $\pi^{-1}B_1 = \pi^{-1}\Gamma(\W,\P)$ contains an edge that is not in $\Gamma(\W,\P)$ as desired.
This concludes the proof of \autoref{clm:sp_identity_bag}
\end{proof}
	
To exploit \autoref{clm:sp_identity_bag} we now contract subsegments of $lw!$ consecutive inner bags of $\W$ into single bags.
We assumed earlier that $(\W,\P)$ is not stable so the number $m$ of non-edges of $\Gamma(\W,\P)$ is at least~$1$ (if $\Gamma(\W,\P)$ is complete there can be no disturbances).
Set $n'\coloneq zf(m-1)$.
As $(\W,\P)$ is valid, its length $n$ is at least $f(m) = zlw! f(m-1) = n'lw!$.
Let $(\W',\P')$ be the contraction of $(\W,\P)$ along the sequence $i_1,\ldots, i_{n'}$ defined by $i_j \coloneq (j-1)lw! + 1$ for $j=1,\ldots, n'$.
Then by \autoref{thm:contraction} the pair $(\W',\P')$ is a regular decomposition of $G$ of length $n'$, adhesion at most $w$, it is $p$-attached, and $\Gamma(\W',\P')=\Gamma(\W,\P)$.

By construction every inner bag $W'_i$ of $\W'$ consists of $lw!$ consecutive inner bags of $\W$ and hence by \autoref{clm:sp_identity_bag} it contains a bridging disturbance $\Q'_i$ such $\Q'_i$ is $p$-attached in $G[W'_i]$, its induced permutation is the identity, and $B(G[W'_i], \Q'_i)$ is a proper supergraph of $\Gamma(\W',\P')$.

Clearly $\Gamma(\W',\P')$ has at most $z-1$ proper supergraphs on the same vertex set.
On the other hand, $\W'$ has at least $n' - 1 = zf(m-1) - 1$ inner bags.
By the pigeonhole principle there must be $f(m-1)$ indices $0<i_1<\ldots<i_{f(m-1)}<n'$ such that $B(G[W'_{i_j}], \Q'_{i_j})$ is the same graph $\Gamma$ for $j=1,\ldots, f(m-1)$.
	
Let $(\W'',\P'')$ be the contraction of $(\W',\P')$ along $i_1,\ldots, i_{f(m-1)}$.
Obtain the foundational linkage $\Q''$ for $\W''$ from $\P''$ by replacing $\P'[W_{i_j}]$ with $\Q_{i_j}$ for $1\leq j\leq f(m-1)$.
By construction $\W''$ is a slim decomposition of $G$ of length $f(m-1)$ and of adhesion at most $w$.
$\Q''$ is a foundational linkage for $\W''$ that satisfies (L7) because $\P''$ does.
By construction $\Q''$ is $p$-attached and $B(G[W''],\Q''[W'']) = \Gamma$ for all inner bags $W''$ of $\W''$.
Hence $(\W'',\P'')$ is regular decomposition of~$G$.
But it is valid and its auxiliary graph $\Gamma$ has more edges than $\Gamma(\W,\P)$, contradicting our initial choice of $(\W,\P)$.
\setcounter{thm}{\value{thm_saved}}
\end{proof}

\section{Token Movements}\label{sec:tokenmovements}
Consider the following token game.
We place distinguishable tokens on the vertices of a graph $H$, at most one per vertex.
A move consists of sliding a token along the edges of $H$ to a new vertex without passing through vertices which are occupied by other tokens.
Which placements of tokens can be obtained from each other by a sequence of moves?

A rather well-known instance of this problem is the 15-puzzle where tokens $1,\ldots, 15$ are placed on the 4-by-4 grid.
It has been observed as early as 1879 by Johnson~\cite{15puzzle1} that in this case there are two placements of the tokens which cannot be obtained from each other by any number of moves.

Clearly the problem gets easier the more ``unoccupied'' vertices there are.
The hardest case with $|H|-1$ tokens was tackled comprehensively by Wilson \cite{wilson} in 1974 but before we turn to his solution we present a formal account of the token game and show how it helps with the linkage problem.
 
Throughout this section let $H$ be a graph and let $\X$ always denote a sequence $\X=X_0,\ldots, X_n$ of vertex sets of $H$ and $\M$ a non-empty sequence $\M=M_1,\ldots, M_n$ of non-trivial paths in $H$.
In our model the sets $X_i$ are ``occupied vertices'', the paths $M_i$ are paths along which the tokens are moved, and $i$ is the ``move count''.

Formally, a pair $(\X,\M)$ is called a \emph{movement on $H$} if for $i=1,\ldots, n$
\begin{enumerate}[(M1)]
	\item the set $X_{i-1}\bigtriangleup X_i$ contains precisely the two end vertices of $M_i$, and
	\item $M_i$ is disjoint from $X_{i-1}\cap X_i$.
\end{enumerate}
Then $n$ is the \emph{length} of $(\X,\M)$, the sets in $\X$ are its \emph{intermediate configurations}, in particular, $X_0$ and $X_n$ are its \emph{first} and \emph{last configuration}, respectively.
The paths in $\M$ are the \emph{moves} of $(\X,\M)$.
A movement with first configuration $X$ and last configuration $Y$ is called an \emph{$X$--$Y$ movement}.
Note that our formal notion of token movements allows a move $M_i$ to have both ends in $X_{i-1}$ or both in $X_i$.
In our intuitive account of the token game this corresponds to ``destroying'' or ``creating'' a pair of tokens on the end vertices of $M_i$.

Let us state some obvious facts about movements.
If $\M$ is a non-empty sequence of non-trivial paths in $H$ and one intermediate configuration $X_i$ is given, then there is a unique sequence $\X$ such that $(\X,\M)$ satisfies (M1).
A pair $(\X,\M)$ is a movement if and only if $((X_{i-1}, X_i), (M_i))$ is a movement for $i=1,\ldots, n$.
This easily implies the following Lemma so we spare the proof.

\begin{lem}\label{thm:concatenation}
Let $(\X, \M) = ((X_0,\ldots, X_n), (M_1,\ldots, M_n))$ and $(\Y, \N) = ((Y_0,\ldots, Y_m), (N_1,\ldots, N_m))$ be movements on $H$ and let $Z\subseteq V(H)$.
\begin{enumerate}[(i)]

\item\label{itm:cc_concatenation}
If $X_n=Y_0$, then the pair
\[\big((X_0,\ldots, X_n=Y_0, \ldots, Y_m), (M_1,\ldots M_n, N_1, \ldots, N_m)\big)\]
is a movement.
We denote it by $(\X, \M)\oplus (\Y, \N)$ and call it the \emph{concatenation} of $(\X, \M)$ and $(\Y, \N)$.

\item\label{itm:cc_union}
If every move of $\M$ is disjoint from $Z$, then the pair
\[\big((X_0\cup Z,\ldots, X_n\cup Z), (M_1,\ldots M_n)\big)\]
is a movement and we denote it by $(\X\cup Z, \M)$.
\end{enumerate}
\end{lem}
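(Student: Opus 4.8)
The statement to prove is \autoref{thm:concatenation}, which asserts two closure properties of movements: concatenation of two movements sharing a configuration, and taking the union of every configuration with a fixed vertex set disjoint from all moves. The paper explicitly says ``This easily implies the following Lemma so we spare the proof,'' so the plan is necessarily short: reduce everything to the single-move case via the observation stated just before the lemma, namely that $(\X,\M)$ is a movement if and only if $((X_{i-1},X_i),(M_i))$ is a movement for each $i=1,\ldots,n$.

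For part \eqref{itm:cc_concatenation}, the plan is to check (M1) and (M2) for the concatenated sequence, index by index. For an index $i$ in the range $1,\ldots,n$ the relevant pair of consecutive configurations and the relevant move are exactly $(X_{i-1},X_i)$ and $M_i$, which satisfy (M1) and (M2) because $(\X,\M)$ is a movement; likewise for an index in the range $n+1,\ldots,n+m$ we land on a pair $(Y_{j-1},Y_j)$ and move $N_j$ coming from $(\Y,\N)$. The hypothesis $X_n=Y_0$ is exactly what makes the glued sequence well-defined (the configuration $X_n$ is identified with $Y_0$), and no axiom involves two different moves simultaneously, so there is nothing to check ``at the seam'' beyond well-definedness. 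Hence the concatenation is a movement by the ``if and only if'' criterion.

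For part \eqref{itm:cc_union}, the plan is again to verify (M1) and (M2) move-by-move for the sequence $(X_0\cup Z,\ldots,X_n\cup Z)$ with moves $M_1,\ldots,M_n$. For (M1): since $M_i$ is disjoint from $Z$, adding $Z$ to both $X_{i-1}$ and $X_i$ does not change the symmetric difference, i.e. $(X_{i-1}\cup Z)\bigtriangleup(X_i\cup Z)=X_{i-1}\bigtriangleup X_i$, which contains exactly the two ends of $M_i$ by hypothesis. For (M2): the intersection only grows, $(X_{i-1}\cup Z)\cap(X_i\cup Z)=(X_{i-1}\cap X_i)\cup Z$, but $M_i$ is disjoint from $X_{i-1}\cap X_i$ (as $(\X,\M)$ is a movement) and from $Z$ (by hypothesis), hence disjoint from the union. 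So each single-move restriction is a movement and the criterion finishes the proof.

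The only genuine subtlety—hardly an obstacle—is the elementary set identity $A\bigtriangleup B=(A\cup Z)\bigtriangleup(B\cup Z)$ when $Z\cap(A\bigtriangleup B)$ is not assumed empty; here it holds because $Z$ is disjoint from $M_i$ and the ends of $M_i$ are precisely $X_{i-1}\bigtriangleup X_i$, so in fact $Z$ may meet $X_{i-1}\cap X_i$ without harm, and the symmetric difference is unchanged. I expect the write-up to be three or four lines, just spelling out the two set computations and invoking the stated equivalence; no induction or case analysis beyond what is described is needed.
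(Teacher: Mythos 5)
Your proposal is correct and is exactly the straightforward move-by-move verification that the paper deliberately omits ("This easily implies the following Lemma so we spare the proof"); you reduce to the single-move criterion stated just above the lemma, and the two set identities $(X_{i-1}\cup Z)\bigtriangleup(X_i\cup Z)=(X_{i-1}\bigtriangleup X_i)\setminus Z$ and $(X_{i-1}\cup Z)\cap(X_i\cup Z)=(X_{i-1}\cap X_i)\cup Z$, together with the hypothesis that $M_i$ avoids $Z$, do the rest. The only thing worth tightening is your phrasing of the symmetric-difference identity: it holds precisely because $Z$ is disjoint from $X_{i-1}\bigtriangleup X_i$ (the ends of $M_i$), which you correctly derive, so the remark about $Z$ possibly meeting $X_{i-1}\cap X_i$ is true but tangential.
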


Let $(\X,\M)$ be a movement.
For $i=1,\ldots, n$ let $R_i$ be the graph with vertex set $(X_{i-1}\times\{i-1\})\cup (X_i\times\{i\})$ and the following edges:
\begin{enumerate}

\item
$(x,i-1)(x,i)$ for each $x\in X_{i-1}\cap X_i$, and

\item
$(x,j)(y,k)$ where $x$, $y$ are the end vertices of $M_i$ and $j$, $k$ the unique indices such that $(x,j), (y,k)\in V(R_i)$.
\end{enumerate}
Define a multigraph $\R$ with vertex set $\bigcup_{i=0}^n (X_i\times\{i\})$ where the multiplicity of an edge is the number of graphs $R_i$ containing it.
Observe that two graphs $R_i$ and $R_j$ with $i < j$ are edge-disjoint unless $j=i+1$ and $M_i$ and $M_j$ both end in the same two vertices $x$, $y$ of $X_j$, in which case they share one edge, namely $(x,j)(y,j)$.
Our reason to prefer the above definition of $\R$ over just taking the simple graph $\bigcup_{i=1}^n R_i$ is to avoid a special case in the following argument.

Every graph $R_i$ is $1$-regular.
Hence in $\R$ every vertex $(x,i)$ with $0<i<n$ has degree~$2$ as $(x,i)$ is a vertex of $R_j$ if an only if $j=i$ or $j=i+1$.
Every vertex $(x,i)$ with $i=0$ or $i=n$ has degree~$1$ as it only lies in $R_1$ or in $R_n$.
This implies that a component of $\R$ is either a cycle (possibly of length~$2$) avoiding $(X_0\times\{0\})\cup (X_n\times\{n\})$ or a non-trivial path with both end vertices in $(X_0\times\{0\})\cup (X_n\times\{n\})$.
We denote the subgraph of $\R$ consisting of these paths by $\R(\X,\M)$.
Intuitively, each path of $\R(\X,\M)$ traces the position of one token over the course of the token movement or of one pair of tokens which is destroyed or created during the movement.

For vertex sets $X$ and $Y$ we call any $1$-regular graph on $(X\times\{0\})\cup (Y\times\{\infty\})$ an \emph{$(X,Y)$-pairing}.
An $(X,Y)$-pairing is said to be \emph{balanced} if its edges form a perfect matching from $X\times\{0\}$ to $Y\times\{\infty\}$, that is, each edge has one end vertex in $X\times\{0\}$ and the other in $Y\times\{\infty\}$.

The components of $\R(\X,\M)$ induce a $1$-regular graph on $(X_0\times\{0\}) \cup (X_n\times\{n\})$ where two vertices form an edge if and only if they are in the same component of $\R(\X,\M)$.
To make this formally independent of the index $n$, we replace each vertex $(x,n)$ by $(x,\infty)$.
The obtained graph $L$ is an $(X_0, X_n)$-pairing and we call it the \emph{induced pairing} of the movement $(\X,\M)$.
A movement $(\X,\M)$ with induced pairing $L$ is called an \emph{$L$-movement}.
If a movement induces a balanced pairing, then we call the movement \emph{balanced} as well.

Given two sets $X$ and $Y$ and a bijection $\varphi: X\to Y$ we denote by $L(\varphi)$ the balanced $X$--$Y$ pairing where $(x,0)(y,\infty)$ is an edge of $L(\varphi)$ if and only if $y=\varphi(x)$.
Clearly an $X$--$Y$ pairing $L$ is balanced if and only if there is a bijection $\varphi:X\to Y$ with $L=L(\varphi)$.

Given sets $X$, $Y$, and $Z$ let $L_X$ be an $X$--$Y$ pairing and $L_Z$ a $Y$--$Z$ pairing.
Denote by $L_X\oplus L_Z$ the graph on $(X\times\{0\}) \cup (Z\times\{\infty\})$ where two vertices are connected by an edge if and only if they lie in the same component of $L_X\cup L(\id_Y)\cup L_Z$.
The components of $L_X\cup L(\id_Y)\cup L_Z$ are either paths with both ends in $(X\times\{0\}) \cup (Z\times\{\infty\})$ or cycles avoiding that set.
So $L_X\oplus L_Z$ is an $X$--$Z$ pairing end we call it the \emph{concatenation} of $L_X$ and $L_Z$.
The next Lemma is an obvious consequence of this construction (and \autoref{thm:concatenation} (\ref{itm:cc_concatenation})).

\begin{lem}\label{thm:concatenation_pairing}
The induced pairing of the concatenation of two movements is the concatenation of their induced pairings.
\end{lem}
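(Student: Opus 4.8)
The plan is to unwind the definition of the multigraph $\R$ attached to a movement and of the concatenation of pairings, and to check that the two constructions are compatible. Write $(\X,\M)$ for a movement of length $n$ with first configuration $X_0$ and last configuration $X_n$, and $(\Y,\N)$ for one of length $m$ with first configuration $Y_0=X_n$ and last configuration $Y_m$; by \autoref{thm:concatenation} (\ref{itm:cc_concatenation}) the pair $(\X^*,\M^*)\coloneq(\X,\M)\oplus(\Y,\N)$ is a movement, so its induced pairing is defined. Denote by $L_1$ and $L_2$ the induced pairings of $(\X,\M)$ and $(\Y,\N)$. Both $L_1\oplus L_2$ and the induced pairing of $(\X^*,\M^*)$ are $(X_0,Y_m)$-pairings, hence graphs on the same vertex set, so the assertion is a genuine equality of graphs and it suffices to compare their edges.

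The key observation I would make is that the graph $R_i$ entering the definition of $\R$ depends only on the triple $(X_{i-1},X_i,M_i)$ together with the labels of the two layers it uses, and that for the concatenation $(\X^*,\M^*)$ these data are exactly the data of $(\X,\M)$ for $i\le n$, followed — after shifting all layer labels up by $n$ — by the data of $(\Y,\N)$. Hence the multigraph $\R$ of $(\X^*,\M^*)$ is obtained from the multigraph $\R$ of $(\X,\M)$ and a copy of the multigraph $\R$ of $(\Y,\N)$ by identifying, for each $x\in X_n=Y_0$, the layer-$n$ vertex $(x,n)$ of the former with the layer-$0$ vertex $(x,0)$ of the latter; edge multiplicities simply add, which is precisely the one place the multigraph convention is needed, since the last move of $\M$ and the first move of $\N$ may contribute one common edge at that junction layer.

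Next I would read off the components. In the multigraph $\R$ of $(\X,\M)$ every layer-$n$ vertex has degree $1$ and therefore lies on a path component (the cycle components avoid the two outermost layers), and symmetrically every layer-$0$ vertex of the multigraph $\R$ of $(\Y,\N)$ has degree $1$ and lies on a path component. Consequently, in the glued multigraph $\R$ of $(\X^*,\M^*)$ every junction vertex has degree $2$; the cycle components of the two pieces survive as cycle components; any newly created cycle component meets only the junction layer, so it avoids $(X_0\times\{0\})\cup(Y_m\times\{\infty\})$ and is irrelevant to the pairing; and every path component is a concatenation, alternating between the two pieces and glued at junction vertices, of path components of $\R(\X,\M)$ and of $\R(\Y,\N)$, with both ends in $(X_0\times\{0\})\cup(Y_m\times\{\infty\})$. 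Replacing each path component of $\R(\X,\M)$ by a single edge joining its two ends (relabelling the last layer as $\infty$) turns it into $L_1$ — that is the definition of the induced pairing — and likewise for $\R(\Y,\N)$ and $L_2$; carrying out both replacements inside the multigraph $\R$ of $(\X^*,\M^*)$ yields $L_1$ and $L_2$ glued along their common middle layer $X_n$, in which every middle vertex has degree $2$. Splitting those degree-$2$ middle vertices is precisely the role of the term $L(\id_{X_n})$ in the definition of $L_1\oplus L_2$. Hence two boundary vertices lie in a common component of $\R(\X^*,\M^*)$ if and only if they lie in a common component of $L_1\cup L(\id_{X_n})\cup L_2$, which by the definitions of induced pairing and of concatenation of pairings is exactly the equality claimed by \autoref{thm:concatenation_pairing}.

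There is no genuinely hard step here; the argument is bookkeeping, and this is what the paper is eliding when it calls the statement obvious. The only points demanding care are: keeping the index shift at the junction consistent, keeping the three relabellings of the middle layer straight ($\{n\}$ versus $\{0\}$ on the two pieces, and $\{n\}$ versus $\{\infty\}$ on passing from $\R(\cdot)$ to a pairing), and noting that the multigraph convention absorbs the single edge the last move of $\M$ and the first move of $\N$ might share. Once the structural identification ``$\R$ of $(\X^*,\M^*)$ equals the gluing of $\R$ of $(\X,\M)$ and $\R$ of $(\Y,\N)$ along $X_n$'' is in place, everything else follows directly from the definitions (and, for the fact that the left-hand side is defined at all, from \autoref{thm:concatenation} (\ref{itm:cc_concatenation})).
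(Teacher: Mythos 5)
Your argument is correct and is precisely the definition-unwinding the paper elides when it dismisses the lemma as ``an obvious consequence of this construction''; there is nothing more to the paper's intended proof than the gluing identification of the multigraphs and the component-matching you carry out. One small imprecision worth fixing: a newly created cycle component of $\R^*$ need not ``meet only the junction layer''---its constituent path pieces of $\R(\X,\M)$ and $\R(\Y,\N)$ typically run through intermediate layers---but the claim you actually use, that such a cycle avoids $(X_0\times\{0\})\cup(Y_m\times\{\infty\})$, is correct (either by the degree-$1$ argument at the outermost layers, or because any piece touching layer $0$ or $n+m$ would make the component a path rather than a cycle).
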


Let $(\X,\M)$ be a movement on $H$.
A vertex $x$ of $H$ is called $(\X,\M)$-\emph{singular} if no move of $\M$ contains $x$ as an inner vertex and $I_x\coloneq\{i\mid x\in X_i\}$ is an \emph{integer interval}, that is, a possibly empty sequence of consecutive integers.
Furthermore, $x$ is called \emph{strongly $(\X,\M)$-singular} if it is $(\X,\M)$-singular and $I_x$ is empty or contains one of $0$ and $n$ where $n$ denotes the length of $(\X,\M)$.
We say that a set $W\subseteq V(H)$ is \emph{$(\X,\M)$-singular} or \emph{strongly $(\X,\M)$-singular} if all its vertices are.
If the referred movement is clear from the context, then we shall drop it from the notation and just write singular or strongly singular.

Note that any vertex $v$ of $H$ that is contained in at most one move of $\M$ is strongly $(\X,\M)$-singular.
Furthermore, $v$ is singular but not strongly singular if it is contained in precisely two moves but neither in the first nor in the last configuration.

The following Lemma shows how to obtain linkages in a graph~$G$ from movements on the auxiliary graph of a regular decomposition of~$G$.
It enables us to apply the results about token movements from this section to our linkage problem.

\begin{lem}\label{thm:tokengame}
Let $(\W,\P)$ be a stable regular decomposition of some graph $G$ and set $\lambda\coloneq\{\alpha\mid P_{\alpha}\text{ is non-trivial}\}$ and $\theta\coloneq\{\alpha\mid P_{\alpha}\text{ is trivial}\}$.
Let $(\X,\M)$ be a movement of length $n$ on a subgraph $\Gamma\subseteq \Gamma(\W,\P)$ and denote its induced pairing by $L$.
If $\theta$ is $(\X,\M)$-singular and $W_a$ and $W_b$ are inner bags of $\W$ with $b-a=2n-1$, then there is a linkage $\Q\subseteq G_{\Gamma}^{\P}[W_{[a,b]}]$ and a bijection $\varphi:E(L)\to \Q$ such that for each $e\in E(L)$ the path $\varphi(e)$ ends in the left $\alpha$-vertex of $W_a$ if and only if $(\alpha,0)\in e$ and $\varphi(e)$ ends in the right $\alpha$-vertex of $W_b$ if and only if $(\alpha,\infty)\in e$.
\end{lem}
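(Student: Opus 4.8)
The plan is to prove the statement by induction on the length $n$ of the movement, peeling off the last move $M_n$ at each step and treating a single move ($n=1$) as the base case. Throughout I would use two preliminary facts. Since $(\W,\P)$ is regular, axiom (L8) gives $B(G[W],\P[W])=\Gamma(\W,\P)$ for every inner bag $W$, so every edge of $\Gamma$ is realised by a $\P$-bridge in every inner bag of $\W$; and a $\P$-bridge realising an edge of $\Gamma$ survives in $G_{\Gamma}^{\P}$ as a connected subgraph, because deleting $\P\setminus\P_{V(\Gamma)}$ only removes attachments lying on foundational paths indexed outside $V(\Gamma)$ and leaves the bridge's inner vertices together with all its attachments on $\P_{V(\Gamma)}$ in place. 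Also, among the $2n$ bags $W_a,\dots,W_b$ it is convenient to let the adhesion set $W_{a-1}\cap W_a$ represent ``time $0$'', the adhesion set $W_b\cap W_{b+1}$ represent ``time $n$'', and the adhesion set between $W_{a+2i-1}$ and $W_{a+2i}$ represent ``time $i$'', so that consecutive times are two bags apart and the move $M_i$ is executed inside the two bags $W_{a+2i-2},W_{a+2i-1}$.

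For the base case $n=1$, let $M=M_1=v_0v_1\cdots v_r$ have first and last configurations $X,X'$. Since $M$ is disjoint from $X\cap X'$ and (being non-trivial) its interior vertices lie in neither $X$ nor $X'$, these vertices also avoid $\theta$ by the singularity hypothesis, so $P_{v_1},\dots,P_{v_{r-1}}$ are non-trivial and carry no token. For each $\alpha\in X\cap X'$ let $\varphi$ send the $L$-edge $(\alpha,0)(\alpha,\infty)$ to the subpath of $P_\alpha$ between its vertices in the two extreme adhesion sets; these are pairwise disjoint and disjoint from each $P_{v_j}$. For the single moving token, route a path by travelling along $P_{v_0},\dots,P_{v_r}$ and greedily hopping through $\P$-bridges of $W_a$ realising the edges of $M$: at the current path $P_{v_j}$ pick such a bridge realising $v_jv_{j+1}$ and hop along it to the highest-indexed $P_{v_{j'}}$ it attaches to, routing strictly through that bridge's inner vertices. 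The greedy choice forces the bridges used to be pairwise distinct, so the walk is a simple path; it lies in $G_{\Gamma}^{\P}[W_a]$ and, using only bridge interiors and segments of the $P_{v_j}$, it avoids every $P_\alpha$ with $\alpha\in X\cap X'$ and every trivial foundational path indexed outside $\{v_0,v_r\}$. Prolonging this path along $P_{v_r}$ through $W_b$ realises the $L$-edge arising from $M$ when $M$ genuinely moves a token ($v_0\in X\setminus X'$, $v_r\in X'\setminus X$); when $M$ creates a pair ($v_0,v_r\in X'\setminus X$) one instead lets the path leave and re-enter $W_b$ so that both its ends lie in the right extreme adhesion set, and symmetrically when $M$ destroys a pair. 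In every case the singularity of $v_0,v_r$ (when they lie in $\theta$) shows that a trivial foundational path is used by at most one path of $\Q$, and the end-vertex conditions follow from the identification of times with adhesion sets.

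For the inductive step $n\ge 2$, write $(\X,\M)=(\X^{(1)},\M^{(1)})\oplus\bigl((X_{n-1},X_n),(M_n)\bigr)$, a legitimate concatenation by \autoref{thm:concatenation}, where $(\X^{(1)},\M^{(1)})$ has configurations $X_0,\dots,X_{n-1}$ and moves $M_1,\dots,M_{n-1}$. Both factors inherit singularity of $\theta$, so the induction hypothesis applied to $(\X^{(1)},\M^{(1)})$ with the bags $W_a,\dots,W_{b-2}$ (note $(b-2)-a=2(n-1)-1$) yields a linkage $\Q^{(1)}\subseteq G_{\Gamma}^{\P}[W_{[a,b-2]}]$ with bijection $\varphi^{(1)}$, and the base case applied to $M_n$ with the bags $W_{b-1},W_b$ yields a linkage $\Q^{(2)}\subseteq G_{\Gamma}^{\P}[W_{[b-1,b]}]$ with bijection $\varphi^{(2)}$. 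By \autoref{thm:concatenation_pairing} the induced pairing satisfies $L=L^{(1)}\oplus L^{(2)}$, where $L^{(j)}$ is the induced pairing of the $j$-th factor. By (L2) the vertex sets $W_{[a,b-2]}$ and $W_{[b-1,b]}$ meet only in $W_{b-2}\cap W_{b-1}$, and for each $\alpha\in X_{n-1}$ the right $\alpha$-vertex of $W_{b-2}$ coincides with the left $\alpha$-vertex of $W_{b-1}$. Hence, for each $e\in E(L)$, tracing the path of $L^{(1)}\cup L(\id_{X_{n-1}})\cup L^{(2)}$ whose ends are the ends of $e$ and concatenating the corresponding $\varphi^{(1)}$- and $\varphi^{(2)}$-paths, glued at these shared $\alpha$-vertices (the $L(\id_{X_{n-1}})$-edges contribute nothing new), defines $\varphi(e)\in G_{\Gamma}^{\P}[W_{[a,b]}]$. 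These paths are pairwise disjoint since $\Q^{(1)}$ and $\Q^{(2)}$ are linkages meeting only at the prescribed glue vertices, and the end-vertex conditions follow by unwinding the definition of $\oplus$ together with the conditions for $\varphi^{(1)}$ and $\varphi^{(2)}$.

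The main obstacle is the base case: routing the moving token's path through the available $\P$-bridges so that it is simultaneously a simple path (even when one bridge realises several edges of $M$), stays inside $G_{\Gamma}^{\P}$, avoids the foundational paths of the stationary tokens and, crucially, the single-vertex trivial foundational paths, and can be bent into the turn-around shape demanded by pair-creating and pair-destroying moves. The accompanying bookkeeping — matching edges of $L$, paths of $\R(\X,\M)$, and paths of $\Q$, and verifying that the singularity of $\theta$ is exactly what rules out a collision of two $\Q$-paths at a trivial foundational-path vertex — is the other delicate point; the concatenation lemmas then handle the rest routinely.
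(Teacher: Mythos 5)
Your proposal follows the same inductive scheme as the paper (the choice to peel off the last move rather than the first is cosmetic), and you correctly identify \autoref{thm:concatenation} and \autoref{thm:concatenation_pairing} as the gluing tools. But you leave both of your self-declared ``delicate points'' open, and one of them is the only place where the hypothesis ``$\theta$ is $(\X,\M)$-singular'' is used — without carrying it out the proof is incomplete.

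For the base case you work harder than necessary. The paper first records that for any \emph{connected} $\Gamma_0\subseteq\Gamma(\W,\P)$ and any inner bag $W$, the graph $G_{\Gamma_0}^{\P}[W]$ is connected: by (L8) every edge of $\Gamma_0$ is realised by a $\P$-bridge in $W$, so $\P_{V(\Gamma_0)}[W]$ together with the bridge interiors forms one component. Since $M_1$ is a path, $G_{M_1}^{\P}[W_a]$ is connected, and therefore so is $P_\alpha[W_{[a,a+1]}]\cup G_{M_1}^{\P}[W_a]\cup P_\beta[W_{[a,a+1]}]$; one simply takes any path between the two prescribed ends. Disjointness from $\P_{X_0\cap X_1}$ is automatic because $G_{M_1}^{\P}$ omits $P_\gamma$ for $\gamma\notin V(M_1)$ and $V(M_1)\cap X_0\cap X_1=\emptyset$ by (M2). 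Your greedy bridge-hopping is a plausible explicit alternative, but the claim that the resulting walk is a simple path is asserted, not proved, and pinning it down (distinct bridges, strictly increasing indices, no revisits at attachments) is more work than the connectivity argument it replaces.

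The more serious gap is the sentence ``$\Q^{(1)}$ and $\Q^{(2)}$ $\dots$ meeting only at the prescribed glue vertices.'' This is precisely the nontrivial claim, and without singularity it is \emph{false}. Consider $\gamma\in\theta$ with $\gamma\in X_0$, $\gamma\notin X_{n-1}$, but $\gamma\in X_n$, so that $I_\gamma$ is not an interval. The single vertex of $P_\gamma$ lies in \emph{every} adhesion set. Since $\gamma\in X_0$, the path $\varphi^{(1)}(e)$ with $(\gamma,0)\in e$ ends at this vertex; since $\gamma\in X_n$, the path $\varphi^{(2)}(e')$ with $(\gamma,\infty)\in e'$ ends there too. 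But $\gamma\notin X_{n-1}$, so this is not one of your intermediate glue vertices, and the union fails to be a linkage realising $L$. The interval condition is exactly what forbids this: if $\gamma\in X_0$ and $\gamma\notin X_{n-1}$, then singularity forces $\gamma\notin X_i$ for all $i\geq n-1$, in particular $\gamma\notin X_n$, and a symmetric check handles touches at the other end and interior touches. You correctly identify that singularity plays this role, but you never carry out the verification; the paper does so explicitly after assembling $\Q_1\cup\Q_2$, and the lemma is not proved without it.
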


\begin{proof}
Let us start with the general observation that for every connected subgraph $\Gamma_0 \subseteq \Gamma(\W,\P)$ and every inner bag $W$ of $\W$ the graph $G_{\Gamma_0}^{\P}[W]$ is connected:
If $\alpha\beta$ is an edge of $\Gamma_0$, then some inner bag of $\W$ contains a $\P$-bridge realising $\alpha\beta$ and so does $W$ by (L8).
In particular, $G_{\Gamma_0}^{\P}[W]$, contains a $P_{\alpha}$--$P_{\beta}$ path.
So $\P_{V(\Gamma_0)}[W]$ must be contained in one component of $G_{\Gamma_0}^{\P}[W]$ as $\Gamma_0$ is connected.
But any vertex of $G_{\Gamma_0}^{\P}[W]$ is in $\P_{V(\Gamma_0)}$ or in a $\P$-bridge attaching to it.
Therefore $G_{\Gamma_0}^{\P}[W]$ is connected.

The proof is by induction on $n$.
Denote the end vertices of $M_1$ by $\alpha$ and $\beta$, that is, $X_0\bigtriangleup X_1 = \{\alpha,\beta\}$.
By definition the induced pairing $L_1$ of $((X_0,X_1), (M_1))$ contains the edges $(\gamma,0)(\gamma,\infty)$ with $\gamma\in X_0\cap X_1$ and w.l.o.g.\ precisely one of $(\alpha,0)(\beta,0)$, $(\alpha,0)(\beta,\infty)$, and $(\alpha,\infty)(\beta,\infty)$.
The above observation implies that $G_{M_1}^{\P}[W_a]$ is connected.
Hence $P_{\alpha}[W_{[a,a+1]}] \cup G_{M_1}^{\P}[W_a] \cup P_{\beta}[W_{[a,a+1]}]$ is connected and thus contains a path $Q$ such that $\Q_1\coloneq \{Q\}\cup \P_{X_0\cap X_1}[W_{[a,a+1]}]$ satisfies the following.
There is a bijection $\varphi_1:E(L_1)\to \Q_1$ such that for each $e\in E(L_1)$ the path $\varphi_1(e)$ ends in the left $\gamma$-vertex of $W_a$ if and only if $(\gamma,0)\in e$ and $\varphi_1(e)$ ends in the right $\gamma$-vertex of $W_{a+1}$ if and only if $(\gamma,\infty)\in e$.
Moreover, the paths of $\Q_1$ are internally disjoint from $W_{a+1}\cap W_{a+2}$.

In the base case $n=1$ the linkage $\Q\coloneq\Q_1$ is as desired.
Suppose that $n\geq 2$.
Then $((X_1,\ldots, X_n), (M_2,\ldots, M_n))$ is a movement and we denote its induced permutation by $L_2$.
\autoref{thm:concatenation_pairing} implies $L = L_1\oplus L_2$.
By induction there is a linkage $\Q_2\subseteq G_{\Gamma}^{\P}[W_{[a+2, b]}]$ and a bijection $\varphi_2:E(L_2)\to \Q_2$ such that for any $e\in E(L_2)$ the path $\varphi_2(e)$ ends in the left $\alpha$-vertex of $W_{a+2}$ (which is the right $\alpha$-vertex of $W_{a+1}$) if and only if $(\alpha, 0)\in e$ and in the right $\alpha$-vertex of $W_b$ if and only if $(\alpha,\infty)\in e$.

Clearly for every $\gamma\in X_1$ the $\gamma$-vertex of $W_{a+1}\cap W_{a+2}$ has degree at most~$1$ in $\Q_1$ and in $\Q_2$.
If a path of $\Q_1$ contains the $\gamma$-vertex of $W_{a+1}\cap W_{a+2}$ and $\gamma\notin X_1$, then $\gamma\in\theta$ so by assumption $I_{\gamma} = \{i\mid \gamma\in X_i\}$ is an integer interval which contains $0$ but not $1$.
This means that no path of $\Q_2$ contains the unique vertex of $P_{\gamma}$.
If the union $\Q_1\cup \Q_2$ of the two graphs $\Q_1$ and $\Q_2$ contains no cycle, then it is a linkage $\Q$ as desired.
Otherwise it only contains such a linkage.
\end{proof}

In the rest of this section we shall construct suitable movements as input for \autoref{thm:tokengame}.
Our first tool to this end is the following powerful theorem\footnote{Wilson stated his theorem for graphs which are neither bipartite, nor a cycle, nor a certain graph $\theta_0$. If $H$ properly contains a triangle, then it satisfies all these conditions and if $H$ itself is a triangle, then our theorem is obviously true.} of Wilson.

\begin{thm}[Wilson 1974]\label{thm:wilson}
	Let $k$ be a postive integer and let $H$ be a graph on $n\geq k+1$ vertices.
	If $H$ is $2$-connected and contains a triangle, then for every bijection $\varphi:X\to Y$ of sets $X, Y\subseteq V(H)$ with $|X| = k = |Y|$ there is a $L(\varphi)$-movement of length $m\leq n!/(n-k)!$ on $H$.
\end{thm}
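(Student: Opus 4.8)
The plan is to reduce to Wilson's original theorem on the transitive action of the puzzle group, which applies to a $2$-connected graph $H$ on $n$ vertices that is not a cycle, not bipartite, and not the exceptional graph $\theta_0$, and then to bound the length of the resulting movement. First I would dispose of the small cases: if $H$ is itself a triangle the statement is trivial (there are only finitely many configurations and one checks them directly, or one notes $k\le 2$ and the bound $n!/(n-k)!$ leaves enough room), so assume $H$ properly contains a triangle. Then $H$ has at least four vertices, is not a cycle, and is not bipartite (it contains an odd cycle); and $\theta_0$ contains no triangle, so $H\ne\theta_0$. Hence Wilson's hypotheses are met.

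Next I would recall the precise content of Wilson's theorem in the language of this paper. With $n-1$ tokens placed on the $n$ vertices of $H$ (one blank vertex), the set of reachable configurations, after quotienting by the position of the blank, is acted on transitively by the whole symmetric group $S_{n-1}$; equivalently, for \emph{any} two placements of $n-1$ labelled tokens there is a sequence of moves (each move sliding the token adjacent to the blank into the blank) taking one to the other. The connection to our setup: such a move-sequence, read off as the list of edges traversed by tokens, is exactly a movement $(\X,\M)$ in the sense of (M1)--(M2) where every move $M_i$ is a single edge, $|X_i|=n-1$ for all $i$, and the induced pairing records which labelled token ends where. So Wilson gives, for the special case $|X|=|Y|=n-1$ with $X=Y=V(H)\setminus\{v\}$ and $\varphi$ arbitrary, an $L(\varphi)$-movement, albeit with no length bound yet.

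To handle general $X,Y$ with $|X|=|Y|=k\le n-1$ and arbitrary bijection $\varphi:X\to Y$, I would first extend $\varphi$ to a bijection $\hat\varphi$ of $(n-1)$-element sets: pick any $(n-1)$-subsets $\hat X\supseteq X$, $\hat Y\supseteq Y$ and any bijection $\hat X\to\hat Y$ restricting to $\varphi$ on $X$ (possible since $|\hat X\setminus X|=|\hat Y\setminus Y|$). Run Wilson to get a movement realizing $L(\hat\varphi)$ using $n-1$ tokens. Forgetting the labels and positions of the tokens on $\hat X\setminus X$ at the start (they are just "don't care" tokens, and the definition of movement already permits configurations to differ only in such auxiliary vertices) yields an $L(\varphi)$-movement. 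The main obstacle is the \emph{length bound} $m\le n!/(n-k)!$: Wilson's transitivity statement is not quantitative, so I would instead argue by a direct counting/reachability argument. Consider the auxiliary graph $\mathcal R$ whose vertices are the reachable $(n-1)$-token configurations (equivalently, the injections from a $k$-set or $(n-1)$-set into $V(H)$ up to the blank) and whose edges are single moves; Wilson says the relevant component has order $n!/1! = n!$ when $k=n-1$ (all $S_{n-1}$ orbits collapse to one), and in general the number of configurations of $k$ labelled tokens reachable from a fixed one is at most $n!/(n-k)!$. Since this component is connected, a BFS from the start configuration reaches the target in at most (number of vertices $-1$), i.e.\ at most $n!/(n-k)!-1<n!/(n-k)!$ moves; each move is one edge $M_i$, so the movement has length $m\le n!/(n-k)!$ as claimed. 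I expect the delicate point to be checking that the configuration graph is genuinely connected in the $k<n-1$ case and that the bound $n!/(n-k)!$ correctly counts reachable $k$-token configurations — this follows from Wilson's theorem applied with $n-1$ tokens by projecting down, but the bookkeeping of "don't care" tokens under (M1)--(M2) needs to be done carefully to stay within this paper's formal notion of movement.
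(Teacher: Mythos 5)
Your proposal is correct and takes essentially the same approach as the paper: cite Wilson for existence (verifying the hypotheses exactly as the paper's footnote does) and bound the length by a pigeonhole on the at most $n!/(n-k)!$ injections $X\to V(H)$. The paper's version of the length bound is a touch slicker --- rather than running a BFS in an explicit configuration graph, it takes any shortest $L(\varphi)$-movement, notes that since $L$ is balanced one may assume no tokens are created or destroyed so each prefix induces an injection $\varphi_i\colon X\to V(H)$, and observes these $\varphi_i$ must all be distinct, else one could excise the subsegment between two equal ones and contradict minimality --- but this is the same count, and the (M1)--(M2) bookkeeping for don't-care tokens that you flag as delicate is sidestepped by that compression argument.
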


The given bound on $m$ is not included in the original statement but not too hard to check:
Suppose that $(\X,\M)$ is a shortest $L(\varphi)$-movement and $m$ is its length.
Since $L$ is balanced we may assume that no tokes are ``created'' or ``destroyed'' during the movement, that is, all intermediate configurations have the same size and for every $i$ with $1\leq i\leq m$ there is an injection $\varphi_i:X\to V(H)$ such that the induced pairing of $((X_0,\ldots, X_i), (M_1,\ldots, M_i))$ is $L(\varphi_i)$.
If there were $i<j$ with $\varphi_i=\varphi_j$, then
\[\big((X_0,\ldots, X_i = X_j, X_{j+1},\ldots, X_m),(M_1,\ldots, M_i, M_{j+1}, \ldots, M_m)\big)\]
was an $L(\varphi)$-movement of length $m-j+i<m$ contradicting our choice of $(\X,\M)$.
But there are at most $n!/(n-k)!$ injections from $X$ to $V(H)$ so we must have $m\leq n!/(n-k)!$.

For our application we need to generate $L$-movements where $L$ is not necessarily balanced.
Furthermore, \autoref{thm:tokengame} requires the vertices of $\theta$ to be singular with respect to the generated movement.
\autoref{thm:stars} and \autoref{thm:maxdeg} give a direct construction of movements if some subgraph of $H$ is a large star.
\autoref{thm:corecase} provides an interface to \autoref{thm:wilson} that incorporates the above requirements.
The proofs of these three Lemmas require a few tools:
\autoref{thm:simpletokens} simply states that for sets $X$ and $Y$ of equal size there is a short balanced $X$--$Y$ movement.
\autoref{thm:chooseXY} exploits this to show that instead of generating movements for every choice of $X,Y\subseteq V(H)$ and any $(X,Y)$-pairing $L$ it suffices to consider just one choice of $X$ and $Y$.
\autoref{thm:chooseA} allows us to move  strongly singular vertices from $X$ to $Y$ and vice versa without spoiling the existence of the desired $X$--$Y$ movement.

We call a set $A$ of vertices in a graph $H$ \emph{marginal} if $H-A$ is connected and every vertex of $A$ has a neighbour in $H-A$. 

\begin{lem}\label{thm:simpletokens}
For any two distinct vertex sets~$X$ and~$Y$ of some size~$k$ in a connected graph $H$ and any marginal set $A\subseteq V(H)$ there is a balanced $X$--$Y$ movement of length at most~$k$ on~$H$ such that $A$ is strongly singular.
\end{lem}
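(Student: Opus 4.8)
The statement is essentially a reachability claim for the token game restricted to a connected host graph: any configuration $X$ of $k$ tokens can be transformed into any other configuration $Y$ of $k$ tokens in at most $k$ moves, while keeping a prescribed marginal set $A$ strongly singular. The natural approach is induction on $k$, peeling off one token at a time and routing it to its target along a path in $H-A$ (or through $A$ only at the very endpoints), so that no vertex of $A$ is ever used as an inner vertex of a move and each vertex of $A$ enters the configuration at most once, at the beginning or the end.

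First I would set up the induction on $|X\cap Y|$ rather than directly on $k$: if $X=Y$ there is nothing to do (the empty movement works, and $A$ is trivially strongly singular), so assume $X\neq Y$. Pick $x\in X\setminus Y$ and $y\in Y\setminus X$. The key point is to choose $x$ and $y$ cleverly with respect to $A$: since $H-A$ is connected and every vertex of $A$ has a neighbour in $H-A$, I can always find a path $P$ in $H$ from $x$ to $y$ whose inner vertices all lie in $H-A$ — if $x\in A$, start by stepping to a neighbour of $x$ in $H-A$; similarly at $y$; and connect the two resulting vertices of $H-A$ by a path inside the connected graph $H-A$. Then I want to slide the token from $x$ to $y$ along $P$. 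The obstruction is that $P$ may meet $X\cup Y$ in other vertices, i.e. $P$ may pass through occupied vertices.

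To deal with occupied inner vertices I would not move along all of $P$ at once: instead I take the move $M_1$ to be the initial segment $xPv$ where $v$ is the first vertex of $P$ after $x$ that lies in $X$ — if there is no such vertex (other than $x$ itself), then I take $M_1=xPy'$ where $y'$ is the last vertex of $P$ before $y$ lying in $Y\setminus\{y\}$, or $M_1=P$ if $P$ avoids $X\cup Y$ except at its ends. Hmm, this greedy bookkeeping is a bit delicate; the cleaner route is: take $M_1$ to be the subpath of $P$ from $x$ to the \emph{first} vertex $v$ of $P\setminus\{x\}$ lying in $X_0\cap(Y\setminus\{y\})$ reached, sliding the $x$-token onto $v$ — but $v$ is already occupied. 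The honest fix is to reorder the induction: rather than always moving the same token, at each step pick an $x\in X\setminus Y$ that is \emph{adjacent in $H-A$ (or via a single edge, possibly using an endpoint in $A$)} to some vertex not in the current configuration $X$, slide it one step along $P$ onto that free vertex, and recurse. Because $H$ is connected and $|X|<|H|$ is not assumed — wait, we do have $|X|=k\le|H|$ but possibly $X=V(H)$; however $X\neq Y$ forces $|X|<|V(H)|$ is false too. I would instead argue: among all vertices of $X\setminus Y$, consider the one closest (in $H-A$, or in $H$ with $A$-endpoints allowed) to the set $Y\setminus X$; the second vertex on a shortest such path is \emph{not} in $X$ (else a shorter path exists), so sliding that token one edge is a legal move, it strictly decreases the potential function $\sum_{x\in X\setminus Y}\mathrm{dist}(x,Y\setminus X)$, and after at most — this is where the bound $k$ must be extracted.

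**Extracting the bound $k$ and handling $A$.** The length bound is the main obstacle: the naive "one edge per move" argument gives a length bounded by a sum of distances, not by $k$. The right statement is that once a token reaches a vertex of $Y\setminus X$ it never moves again, and each of the $|X\setminus Y|\le k$ "wrong" tokens is moved in one contiguous burst along a path to its final target; so I should prove by induction on $|X\setminus Y|$ that $|X\setminus Y|$ moves suffice. Concretely: pick $y\in Y\setminus X$; among vertices of $X\setminus Y$ pick one, $x$, minimizing the distance in $H$ to $y$ along paths internally disjoint from $X\cup Y$ and internally avoiding $A$ — such a path exists by connectivity of $H-A$ and marginality of $A$, and by the minimality its internal vertices avoid $X$ entirely; if an internal vertex lay in $Y\setminus X$ we would have picked that closer token instead, contradiction. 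So the whole path $P$ from $x$ to $y$ has internal vertices disjoint from $X\cup Y$ and from $A$; one move along $P$ is legal (property (M1),(M2) hold since $P\setminus\{x,y\}$ avoids $X_0\cap X_1\subseteq X\cup Y$, and $x,y$ are exactly $X_0\triangle X_1$); it produces $X_1=(X\setminus\{x\})\cup\{y\}$ with $|X_1\setminus Y|=|X\setminus Y|-1$; the endpoints $x,y$ may lie in $A$ but each is touched only now, and $x$ leaves the configuration forever while $y$ entered only now and will never move, so $A$ stays strongly singular (its vertices' index sets are integer intervals containing $0$ or $n$, or empty). Apply the inductive hypothesis to $X_1,Y$ with the same $A$ — noting $A$ is still marginal — to get an $X_1$–$Y$ movement of length $\le|X\setminus Y|-1\le k-1$ with $A$ strongly singular, concatenate via \autoref{thm:concatenation}(\ref{itm:cc_concatenation}), check that concatenation preserves strong singularity of $A$ (the index sets merge into a single interval since $A$ is touched at most at the seam and the extremes), and conclude a movement of length $\le k$. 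The induced pairing is balanced throughout because every move has one end in $X_{i-1}$ and one in $X_i$, so no token is created or destroyed; hence the movement is balanced, as required.
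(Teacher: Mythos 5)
Your argument hinges on the claim that for $x\in X\setminus Y$ and $y\in Y\setminus X$ there is an $x$--$y$ path whose internal vertices avoid $X\cup Y$ (and $A$), and that consequently $|X\setminus Y|$ moves suffice, each directly transporting a ``wrong'' token to its final target. Both claims are false. Consider $H$ the path $v_1v_2v_3v_4v_5$, $A=\emptyset$, $X=\{v_1,v_3\}$, $Y=\{v_3,v_5\}$. Then $X\setminus Y=\{v_1\}$ and $Y\setminus X=\{v_5\}$, but the unique $v_1$--$v_5$ path passes through $v_3\in X\cap Y$, so no legal single move from $X$ to $Y$ exists: any such move would have to be a $v_1$--$v_5$ path disjoint from $X\cap Y=\{v_3\}$. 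The shortest movement here has length $2$ (slide $v_3$ to $v_5$, then $v_1$ to $v_3$), and it necessarily moves the token sitting at $v_3\in X\cap Y$. Your appeal to ``connectivity of $H-A$ and marginality of $A$'' only produces a path internally avoiding $A$, not one internally avoiding $X\cup Y$; these are very different constraints, and your minimality argument only rules out internal vertices in $X\setminus Y$, not in $X\cap Y$. Relatedly, your assertion that $y$ ``entered only now and will never move'' is unjustified: once $y\in X_1\cap Y$, the recursion may well move it again (as in the example), which would also break strong singularity when $y\in A$.

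The paper's proof circumvents all of this by first passing to a spanning tree with $A$ consisting of leaves, then doing induction on $|H|$ rather than on $|X\setminus Y|$. If some edge $e$ splits $H$ into halves each containing equally many $X$- and $Y$-vertices, the problem splits into two independent subproblems of total size $k$; otherwise orienting each edge towards the $Y$-heavier side yields a sink, which must be a leaf $y\in Y\setminus X$, and one slides \emph{any} $X$-token (possibly from $X\cap Y$) along an $X$--$y$ path to $y$ and then recurses on $H-y$, which is still connected because $y$ is a leaf. Deleting $y$ from the host graph in the recursion is precisely what guarantees $y$ never moves again, hence the length bound $k$ and the strong singularity of $A$; and the move only needs to avoid $X\setminus\{x\}$, not all of $X\cup Y$, which an $X$--$y$ path provides. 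If you want to salvage your approach you would need at least the spanning-tree reduction, the edge-split case, and the graph-shrinking recursion; as written, the inductive step does not go through.
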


\begin{proof}
We may assume that $H$ is a tree and that all vertices of $A$ are leaves of this tree.
This already implies that vertices of $A$ cannot be inner vertices of moves.
Moreover, we may assume that $X\cap Y\cap A=\emptyset$.

We apply induction on $|H|$.
The base case $|H| = 1$ is trivial.
For $|H|>1$ let $e$ be an edge of $H$.
If the two components $H_1$ and $H_2$ of $H-e$ each contain the same number of vertices from $X$ as from $Y$, then for $i=1,2$ we set $X_i\coloneq X\cap V(H_i)$ and $Y_i\coloneq Y\cap V(H_i)$.
By induction there is a balanced $X_i$--$Y_i$ movement $(\X_i,\M_i)$ of length at most $|X_i|$ on $H_i$ such that each vertex of $A$ is strongly $(\X_i,\M_i)$-singular where $i=1,2$.
By \autoref{thm:concatenation} $(\X,\M)\coloneq (\X_1\cup X_2,\M_1)\oplus (\X_2\cup Y_1,\M_2)$ is an $X$--$Y$ movement of length at most $|X_1|+|X_2| = |X| = k$ as desired.
Clearly $(\X,\M)$ is balanced and $A$ is strongly $(\X,\M)$-singular as $H_1$ and $H_2$ are disjoint.
	
So we may assume that for every edge $e$ of $H$ one component of $H-e$ contains more vertices from $Y$ than from $X$ and direct $e$ towards its end vertex lying in this component.
As every directed tree has a sink, there is a vertex $y$ of $H$ such that every incident edge $e$ is incoming, that is, the component of $H-e$ not containing $y$ contains more vertices of $X$ than of $Y$.
As $|X| = |Y|$, this can only be if $y$ is a leaf in $H$ and $y\in Y\setminus X$.	
	
Let $M$ be any $X$--$y$ path and denote its first vertex by $x$.
At most one of $x\in Y$ and $x\in A$ can be true by assumption.
Clearly $((\{x\},\{y\}),(M))$ is an $\{x\}$--$\{y\}$ movement and since $H-y$ is connected, by induction there is a balanced $(X\setminus\{x\})$--$(Y\setminus\{y\})$ movement $(\X',\M')$ of length at most $k-1$ on $H-y$ such that $A$ is strongly singular w.r.t.\ both movements.
As before, \autoref{thm:concatenation} implies that
\[(\X,\M)\coloneq \big((X, (X\setminus\{x\})\cup \{y\}),(M)\big)\oplus (\X'\cup\{y\},\M')\]
is an $X$--$Y$ movement of length at most $k$.
Clearly $(\X,\M)$ is balanced and by construction $A$ is strongly $(\X,\M)$-singular.
\end{proof}

\begin{lem}\label{thm:chooseXY}
Let $k$ be a positive integer and $H$ a connected graph with a marginal set $A$.
Suppose that $X,X', Y',Y\subseteq V(H)$ are sets with $|X|+|Y| = 2k$, $|X'|=|X|$, and $|Y'|=|Y|$ such that $(X\cup X')\cap (Y'\cup Y)$ does not intersect $A$.
If for each $(X', Y')$-pairing $L'$ there is an $L'$-movement $(\X',\M')$ of length at most $n'$ on $H$ such that $A$ is strongly $(\X',\M')$-singular, then for each $(X,Y)$-pairing $L$ there is an $L$-movement $(\X,\M)$ of length at most $n' + 2k$ such that $A$ is $(\X,\M)$-singular and all vertices of $A$ that are not strongly $(\X,\M)$-singular are in $(X'\cup Y')\setminus (X\cup Y)$.
\end{lem}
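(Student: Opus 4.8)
The plan is to build the required $L$-movement as a concatenation of three movements: a short balanced movement from $X$ to $X'$, the hypothesised movement from $X'$ to $Y'$ for a carefully chosen $(X',Y')$-pairing $L'$, and a short balanced movement from $Y'$ to $Y$. First I would obtain the two outer movements from \autoref{thm:simpletokens} (taking a trivial, length-$0$ movement if $X=X'$ or $Y'=Y$): this gives a balanced $X$--$X'$ movement $(\X_1,\M_1)$ of length at most $|X|$ and a balanced $Y'$--$Y$ movement $(\X_3,\M_3)$ of length at most $|Y|$, each keeping $A$ strongly singular. Writing their induced pairings as $L(\varphi_1)$ and $L(\varphi_3)$ for bijections $\varphi_1\colon X\to X'$ and $\varphi_3\colon Y'\to Y$, I would then, given the target pairing $L$, put $L'\coloneq L(\varphi_1^{-1})\oplus L\oplus L(\varphi_3^{-1})$, which is an $(X',Y')$-pairing; the hypothesis of the Lemma supplies an $L'$-movement $(\X_2,\M_2)$ of length at most $n'$ with $A$ strongly singular.

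Next I would concatenate, via \autoref{thm:concatenation}, to get a movement $(\X,\M)\coloneq(\X_1,\M_1)\oplus(\X_2,\M_2)\oplus(\X_3,\M_3)$ of length at most $|X|+n'+|Y|=n'+2k$. By \autoref{thm:concatenation_pairing} its induced pairing is $L(\varphi_1)\oplus L'\oplus L(\varphi_3)=L(\varphi_1)\oplus L(\varphi_1^{-1})\oplus L\oplus L(\varphi_3^{-1})\oplus L(\varphi_3)$, and this collapses to $L$ once one records the elementary facts that $\oplus$ is associative, that $L(\varphi)\oplus L(\varphi^{-1})=L(\id)$, and that $L(\id)\oplus M=M=M\oplus L(\id)$ for every pairing $M$ --- all immediate from the definition of $\oplus$. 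So $(\X,\M)$ is an $L$-movement of the stated length.

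It remains to analyse singularity, which is the delicate part. Note first that the moves of $(\X,\M)$ are precisely the moves of the three segments, so no vertex of $A$ is an inner vertex of a move. Fix $a\in A$. In each segment $a$ is strongly singular, so its set of occurrence indices within that segment is an integer interval which, if non-empty, contains an endpoint index, i.e.\ $a$ then lies in the first or last configuration of that segment; hence occurring in segment~$1$ forces $a\in X\cup X'$, occurring in segment~$2$ forces $a\in X'\cup Y'$, and occurring in segment~$3$ forces $a\in Y'\cup Y$. Since $(X\cup X')\cap(Y'\cup Y)$ is disjoint from $A$, the vertex $a$ cannot occur in both segment~$1$ and segment~$3$, and if it occurs in segment~$2$ it does so via exactly one of $X'$, $Y'$. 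Consequently all occurrences of $a$ lie either in segments $1$--$2$ or in segments $2$--$3$; in each case the two relevant (global) occurrence intervals share the configuration $X'$, resp.\ $Y'$, at the seam, so their union is a single integer interval, and $a$ is $(\X,\M)$-singular.

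Finally, such a combined interval meets the global first index only if $a\in X$ and the global last index only if $a\in Y$. Hence if $a\in A$ is not strongly $(\X,\M)$-singular, then $a\notin X\cup Y$, yet $a$ occurs somewhere, so $a\in X'\cup Y'$; thus $a\in(X'\cup Y')\setminus(X\cup Y)$, as required. I expect the main obstacle to be exactly the bookkeeping of this last step --- verifying that occurrence intervals from adjacent segments glue into one interval with no gap --- where the disjointness hypothesis on $A$ is precisely what makes the two ``sides'' of the concatenation interact cleanly.
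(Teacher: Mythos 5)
Your proposal is correct and follows essentially the same route as the paper's own proof: both construct the movement as $(\X_X,\M_X)\oplus(\X',\M')\oplus(\X_Y,\M_Y)$ using \autoref{thm:simpletokens} for the outer pieces, choose $L'$ so that the composed induced pairing equals $L$, and verify the singularity claim by tracking where each $a\in A$ can occur and using the disjointness hypothesis to see that the occurrence intervals from adjacent segments glue at $X'$ or $Y'$. The only cosmetic difference is that you make the choice $L'=L(\varphi_1^{-1})\oplus L\oplus L(\varphi_3^{-1})$ explicit where the paper merely asserts that a suitable $L'$ exists.
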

\begin{proof}
Let $(\X_X, \M_X)$ be a balanced $X$--$X'$ movement of length at most $|X|$ and let $(\X_Y,\M_Y)$ be a balanced $Y'$--$Y$ movement of length at most $|Y|$ such that $A$ is strongly singular w.r.t.\ both movements.
These exist by \autoref{thm:simpletokens}.
For any $X'$--$Y'$ movement $(\X',\M')$ such that $A$ is strongly $(\X',\M')$-singular,
\[(\X,\M)\coloneq (\X_X,\M_X)\oplus (\X',\M')\oplus (\X_Y,\M_Y)\]
is a movement of length at most $|X|+ n' +|Y| = n' + 2k$ by \autoref{thm:concatenation}.

In a slight abuse of the notation we shall write $a\in \M_X$, $a\in \M'$, and $a\in \M_Y$ for a vertex $a\in A$ if there is a move of $\M_X$, $\M'$, and $\M_Y$, respectively, that contains $a$.
Consequently, we write $a\notin M_X$, etc.\ if there is no such move.
The set $A$ is strongly singular w.r.t.\ each of $(\X_X,\M_X)$, $(\X',\M')$, and $(\X_Y,\M_Y)$.
Therefore all moves of $\M$ are internally disjoint from $A$ and each $a\in A$ is contained in at most one move from each of $\M_X$, $\M'$, and $\M_Y$.
Moreover, for each $a\in A$
\begin{enumerate}
\item
$a\in \M_X$ if and only if precisely one of $a\in X$ and $a\in X'$ is true,

\item
$a\in \M'$ if and only if precisely one of $a\in X'$ and $a\in Y'$ is true, and

\item
$a\in\M_Y$ if and only if precisely one of $a\in Y'$ and $a\in Y$ is true.
\end{enumerate}
Clearly $A\setminus (X\cup X'\cup Y'\cup Y)$ is strongly $(\X,\M)$-singular as none of its vertices is contained in a path of $\M$.

Let $a\in X\cap A$.
Then by assumption $a\notin Y\cup Y'$ and thus $a\notin \M_Y$.
If $a\in X'$, then $a\in \M'$ and $a\notin \M_X$.
Otherwise $a\notin X'$ and therefore $a\in \M_X$ and $a\notin \M'$.
In either case $a$ is in at most one move of $\M$ and hence $X\cap A$ is strongly $(\X,\M)$-singular.
A symmetric argument shows that $Y\cap A$ is strongly $(\X,\M)$-singular.

Let $a\in (X'\cup Y')\cap A$ with $a\notin X\cup Y$.
Then $a\in X'\bigtriangleup Y'$ so $a\in \M'$ and precisely one of $a\in \M_X$ and $a\in \M_Y$ is true.

We conclude that every vertex of $a\in A$ is $(\X,\M)$-singular and it is even strongly $(\X,\M)$-singular if and only if $a\notin(X'\cup Y')\setminus (X\cup Y)$.

The induced pairings $L_X$ of $(\X_X,\M_X)$ and $L_Y$ of $(\X_Y,\M_Y)$ are both balanced and it is not hard to see that for a suitable choice of $L'$ the induced pairing $L_X\oplus L'\oplus L_Y$ of $(\X,\M)$ equals $L$.
\end{proof}

\begin{lem}\label{thm:chooseA}
Let $H$ be a connected graph and let $X,Y\subseteq V(H)$.
Suppose that $L$ is an $(X,Y)$-pairing and $(\X,\M)$ an $L$-movement of length $n$.
If $x\in X\cup Y$ is strongly $(\X,\M)$-singular, then the following statements hold.

\begin{enumerate}[(i)]

\item
$(\X\bigtriangleup x, \M)$ is an $(L\bigtriangleup x)$-movement of length $n$ where $\X\bigtriangleup x\coloneq(X_0\bigtriangleup \{x\}, \ldots, X_n\bigtriangleup \{x\})$ and $L\bigtriangleup x$ denotes the graph obtained from $L$ by replacing $(x,0)$ with $(x,\infty)$ or vice versa (at most one of these can be a vertex of $L$).

\item
A vertex $y\in V(H)$ is (strongly) $(\X,\M)$-singular if and only if it is (strongly) $(\X\bigtriangleup x,\M)$-singular.
\end{enumerate}
\end{lem}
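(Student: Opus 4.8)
The plan is to handle part (i) in two stages --- first check that $(\X\bigtriangleup x,\M)$ is again a movement, then identify its induced pairing --- and to read off part (ii) from the observation that, away from $x$, nothing changes at all. The length stays $n$ because $\M$ is left untouched.

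For the movement axioms, (M1) is immediate: for every $i$ one has $(X_{i-1}\bigtriangleup\{x\})\bigtriangleup(X_i\bigtriangleup\{x\})=X_{i-1}\bigtriangleup X_i$, so the set that must consist of the two ends of $M_i$ is unchanged. For (M2) I would note that the new adhesion set $(X_{i-1}\bigtriangleup\{x\})\cap(X_i\bigtriangleup\{x\})$ agrees with $X_{i-1}\cap X_i$ on every vertex different from $x$, and contains $x$ exactly when $x\notin X_{i-1}\cup X_i$; in that situation $x$ is not an end of $M_i$ (it lies outside $X_{i-1}\bigtriangleup X_i$) and, since $x$ is strongly singular, not an inner vertex of $M_i$ either, hence $x\notin V(M_i)$. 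Combined with $M_i\cap(X_{i-1}\cap X_i)=\emptyset$ this yields $M_i\cap\big((X_{i-1}\bigtriangleup\{x\})\cap(X_i\bigtriangleup\{x\})\big)=\emptyset$, which is (M2).

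The substantive point is the induced pairing. After reversing the movement if necessary --- which exchanges the roles of the indices $0$ and $n$ and, correspondingly, of $(v,0)$ and $(v,\infty)$ in every pairing --- I may assume $I_x=\{0,1,\dots,j\}$; since $x\in X\cup Y$ is strongly singular this forces $x\in X\setminus Y$ and $j<n$, the degenerate case $x\in X\cap Y$ (which makes $I_x=\{0,\dots,n\}$) being settled by the evident variant in which $L\bigtriangleup x$ is read as $L$ with the edge $(x,0)(x,\infty)$ and its two ends deleted. I would then compare the multigraph $\R$ built from $(\X,\M)$ with the multigraph $\R'$ built from $(\X\bigtriangleup x,\M)$. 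Every $R_i$ avoiding the vertices $(x,\cdot)$ is unchanged; the $x$-vertices of $\R$, namely $(x,0),\dots,(x,j)$, form a path, and in $\R'$ the $x$-vertices are $(x,j+1),\dots,(x,n)$, again forming a path; and the unique move $M_{j+1}$ that has $x$ as an end contributes an edge whose $x$-end is lifted from level $j$ to level $j+1$ while its other end, being different from $x$, is unmoved. Hence the one component of $\R$ meeting a vertex $(x,\cdot)$ --- a path, since $(x,0)$ has degree $1$ --- runs from $(x,0)$ along $(x,1),\dots,(x,j)$, then through the $M_{j+1}$-edge and on through non-$x$ vertices to some terminal vertex $(z,\star)$; in $\R'$ this same component runs from $(z,\star)$ back through the $M_{j+1}$-edge and then along $(x,j+1),\dots,(x,n)$, and every other component is unchanged. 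Passing to induced pairings, the only effect on $L$ is to replace the end $(x,0)$ of one edge by $(x,\infty)$, i.e.\ $L\bigtriangleup x$, which proves (i).

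For part (ii): the sequence $\M$ of moves is common to both movements, so ``$y$ is not an inner vertex of any move'' is the same requirement for both; and for $y\neq x$ one has $y\in X_i\bigtriangleup\{x\}\iff y\in X_i$, so the set $I_y$ is literally the same, whence $y$ is (strongly) $(\X,\M)$-singular iff it is (strongly) $(\X\bigtriangleup x,\M)$-singular. For $y=x$ the moves are again unchanged and $I_x$ becomes $\{j+1,\dots,n\}$, an integer interval containing $n$, so $x$ is strongly $(\X\bigtriangleup x,\M)$-singular; the converse is automatic because $\bigtriangleup x$ is an involution. The step I expect to cost the most care is the pairing computation in (i): one must check that toggling the membership of $x$ alters exactly one component of $\R$, and alters it in precisely the way that turns $(x,0)$ into $(x,\infty)$, and then cleanly dispose of the degenerate case $x\in X\cap Y$.
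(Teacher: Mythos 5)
Your proof is correct and takes essentially the same approach as the paper, which dismisses part (i) with a bare ``Clearly'' and gives a one-line argument for (ii) by looking at the complement of $I_x$. You supply the missing detail by verifying (M1), (M2), and then tracking how toggling the membership of $x$ alters the auxiliary multigraph $\R$: the $x$-column of vertices flips from levels $\{0,\dots,j\}$ to $\{j+1,\dots,n\}$, the single $M_{j+1}$-edge slides its $x$-end up by one level, and exactly one path component of $\R$ is affected, which is precisely the computation the paper is implicitly appealing to. One minor remark: the degenerate case $x\in X\cap Y$ that you handle separately cannot in fact arise. The definition of strongly $(\X,\M)$-singular is intended with $I_x$ containing \emph{precisely} one of $0$ and $n$ (this is how the paper itself reads it in its own proof of part (ii), where it asserts that $\{i\mid x\in X_i\}$ is an interval containing precisely one of $0$ and $n$, and it is also what makes the statement's parenthetical ``at most one of these can be a vertex of $L$'' true). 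Under that reading, $x\in X\cup Y$ together with strong singularity already forces $x\notin X\cap Y$, so your extra case is vacuous, though harmless.
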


\begin{proof}
Clearly $(\X\bigtriangleup x,\M)$ is an $(L\bigtriangleup x)$-movement of length $n$.
As its intermediate configurations differ from those of $\X$ only in $x$, the last assertion is trivial for $y\neq x$.
For $y=x$ note that $\{i\mid x\notin X_i\}$ is an integer interval containing precisely one of $0$ and $n$ because $\{i\mid x\in X_i\}$ is.
\end{proof}

In the final three Lemmas of this section we put our tools to use and construct movements under certain assumptions about the graph.
Note that it is not hard to improve on the upper bounds given for the lengths of the generated movements with more complex proofs.
However, in our main proof we have an arbitrarily long stable regular decomposition at our disposal, so the input movements for \autoref{thm:tokengame} can be arbitrarily long as well.

\begin{lem}\label{thm:stars}
Let $k$ be a positive integer and $H$ a connected graph with a marginal set $A$.
If one of
\begin{enumerate}[a)]

\item
$|A|\geq 2k-1$ and

\item
$|N_H(v)\cap N_H(w)\cap A|\geq 2k-3$ for some edge $vw$ of $H-A$
\end{enumerate}
holds, then for any $X$--$Y$ pairing $L$ such that $X,Y\subseteq V(H)$ with $|X|+|Y| = 2k$ and $X\cap Y\cap A=\emptyset$ there is an $L$-movement $(\X,\M)$ of length at most $3k$ on $H$ such that $A$ is $(\X,\M)$-singular.
\end{lem}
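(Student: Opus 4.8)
The plan is to reduce, via \autoref{thm:chooseXY}, to constructing movements for one cleverly chosen pair of terminal sets, and then to build those movements explicitly inside the dense ``hub'' that hypotheses (a) and (b) supply; a key point throughout is that a single move may slide, create, or destroy a pair of tokens along a path of arbitrary length as long as that path's interior is currently unoccupied. Reversing a movement sends an $L$-movement to a movement for the reversed pairing, so we may assume $|X|\ge|Y|$, hence $|X|\ge k\ge|Y|$; put $q=\tfrac12(|X|-|Y|)$. By \autoref{thm:chooseXY} it suffices to exhibit one pair $(X',Y')$ with $|X'|=|X|$, $|Y'|=|Y|$, $X'\cap Y'\cap A=\emptyset$, and $(X\cup X')\cap(Y\cup Y')\cap A=\emptyset$, for which every $(X',Y')$-pairing $L'$ is realized by an $L'$-movement of length at most $k$ with $A$ strongly singular: then \autoref{thm:chooseXY} turns this, for every $(X,Y)$-pairing $L$, into an $L$-movement of length at most $k+2k=3k$ with $A$ singular, which is the assertion. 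In choosing $(X',Y')$ I would also arrange $|X'\cap Y'|\le 1$, which is what keeps the explicit construction within $k$ moves.

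Next I would pin down the hub. In case (b) let $U=N_H(v)\cap N_H(w)\cap A$, so $|U|\ge 2k-3$; the edge $vw$ and the triangles $vwa$ for $a\in U$ form a ``book'' $B\subseteq H$ in which any two vertices are joined by a path whose interior is at most one spine vertex. I would take $X'=\{v,w\}\cup P$ and $Y'=\{v\}\cup Q$ with $P,Q\subseteq U$ disjoint and $P\cup Q=U$ (so $X'\cap Y'=\{v\}\notin A$ and the $2k-3$ pages exactly suffice), in addition choosing $P,Q$ so that $X'$ and $Y'$ avoid the $A$-vertices forbidden by \autoref{thm:chooseXY}; in the tight corners ($\min(|X|,|Y|)\le1$, or $Y$ already meeting most of $U$) one borrows vertices of $H$ lying outside $B$, which must exist there because $|H|\ge|X|$ forces $H$ to be strictly larger than $B$. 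In case (a) the connected non-empty graph $H-A$ plays the role of the hub: every vertex of $A$ has a neighbour in $H-A$, so one path-move through $H-A$ (kept clear, if need be, by \autoref{thm:simpletokens}, which applies since $A$ is marginal) slides a token between any two vertices of $A$; the $\ge 2k-1$ vertices of $A$ serve as the pages, and $(X',Y')$ is chosen analogously, mostly inside $A$.

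I would then realize an arbitrary $(X',Y')$-pairing $L'$. Partition its edges into destroy edges in $X'\times\{0\}$, create edges in $Y'\times\{\infty\}$ and transport edges, so that $(\text{\#destroy})-(\text{\#create})=q$; at most one destroy edge lacks a page endpoint (such an endpoint being a vertex of $A$ outside $Y'$). Using \autoref{thm:chooseA}, flip a page endpoint in enough destroy edges to reach a \emph{balanced} pairing $\tilde L'$ on sets $\tilde X',\tilde Y'$ with $\tilde X'\cap\tilde Y'=X'\cap Y'$ of size $\le1$ and $|\tilde X'|=|\tilde Y'|=k$, and realize by a single move any destroy edge that is not flipped (in particular the one with no page endpoint, along $vw$) and, symmetrically, each create edge. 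It remains to realize the bijection $\varphi$ underlying $\tilde L'$: since the only vertex that is both a source and a target is the single common vertex, the $\varphi$-arrows decompose into direct source-to-target arrows plus at most one length-two chain through that common vertex, with no non-trivial cycle; so, moving first the token on the common vertex (if any) and then the remaining tokens one at a time by a single path-move through $w$ (case (b)) or through $H-A$ (case (a)) in an order keeping the current target free, $\varphi$ is realized in $\le k$ moves. No vertex of $A$ is an inner vertex of any move and each page lies in at most one move, so $A$ is strongly singular; finally undo the flips with \autoref{thm:chooseA}, which changes neither length nor singularity.

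The hard part will not be any single step but the \emph{bookkeeping in the tight corners}: arranging simultaneously that the $2k-3$ pages (respectively $2k-1$ vertices of $A$) accommodate $X'$ and $Y'$ with only the single permitted overlap, that $X'$ and $Y'$ dodge the $A$-vertices forbidden by \autoref{thm:chooseXY}, that at most one destroy edge of each $L'$ lacks a page endpoint, and that the residual create and destroy edges can be executed along hub edges — all without ever exceeding $k$ moves — together with checking that in the handful of configurations where the hub alone is too small, $H$ is forced to be large enough to supply the missing vertices while preserving strong singularity of $A$.
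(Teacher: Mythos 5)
Your overall strategy is the same as the paper's: use \autoref{thm:chooseXY} to reduce to a single cleverly positioned pair $(X',Y')$ sitting in the hub, then construct explicit short movements for every $(X',Y')$-pairing with $A$ strongly singular. But the reduction step is where you diverge, and where the paper's choice is worth noticing. The paper uses \autoref{thm:chooseA} to flip \emph{every} eligible endpoint onto the $X$-side, reducing in case (a) to an $(X'\cup Y')$--$\emptyset$ pairing and in case (b) to a pairing with $N_A\subseteq X'$ plus a tiny case analysis over $\bar v,\bar w,\bar z$. With everything (or almost everything) a destroy edge, each move simply annihilates a pair, the intermediate configurations are nested $X_0\supset X_1\supset\cdots$, and there is never a currently occupied target to worry about. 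You instead flip \emph{towards} a balanced pairing and then push tokens around a bijection $\varphi$; this reintroduces genuine ordering constraints (the target of each arrow and the interior hub vertex must be free when the move fires), and although the proposed order (common vertex first, then one-at-a-time through $v$ or $w$) is plausible, it needs a careful argument that no deadlock occurs. It is a genuinely different, and strictly harder, path to the same place.

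Two concrete gaps. First, in case (a) the remark that the route through $H-A$ is ``kept clear, if need be, by \autoref{thm:simpletokens}'' is not a valid step: \autoref{thm:simpletokens} \emph{produces} a balanced movement of its own rather than vacating $H-A$ mid-realization, and interleaving such a helper movement would consume moves from your $k$-move budget. The paper avoids this entirely by ordering the destroys so that the unique vertex of $(X'\cup Y')\setminus N_A$ is eliminated in the very first move, after which $H-A$ really is empty. Second, your fixed template $X'=\{v,w\}\cup P$, $Y'=\{v\}\cup Q$ with $P\sqcup Q=U$ forces $N_B=\{v,w\}\subseteq X'$ and hence $|P|=|X|-2$; but \autoref{thm:chooseXY} also requires $X\cap U\subseteq P$, so the template fails as soon as $|X\cap U|>|X|-2$ (e.g.\ $X\subseteq U$), a situation not among your listed ``tight corners'' and not salvageable by borrowing vertices outside the book. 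This is exactly why the paper formulates conditions (5)--(6) in disjunctive form, allowing $X'\subset N_A\cup N_B$ as an alternative to $N_B\subseteq X'$. To make your version airtight you would need to loosen your choice of $(X',Y')$ to something like the paper's conditions and then carry that flexibility through the $\varphi$-realization step.
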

The basic argument of the proof is that that if we place tokens on the leaves of a star but not on its centre, then we can clearly ``destroy'' any given pair of tokens by moving one on top of the other through the centre of the star.

\begin{proof}
Suppose that a) holds.
Let $N_A\subseteq A$ with $|N_A|=2k-1$.
There are sets $X', Y'\subseteq V(H)$ such that
\begin{enumerate}

\item
$|X'|=|X|$ and $|Y'| = |Y|$,

\item
$X\cap N_A\subseteq X'$,

\item
$Y\cap N_A\subseteq Y'$, and

\item
$N_A\subseteq X'\cup Y'$ and $X'\cap Y'\cap A=\emptyset$.

\end{enumerate}
By \autoref{thm:chooseXY} it suffices to show that for each $X'$--$Y'$ pairing $L'$ there is an $L'$-movement $(\X',\M')$ of length at most $k$ on $H$ such that $A$ is strongly $(\X',\M')$-singular.
Assume w.l.o.g.\ that the unique vertex of $(X'\cup Y') \setminus N_A$ is in $X'$.
Repeated application of \autoref{thm:chooseA} implies that the desired $L'$-movement $(\X',\M')$ exists if and only if for every $(X'\cup Y')$--$\emptyset$ pairing $L''$ there is an $L''$-movement $(\X'',\M'')$ of length at most $k$ on $H$ such that $A$ is strongly $(\X'',\M'')$-singular.

Let $L''$ be any $(X'\cup Y')$--$\emptyset$ pairing.
Then $E(L'')=\{(x_i, 0)(y_i,0) \mid i=1,\ldots, k\}$ where $(X'\cup Y')\cap N_A = \{x_1,\ldots, x_k, y_2,\ldots, y_k\}$ and $(X'\cup Y')\setminus N_A = \{y_1\}$.
For $i=0,\ldots, k$ set $X_i\coloneq \{x_j,y_j\mid j>i\}$.
For $i=1,\ldots, k$ let $M_i$ be an $x_i$--$y_i$ path in $H$ that is internally disjoint from $A$.
Then $(\X'',\M'')\coloneq ((X_0,\ldots, X_k), (M_1,\ldots, M_k))$ is an $L''$-movement of length $k$ and obviously $A$ is strongly $(\X'',\M'')$-singular.

Suppose that b) holds and let $N_A\subseteq N_H(v)\cap N_H(w)\cap A$ with $|N_A|=2k-3$ and set $N_B\coloneq\{v,w\}$.
There are sets $X', Y'\subseteq V(H)$ such that
\begin{enumerate}

\item
$|X'|=|X|$ and $|Y'| = |Y|$,

\item
$X\cap N_A\subseteq X'$ and $X'\cap A\subseteq X\cup N_A$,

\item
$Y\cap N_A\subseteq Y'$ and $Y'\cap A\subseteq Y\cup N_A$,

\item
$N_A\subseteq X'\cup Y'$ and $X'\cap Y'\cap A=\emptyset$,


\item
$N_B\subseteq X'$ or $X'\subset N_A\cup N_B$, and

\item
$N_B\subseteq Y'$ or $Y'\subset N_A\cup N_B$.
\end{enumerate}

By \autoref{thm:chooseXY} (see case a) for the details) it suffices to find an $L'$-movement $(\X',\M')$ of length at most $k$ on $H$ such that $A$ is strongly $(\X',\M')$-singular where $L'$ is any $X'$--$Y'$ pairing.
Since $|(X'\cup Y')\setminus N_A| = 3$ we may asssume w.l.o.g.\ that $N_B\subseteq X'$ and $Y'\subseteq N_A\cup \{v\}$.
So either there is $z\in X'\setminus (N_A\cup N_B)$ or $v\in Y'$.
By repeated application of \autoref{thm:chooseA} we may assume that $N_A\subseteq X'$.
This means that $L'$ has the vertices $\bar{N}_A\coloneq N_A\times \{0\}$, $\bar{v}\coloneq (v,0)$, $\bar{w}\coloneq (w,0)$, and $\bar{z}\coloneq (z,0)$ in the first case or $\bar{z}\coloneq (v,\infty)$ in the second case.
So $L'$ must satisfy one of the following.
\begin{enumerate}
\item No edge of $L'$ has both ends in $\{\bar{v}, \bar{w},\bar{z}\}$.
\item $\bar{v}\bar{w}\in E(L')$.
\item $\bar{v}\bar{z}\in E(L')$.
\item $\bar{w}\bar{z}\in E(L')$.
\end{enumerate}
This leaves us with eight cases in total.
Since construction is almost the same for all cases we provide the details for only one of them:
We assume that $v\in Y'$ and $\bar{w}\bar{z}\in E(L')$.
Then $L'$ has edges $(w,0)(v,\infty)$ and $\{(x_i,0)(y_i,0)\mid i=1,\ldots, k-1\}$ where $x_1\coloneq v$ and $X'\cap N_A = \{x_2,\ldots, x_{k-1}, y_1,\ldots, y_{k-1}\}$.
For $i=0,\ldots, k-1$ set $X_i\coloneq \{w\}\cup \bigcup_{j>i}\{x_j, y_j\}$ and let $X_k\coloneq \{v\}$.
Set $M_1\coloneq vy_1$ and $M_i\coloneq x_ivy_i$ for $i=2,\ldots, k-1$ and let $M_k$ be a $w$--$z$ path in $H$ that is internally disjoint from $A$.
Then $(\X',\M')\coloneq ((X_0,\ldots, X_k), (M_1,\ldots, M_k))$ is an $L'$-movement and $A$ is strongly $(\X',\M')$-singular.
\end{proof}

\begin{lem}\label{thm:maxdeg}
Let $k$ be a positive integer and $H$ a connected graph with a marginal set $A$.
	Let $X,Y\subseteq V(H)$ with $|X|+|Y| = 2k$ and $X\cap Y\cap A=\emptyset$.
	Suppose that there is a vertex $v$ of $H-(X\cup Y\cup A)$ such that 
	\[2 |N_H(v)\setminus A| + |N_H(v)\cap A| \geq 2k+1.\]
	Then for any $(X,Y)$-pairing $L$ there is an $L$-movement of length at most $k(k+2)$ on $H$ such that $A$ is singular.
\end{lem}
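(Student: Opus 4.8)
The plan is to route every token through the hub $v$. Recall that the induced pairing $L$ of a movement is a $1$-regular graph on $(X\times\{0\})\cup(Y\times\{\infty\})$ whose components record token trajectories: an edge inside $X\times\{0\}$ means two tokens are destroyed, an edge inside $Y\times\{\infty\}$ means a pair is created, and an edge between the two sides means a token is relocated. First I would apply \autoref{thm:chooseXY}: it suffices to exhibit, for a conveniently chosen pair of sets $X',Y'$ (with $|X'|=|X|$, $|Y'|=|Y|$ and $(X\cup X')\cap(Y\cup Y')$ disjoint from $A$) and for every $(X',Y')$-pairing $L'$, a short $L'$-movement on $H$ in which $A$ is strongly singular. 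The weighted-degree hypothesis $2|N_H(v)\setminus A|+|N_H(v)\cap A|\geq 2k+1$ says exactly that the neighbours of $v$ have enough capacity to host the $2k$ trajectory endpoints of such an $L'$: a neighbour outside $A$ can be the endpoint of two trajectories (it may be occupied at time $0$ and at time $n$, and tokens may be slid through it), a neighbour inside $A$ of one (to keep $A$ singular), and the slack ``$+1$'' leaves one extra unit. I would choose $X',Y'\subseteq N_H(v)$ with $X'\cap Y'\subseteq N_H(v)\setminus A$ realising such a packing, and arrange moreover that at every moment when scratch space is needed there is a free vertex outside $A$ available — which the ``$+1$'' guarantees, whether as a permanently unused neighbour of $v$, or via a neighbour freed by a destruction, or one not yet created. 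Checking that such a choice of $X',Y'$ exists for all $|X|,|Y|$ with $|X|+|Y|=2k$ is the main piece of bookkeeping.

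Having done this, \autoref{thm:chooseA} lets me flip strongly singular vertices between the two sides of the pairing, so I may restrict to one normal form of $L'$, and then build a movement keeping $v$ permanently unoccupied. Realise each edge $\{a,b\}$ of $L'$ inside $X'\times\{0\}$ (destroy) and each edge inside $Y'\times\{\infty\}$ (create) by the single move along the path $avb$, and each cross edge $\{a,b\}$ (relocate from $a$ to $b$) by the same move $avb$. A relocation onto $b$ is legal only while $b$ is unoccupied, so the relocations must respect the dependency digraph $D$ on $X'\cup Y'$ that has an arc $a\to b$ when the token on $a$ is destined for $b$. Every vertex of $D$ has in- and out-degree at most one, so $D$ is a disjoint union of directed paths and directed cycles, and a cycle lies entirely in $X'\cap Y'$. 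Process the directed paths in reverse order (no scratch space needed), and break each directed cycle $x_1\to\cdots\to x_t\to x_1$ by first sliding the token on $x_1$ past $v$ onto a free scratch vertex, then rotating the remaining tokens one step each through $v$, and finally returning the parked token — the classical one-spare rotation, which costs $t+1$ moves and stays legal because $v$ is never occupied. Since $v\notin A$, no move of the construction has an inner vertex in $A$, and since each neighbour of $v$ in $A$ serves only as a trajectory endpoint and is vacated at most once, $A$ is strongly singular for the resulting movement, hence singular for the movement \autoref{thm:chooseXY} returns.

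For the length: destructions and creations cost at most one move per pair, acyclic relocations at most one each, each directed cycle of length $t$ costs $t+1$ moves with the cycles vertex-disjoint on the at most $k$ vertices of $X'\cap Y'$, and \autoref{thm:chooseXY} adds at most $|X|+|Y|=2k$; a generous tally stays below $k(k+2)$.

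The step I expect to be the real obstacle is the case $|X|\neq|Y|$. When the sides are far from balanced, $N_H(v)$ may be too small to hold the larger of $X',Y'$ at once — the hypothesis bounds only a weighted degree, not the degree — so one cannot stage the whole pairing at the neighbours of $v$ simultaneously. The remedy is to stage the tokens instead in a bounded broom rooted at $v$ (reachable using \autoref{thm:simpletokens}) and to carry out the destructions (respectively, creations) a bounded number at a time, interleaving them with clearing moves that shuttle tokens up towards $v$; each clearing episode costs $O(|X|+|Y|)$ additional moves, and it is this $15$-puzzle-style shuffling — not the clean hub argument above — that forces the quadratic bound $k(k+2)$ rather than a linear one. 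Guaranteeing the scratch space throughout (for the cyclic dependencies and for the clearing) and verifying that the packing always leaves the required slack is where the care goes; \autoref{thm:wilson} is not needed for this lemma.
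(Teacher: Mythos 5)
Your overall strategy --- reduce to favourably placed sets $X',Y'$ near the hub $v$ via \autoref{thm:chooseXY} and \autoref{thm:chooseA}, then route through $v$ --- matches the paper's, but the construction is never completed, and you yourself flag the hole: the unbalanced case $|X|\neq|Y|$, where $N_H(v)$ cannot hold all of $X'\cup Y'$ at once. The ``bounded broom'' and 15-puzzle shuffling you sketch as a remedy is exactly where a real argument would be needed, and nothing in the proposal pins it down; indeed \autoref{thm:simpletokens} only moves tokens along a tree and does not obviously let you stage tokens at $v$ a few at a time while keeping $A$ singular.

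The step you are missing is that the whole thing should be done by induction on $k$, one pair at a time, rather than attempting to schedule the entire pairing up front. The paper chooses $X',Y'$ (with properties like $N_A\subseteq X'\cup Y'$, and $N_B\subseteq X'$ or $X'\subset N_A\cup N_B$) so that, by the pigeon hole principle, some edge of $L'$ has both ends at neighbours of $v$ --- either two from the same side (a pair to destroy or create with the single move $xvy$) or, after a flip by \autoref{thm:chooseA}, one on each side. That one pair is discharged with one move through $v$, the two vertices are deleted from $H$, and one recurses with $k-1$. The crucial observation that makes this painless, and which your scheme has no analogue of, is that deleting those two vertices decreases the weighted degree $2|N_H(v)\setminus A|+|N_H(v)\cap A|$ by at most $2$, so the inductive hypothesis $\geq 2(k-1)+1$ is preserved. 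This sidesteps the unbalanced case entirely: you never need $N_H(v)$ to hold more than one pair, so there is no staging problem, and the quadratic length bound comes from the at most $k$ recursion levels each contributing $O(k)$ moves via \autoref{thm:chooseXY}. Your cyclic-dependency rotation analysis is also unnecessary in this scheme, since after reducing via \autoref{thm:chooseA} the only residual case is $X',Y'\subseteq N_B$ with $N_A=\emptyset$ and $L'$ balanced, where $|N_B|\geq k+1$ provides the spare slot directly.
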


Although the basic idea is still the same as in \autoref{thm:stars} it gets a little more complicated here as our star might not have enough leaves to hold all tokens at the same time.
Hence we prefer an inductive argument over an explicit construction.

\begin{proof}
Set $N_A\coloneq N_H(v)\cap A$ and $N_B\coloneq N_H(v)\setminus A$.
If $|N_A|\geq 2k-1$, then we are done by \autoref{thm:stars} as $3k\leq k(k+2)$.
So we may assume that $|N_A|\leq 2k-2$.
Under this additional assumption we prove a slightly stronger statement than that of \autoref{thm:maxdeg} by induction on~$k$:
We not only require that $A$ is singular but also that all vertices of $A$ that are not strongly singular are in $N_A\setminus (X\cup Y)$.

The base case $k=1$ is trivial.
Suppose that $k\geq 2$.
There are sets $X',Y'\subseteq V(H)$ such that
\begin{enumerate}

\item
$|X'|=|X|$ and $|Y'| = |Y|$,

\item
$X\cap N_A\subseteq X'$ and $X'\cap A\subseteq X\cup N_A$,

\item
$Y\cap N_A\subseteq Y'$ and $Y'\cap A\subseteq Y\cup N_A$,

\item
$N_A\subseteq X'\cup Y'$ and $X'\cap Y'\cap A=\emptyset$,

\item
$N_B\subseteq X'$ or $X'\subset N_A\cup N_B$,

\item
$N_B\subseteq Y'$ or $Y'\subset N_A\cup N_B$, and

\item
$v\notin X'$ and $v\notin Y'$.
\end{enumerate}

By \autoref{thm:chooseXY} it suffices to find an $L'$-movement $(\X',\M')$ of length at most $k^2$ such that $A$ is strongly $(\X',\M')$-singular where $L'$ is any $X'$--$Y'$ pairing.

If there are $x,y \in X'\cap N_H(v)$ such that $(x,0)(y,0)\in E(L')$, then set $X''\coloneq X'\setminus \{x,y\}$, $Y''\coloneq Y'$, $H''\coloneq H-(A\setminus (X''\cup Y''))$, and $L''\coloneq L' -\{(x,0),(y,0)\}$.
We have $N_{H''}(v)\setminus A = N_B$ and $N_{H''}(v)\cap A = N_A\setminus\{x,y\}$ as $N_A\subseteq X'\cup Y'$.
This means
\[2|N_{H''}(v)\setminus A| + |N_{H''}(v)\cap A| \geq 2|N_B| + |N_A| - 2\geq 2k-1.\]
Hence by induction there is an $L''$-movement $(\X'',\M'')$ of length at most $(k+1)(k-1)$ on $H''$ such that $A$ is singular and all vertices of $A$ that are not strongly singular are in $N_A\setminus (X''\cup Y'')$.
Since $N_A\cap V(H'')\subseteq X''\cup Y''$ the set $A$ is strongly $(\X'',\M'')$-singular.
Then by construction $(\X',\M')\coloneq((X', X''), (xvy))\oplus (\X'',\M'')$ is an $L'$-movement of length at most $k^2$ and $A$ is strongly $(\X',\M')$-singular.

The case $x,y \in Y'\cap N_H(v)$ with $(x,\infty)(y,\infty)\in E(L')$ is symmetric.
If there are $x\in X'\cap N_H(v)$ and $y\in Y'\cap N_H(v)$ such that $(x,0)(y,\infty)\in E(L')$ and at least one of $x$ and $y$ is in $N_A$, then the desired movement exists by \autoref{thm:chooseA} and one of the previous cases.

By assumption
\[2|N_H(v)|\geq 2 |N_B| + |N_A| \geq 2k+1\]
and thus $|N_H(v)|\geq k+1$.
If $N_B\subseteq X'$, then $N_H(v)\subseteq X'\cup (Y'\cap A)$ and there is a pair as above by the pigeon hole principle.
Hence we may assume that $X'\subset N_B$ and by symmetry also that $Y'\subset N_B$.
This implies that $N_A=\emptyset$ and that $L'$ is balanced.

So we have $|X'|=k=|Y'|$, $X',Y'\subseteq N_B$ and $|N_B|\geq k+1$.
It is easy to see that there is an $L'$-movement $(\X',\M')$ of length at most $2k\leq k^2$ on $H[\{v\}\cup N_B]$ such that $A$ is strongly $(\X',\M')$-singular.
\end{proof}

\begin{lem}\label{thm:corecase}	
	Let $n\in\NN$ and let $f:\NN_0\to \NN_0$ be the map that is recursively defined by setting $f(0)\coloneq 0$ and $f(k)\coloneq 2k +2n! +4 + f(k-1)$ for $k >0$.
	Let $k$ be a positive integer and let $H$ be a connected graph on at most $n$ vertices with a marginal set $A$.
	Let $X,Y\subseteq V(H)$ with $|X|+|Y| = 2k$ and $X\cap Y\cap A=\emptyset$ such that neither $X$ nor $Y$ contains all vertices of $H-A$.
	Suppose that there is a block $D$ of $H-A$ such that $D$ contains a triangle and $2|D| + |N(D)| \geq 2k+3$.
	Then for any $(X,Y)$-pairing $L$ there is an $L$-movement of length at most $f(k)$ on $H$ such that $A$ is singular.
\end{lem}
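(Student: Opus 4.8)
The goal is to reduce \autoref{thm:corecase} to \autoref{thm:wilson} (Wilson's theorem on the token game) applied to the block $D$, using the auxiliary Lemmas \autoref{thm:chooseXY} and \autoref{thm:chooseA} to massage the terminal sets so that everything happens inside $D$ and so that the vertices of $A$ end up singular. The hypothesis $2|D| + |N(D)| \ge 2k+3$ is exactly what is needed to guarantee that $D$ (together with its attachment set $N(D)$, which behaves like extra ``marginal'' slack) is big enough to run the token game for $2k$ tokens, and the recursion in $f$ is designed to peel off tokens one at a time when $D$ alone is not quite large enough.

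First I would handle the easy case where $D$ is large outright: if $|D| \ge k+1$, then $D$ is a $2$-connected graph containing a triangle on at least $k+1$ vertices, so \autoref{thm:wilson} applies directly to produce a balanced movement inside $D$ for any bijection between $k$-subsets of $V(D)$. To turn this into an $L$-movement for an arbitrary (not necessarily balanced) $(X,Y)$-pairing $L$ with $|X|+|Y|=2k$, I would choose auxiliary sets $X', Y' \subseteq V(D)$ with $|X'|=|X|$, $|Y'|=|Y|$, chosen so that $X' \cap Y' = \emptyset$ (possible since $|X'|+|Y'| = 2k \le |D|$... here one may need $|D| \ge 2k$, so in fact the truly easy subcase is $|D| \ge 2k$; otherwise see below) and then apply \autoref{thm:chooseA} repeatedly to flip the ``strongly singular'' vertices of $X' \cup Y'$ from one side to the other, converting any given pairing into a balanced one, apply Wilson, and flip back. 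Then \autoref{thm:chooseXY} (with marginal set $A$) transports the movement from $(X',Y')$ back to the original $(X,Y)$ at the cost of an additive $2k$ in the length, keeping $A$ singular. The bookkeeping here just has to confirm the length stays below $f(k)$, which it does since $n!/(n-k)! + 2k$ is dwarfed by $f(k)$.

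The substantive case is when $D$ is not large enough to hold all $2k$ tokens at once, i.e. $|D|$ is small but $|N(D)|$ is large (the constraint $2|D|+|N(D)| \ge 2k+3$ forces this tradeoff). Here I would argue by induction on $k$, mirroring the structure of the proof of \autoref{thm:maxdeg}. The idea: pick a vertex $v$ in the triangle inside $D$ and use $v$ as a ``hub''. Using \autoref{thm:chooseXY} and \autoref{thm:chooseA} I would first normalize the terminals so that $N(D) \cup \{v\}$-related vertices sit where I want them and $v \notin X' \cup Y'$. Then, if the pairing $L'$ pairs two vertices on the same side (both in $X'$ or both in $Y'$) via a path through the hub, or pairs two vertices across through the hub, I move one token on top of the other through a length-$2$ path $xvy$, deleting that pair, and delete the corresponding vertices of $A$-or-$N(D)$ from the graph; the inequality $2|D''|+|N(D'')| \ge 2k-1$ is preserved (we lose at most $2$ from the count), and $D''$ still contains the triangle since $v$ and two of its triangle-neighbours survive, so induction with parameter $k-1$ applies, giving a movement of length at most $f(k-1)$; prepending the single move of length $2$ and accounting for the \autoref{thm:chooseXY}/\autoref{thm:chooseA} overhead ($\le 2k + 2n! + 4$) yields total length at most $f(k)$. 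The base case $k=0$ is vacuous ($f(0)=0$). When no such hub-realizable pair exists in $L'$, one falls into a residual balanced situation small enough (at most $k+1$ vertices of $N(D) \cup \{v\}$ available, as in the end of the proof of \autoref{thm:maxdeg}) that the movement can be written down explicitly with length $\le 2k$.

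The main obstacle I expect is the combinatorial case analysis in the inductive step: ensuring that, after the normalizations via \autoref{thm:chooseXY} and \autoref{thm:chooseA}, one can \emph{always} find a pair of terminals that is ``hub-realizable'' (realized by a path through $v$ staying inside $D$, using at most one extra vertex), unless one has already reached the small balanced base situation. This requires a careful pigeonhole on $N_D(v) \cup (N(D) \cap (X' \cup Y'))$ against $X' \cup Y'$, exactly analogous to the argument near the end of \autoref{thm:maxdeg} but complicated by the fact that $D$ is a block rather than a star, so that $v$'s neighbourhood in $D$ may be small; one compensates using $2$-connectivity of $D$ to route through $D$ rather than literally through $v$, which is why the length bound carries the factor $n!$ (a Wilson-type subroutine) rather than being linear. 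A second, more routine, obstacle is keeping the two hypotheses ``neither $X$ nor $Y$ contains all of $H - A$'' and ``$X \cap Y \cap A = \emptyset$'' intact through the normalization and through the induction — the first is needed so that there is always ``room'' (an unoccupied vertex outside $A$) to perform a move, and the second is an invariant that \autoref{thm:chooseXY} is already set up to respect.
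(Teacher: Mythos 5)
Your high-level strategy matches the paper's: induct on $k$, use Wilson's theorem inside the $2$-connected block $D$ to shuffle one token next to its partner and destroy the pair, reset the terminals via \autoref{thm:chooseXY} and \autoref{thm:chooseA}, and charge the overhead to the recursion $f(k)=2k+2n!+4+f(k-1)$. You have also correctly intuited that the $n!$ term comes from a Wilson subroutine rather than a literal hub path, and that the hard part is showing a ``realizable pair'' always exists when the terminals are not yet entirely inside $D$.

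But the plan as written has two concrete gaps. First, you never treat the case $|N(D)\cap A|\geq 2k-1$: the paper handles it up front by appealing to \autoref{thm:stars}, and only after assuming $|N(D)\cap A|\leq 2k-2$ does it strengthen the inductive statement (all non-strongly-singular vertices of $A$ must lie in $N(D)\cap A\setminus(X\cup Y)$). Your inductive hypothesis is not strong enough to propagate through the reduction if $A$-vertices in $N(D)$ are plentiful, so this split is not optional. Second, the ``careful pigeonhole'' you defer is the bulk of the proof. The paper divides the elimination step into four cases according to where the two ends of the eliminated edge of $L'$ lie (both in $V(D)$; one in $V(D)$ and one in $N(D)$; both in $N(D)\setminus A$; and the balanced-edge variants crossing to $N(D)\setminus A$), proves a separate claim for each, and then shows via an edge-count on $L'$,
\[
\|L'\|\geq |X'\cap V(D)| + |X'\cap N_B|/2 + |(X\cup Y)\cap N_A|/2,
\]
that under $2|D|+|N(D)|\geq 2k+3$ one of these cases must occur unless $X',Y'\subseteq V(D)$, in which case Wilson finishes directly. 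Your sketch asserts this but does not establish it, and your target invariant for the induction (``$2|D''|+|N(D'')|\geq 2k-1$'') is in any event too weak — the recursion needs $\geq 2k+1 = 2(k-1)+3$; fortunately at most two vertices of $N(D)$ are ever removed, so the correct bound does hold, but the arithmetic has to be tracked as in Claims~\ref{clm:core_D_edge} and~\ref{clm:core_ND_edge}.
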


\begin{proof}
Set $N_A\coloneq N(D)\cap A$ and $N_B\coloneq N(D)\setminus A$.
If $|N_A|\geq 2k-1$, then we are done by \autoref{thm:stars} as $3k\leq f(k)$.
So we may assume that $|N_A|\leq 2k-2$.
Under this additional assumption we prove a slightly stronger statement than that of \autoref{thm:corecase} by induction on~$k$:
We not only require that $A$ is singular but also that all vertices of $A$ that are not strongly singular are in $N_A\setminus (X\cup Y)$.
As always the base case $k=1$ is trivial.
Suppose that $k\geq 2$.
\begin{clm}\label{clm:core_D_edge}
Suppose that $|V(D)\setminus X|\geq 1$ and that there is an edge $(x,0)(y,0)\in E(L)$ with $x\in V(D)$ and $y\in V(D)\cup N(D)$.
Let $A'\subseteq A\setminus (X\cup Y)$ with $|A'|\leq 1$.
Then there is an $L$-movement of length at most $|D|! + 1 + f(k-1)$ on $H-A'$ such that $A$ is singular and every vertex of $A$ that is not strongly singular is in $N_A\setminus (X\cup Y)$.
\end{clm}
\begin{proof}
Let $y'$ be a neighbour of $y$ in $D$.
Here is a sketch of the idea:
Move the token from $x$ to $y'$ by a movement on $D$ which we can generate with Wilsons's \autoref{thm:wilson} and then add the move $yy'$.
This ``destroys'' one pair of tokens and allows us to invoke induction.

We assume $x\neq y'$ (in the case $x=y'$ we can skip the construction of $(\X_{\varphi},\M_{\varphi})$ in this paragraph).
Set $X'\coloneq (X\setminus\{x\})\cup\{y'\}$ if $y'\notin X$ and $X'\coloneq X$ otherwise.
The vertices $x$ and $y'$ are both in the $2$-connected graph $D$ which contains a triangle.
By definition $|X\cap V(D)| = |X'\cap V(D)|$ and by assumption both sets are smaller than $|D|$.
Let $\varphi:X\to X'$ any bijection with $\varphi|_{X\setminus V(D)} = \id|_{X\setminus V(D)}$ and $\varphi(x)=y'$.
By \autoref{thm:wilson} there is a balanced $L(\varphi|_{V(D)})$-movement of length at most $|D|!$ on $D$ so by \autoref{thm:concatenation} (\ref{itm:cc_union}) there is a balanced $L(\varphi)$-movement $(\X_{\varphi},\M_{\varphi})$ of length at most $|D|!$ on $H$ such that all its moves are contained in $D$.

Set $X''\coloneq X'\setminus \{y,y'\}$ and let $L'$ be the $X'$--$X''$ pairing with edge set $\{(z,0)(z,\infty)\mid z\in X''\}\cup\{(y,0)(y',0)\}$.
Clearly $((X', X''), (yy'))$ is an $L'$-movement.
Let $L''$ be the $X''$--$Y$ pairing obtained from $L$ by deleting the edge $(x,0)(y,0)$ and substituting every vertex $(z,0)$ with $(\varphi(z), 0)$.
By definition we have $L = L(\varphi)\oplus L'\oplus L''$.

The set $A''\coloneq A'\cup (A\cap \{y\})$ has at most~$2$ elements and thus $2|D| + |N(D)\setminus A''|\geq 2k+1$.
So by induction there is an $L''$-movement $(\X'',\M'')$ of length at most $f(k-1)$ on $H-A''$ such that $A$ is $(\X'',\M'')$-singular and every vertex of $A$ that is not strongly  $(\X'',\M'')$-singular is in $N_A\setminus (X\cup Y)$.
Hence the movement
\[(\X,\M)\coloneq (\X_{\varphi},\M_{\varphi}) \oplus \big((X', X''), (yy')\big) \oplus (\X'',\M'')\]
on $H-A'$ has induced pairing $L$ by \autoref{thm:concatenation_pairing} and length at most $|D|! + 1 + f(k-1)$.
Every move of $\M$ that contains a vertex of $A\setminus\{y\}$ is in $\M''$.
Hence $A\setminus\{y\}$ is $(\X,\M)$-singular and every vertex of $A\setminus\{y\}$ that is not strongly $(\X,\M)$-singular is in $N_A\setminus (X\cup Y)$.
If $y\notin A$, then we are done.
But if $y\in A$, then our construction of $(\X'',\M'')$ ensures that no move of $\M''$ contains $y$.
Therefore $y$ is strongly $(\X,\M)$-singular.
\end{proof}

\begin{clm}\label{clm:core_ND_edge}
Suppose that $|V(D)\setminus X|\geq 2$ and that $L$ has an edge $(x,0)(y,0)$ with $x, y\in N(D)$.
Then there is an $L$-movement of length at most $2|D|!+2+f(k-1)$ on $H$ such that $A$ is singular and every vertex of $A$ that is not strongly singular is in $N_A\setminus (X\cup Y)$.
\end{clm}
\begin{proof}
The proof is very similar to that of \autoref{clm:core_D_edge}.
Let $y'$ be a neighbour of $y$ in $D$.
We assume $y'\in X$ (in the case $y'\notin X$ we can skip the construction of $(\X_{\varphi},\M_{\varphi})$ in this paragraph).
Let $z\in V(D)\setminus X$ and let $X'\coloneq (X\setminus\{y'\})\cup \{z\}$.
Let $\varphi:X\to X'$ be any bijection with $\varphi|_{X\setminus V(D)}=\id_{X\setminus V(D)}$ and $\varphi(y')=z$.
Applying \autoref{thm:wilson} and \autoref{thm:concatenation} as in the proof of \autoref{clm:core_D_edge} we obtain a balanced $L(\varphi)$-movement $(\X_{\varphi},\M_{\varphi})$ of length at most $|D|!$ such that its moves are contained in $D$ (in fact, we could ``free'' the vertex $y'$ with only $|D|$ moves by shifting each token on a $y'$--$z$ path in $D$ by one position towards $z$, but we stick with the proof of \autoref{clm:core_D_edge} here for simplicity).

Set $X''\coloneq (X'\setminus\{y\})\cup \{y'\}$ and let $\varphi':X'\to X''$ be the bijection that maps $y$ to $y'$ and every other element to itself.
Clearly $((X',X''),(yy'))$ is an $L(\varphi')$-movement.
Let $L''$ be the $X''$--$Y$ pairing obtained from $L$ by substituting every vertex $(z,0)$ with $(\varphi'\circ\varphi(z), 0)$.
It is not hard to see that this construction implies $L=L(\varphi)\oplus L(\varphi')\oplus L''$.
Since $(0,x)(0,y')$ is an edge of $L''$ with $x\in V(D)\cup N(D)$ and $y'\in V(D)$ we can apply \autoref{clm:core_D_edge} to obtain an $L''$-movement $(\X'',\M'')$ of length at most $|D|!+1+f(k-1)$ on $H-(\{y\}\cap A)$ (note that $y\in A\cap X$ implies $y\notin Y$ by assumption) such that $A\setminus \{y\}$ is $(\X'',\M'')$-singular and every vertex of $A\setminus\{y\}$ that is not strongly $(\X'',\M'')$-singular is in $N_A\setminus(X\cup Y)$.
Hence the movement
\[(\X,\M)\coloneq (\X_{\varphi},\M_{\varphi}) \oplus \big((X', X''), (yy')\big) \oplus (\X'',\M'')\]
on $H$ has induced pairing $L$ by \autoref{thm:concatenation_pairing} and length at most $2|D|! + 2 + f(k-1)$.
The argument that $A$ is $(\X,\M)$-singular and the only vertices of $A$ that are not strongly $(\X,\M)$-singular are in $N_A\setminus(X\cup Y)$ is the same as in the proof of \autoref{clm:core_D_edge}.
\end{proof}

Pick any vertex $v\in V(D)$.
There are sets $X',Y'\subseteq V(H)$ such that
\begin{enumerate}
\item
$|X'|=|X|$ and $|Y'| = |Y|$,

\item
$X\cap N_A\subseteq X'$ and $X'\cap A\subseteq X\cup N_A$,

\item
$Y\cap N_A\subseteq Y'$ and $Y'\cap A\subseteq Y\cup N_A$,

\item
$N_A\subseteq X'\cup Y'$ and $X'\cap Y'\cap A=\emptyset$,

\item
$N_B\subseteq X'$ or $X'\subset N_A\cup N_B$,

\item
$N_B\subseteq Y'$ or $Y'\subset N_A\cup N_B$,

\item
$v\notin X'$ and $v\notin Y'$,

\item
$V(D)\cup N_B\subseteq X'\cup\{v\}$ or $X'\subset V(D)\cup N(D)$, and

\item
$V(D)\cup N_B\subseteq Y'\cup\{v\}$ or $Y'\subset V(D)\cup N(D)$.
\end{enumerate}
By \autoref{thm:chooseXY} it suffices to find an $L'$-movement $(\X',\M')$ of length at most $f(k)-2k$ on $H$ such that $A$ is strongly $(\X',\M')$-singular where $L'$ is any $X'$--$Y'$ pairing.

Since $n\geq |D|$ we have $f(k)-2k\geq 2|D|!+2+f(k-1)$ and by assumption $v\in V(D)\setminus X'$.
If $L'$ has an edge $(0,x)(0,y)$ with $x\in V(D)$ and $y\in V(D)\cup N(D)$, then by \autoref{clm:core_D_edge} there is an $L'$-movement $(\X',\M')$ of length at most $f(k)-2k$ on $H$ such that $A$ is strongly $(\X',\M')$-singular (recall that $N_A\setminus (X'\cup Y')$ is empty by choice of $X'$ and $Y'$).
So we may assume that $L'$ contains no such edge and by \autoref{thm:chooseA} we may also assume that it has no edge $(x,0)(y,\infty)$ with $x\in V(D)$ and $y\in N_A$.

Counting the edges of $L'$ that are incident with a vertex of $(V(D)\cup N(D))\times\{0\}$ we obtain the lower bound
\[\|L'\|\geq |X'\cap V(D)| + |X'\cap N_B|/2 + |(X\cup Y) \cap N_A|/2.\]
If $V(D)\cup N_B\subseteq X'\cup\{v\}$, then $|X'\cap V(D)|=|D|-1$ and $|X'\cap N_B| = |N_B|$.
Since $|(X'\cup Y') \cap N_A| = |N_A|$ this means
\[2k=2\|L'\|\geq 2(|D|-1) + |N_B| + |N_A| \geq 2|D| + |N(D)| - 2 \geq 2k + 1,\]
a contradiction.
So we must have $X'\subset V(D)\cup N(D)$ and $|V(D)\setminus X'|\geq 2$.
Applying \autoref{clm:core_ND_edge} in the same way as \autoref{clm:core_D_edge} above we deduce that no edge of $L'$ has both ends in $X\times\{0\}$ or one end in $X\times\{0\}$ and the other in $N_A\times\{\infty\}$.
By symmetry we can obtain statements like \autoref{clm:core_D_edge} and \autoref{clm:core_ND_edge} for $Y$ instead of $X$ thus by the same argument as above we may also assume that $Y'\subset V(D)\cup N(D)$ and that no edge of $L'$ has both ends in $Y\times\{\infty\}$ or one end in $N_A\times\{0\}$ and the other in $Y\times\{\infty\}$.
Hence $L'$ is balanced and $N_A=\emptyset$.
Let $\varphi':X'\to Y'$ be the bijection with $L'=L(\varphi')$.

In the rest of the proof we apply the same techniques that we have already used in the proof of \autoref{clm:core_D_edge} and again in that of \autoref{clm:core_ND_edge} so from now on we only sketch how to construct the desired movements.
Furthermore, all constructed movements use only vertices of $V(D)\cup N_B$ for their moves so $A$ is trivially strongly singular w.r.t.\ them.

If $N_B\setminus X'\neq\emptyset$, then we have $X'\subset N_B$ by assumption, so $|N_B|\geq k+1$ and thus also $Y'\subset N_B$.
This is basically the same situation as at the end of the proof for \autoref{thm:maxdeg} so we find an $L'$-movement of length at most $2k\leq f(k)$.
We may therefore assume that $N_B\subseteq X'\cap Y'$.

\begin{clm}\label{clm:core_D_balanced}
Suppose that $L'$ has an edge $(x,0)(y,\infty)$ with $x\in V(D)$ and $y\in N_B$.
Then there is an $L'$-movement $(\X',\M')$ of length at most $|D|!+2+f(k-1)$ such that the moves of $\M'$ are disjoint from $A$.
\end{clm}

\begin{proof}
Let $y'$ be a neighbour of $y$ in $V(D)$.
Since $D$ is $2$-connected, $y'$ has two distinct neighbours $y_l$ and $y_r$ in $D$.
Using \autoref{thm:wilson} we generate a balanced movement of length at most $|D|!$ on $H$ such that all its moves are in $D$ and its induced pairing has the edge $(x,0)(y_l,\infty)$ and its final configuration does not contain $y'$ or $y_r$.
Adding the two moves $yy'y_r$ and $y_ly'y$ then results in a movement $(\X_x, \M_x)$ of length at most $|D|!+2$ whose induced pairing $L_x$ contains the edge $(x,0)(y,0)$.

It is not hard to see that there is a pairing $L''$ such that $L_x\oplus L'' = L$ and this pairing must have the edge $(y,0)(y,\infty)$.
By induction there is an $L''$-movement $(\X'', \M'')$ of length at most $f(k-1)$ such that none if its moves contains $y$.
So $(\X',\M')\coloneq (\X_x, \M_x)\oplus (\X'', \M'')$ is an $L'$ movement of length at most $|D|!+2 + f(k-1)$ as desired.
\end{proof}

\begin{clm}\label{clm:core_ND_balanced}
Suppose that $L'$ has an edge $(x,0)(y,\infty)$ with $x, y\in N_B$.
Then there is an $L'$-movement $(\X',\M')$ of length at most $2|D|!+4+f(k-1)$ such that the moves of $\M'$ are disjoint from $A$.
\end{clm}

\begin{proof}
Let $x'$ be a neighbour of $x$ in $V(D)$.
Since $D$ is $2$-connected, $x'$ has two distinct neighbours $x_l$ and $x_r$ in $D$.
With the same construction as in \autoref{clm:core_D_balanced} we can generate a movement $(\X_x,\M_x)$ of length at most $|D|!+2$ such that its induced pairing $L_x$ contains the edge $(x,0)(x_r,\infty)$ and $(x'',0)(x,\infty)$ for some vertex $x''\in V(D)\cap X'$.
There is a pairing $L''$ such that $L=L_x\oplus L''$ and $L''$ contains the edge $(x_r,0)(y,\infty)$.

By \autoref{clm:core_D_balanced} there is an $L''$-movement $(\X'',\M'')$ of length at most $|D|!+2+f(k-1)$. 
So $(\X',\M')\coloneq (\X_x, \M_x)\oplus (\X'', \M'')$ is an $L'$ movement of length at most $2|D|! + 4 + f(k-1)$ as desired.
\end{proof}

Since $f(k)-2k\geq 2|D|! + 4 + f(k-1)$ we may assume that $N_B\cap Y'=\emptyset$ and thus $N_B=\emptyset$ by \autoref{clm:core_D_balanced} and \autoref{clm:core_ND_balanced}.
This means $X',Y'\subseteq V(D)$ and therefore by \autoref{thm:wilson} there is an $L'$-movement $(\X',\M')$ of length at most $|D|!\leq n!\leq f(k)-2k$.
This concludes the induction and thus also the proof of \autoref{thm:corecase}.
\end{proof}

\section{Relinkages}\label{sec:relinkage}
This section collects several Lemma that compare different foundational linkages for the same stable regular decomposition of a graph.
To avoid tedious repetitions we use the following convention throughout the section.
\begin{con}
Let $(\W,\P)$ be a stable regular decomposition of some length $l\geq 3$ and attachedness $p$ of a $p$-connected graph $G$ and set $\lambda\coloneq\{\alpha\mid P_{\alpha} \text{ is non-trivial}\}$ and $\theta\coloneq\{\alpha\mid P_{\alpha} \text{ is trivial}\}$.
Let $D$ be a block of $\Gamma(\W,\P)[\lambda]$ and let $\kappa$ be the set of all cut-vertices of $\Gamma(\W,\P)[\lambda]$ that are in $D$.
\end{con}

\begin{lem}\label{thm:global_disturbance}
Let $\Q$ be a foundational linkage.
If $\alpha\beta$ is an edge of $\Gamma(\W,\Q)$ with $\alpha\in\lambda$ or $\beta\in\lambda$, then $\alpha\beta$ is an edge of $\Gamma(\W,\P)$.
\end{lem}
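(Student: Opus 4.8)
The plan is to show that any foundational linkage $\Q$ for $\W$ gives an auxiliary graph $\Gamma(\W,\Q)$ whose edges incident to $\lambda$ are already present in $\Gamma(\W,\P)$; this is essentially the statement that stability of $(\W,\P)$ cannot be ``broken'' by passing to a different foundational linkage. I would first unwind how $\Q$ relates to $\P$ inside a single inner bag. Since $\Q$ is a foundational linkage for the same $\W$, its restriction $\Q[W_i]$ to any inner bag $W_i$ is a linkage from the left adhesion set of $W_i$ to its right adhesion set, so axioms (L10) and (L11) of stability apply to it directly. The trivial paths of $\Q$ and $\P$ coincide bag-by-bag by (L7) (a vertex lies in at most two bags or in all of them, independently of the foundational linkage chosen), so $\theta$ is the same set for $\P$ and $\Q$; in particular $\Q_\theta = \P_\theta$ and the only genuine comparison happens on the non-trivial paths.

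Next I would reduce a global edge of $\Gamma(\W,\Q)$ to a local one. By definition $\Gamma(\W,\Q) = B(G[W_{[1,l-1]}],\Q)$, so if $\alpha\beta \in E(\Gamma(\W,\Q))$ there is a $\Q$-bridge in $G[W_{[1,l-1]}]$ attaching to $Q_\alpha$ and $Q_\beta$. Because $\W$ is a slim decomposition, every $\Q$-bridge is contained in a single bag of $\W$ (the argument given in the excerpt for $\P$-bridges applies verbatim to $\Q$), so this bridge lies in $G[W_i]$ for some inner bag $W_i$; hence $\alpha\beta \in E(B(G[W_i],\Q[W_i]))$. Now I want to transport this edge back to the enumeration used by $\P$ on $W_i$'s left adhesion set. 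Writing $\tau$ for the induced permutation of $\Q$ restricted to the bags strictly to the left of $W_i$ (i.e.\ $\Q[W_{[1,i-1]}]$), the enumeration $\P$ induces on the left adhesion set of $W_i$ differs from the one $\Q$ induces there by $\tau$; so, relabelling, $B(G[W_i],\Q[W_i])$ is the same abstract bridge-adjacency pattern as one that, under the $\P$-enumeration of $W_i$, is witnessed by the linkage $\Q[W_i]$. Here the key point is that $\Q[W_i]$, viewed from the $\P$-enumeration of the left adhesion set of $W_i$, is exactly a candidate linkage to which (L11) applies: every edge of $B(G[W_i],\Q[W_i])$ with an end in $\lambda$ is an edge of $\Gamma(\W,\P)$. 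Since $\tau$ is the induced permutation of a linkage from the left to the right adhesion set of a bag sequence, axiom (L10) guarantees $\tau$ (and its restriction, composed appropriately) is an automorphism of $\Gamma(\W,\P)$, which is needed so that ``having an end in $\lambda$'' is preserved when we push the edge through $\tau$ — recall $\tau$ fixes $\lambda$ and $\theta$ setwise because trivial paths stay trivial.

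Putting it together: given $\alpha\beta\in E(\Gamma(\W,\Q))$ with, say, $\alpha\in\lambda$, localise it to an inner bag $W_i$, translate via $\tau$ into the $\P$-enumeration at $W_i$ so that the corresponding edge $\tau(\alpha)\tau(\beta)$ has $\tau(\alpha)\in\lambda$, apply (L11) to the linkage $\Q[W_i]$ (in $\P$-coordinates) to conclude $\tau(\alpha)\tau(\beta)\in E(\Gamma(\W,\P))$, then apply $\tau^{-1}$, which is an automorphism of $\Gamma(\W,\P)$ by (L10), to get $\alpha\beta\in E(\Gamma(\W,\P))$. The main obstacle I anticipate is purely bookkeeping: keeping straight which adhesion set each enumeration is anchored at, and verifying that the induced permutation $\tau$ restricts correctly so that (L10) and (L11) are being invoked for honest linkages between the left and right adhesion sets of inner bags (not, say, a sub-bag range that starts at an interior bag). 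One has to be careful that $\tau$ is defined using the $\P$-enumeration on $W_0\cap W_1$ and that $\Q$ was chosen (per the excerpt's convention) to induce the same enumeration as $\P$ on $W_0\cap W_1$, so that the two enumerations genuinely differ only by a composition of per-bag induced permutations; once that is set up, both stability axioms apply as stated and the conclusion follows.
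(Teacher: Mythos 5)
Your proposal is correct and follows essentially the same route as the paper's proof: localise the $\Q$-bridge realising $\alpha\beta$ to an inner bag $W_i$, form the induced permutation $\tau$ of the prefix $\Q[W_{[1,i-1]}]$ as a composition of the per-bag induced permutations (each an automorphism of $\Gamma(\W,\P)$ by (L10)), apply (L11) to $\Q[W_i]$ to get $\tau(\alpha)\tau(\beta)\in E(\Gamma(\W,\P))$, and then undo $\tau$. The only cosmetic remark is that (L10) as stated applies to a single inner bag, so one should say explicitly (as the paper does, and as your parenthetical hints) that $\tau$ is an automorphism because it is a composition of the automorphisms coming from each bag, rather than invoking (L10) directly on the multi-bag range.
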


\begin{proof}
Some inner bag $W_k$ of $\W$ contains a $\Q$-bridge $B$ realising $\alpha\beta$, that is, $B$ attaches to $Q_{\alpha}$ and $Q_{\beta}$.
For $i=1,\ldots, k-1$ the induced permutation $\pi_i$ of $\Q[W_i]$ is an automorphism of $\Gamma(\W,\P)$ by (L10) and hence so is the induced permutation $\pi = \prod_{i=1}^{k-1}\pi_i$ of $\Q[W_{[1,k-1]}]$.

Clearly the restriction of any induced permutation to $\theta$ is always the identity, so $\pi(\alpha)\in\lambda$ or $\pi(\beta)\in\lambda$.
Therefore $\pi(\alpha)\pi(\beta)$ must be an edge of $\Gamma(\W,\P)$ by (L11) as $B$ attaches to $\Q[W]_{\pi(\alpha)}$ and $\Q[W]_{\pi(\beta)}$.
Since $\pi$ is an automorphism this means that $\alpha\beta$ is an edge of $\Gamma(\W,\P)$.
\end{proof}

The previous Lemma allows us to make statements about any foundational linkage $\Q$ just by looking at $\Gamma(\W,\P)$, in particular, for every $\alpha\in\lambda$ the neighbourhood $N(\alpha)$ of $\alpha$ in $\Gamma(\W,\P)$ contains all neighbours of $\alpha$ in $\Gamma(\W,\Q)$.
The following Lemma applies this argument.

\begin{lem}\label{thm:stable_bridges}
Let $\Q$ be a foundational linkage such that $\Q[W]$ is $p$-attached in $G[W]$ for each inner bag $W$ of $\W$.
If $\lambda_0$ is a subset of $\lambda$ such that $|N(\alpha)\cap\theta|\leq p-3$ for each $\alpha\in\lambda_0$, then every non-trivial $\Q$-bridge in an inner bag of $\W$ that attaches to a path of $\Q_{\lambda_0}$ must attach to at least one other path of $\Q_{\lambda}$.
\end{lem}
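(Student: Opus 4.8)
\medskip
\noindent\emph{Plan of proof.}
The plan is to zoom in on the single inner bag $W$ of $\W$ that contains the given bridge, apply the definition of $p$-attachedness to $\Q[W]$ inside $G[W]$, and then lift the bridge adjacencies it produces to $\Gamma(\W,\P)$ by means of \autoref{thm:global_disturbance}.

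First I would dispatch two pieces of bookkeeping. Since $\P$ satisfies (L7), so does $\Q$ (this is a property of $\W$ alone, as noted after the definition of regular decompositions), and $\Q_{\theta}=\P_{\theta}$; hence the trivial paths of $\Q$ are exactly the $Q_{\gamma}$ with $\gamma\in\theta$, each being a single vertex lying in every bag of $\W$. It follows that for every inner bag $W$ and every index $\beta$ the restriction $Q_{\beta}[W]$ is trivial if and only if $\beta\in\theta$ — the non-trivial direction being exactly (L7) — so the non-trivial paths of $\Q[W]$ are precisely the $Q_{\beta}[W]$ with $\beta\in\lambda$. Second, a foundational linkage meets every adhesion set of $\W$ in all $q$ of its vertices, so $\Q$ covers every adhesion set; combined with (L2), which implies that the two adhesion sets bordering an inner bag $W$ separate $W$ from the rest of $G$, this gives that a $\Q[W]$-bridge in $G[W]$ is nothing but a $\Q$-bridge of $G$ contained in $W$, with the same attachments, and in particular such a bridge lies inside $G[W_{[1,l-1]}]$.

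The main argument is then short. Let $B$ be a non-trivial $\Q$-bridge in an inner bag $W$ of $\W$ attaching to a path $Q_{\alpha}$ of $\Q_{\lambda_0}$, and suppose for contradiction that $B$ attaches to no other path of $\Q_{\lambda}$. By the first bookkeeping point, $B$ is a non-trivial $\Q[W]$-bridge in $G[W]$ attaching to the non-trivial path $Q_{\alpha}[W]$ of $\Q[W]$ but to no other non-trivial path of $\Q[W]$. Since $\Q[W]$ is $p$-attached in $G[W]$ by hypothesis, the definition of $p$-attachedness forces $p-2$ pairwise distinct indices $\gamma\in\theta$ such that, for each of them, $G[W]$ contains a $\Q[W]$-bridge attaching to $Q_{\alpha}[W]$ and $Q_{\gamma}[W]$. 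By the second bookkeeping point each such bridge is a $\Q$-bridge of $G$ lying in $W\subseteq W_{[1,l-1]}$ and attaching to $Q_{\alpha}$ and $Q_{\gamma}$, so $\alpha\gamma\in E(\Gamma(\W,\Q))$; as $\alpha\in\lambda$, \autoref{thm:global_disturbance} yields $\alpha\gamma\in E(\Gamma(\W,\P))$, i.e.\ $\gamma\in N(\alpha)\cap\theta$. Ranging over the $p-2$ distinct values of $\gamma$ gives $|N(\alpha)\cap\theta|\geq p-2$, contradicting the assumption $|N(\alpha)\cap\theta|\leq p-3$ for $\alpha\in\lambda_0$. Hence $B$ must attach to a further path of $\Q_{\lambda}$, as claimed.

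The only step where I expect to need real care is the second bookkeeping point: identifying $\Q[W]$-bridges of $G[W]$ with genuine $\Q$-bridges of $G$ sitting inside the inner bag $W$ (so that the conclusion of $p$-attachedness of $\Q[W]$ can be read off inside $\Gamma(\W,\Q)$), which hinges on the fact that the adhesion sets isolating $W$ are entirely absorbed by $\Q$. Everything else is a direct unwinding of definitions together with the single appeal to \autoref{thm:global_disturbance}.
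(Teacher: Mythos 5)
Your proof is correct and follows essentially the same route as the paper's: invoke (L7) to identify the non-trivial paths of $\Q[W]$ with $\Q_{\lambda}[W]$, apply $p$-attachedness to extract $p-2$ bridge adjacencies from $Q_{\alpha}[W]$ to paths of $\Q_{\theta}$, transfer these to $\Gamma(\W,\Q)$ and then to $\Gamma(\W,\P)$ via \autoref{thm:global_disturbance}, and contradict $|N(\alpha)\cap\theta|\leq p-3$. Your ``second bookkeeping point'' (identifying $\Q[W]$-bridges in $G[W]$ with $\Q$-bridges of $G$ lying in $W$) is a detail the paper leaves implicit, but it is correct and does not change the argument.
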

\begin{proof}
Suppose for a contradiction that some inner bag $W$ of $\W$ contains a $\Q$-bridge $B$ that attaches to some path $Q_{\alpha}[W]$ with $\alpha\in\lambda_0$ but to no other path of $\Q_{\lambda}[W]$.
Recall that either all foundational linkages for $\W$ satisfy (L7) or none does and $\P$ witnesses the former.
Hence by (L7) a path of $\Q[W]$ is non-trivial if and only if it is in $\Q_{\lambda}[W]$.
So by $p$-attachedness $Q_{\alpha}[W]$ is bridge adjacent to at least $p-2$ paths of $\Q_{\theta}$ in $G[W]$.
Therefore in $\Gamma(\W,\Q)$ the vertex $\alpha$ is adjacent to at least $p-2$ vertices of $\theta$ and by \autoref{thm:global_disturbance} so it must be in $\Gamma(\W,\P)$, giving the desired contradiction.
\end{proof}

\begin{lem}\label{thm:block_separates}
Let $\Q$ be a foundational linkage.
Every $\Q$-bridge $B$ that attaches to a path of $\Q_{\lambda\setminus V(D)}$ has no edge or inner vertex in $G_D^{\Q}$, in particular, it can attach to at most one path of $\Q_{V(D)}$.
\end{lem}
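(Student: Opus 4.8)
The plan is to reduce the statement to a short triangle-and-block argument inside $\Gamma(\W,\P)[\lambda]$. First recall that a block is an induced subgraph: since $V(D)\subseteq\lambda$ and any two blocks of $\Gamma(\W,\P)[\lambda]$ meet in at most one vertex, every edge of $\Gamma(\W,\P)$ joining two vertices of $V(D)$ already lies in $D$, so $\Gamma(\W,\P)[V(D)]=D$, and ``$B$ realises an edge of $D$'' means $B$ attaches to $Q_\gamma$ and $Q_\delta$ for some $\gamma\delta\in E(D)$. Also $V(D)\cap\lambda=V(D)$.

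First I would carry out the reduction. Let $B$ be a $\Q$-bridge attaching to a path $Q_\alpha$ with $\alpha\in\lambda\setminus V(D)$, and suppose $B$ has an edge or an inner vertex in $G_D^\Q$. By construction $G_D^\Q$ is assembled from $\Q_{V(D)}$ together with certain $\Q$-bridges lying in inner bags, after which the paths of $\Q\setminus\Q_{V(D)}$ are deleted. Now no edge of a $\Q$-bridge lies on $\Q$, distinct $\Q$-bridges are edge-disjoint, and an inner vertex of a $\Q$-bridge lies on no path of $\Q$ and on no other $\Q$-bridge (common vertices of two $\Q$-bridges are attachments of both). Hence an edge or inner vertex of $B$ surviving in $G_D^\Q$ can come neither from $\Q_{V(D)}$ nor from a different $\Q$-bridge, so it forces $B$ itself to be one of the assembling $\Q$-bridges; in particular $B$ lies in an inner bag of $\W$ and either realises an edge of $D$, or attaches to $\Q_{V(D)}$ but to no path of $\Q_{\lambda\setminus V(D)}$. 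The second alternative is excluded since $B$ attaches to $Q_\alpha$ with $\alpha\in\lambda\setminus V(D)$; so $B$ realises an edge $\gamma\delta$ of $D$, and thus attaches to the three pairwise distinct foundational paths $Q_\alpha,Q_\gamma,Q_\delta$ (pairwise distinct because $\alpha\notin V(D)\ni\gamma,\delta$ and $\gamma\ne\delta$).

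The heart of the matter is then the triangle argument. Since $B$ lies in an inner bag and attaches to $Q_\alpha,Q_\gamma,Q_\delta$, it realises the edges $\alpha\gamma$, $\alpha\delta$, $\gamma\delta$ of $\Gamma(\W,\Q)$; as $\alpha,\gamma,\delta\in\lambda$, \autoref{thm:global_disturbance} shows all three are edges of $\Gamma(\W,\P)$, hence of $\Gamma(\W,\P)[\lambda]$. They form a triangle on $\{\alpha,\gamma,\delta\}$, which is $2$-connected and therefore contained in a single block of $\Gamma(\W,\P)[\lambda]$; since that block contains the edge $\gamma\delta\in E(D)$, it is $D$, and so $\alpha\in V(D)$ — contradicting $\alpha\in\lambda\setminus V(D)$. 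This establishes that $B$ has no edge or inner vertex in $G_D^\Q$. The ``in particular'' follows: if such a $B$ also attached to two distinct paths $Q_\gamma,Q_\delta$ of $\Q_{V(D)}$, then $B$ would be a $\Q$-bridge with at least three attachments, hence non-trivial and contained in a unique bag, which in the case relevant to $G_D^\Q$ is an inner bag; then \autoref{thm:global_disturbance} applied to $\gamma,\delta\in\lambda$ gives $\gamma\delta\in E(\Gamma(\W,\P))$, hence $\gamma\delta\in E(D)$ as $D$ is induced, so $B$ realises an edge of $D$ and therefore retains an edge in $G_D^\Q$ (the one joining its inner part to an attachment on $Q_\gamma$, which survives since $\gamma\in V(D)$), contradicting the previous sentence.

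I expect the only delicate point to be the bookkeeping in the reduction step: keeping track of which edges and inner vertices of $B$ can survive the deletion of $\Q\setminus\Q_{V(D)}$, and confirming that a single surviving edge or inner vertex really does identify $B$ with one of the $\Q$-bridges used to build $G_D^\Q$ (and so places it in an inner bag, where \autoref{thm:global_disturbance} applies). Once that is settled, the triangle-plus-block argument closes everything, relying only on \autoref{thm:global_disturbance} and on the fact that blocks are induced subgraphs.
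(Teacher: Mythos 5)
Your proposal is correct and follows essentially the same route as the paper: reduce to the case that $B$ is one of the $\Q$-bridges used in assembling $G_D^{\Q}$ (hence in an inner bag and realising an edge $\gamma\delta$ of $D$), invoke \autoref{thm:global_disturbance} to pull the bridge adjacencies back to $\Gamma(\W,\P)$, and then use the block structure of $\Gamma(\W,\P)[\lambda]$ to get the contradiction. The only cosmetic difference is the final step: you observe that $\{\alpha,\gamma,\delta\}$ carries a triangle, which being $2$-connected lies in the unique block containing $\gamma\delta$, namely $D$, forcing $\alpha\in V(D)$; the paper instead uses directly that no vertex of $\lambda\setminus V(D)$ can have two neighbours in the block $D$ — two phrasings of the same block maximality fact.
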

\begin{proof}
By assumption $B$ attaches to some path $Q_{\alpha}$ with $\alpha\in\lambda\setminus V(D)$.
This rules out the possibility that $B$ attaches to only one path of $\Q_{\lambda}$ that happens to be in $\Q_{V(D)}$.
So if $B$ has an edge or inner vertex in $G_D^{\Q}$, then it must realise an edge of $D$.
Hence $B$ attaches to paths $Q_{\beta}$ and $Q_{\gamma}$ with $\beta,\gamma\in V(D)$.
This means that $\alpha\beta$ and $\alpha\gamma$ are both edges of $\Gamma(\W,\Q)$ and thus of $\Gamma(\W,\P)$ by \autoref{thm:global_disturbance}.
But $D$ is a block of $\Gamma(\W,\P)[\lambda]$ so no vertex of $\lambda\setminus V(D)$ can have two neighbours in $D$.
\end{proof}

Given two foundational linkages $\Q$ and $\Q'$ and a set $\lambda_0\subseteq \lambda$, we say that $\Q'$ is a \emph{$(\Q,\lambda_0)$-relinkage} or a \emph{relinkage of $\Q$ on $\lambda_0$} if $Q'_{\alpha}=Q_{\alpha}$ for $\alpha\notin \lambda_0$ and $\Q'_{\lambda_0}\subseteq G_{\lambda_0}^{\Q}$.

\begin{lem}\label{thm:relinkage}
If $\Q$ is a $(\P,V(D))$-relinkage and $\Q'$ a $(\Q,V(D))$-relinkage, then $G_{D}^{\Q'}\subseteq G_{D}^{\Q}$, in particular, $G_{D}^{\Q}\subseteq G_{D}^{\P}$.
\end{lem}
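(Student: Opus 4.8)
The plan is to prove the main inclusion $G_D^{\Q'}\subseteq G_D^\Q$ first; the addendum follows by taking $\Q=\P$, since $\P$ is trivially a $(\P,V(D))$-relinkage (indeed $\P_{V(D)}\subseteq G_{V(D)}^\P=G_D^\P$ straight from the definition, as $\P$ is a linkage), so applying the main inclusion with $\P$ and $\Q$ in the roles of $\Q$ and $\Q'$ gives $G_D^\Q\subseteq G_D^\P$.

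\textbf{Setting up the main inclusion.} The first move is to exploit that $V(D)\subseteq\lambda$: this forces $\Q$ and $\Q'$ to agree on every path indexed outside $V(D)$, so writing $R\coloneq\bigcup_{\alpha\notin V(D)}Q_\alpha$ we have $\Q=\Q_{V(D)}\cup R$ and $\Q'=\Q'_{V(D)}\cup R$, with $R$ vertex-disjoint from each of $\Q_{V(D)},\Q'_{V(D)}$. Unwinding the definition of $G_\Gamma^{\cdot}$ with $\Gamma=\Gamma(\W,\P)[V(D)]$ (a block of $\Gamma(\W,\P)[\lambda]$ is an induced subgraph, so $\Gamma=D$), one obtains $G_D^\Q=\Q_{V(D)}\cup(\B-V(R))$, where $\B$ is the union of the $\Q$-bridges in inner bags of $\W$ that are \emph{relevant}, i.e.\ realise an edge of $D$ or attach to $\Q_{V(D)}$ but to no path of $\Q_{\lambda\setminus V(D)}$, and symmetrically for $\Q'$. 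As $\Q'_{V(D)}\subseteq G_D^\Q$ is given, it remains to show $B'-V(R)\subseteq G_D^\Q$ for every relevant $\Q'$-bridge $B'$ in an inner bag $W$. A vertex of $B'-V(R)$ either lies on $\Q'_{V(D)}\subseteq G_D^\Q$, or lies on $\Q$ off $V(R)$ hence on $\Q_{V(D)}\subseteq G_D^\Q$, or lies off $\Q$ hence is an inner vertex of a unique $\Q$-bridge $B$ of $W$; an edge of $B'-V(R)$ is either an edge of $\Q_{V(D)}\subseteq G_D^\Q$ or lies off $\Q$ hence in a unique $\Q$-bridge $B$ of $W$ with both ends off $V(R)$. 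So everything comes down to the claim: \emph{if a $\Q$-bridge $B$ in an inner bag $W$ and a relevant $\Q'$-bridge $B'$ in $W$ have a common inner vertex, or a common edge with both ends off $V(R)$, then $B$ is relevant} — a relevant $B$ puts the corresponding piece into $\B-V(R)\subseteq G_D^\Q$.

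\textbf{Proof of the claim.} Trivial $B$ is immediate: its single edge has both ends on $\Q$ off $V(R)$, hence on $\Q_{V(D)}$, so $B$ either attaches to a single path of $\Q_{V(D)}$ or realises an edge $\alpha\beta$ with $\alpha,\beta\in V(D)$, and in the latter case \autoref{thm:global_disturbance} (using $\alpha\in\lambda$) gives $\alpha\beta\in E(\Gamma(\W,\P))$, hence $\alpha\beta\in E(D)$; either way $B$ is relevant. Now suppose $B$ is non-trivial and, for a contradiction, not relevant. Then $B$ — in particular its set $C$ of inner vertices — is disjoint from $G_D^\Q$, since distinct bridges are vertex- and edge-disjoint off $\Q$ and relevant bridges' attachments lie on $\Q$; so $C$ is disjoint from $\Q'_{V(D)}$ and from $R$, hence $C$ lies inside a single $\Q'$-bridge, which, sharing an inner vertex with $B'$ (in the edge case the $C$-endpoint of the common edge is such a vertex), must be $B'$ itself. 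Thus $C$ lies in the inner-vertex set of $B'$ and $E(B)\subseteq E(B')$. Non-relevance of the non-trivial $B$ gives two cases. \emph{Case (I): $B$ attaches to a path of $\Q_{\lambda\setminus V(D)}$.} The attaching edge lies in $E(B)\subseteq E(B')$ and its $\Q$-end lies on the same (unchanged) path of $\Q'_{\lambda\setminus V(D)}$, so $B'$ attaches to $\Q'_{\lambda\setminus V(D)}$; by \autoref{thm:block_separates} applied to $\Q'$ it then attaches to at most one path of $\Q'_{V(D)}$, so it cannot realise an edge of $D$, yet by relevance it attaches to $\Q'_{V(D)}$ — contradiction. \emph{Case (II): $B$ attaches to no path of $\Q_{V(D)}$.} Relevance of $B'$ yields an attachment $u$ of $B'$ on some $Q'_\alpha$, $\alpha\in V(D)$, and a neighbour $u^*$ of $u$ inside the connected inner-vertex set of $B'$. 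Tracing a path from $C$ to $u^*$ inside that set and taking the first vertex $z$ it meets on $\Q_{V(D)}$, the vertex of the path just before $z$ lies in $C$ (it is off $\Q$ and joined to $C$ through vertices off $\Q$), so $z$ is an attachment of $B$ on $\Q_{V(D)}$ — contradiction; and if the path never meets $\Q_{V(D)}$ then $u^*\in C$, and $u\in\Q'_{V(D)}\subseteq G_D^\Q=\Q_{V(D)}\cup(\B-V(R))$ is adjacent to $u^*\in C$, so $u$ either lies on $\Q_{V(D)}$ (again an attachment of $B$) or is an inner vertex of a relevant $\Q$-bridge which, being joined to $u^*\in C$, must equal $B$ — contradiction in both cases. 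This proves the claim, hence the lemma.

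\textbf{Main obstacle.} The delicate point is Case (II): the inner-vertex set of $B'$ may strictly contain $C$, so the path to the witness $u^*$ can leave $C$, and one must argue that it can only do so through $\Q_{V(D)}$ (it avoids $R$ and $\Q'_{V(D)}$), keeping careful track of which of the two separators $\Q,\Q'$ a given vertex may lie on. Everything else — isolating $R$, reading off the relevant-bridge description of $G_D^\Q$, and Case (I) — is routine once \autoref{thm:global_disturbance} and \autoref{thm:block_separates} are available.
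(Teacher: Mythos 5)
Your proof is correct. It takes a more explicit, bridge-by-bridge route than the paper, which is worth contrasting. The paper's proof is much shorter: it first observes that for edge containment it suffices to show $V(G_D^{\Q'})\subseteq V(G_D^{\Q})$ (the "induced" observation), then picks a hypothetical $w\in V(G_D^{\Q'})\setminus V(G_D^{\Q})$, notes $w$ is an inner vertex of a $\Q'$-bridge $B'$, and — crucially — uses the fact that $w$ lies in $G_{\lambda}-G_D^{\Q}$ to conclude immediately that $w$ sits inside a $\Q$-bridge that attaches to $\Q_{\lambda\setminus V(D)}$; the path inside that bridge then forces $B'$ to attach to $\Q'_{\lambda\setminus V(D)}$, contradicting \autoref{thm:block_separates}. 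Because $G_{\lambda}$ consists precisely of $\Q_{\lambda}$ together with bridges attaching to $\Q_{\lambda}$, the alternative you isolate as Case (II) (the $\Q$-bridge $B$ attaching to no path of $\Q_{V(D)}$, and possibly to no path of $\Q_\lambda$ at all) simply cannot occur there, so the paper needs only your Case (I). Your argument instead unwinds the definition of $G_D^{\cdot}$ directly, treats vertices and edges separately rather than invoking inducedness, and supplies an explicit argument for Case (II) by tracing a path inside $B'$ to an attachment on $\Q'_{V(D)}$ and extracting an attachment of $B$ on $\Q_{V(D)}$ or an identification $B=B_0$ with a relevant bridge. Both proofs hinge on the same two lemmas (\autoref{thm:global_disturbance} and \autoref{thm:block_separates}); yours is more verbose but also more self-contained, since it does not appeal to the structure of $G_{\lambda}$.
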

\begin{proof}
Clearly $G_D^{\Q}$ and $G_D^{\Q'}$ are induced subgraphs of $G$ so it suffices to show $V(G_D^{\Q'})\subseteq V(G_D^{\Q})$.
Suppose for a contradiction that there is a vertex $w\in V(G_D^{\Q'})\setminus V(G_D^{\Q})$.
We have $G_D^{\Q'}\cap \Q' = \Q'_{V(D)}\subseteq G_D^{\Q}$ so $w$ must be an inner vertex of a $\Q'$-bridge $B'$.
But $w$ is in $G_{\lambda}-G_D^{\Q}$ and thus in a $\Q$-bridge attaching to a path of $\Q_{\lambda\setminus V(D)}$, in particular, there is a $w$--$\Q_{\lambda\setminus V(D)}$ path $R$ that avoids $G_D^{\Q}\supseteq \Q'_{V(D)}$.
This means $R\subseteq B'$ and thus $B'$ attaches to a path of $\Q'_{\lambda\setminus V(D)}=\Q_{\lambda\setminus V(D)}$, a contradiction to \autoref{thm:block_separates}.
Clearly $\P$ itself is a $(\P,V(D))$-relinkage so $G_{D}^{\Q}\subseteq G_{D}^{\P}$ follows from a special case of the statement we just proved.
\end{proof}

\begin{lem}\label{thm:make_attached}
Let $\Q$ be a $(\P,V(D))$-relinkage.
If in $\Gamma(\W,\P)$ we have $|N(\alpha)\cap\theta|\leq p-3$ for all $\alpha\in\lambda\setminus V(D)$, then there is a $(\Q, V(D))$-relinkage $\Q'$ such that for every inner bag $W$ of $\W$ the linkage $\Q'[W]$ is $p$-attached in $G[W]$ and has the same induced permutation as $\Q[W]$.
Moreover, $\Gamma(\W,\Q')$ contains all edges of $\Gamma(\W,\Q)$ that have at least one end in $\lambda$.
\end{lem}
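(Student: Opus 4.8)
The plan is to obtain $\Q'$ from $\Q$ by a single bridge stabilisation that reroutes only the (automatically non‑trivial) paths $\Q_{V(D)}$, carried out inside $G_{V(D)}^{\Q}$. Let $H$ be the subgraph of $G$ induced on $V(G_{V(D)}^{\Q})$ together with the vertices of the trivial paths $\Q_\theta$ and of every $\Q$‑bridge in an inner bag of $\W$ that attaches to $\Q_\theta$ but to no path of $\Q_{\lambda\setminus V(D)}$. Since $G_{V(D)}^{\Q}$ is an induced subgraph of $G$ (as observed in the proof of \autoref{thm:relinkage}) and is vertex‑disjoint from $\Q_\theta$, the $\Q$‑bridges just added fall away from $G_{V(D)}^{\Q}$ once $\Q_\theta$ is removed, so the rerouting of $\Q_{V(D)}$ in $H-\Q_\theta$ delivered by \autoref{thm:tutte_bridges} actually takes place inside $G_{V(D)}^{\Q}$. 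Write $\Q'_{V(D)}\cup\Q_\theta$ for this bridge stabilisation of $\Q_{V(D)}\cup\Q_\theta$ in $H$ and put $\Q'\coloneq\Q'_{V(D)}\cup\Q_{\lambda\setminus V(D)}\cup\Q_\theta$, so that $Q'_\alpha=Q_\alpha$ for $\alpha\notin V(D)$. As $\Q'_{V(D)}\subseteq G_{V(D)}^{\Q}$, the linkage $\Q'$ is a $(\Q,V(D))$‑relinkage, and by \autoref{thm:relinkage} it is also a $(\P,V(D))$‑relinkage with $G_D^{\Q'}\subseteq G_D^{\Q}\subseteq G_D^{\P}$.

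Next the two ``soft'' properties. For the induced permutations, observe that every adhesion set $A$ of an inner bag of $\W$ is entirely covered by $\Q$ (its size equals the number of foundational paths), so $A\cap V(G_{V(D)}^{\Q})$ is exactly the set of vertices where the paths of $\Q_{V(D)}$ meet $A$; hence no $\Q_{V(D)}$‑bridge of $G_{V(D)}^{\Q}$ reaches between two bags of $\W$, the rerouting of \autoref{thm:tutte_bridges} alters each path of $\Q_{V(D)}$ only inside single bags, and $Q'_\alpha$ meets every adhesion set in the same vertex as $Q_\alpha$. Thus $\Q'[W]$ and $\Q[W]$ induce the same permutation for every inner bag $W$. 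For the ``moreover'', fix $\alpha\beta\in E(\Gamma(\W,\P))$ with $\alpha\in\lambda$; by (L8) each inner bag carries a $\P$‑bridge realising $\alpha\beta$. If at most one of $\alpha,\beta$ lies in $V(D)$, this bridge avoids $G_D^{\P}$ by \autoref{thm:block_separates}, so the rerouting leaves it intact and it still realises $\alpha\beta$ for $\Q'$ (its attachment on $Q'_\alpha$, when $\alpha\in V(D)$, being located by the previous sentence). If $\alpha,\beta\in V(D)$ then $\alpha\beta\in E(D)$; since $D$ is connected one checks that $\Q$ already realises $\alpha\beta$, by a bridge that either lies in $G_D^{\Q}$, in which case \autoref{thm:bridge_stabilisation}\,(\ref{itm:bs_bridges}) produces a $\Q'$‑bridge realising it, or attaches to $\Q_{\lambda\setminus V(D)}$, in which case it is again untouched by the rerouting. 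Hence $\Gamma(\W,\Q')$ contains every edge of $\Gamma(\W,\P)$ with an end in $\lambda$.

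The substance of the proof — and the step I expect to be the real obstacle — is that $\Q'[W]$ is $p$‑attached in $G[W]$ for every inner bag $W$. That the paths of $\Q'[W]$ are induced in $G[W]$ is quick: for $\alpha\notin V(D)$ this is (L6) for $\P$, and for $\alpha\in V(D)$ it follows from \autoref{thm:tutte_bridges} together with $G_{V(D)}^{\Q}$ being an induced subgraph of $G$. For the bridge axiom, let $B$ be a non‑trivial $\Q'[W]$‑bridge of $G[W]$ attaching to a non‑trivial path $Q'_\gamma[W]$ but to no other non‑trivial path of $\Q'[W]$. If $\gamma\in\lambda\setminus V(D)$, so $Q'_\gamma=P_\gamma$, then — since $B$ attaches to no other non‑trivial path, and \autoref{thm:block_separates} keeps the rerouted region away from it (any $B$‑vertex meeting a detoured‑away $\Q_{V(D)}$‑vertex would force $B$ to attach to some $Q'_\alpha[W]$ with $\alpha\in V(D)$) — $B$ can be identified with a $\P$‑bridge attaching to $P_\gamma$ and to no other non‑trivial path of $\P[W]$, which \autoref{thm:stable_bridges}, applied to $\P$ with $\lambda_0=\lambda\setminus V(D)$ (its hypothesis being exactly $|N(\gamma)\cap\theta|\le p-3$), forbids; so this case does not occur. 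If $\gamma\in V(D)$, then — by the choice of $H$, and because every $\Q'$‑bridge of $G$ is confined to a single bag — $B$ is contained in $H$ and is a bridge of the linkage $\Q'_{V(D)}\cup\Q_\theta$ in $H$, so \autoref{thm:bridge_stabilisation}\,(\ref{itm:bs_attached}) yields the required $p-2$ trivial paths (or a further non‑trivial attachment) provided one furnishes, for each inner vertex $x$ of $B$, an $x$--$Z$ fan of size $p$ in $H$, where $Z$ is the set of ends of $\Q_{V(D)}\cup\Q_\theta$; this is what the $p$‑connectivity of $G$ supplies, using that the neighbourhood of $x$ lies in the single bag $W$. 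The delicate part throughout is reconciling the three notions of bridge in play ($\Q'[W]$‑bridges of $G[W]$, $\Q'$‑bridges of $G$, bridges of $\Q'_{V(D)}\cup\Q_\theta$ of $H$), checking that $H$ is large enough to absorb the $\Q$‑bridges attaching to $\Q_\theta$ (which can merge into a rerouted bridge once $\Q_{V(D)}$ is changed), and routing the fans inside $H$; here \autoref{thm:block_separates}, \autoref{thm:stable_bridges} and the bound $|N(\alpha)\cap\theta|\le p-3$ are precisely the instruments that tame the interaction between the rerouting inside $G_D^{\Q}$ and the deleted paths $\Q_{\lambda\setminus V(D)}$ and $\Q_\theta$.
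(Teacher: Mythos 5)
Your proof takes a genuinely different route from the paper: you perform one \emph{global} bridge stabilisation of $\Q_{V(D)}\cup\Q_\theta$ inside a subgraph $H$ spanning all inner bags, whereas the paper applies the bridge stabilisation to $\Q[W_i]$ \emph{separately inside each} $G[W_i]$ and then unites the results. The paper's per-bag approach makes the crucial hypothesis of \autoref{thm:bridge_stabilisation}\,(\ref{itm:bs_attached}) trivial: for the linkage $\Q[W_i]$ in $G[W_i]$, the set $Z$ of endpoints is exactly the union of the two adhesion sets of $W_i$, which \emph{separates} $W_i\setminus Z$ from $G-W_i$; hence $p$-connectivity of $G$ directly produces the required $x$--$Z$ fan \emph{inside} $G[W_i]$. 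In your global setup this collapses: your $Z$ is the set of ends of $\Q_{V(D)}\cup\Q_\theta$, and $Z$ does \emph{not} separate $H-Z$ from $G-H$. Concretely, after the rerouting a vertex $x$ on $Q_\alpha$ ($\alpha\in V(D)$) that was bypassed by $Q'_\alpha$ may be an inner vertex of a hosted $\Q'$-bridge, yet $x$ can be adjacent in $G$ to an inner vertex of a $\Q$-bridge that attaches to $\Q_{\lambda\setminus V(D)}$; that neighbour lies outside $H$ (by your very choice of $H$ and by \autoref{thm:block_separates}). Then $p$-connectivity of $G$ gives you a fan of size $p$ from $x$ in $G$, but nothing forces those fan paths to stay inside $H$, and \autoref{thm:bridge_stabilisation}\,(\ref{itm:bs_attached}) needs the fan to lie in $H$. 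Your remark that ``the neighbourhood of $x$ lies in the single bag $W$'' does not help: the fan must reach $Z$, which includes endpoints in $W_0\cap W_1$ and $W_{l-1}\cap W_l$, and the fan paths are free to leave $H$ through $\Q_{\lambda\setminus V(D)}$ or through bridges attached to it. This is the gap, and I do not see how to close it without effectively reverting to the per-bag argument.

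Two smaller issues. First, for the ``moreover'' clause you start from an edge $\alpha\beta\in E(\Gamma(\W,\P))$ with $\alpha\in\lambda$ and try to place it in $\Gamma(\W,\Q')$; but the statement only asks for the edges of $\Gamma(\W,\Q)$, a (possibly proper) subset by \autoref{thm:global_disturbance}, and your stronger claim need not hold — moreover, when $\alpha\in V(D)$ the $\P$-bridge realising $\alpha\beta$ attaches to $P_\alpha$ at vertices that need not lie on $Q'_\alpha$, so it is not clear that it still realises $\alpha\beta$ for $\Q'$. The paper instead applies \autoref{thm:bridge_stabilisation}\,(\ref{itm:bs_bridges}) to the per-bag stabilisations, starting correctly from $\Q$-bridge adjacencies. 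Second, even assuming the fan condition, the translation between $\Q'[W]$-bridges of $G[W]$ and bridges of $\Q'_{V(D)}\cup\Q_\theta$ in $H$ that you need at the end is not carried out; it is plausible but not ``the same claim,'' and making it precise is nontrivial. The paper sidesteps all of this by observing, as its very first step, that \emph{no} non-trivial $\Q$-bridge attaches only to a path of $\Q_{\lambda\setminus V(D)}$ among $\Q_\lambda$ (via the hypothesis $|N(\alpha)\cap\theta|\le p-3$ and \autoref{thm:stable_bridges}), whence the per-bag bridge stabilisations leave $\Q_{\lambda\setminus V(D)}$ fixed and keep $\Q'_{V(D)}$ inside $G_D^{\Q}$ automatically by \autoref{thm:bridge_stabilisation}\,(\ref{itm:bs_segments}).
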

\begin{proof}
Suppose that some non-trivial $\Q$-bridge $B$ in an inner bag $W$ of $\W$ attaches to a path $Q_{\alpha}=P_{\alpha}$ with $\alpha\in\lambda\setminus V(D)$ but to no other path of $\Q_{\lambda}$.
Then $B$ is also a $\P$-bridge and $\P[W]$ is $p$-attached in $G[W]$ by (L6) so $P_{\alpha}[W]$ must be bridge adjacent to at least $p-2$ paths of $\P_{\theta}$ in $G[W]$ and thus $\alpha$ has at least $p-2$ neighbours in $\theta$, a contradiction.
Hence every non-trivial $\Q$-bridge that attaches to a path of $\Q_{\lambda\setminus V(D)}$ must attach to at least one other path of $\Q_{\lambda}$.

For every inner bag $W_i$ of $\W$ let $\Q'_i$ be the bridge stabilisation of $\Q[W_i]$ in $G[W_i]$.
Then $\Q'_i$ has the same induced permutation as $\Q[W_i]$.
Note that the set $Z$ of all end vertices of the paths of $\Q[W_i]$ is the union of the left and right adhesion set of $W_i$.
So by the $p$-connectivity of $G$ for every vertex $x$ of $G[W_i]-Z$ there is an $x$--$Z$ fan of size $p$ in $G[W_i]$.
This means that $\Q'_i$ is $p$-attached in $G[W_i]$ by \autoref{thm:bridge_stabilisation} (\ref{itm:bs_attached}).

Hence $\Q'\coloneq\bigcup_{i=1}^{l-1} \Q'_i$ is a foundational linkage with $\Q'[W_i] = \Q'_i$ for $i=1,\ldots, l-1$.
Therefore $\Q'[W]$ is $p$-attached in $G[W]$ and $\Q'[W]$ has the same induced permutations as $\Q[W]$ for every inner bag $W$ of $\W$.
There is no $\Q$-bridge that attaches to precisely one path of $\Q_{\lambda\setminus V(D)}$ but to no other path of $\Q_{\lambda}$ so we have $\Q'_{\lambda\setminus V(D)} = \Q_{\lambda\setminus V(D)}$ by \autoref{thm:bridge_stabilisation} (\ref{itm:bs_segments}).
The same result implies $\Q'_{V(D)}\subseteq G_{D}^{\Q}$ so $\Q'$ is indeed a relinkage of $\Q$ on $V(D)$.

Finally, \autoref{thm:bridge_stabilisation} (\ref{itm:bs_bridges}) states that $\Gamma(\W,\Q')$ contains all those edges of $\Gamma(\W,\Q)$ that have at least one end in $\lambda$.
\end{proof}

The ``compressed'' linkages presented next will allow us to fulfil the size requirement that \autoref{thm:corecase} imposes on our block $D$ as detailed in \autoref{thm:compressed}.
Given a subset $\lambda_0\subseteq \lambda$ and a foundational linkage $\Q$, we say that $\Q$ is \emph{compressed to $\lambda_0$} or \emph{$\lambda_0$-compressed} if there is no vertex $v$ of $G_{\lambda_0}^{\Q}$ such that $G_{\lambda_0}^{\Q}-v$ contains $|\lambda_0|$ disjoint paths from the first to the last adhesion set of $\W$ and $v$ has a neighbour in $G_{\lambda}-G_{\lambda_0}^{\Q}$.

\begin{lem}\label{thm:make_compressed}
Suppose that in $\Gamma(\W,\P)$ we have $|N(\alpha)\cap\theta|\leq p-3$ for all $\alpha\in\lambda\setminus V(D)$ and let $\Q$ be a $(\P,V(D))$-relinkage.
Then there is a $V(D)$-compressed $(\Q,V(D))$-relinkage $\Q'$ such that for every inner bag $W$ of $\W$ the linkage $\Q'[W]$ is $p$-attached in $G[W]$.
\end{lem}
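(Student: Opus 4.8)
The plan is to obtain $\Q'$ from a minimality argument followed by one application of \autoref{thm:make_attached}. First I would let $\R$ be the set of all $(\Q,V(D))$-relinkages; it is non-empty since it contains $\Q$, and by \autoref{thm:relinkage} every member of $\R$ is also a $(\P,V(D))$-relinkage, while any relinkage on $V(D)$ of a member of $\R$ again lies in $\R$. I would pick $\Q_0\in\R$ minimizing the number of vertices of $G_{V(D)}^{\Q_0}$; this minimum is attained since the quantity is a non-negative integer. Recall also that $D$, being a block of $\Gamma(\W,\P)[\lambda]$, equals $\Gamma(\W,\P)[V(D)]$, so $G_D^{\,\cdot}=G_{V(D)}^{\,\cdot}$.

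The heart of the argument is to show that $\Q_0$ is $V(D)$-compressed. Suppose not: there is a vertex $v$ of $G_{V(D)}^{\Q_0}$ with a neighbour $u\in V(G_\lambda)\setminus V(G_{V(D)}^{\Q_0})$ such that $G_{V(D)}^{\Q_0}-v$ still contains $|V(D)|$ disjoint paths between the first and the last adhesion set of $\W$. I would first record the bookkeeping fact that $G_{V(D)}^{\Q_0}$ meets the first (and likewise the last) adhesion set of $\W$ in exactly the $|V(D)|$ vertices carried there by the foundational paths indexed by $V(D)$: every adhesion-set vertex lies on a foundational path, the paths of $\Q_0$ not indexed by $V(D)$ are deleted when forming $G_{V(D)}^{\Q_0}$, and interiors of bridges avoid adhesion sets. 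Hence those $|V(D)|$ disjoint paths begin and end precisely at these vertices, so replacing the part of $\Q_0$ indexed by $V(D)$ by them yields a foundational linkage $\Q''$ for $\W$ (the new paths are disjoint from the retained ones, since the latter lie outside $G_{V(D)}^{\Q_0}$, using \autoref{thm:block_separates}) which is a $(\Q_0,V(D))$-relinkage, with $\Q''_{V(D)}\subseteq G_{V(D)}^{\Q_0}-v$. In particular $\Q''\in\R$ and $G_{V(D)}^{\Q''}\subseteq G_{V(D)}^{\Q_0}$ by \autoref{thm:relinkage}.

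The crux is then to show $v\notin V(G_{V(D)}^{\Q''})$, which contradicts the minimality of $\Q_0$ since then $G_{V(D)}^{\Q''}$ has strictly fewer vertices. From $\Q''_{V(D)}\subseteq G_{V(D)}^{\Q_0}-v$ and $v\in V(G_{V(D)}^{\Q_0})$ one deduces that $v$ lies on no path of $\Q''$, so if $v$ were in $G_{V(D)}^{\Q''}$ it would be an inner vertex of a $\Q''$-bridge $B$; since $B$ then has an inner vertex in $G_D^{\Q''}$, \autoref{thm:block_separates} forbids $B$ from attaching to any path of $\Q''_{\lambda\setminus V(D)}$. But $u$, being adjacent to the inner vertex $v$ of the non-trivial bridge $B$, must lie in $B$, and I would then rule out every place $u$ could sit: $u$ cannot be an inner vertex of $B$ or lie on $\Q''_{V(D)}$ (either would put $u$ into $G_{V(D)}^{\Q_0}$), $u$ cannot lie on $\Q''_{\lambda\setminus V(D)}$ (as $B$ does not attach there), and $u$ cannot lie on a trivial path of $\Q''$ (since $G_\lambda$ contains no vertex of a trivial path). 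This case analysis is the step I expect to be the main obstacle: it hinges on carefully controlling which vertices survive in the various $G_{V(D)}^{\,\cdot}$ and on the attachment structure of $\Q''$-bridges, and it is precisely here that the hypothesis that $D$ is a block of $\Gamma(\W,\P)[\lambda]$ is used, via \autoref{thm:block_separates}.

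Finally, I would apply \autoref{thm:make_attached} to the $(\P,V(D))$-relinkage $\Q_0$ — its hypothesis is exactly the one we are given — to obtain a $(\Q_0,V(D))$-relinkage $\Q'$ such that $\Q'[W]$ is $p$-attached in $G[W]$ for every inner bag $W$ of $\W$. Then $\Q'\in\R$ and $G_{V(D)}^{\Q'}\subseteq G_{V(D)}^{\Q_0}$ by \autoref{thm:relinkage}, so the minimality of $\Q_0$ forces these two induced subgraphs of $G$ to have equal vertex sets and hence to coincide. Since the property of being $V(D)$-compressed depends only on $G_{V(D)}^{\,\cdot}$ and on the fixed graph $G_\lambda$, the linkage $\Q'$ inherits it, and $\Q'$ is the desired relinkage.
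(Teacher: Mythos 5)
Your argument is correct and follows essentially the same approach as the paper's: minimize $G_{V(D)}^{\,\cdot}$ over the $(\Q,V(D))$-relinkages, establish $V(D)$-compressedness by way of \autoref{thm:block_separates}, and combine \autoref{thm:make_attached} with \autoref{thm:relinkage} to get $p$-attachedness without losing minimality (the paper does this step before the compressedness argument, you do it after — an inessential reordering). The only substantive deviation is in closing the compressedness contradiction: the paper produces a $w$--$\Q'_{\lambda\setminus V(D)}$ path $R$ in $G_\lambda$ avoiding $G_D^{\Q'}$ and notes $R\subseteq B''$, whereas you instead exhaustively rule out every possible location of the single neighbour $u$ within the alleged bridge $B$; both routes are sound, and your version is arguably more self-contained since it sidesteps having to justify the existence of $R$.
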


\begin{proof}
Clearly $\Q$ itself is a $(\Q,V(D))$-relinkage.
Among all $(\Q,V(D))$-relinkages pick $\Q'$ such that $G_D^{\Q'}$ is minimal.
By \autoref{thm:relinkage} and \autoref{thm:make_attached} we may assume that we picked $\Q'$ such that for every inner bag of $W$ of $\W$ the linkage $\Q'[W]$ is $p$-attached in $G[W]$.

It remains to show that $\Q'$ is $V(D)$-compressed.
Suppose not, that is, there is a vertex $v$ of $G_D^{\Q'}$ such that $v$ has a neighbour in $G_{\lambda}-G_D^{\Q'}$ and $G_D^{\Q'}-v$ contains an $X$--$Y$ linkage $\Q''$ where $X$ and $Y$ denote the intersection of $V(G_D^{\Q'})$ with the first and last adhesion set of $\W$, respectively.

By \autoref{thm:relinkage} we have $G_D^{\Q''}\subseteq G_D^{\Q'}\subseteq G_D^{\Q}$ and thus $\Q''$ is a $(\Q,V(D))$-relinkage as well.
This implies $G_D^{\Q''} = G_D^{\Q'}$ by the minimality of $G_D^{\Q'}$.
The vertex $v$ does not lie on a path of $\Q''$ by construction so it must be in a $\Q''$-bridge $B''$.
But $v$ has a neighbour $w$ in $G_{\lambda}-G_D^{\Q'}$ and there is a $w$--$\Q'_{\lambda\setminus V(D)}$ path $R$ that avoids $G_D^{\Q'}$.
This means $R\subseteq B''$ and thus $B''$ attaches to a path of $\Q''_{\lambda\setminus V(D)}$, contradicting \autoref{thm:block_separates}.
\end{proof}

\begin{lem}\label{thm:compressed}
Let $\Q$ be a $V(D)$-compressed foundational linkage.
Let $V$ be the set of all inner vertices of paths of $\Q_{\kappa}$ that have degree at least~$3$ in $G_D^{\Q}$.
Then the following statements are true.

\begin{enumerate}[(i)]
\item\label{itm:cpr_covering}
Either $2|D|+|N(D)\cap \theta|\geq p$ or $V(G_D^{\Q}) = V(\Q_{V(D)})$ and $\kappa\neq\emptyset$.

\item\label{itm:cpr_wbound}
Either $2|D|+|N(D)| \geq p$ or there is $\alpha\in\kappa$ such that $|Q_{\beta}| \leq |V\cap V(Q_{\alpha})| + 1$ for all $\beta\in V(D)\setminus \kappa$.
\end{enumerate}
\end{lem}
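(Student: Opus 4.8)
The plan is to prove each of \autoref{itm:cpr_covering} and \autoref{itm:cpr_wbound} under the negation of its first alternative and to deduce the stated structure. Since $|N(D)\cap\theta|\le|N(D)|$, the hypothesis $2|D|+|N(D)|<p$ of \autoref{itm:cpr_wbound} implies $2|D|+|N(D)\cap\theta|<p$, so \autoref{itm:cpr_covering} is available throughout the proof of \autoref{itm:cpr_wbound}. I will use repeatedly the following consequences of \autoref{thm:global_disturbance} and \autoref{thm:block_separates}: if a $\Q$-bridge $B$ attaches to $Q_\alpha$ with $\alpha\in V(D)$ and also to $Q_\gamma$, then $\alpha\gamma\in E(\Gamma(\W,\P))$; hence $\gamma\in V(D)$ forces $\alpha\gamma\in E(D)$ (distinct blocks share at most one vertex), $\gamma\in\lambda\setminus V(D)$ forces $\gamma\in N(D)$ and — by \autoref{thm:block_separates} — that $B$ has no inner vertex or edge in $G_D^\Q$ and attaches to no further path of $\Q_{V(D)}$, and $\gamma\in\theta$ forces $\gamma\in N(D)\cap\theta$.

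For \autoref{itm:cpr_covering}, assume $2|D|+|N(D)\cap\theta|<p$. To see $V(G_D^\Q)=V(\Q_{V(D)})$, suppose some inner vertex $w$ of a non-trivial $\Q$-bridge $B$ lies in $G_D^\Q$; then $B$ sits inside a single inner bag $W_i$ and $w$ lies in the component $C$ of $G-\Q$ contained in $B$. By the bulleted facts $B$ attaches to no path of $\Q_{\lambda\setminus V(D)}$, so $N_G(C)\subseteq V(\Q_{V(D)}[W_i])\cup(N(D)\cap\theta)$. As $G$ is $p$-connected and, by (L4) together with $l\ge 3$, $V(G)\not\subseteq W_i$, the set $N_G(C)$ is not a separator of order $<p$, so $|N_G(C)|\ge p$ and there is a fan of $p$ paths from $w$ to $N_G(C)$, each running inside $C$ up to its last vertex. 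Since $|N(D)\cap\theta|<p-2|D|$, more than $2|D|$ of these paths end on $\Q_{V(D)}$, so some $Q_{\alpha_0}$ with $\alpha_0\in V(D)$ receives at least three of them, at distinct vertices $u_1,u_2,u_3$ in the order of $Q_{\alpha_0}$. Rerouting $Q_{\alpha_0}$ through $C$ — replacing $u_1Q_{\alpha_0}u_3$ by a $u_1$–$u_3$ path inside $C\cup\{u_1,u_3\}$ — yields a $(\Q,V(D))$-relinkage $\Q^*$ with $G_D^{\Q^*}\subseteq G_D^\Q$ by \autoref{thm:relinkage} and with $\Q^*_{V(D)}$ a linkage of $|D|$ disjoint first-to-last paths avoiding $u_2$, so $G_D^\Q-u_2$ still contains $|D|$ such paths; compressedness then forbids $u_2$ from having a neighbour in $G_\lambda\setminus G_D^\Q$. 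Exactly the case analysis used for the degree bound in \autoref{itm:cpr_wbound} below then gives $\deg_G(u_2)\le\deg_{G_D^\Q}(u_2)+|N(D)\cap\theta|$, so $\deg_{G_D^\Q}(u_2)>2|D|$; iterating the fan argument at such skippable vertices and tracking how the nested minimum $X$–$Y$ separators of $G_D^\Q$ interleave them produces a separator of $G$ of order at most $2|D|+|N(D)\cap\theta|<p$, a contradiction. For $\kappa\neq\emptyset$: if $\kappa=\emptyset$ then $D$ is a whole connected component of $\Gamma(\W,\P)[\lambda]$, so $N(D)=N(D)\cap\theta$; using $V(G_D^\Q)=V(\Q_{V(D)})$ and the bulleted facts, every $G$-neighbour of a vertex of $G_D^\Q$ lies in $G_D^\Q$ or in $N(D)\cap\theta$, so $N(D)\cap\theta$ separates $G_D^\Q$ from the rest of $G$; this rest is non-empty because the foundational paths run from the first to the last adhesion set, so by (L4) a vertex of $W_0\setminus W_1$ lies outside $\bigcup\Q\supseteq V(G_D^\Q)\cup(N(D)\cap\theta)$. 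Hence $p\le|N(D)\cap\theta|\le2|D|+|N(D)\cap\theta|$, contradicting the assumption.

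For \autoref{itm:cpr_wbound}, assume $2|D|+|N(D)|<p$; by the reduction \autoref{itm:cpr_covering} applies, so $G_D^\Q$ is $\Q_{V(D)}$ together with some trivial bridges and $\kappa\neq\emptyset$. If $V(D)\setminus\kappa=\emptyset$ any $\alpha\in\kappa$ satisfies the conclusion vacuously, so fix $\beta\in V(D)\setminus\kappa$; being no cut-vertex, $\beta$ lies in the single block $D$, so $N(\beta)\cap\lambda\subseteq V(D)$. Every inner vertex $v$ of $Q_\beta$ has degree at least $3$ in $G_D^\Q$: otherwise $\deg_{G_D^\Q}(v)=2$ and, $G_D^\Q$ being induced, the only $G$-neighbours of $v$ in $G_D^\Q$ are its two neighbours on $Q_\beta$, while — by the bulleted facts, $N(\beta)\cap\lambda\subseteq V(D)$ and $V(G_D^\Q)=V(\Q_{V(D)})$ — every further $G$-neighbour of $v$ is a trivial vertex in $N(\beta)\cap\theta\subseteq N(D)\cap\theta$; thus $p\le\delta(G)\le\deg_G(v)\le2+|N(D)\cap\theta|\le2|D|+|N(D)|$, a contradiction. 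So every inner vertex of $Q_\beta$ carries a cross-edge in $G_D^\Q$, i.e.\ a trivial bridge realising an edge of $D$ at $\beta$ (chords of $Q_\beta$ are handled the same way). To finish, take $\beta$ with $|Q_\beta|$ maximal over $V(D)\setminus\kappa$ and a cut-vertex $\alpha\in\kappa$ whose deletion from $\Gamma(\W,\P)[\lambda]$ separates the component containing $D-\alpha$ from the rest; in $G$, $V(Q_\alpha)$ together with the $N(D)$-neighbours then separates the ``$D$-side'' from the remainder, and routing the $|Q_\beta|-1$ edges of $Q_\beta$ across this cut forces the branch vertices $V\cap V(Q_\alpha)$ of $Q_\alpha$ to number at least $|Q_\beta|-1$, i.e.\ $|Q_\beta|\le|V\cap V(Q_\alpha)|+1$.

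The two genuine obstacles I expect are: (a) in \autoref{itm:cpr_covering}, turning ``skippable vertex of large degree'' into a small separator of $G$ — this needs the fan argument to be iterated in a controlled way, tracking the nested minimum $X$–$Y$ separators of $G_D^\Q$ so that the repeatedly produced skippable vertices accumulate into one honest separator; and (b) the final routing step in \autoref{itm:cpr_wbound}, where $2$-connectedness of $D$ and the cross-edge at every inner vertex of $Q_\beta$ must be combined to show that a cut-vertex path $Q_\alpha$ is long enough to carry the full length of the longest non-cut path, and not merely to match the branch count of $Q_\beta$ itself. The chord case for $Q_\beta$ is routine.
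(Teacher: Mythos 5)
Your proposal diverges from the paper's argument and leaves both core steps unfinished; the gaps you flag as ``(a)'' and ``(b)'' at the end are indeed genuine, and the missing tool in both is Lemma~\ref{thm:nested_separations} applied inside $G_D^{\Q}$.

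For (\ref{itm:cpr_covering}), the paper does not argue by locating a single high-degree ``skippable'' vertex $u_2$ and iterating. Instead: let $X,Y$ be the intersections of $G_D^{\Q}$ with the first and last adhesion sets and let $Z$ be $X\cup Y$ together with the vertices of $G_D^{\Q}$ having a neighbour in $G_{\lambda}-G_D^{\Q}$. Compressedness says precisely that for every $z\in Z$ the graph $G_D^{\Q}-z$ has no $X$--$Y$ linkage of size $|D|$; since $\Q_{V(D)}$ realises a linkage of that size, every $z\in Z$ lies in a minimum-order ($=|D|$) $X$--$Y$ separator. Lemma~\ref{thm:nested_separations} then yields a \emph{nested} family $\S$ of such separations covering $Z$. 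If any vertex $v\in V(G_D^{\Q})$ is uncovered, the two adjacent separators $S_L,S_R$ of $\S$ together with $V(\Q_{N(D)\cap\theta})$ separate $v$ from $G-G_D^{\Q}$ in $G$, giving the first alternative directly; otherwise every vertex is covered and, as each separator has exactly one vertex per path of $\Q_{V(D)}$, this forces $V(G_D^{\Q})=V(\Q_{V(D)})$. The $\kappa=\emptyset$ case is the degenerate case $Z=X\cup Y$. Your fan-and-reroute detour produces a local observation about one vertex but supplies no mechanism to assemble a separator of $G$ from it; that assembly is exactly what the nested family does.

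For (\ref{itm:cpr_wbound}), your claim that every inner vertex of $Q_{\beta}$ has degree $\geq 3$ in $G_D^{\Q}$ is correct under $2|D|+|N(D)|<p$, but it says nothing about $|V\cap V(Q_{\alpha})|$ — those are branch vertices on a \emph{different} path, and nothing in $Q_{\beta}$'s local structure forces $Q_{\alpha}$ to carry many of them. The paper instead picks $\alpha\in\kappa$ to \emph{maximise} $|V\cap V(Q_{\alpha})|$ (not by a separation property of $\Gamma(\W,\P)[\lambda]$) and, reusing the nested family $\S$, associates to each inner vertex $v$ of $Q_{\beta}$ the set $V_v\coloneq V\cap(B_L\setminus A_L)\cap(A_R\setminus B_R)$. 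Three facts then combine: the $V_v$ are pairwise disjoint (no separator of $\S$ meets a path of $\Q_{V(D)}$ twice); $S_L\cup S_R\cup V_v\cup V(\Q_{N(D)\cap\theta})$ separates $v$ from $G-G_D^{\Q}$, so $|V_v|\geq p-2|D|-|N(D)\cap\theta|>|N(D)\cap\lambda|\geq|\kappa|$, i.e.\ $|V_v|\geq|\kappa|+1$; and $\bigcup_v V_v\subseteq V$ with $|V|\leq|\kappa|\cdot|V\cap V(Q_{\alpha})|$. Summing over the $|Q_{\beta}|-2$ inner vertices of $Q_{\beta}$ gives $(|Q_{\beta}|-2)(|\kappa|+1)\leq|\kappa|\cdot|V\cap V(Q_{\alpha})|$, whence $|Q_{\beta}|\leq|V\cap V(Q_{\alpha})|+1$. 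Your ``routing across a cut'' sketch has no analogue of the disjointness of the $V_v$ or of the lower bound $|V_v|\geq|\kappa|+1$, which is where the whole quantitative content lives.
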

Note that $V(G_D^{\Q}) = V(\Q_{V(D)})$ implies that every $\Q$-bridge in an inner bag of $\W$ that realises an edge of $D$ must be trivial.
\begin{proof}~
\begin{enumerate}[(i)]

\item
Denote by $X$ and $Y$ the intersection of $G_D^{\Q}$ with the first and last adhesion set of $\W$, respectively.
Let $Z$ be the union of $X$, $Y$, and the set of all vertices of $G_D^{\Q}$ that have a neighbour in $G_{\lambda}-G_D^{\Q}$.
Clearly $Z\subseteq V(\Q_{\kappa})\cup X\cup Y$.
Moreover, $G_D^{\Q}-z$ does not contain an $X$--$Y$ linkage for any $z\in Z$:
For $z\in X\cup Y$ this is trivial and for the remaining vertices of $Z$ it holds by the assumption that $\Q$ is $V(D)$-compressed.
Therefore for every $z\in Z$ there is an $X$--$Y$ separation $(A_z,B_z)$ of $G_D^{\Q}$ of order at most $|D|$ with $z\in A_z\cap B_z$.
On the other hand, $\Q_{V(D)}$ is a set of $|D|$ disjoint $X$--$Y$ paths in $G_D^{\Q}$ so every $X$--$Y$ separation has order at least $|D|$.
Hence by \autoref{thm:nested_separations} there is a nested set $\S$ of $X$--$Y$ separations of $G_D^{\Q}$, each of order $|D|$, such that $Z\subseteq Z_0$ where $Z_0$ denotes the set of all vertices that lie in a separator of a separation of $\S$.

We may assume that $(X, V(G_D^{\Q}))\in\S$ and $(V(G_D^{\Q}), Y)\in\S$ so for any vertex $v$ of $G_D^{\Q}-(X\cup Y)$ there are $(A_L,B_L)\in\S$ and $(A_R,B_R)\in\S$ such that $(A_L,B_L)$ is rightmost with $v\in B_L\setminus A_L$ and $(A_R,B_R)$ is leftmost with $v\in A_R\setminus B_R$.
Set $S_L\coloneq A_L\cap B_L$ and $S_R\coloneq A_R\cap B_R$.

Let $z$ be any vertex of $Z_0$ ``between'' $S_L$ and $S_R$, more precisely, $z\in(B_L\setminus A_L) \cap (A_R\setminus B_R)$.
There is a separation $(A_M,B_M)\in\S$ such that its separator $S_M\coloneq A_M\cap B_M$ contains $z$.
Then $z$ witnesses that $A_M\nsubseteq A_L$ and $B_M\nsubseteq B_R$ and thus $(A_M,B_M)$ is neither left of $(A_L,B_L)$ nor right of $(A_R,B_R)$.
But $\S$ is nested and therefore $(A_M,B_M)$ is strictly right of $(A_L,B_L)$ and strictly left of $(A_R,B_R)$.
This means $v\in S_M$ otherwise $(A_M,B_M)$ would be a better choice for $(A_L,B_L)$ or for $(A_R,B_R)$.
So any separator of a separation of $\S$ that contains a vertex of $(B_L\setminus A_L) \cap (A_R\setminus B_R)$ must also contain $v$.

If $v\notin Z_0$, then $(B_L\setminus A_L) \cap (A_R\setminus B_R)\cap Z_0 = \emptyset$.
This means that $S_L\cup S_R$ separates $v$ from $Z$ in $G_D^{\Q}$.
So $S_L\cup S_R\cup V(\Q_{N(D)\cap\theta})$ separates $v$ from $G-G_D^{\Q}$ in $G$.
By the connectivity of $G$ we therefore have 
\[2|D| + |N(D)\cap \theta|\geq\left|S_L\cup S_R \cup V(\Q_{N(D)\cap \theta})\right|\geq p.\]

So we may assume that $V(G_D^{\Q}) = Z_0$
Since every separator of a separation of $\S$ consists of one vertex from each path of $\Q_{V(D)}$ this means $V(\Q_{V(D)})\subseteq V(G_D^{\Q}) = Z_0\subseteq V(\Q_{V(D)})$.
If $\kappa=\emptyset$, then $X\cup Y\cup V(\Q_{N(D)\cap \theta})$ separates $G_D^{\Q}-(X\cup Y)$ from $G-G_D^{\Q}$ in $G$ so this is just a special case of the above argument.

\item
We may assume $\kappa\neq\emptyset$ by (\ref{itm:cpr_covering}) and $\kappa\neq V(D)$ since the statement is trivially true in the case $\kappa=V(D)$.
Pick $\alpha\in\kappa$ such that $|V\cap V(Q_{\alpha})|$ is maximal and let $\beta\in V(D)\setminus\kappa$.
For any inner vertex $v$ of $Q_{\beta}$ define $(A_L,B_L)$ and $(A_R,B_R)$ as in the proof of (\ref{itm:cpr_covering}) and set $V_v\coloneq V\cap (B_L\setminus A_L) \cap (A_R\setminus B_R)$.

By (\ref{itm:cpr_covering}) we have $V_v\subseteq Z_0$ and every separator of a separation of $\S$ that contains a vertex of $V_v$ must also contain $v$.
This means that $V_v\cap V_{v'}=\emptyset$ for distinct inner vertices $v$ and $v'$ of $Q_{\beta}$ since no separator of a separation of $\S$ contains two vertices on the same path of $\Q_{V(D)}$.

Furthermore, $S_L\cup S_R\cup V_v$ separates $v$ from $V(\Q_{\kappa})\cup X\cup Y\supseteq Z$ in $G_D^{\Q}$ so by the same argument as in (\ref{itm:cpr_covering}) we have $2|D| + |N(D)\cap\theta| + |V_v|\geq p$.
Then $|N(D)\cap \lambda |\geq |V_v|$ would imply $2|D| + |N(D)|\geq p$ so we may assume that $|N(D)\cap \lambda | < |V_v|$ for all inner vertices $v$ of $Q_{\beta}$.
Clearly $N(D)\cap \lambda$ is a disjoint union of the sets $(N(\gamma)\cap \lambda)\setminus V(D)$ with $\gamma\in\kappa$ and these sets are all non-empty.
Hence $|\kappa|\leq |N(D)\cap \lambda|$ and thus $|\kappa | + 1\leq |V_v|$ for all inner vertices $v$ of $Q_{\beta}$.

Write $V$ for the inner vertices of $\Q_{\beta}$.
Statement (\ref{itm:cpr_wbound}) easily follows from
\[
|V| (|\kappa| + 1)\leq \sum_{v\in V}|V_v|\leq |V|\leq |\kappa|\cdot |V\cap V(Q_{\alpha})|.
\qedhere\]
\end{enumerate}
\end{proof}

\section{Rural Societies}\label{sec:rural_societies}
In this section we present the answer of Robertson and Seymour to the question whether or not a graph can be drawn in the plane with specified vertices on the boundary of the outer face in a prescribed order.
We will apply their result to subgraphs of a graph with a stable decomposition.

A \emph{society} is a pair $(G, \Omega)$ where $G$ is a graph and $\Omega$ is a cyclic permutation of a subset of $V(G)$ which we denote by $\bar{\Omega}$.
A society $(G,\Omega)$ is called \emph{rural} if there is a drawing of $G$ in a closed disc $D$ such that $V(G)\cap \partial D = \bar{\Omega}$ and $\Omega$ coincides with a cyclic permutation of $\bar{\Omega}$ arising from traversing $\partial D$ in one of its orientations.
We say that a society $(G,\Omega)$ is \emph{$k$-connected} for an integer $k$ if there is no separation $(A,B)$ of $G$ with $|A\cap B| < k$ and $\bar{\Omega}\subseteq B \neq V(G)$.
For any subset $X\subseteq \bar{\Omega}$ denote by $\Omega|X$ the map on $X$ defined by $x\mapsto\Omega^k(x)$ where $k$ is the smallest positive integer such that $\Omega^k(x)\in X$ (chosen for each $x$ individually).
Since $\Omega$ is a cyclic permutation so is $\Omega|X$.

Given two internally disjoint paths $P$ and $Q$ in $G$ we write $PQ$ for the cyclic permutation of $V(P\cup Q)$ that maps each vertex of $P$ to its successor on $P$ if there is one and to the first vertex of $Q-P$ otherwise and that maps each vertex of $Q-P$ to its successor on $Q-P$ if there is one and to the first vertex of $P$ otherwise.

Let $R$ and $S$ be disjoint $\bar{\Omega}$-paths in a society $(G, \Omega)$, with end vertices $r_1$, $r_2$ and $s_1$, $s_2$, respectively.
We say that $\{R, S\}$ is a \emph{cross in $(G,\Omega)$}, if $\Omega|\{r_1,r_2,s_1,s_2\} = (r_1s_1r_2s_2)$ or $\Omega|\{r_1,r_2,s_1,s_2\} = (s_2r_2s_1r_1)$.

The following is an easy consequence of Theorems 2.3 and 2.4 in~\cite{gm9}.
\begin{thm}[Robertson \& Seymour 1990]\label{thm:cross}
Any $4$-connected society is rural or contains a cross.
\end{thm}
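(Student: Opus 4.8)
The plan is to obtain the required dichotomy directly from Theorems~2.3 and~2.4 of~\cite{gm9} and then to use the $4$-connectedness of $(G,\Omega)$ to kill the exceptional outcome left open by those theorems. First I would dispose of the degenerate case $|\bar{\Omega}|\leq 3$. If there were a vertex $v\in V(G)\setminus\bar{\Omega}$, then $(V(G),\bar{\Omega})$ would be a separation of $G$ of order $|\bar{\Omega}|\leq 3$ with $\bar{\Omega}\subseteq\bar{\Omega}\neq V(G)$, contradicting $4$-connectedness; hence $V(G)=\bar{\Omega}$ and $G$ has at most three vertices, so $(G,\Omega)$ is trivially rural (place the at most three vertices on the boundary of a disc in the cyclic order prescribed by $\Omega$ and draw the at most three edges inside it). So from now on assume $|\bar{\Omega}|\geq 4$.

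Next I would quote Theorems~2.3 and~2.4 of~\cite{gm9}. After translating their statements about linking two prescribed pairs of vertices by two vertex-disjoint paths into the language of societies, they assert that a society $(G,\Omega)$ with $|\bar{\Omega}|\geq 4$ is rural, or contains a cross, or admits a separation $(A,B)$ of $G$ with $\bar{\Omega}\subseteq B$, $B\neq V(G)$, and $|A\cap B|\leq 3$. In the first two cases there is nothing to prove. In the third case $(A,B)$ is a separation of $G$ with $|A\cap B|<4$ and $\bar{\Omega}\subseteq B\neq V(G)$, which by the very definition of a $4$-connected society cannot exist. Hence $(G,\Omega)$ is rural or contains a cross.

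The step I expect to be the only genuine work is the quotation in the previous paragraph: the results in~\cite{gm9} are phrased in terms of the presence of two vertex-disjoint paths joining two specified pairs of vertices versus a planarity obstruction controlled by separations of order at most three, and one has to check carefully that this is equivalent to the ``rural or cross'' alternative used here, in particular that the planarity-obstruction side of their statement, for a society with $|\bar{\Omega}|\geq 4$, really does supply a society separation of order at most three (and, should those theorems require a larger lower bound on $|\bar{\Omega}|$, that the finitely many remaining small cases can be checked by hand). Once that correspondence is set up, the rest follows immediately from the definition of $4$-connectedness.
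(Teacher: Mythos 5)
Your proposal matches the paper's own treatment: the paper supplies no proof of this theorem, stating only that it ``is an easy consequence of Theorems~2.3 and~2.4 in~\cite{gm9},'' and the derivation you sketch---handle $|\bar{\Omega}|\le 3$ directly, then use $4$-connectedness of the society to exclude the small-separation outcome of the Robertson--Seymour trichotomy---is exactly the intended ``easy consequence.'' The one caveat you rightly flag, namely checking that Theorems~2.3 and~2.4 of~\cite{gm9} really produce the alternative ``rural / cross / separation of order at most three with $\bar{\Omega}$ on one side'' in the precise form you quote, is the only substantive step, and since the paper itself elides it your sketch is at the same level of rigor.
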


In our application we always want to find a cross.
To prevent the society from being rural we force it to violate the implication given in following Lemma which is a simple consequence of Euler's formula.

\begin{lem}\label{thm:euler}
Let $(G,\Omega)$ be a rural society.
If the vertices in $V(G)\setminus \bar{\Omega}$ have degree at least~$6$ on average, then $\sum_{v\in \bar{\Omega}}d_G(v)\leq 4|\bar{\Omega}|-6$.
\end{lem}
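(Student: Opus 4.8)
The plan is to derive the inequality from Euler's formula. Set $x\coloneq|\bar{\Omega}|$, let $Y\coloneq V(G)\setminus\bar{\Omega}$ and $y\coloneq|Y|$, so the hypothesis reads $\sum_{v\in Y}d_G(v)\geq 6y$. Since $\sum_{v\in V(G)}d_G(v)=2|E(G)|$, this gives $\sum_{v\in\bar{\Omega}}d_G(v)\leq 2|E(G)|-6y$, so it suffices to prove $|E(G)|\leq 2x+3y-3$. Before that I would make a few harmless reductions. We may assume $G$ is simple (a loop or parallel edge at a vertex of $\bar{\Omega}$ may be deleted, which only decreases $\sum_{v\in\bar{\Omega}}d_G(v)$), and connected (distinct components may be joined by edges drawn inside the disc; this keeps the society rural with the same $\Omega$ and only raises degrees of vertices of $Y$, so proving the claim for the larger graph suffices and the hypothesis persists for it); the remaining cases $|V(G)|\leq 2$ are dealt with directly against $4x-6$.

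Next I would fix a drawing of $G$ in a closed disc $\Delta$ witnessing that $(G,\Omega)$ is rural, so $V(G)\cap\partial\Delta=\bar{\Omega}$ and $\bar{\Omega}$ appears on $\partial\Delta$ in the cyclic order $\Omega$, and read this as a drawing in the plane with the complement of $\Delta$ lying in the unbounded face. As $G$ is connected, the boundary walk of the unbounded face is a single closed walk meeting precisely the vertices on $\partial\Delta$, that is $\bar{\Omega}$; visiting $x$ distinct vertices, it has length at least $x$. Every bounded face is bounded by a closed walk of length at least $3$, because $G$ is simple with at least three vertices. Writing $f$ for the number of faces and summing the lengths of all faces — which counts each edge exactly twice — gives $2|E(G)|\geq x+3(f-1)$, hence $f\leq(2|E(G)|-x+3)/3$. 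Substituting this into Euler's formula $|V(G)|-|E(G)|+f=2$ together with $|V(G)|=x+y$ and solving for $|E(G)|$ yields $|E(G)|\leq 2x+3y-3$, whence
\[
\sum_{v\in\bar{\Omega}}d_G(v)\;\leq\;2|E(G)|-6y\;\leq\;2(2x+3y-3)-6y\;=\;4x-6 .
\]

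I do not expect a serious obstacle here. The only points that need a careful word are the elementary reductions of the first paragraph (simplicity, connectedness, and the tiny cases), and the standard topological fact that for a connected graph drawn in the disc with its boundary vertices in the prescribed cyclic order the unbounded face is bounded by a single closed walk through exactly those vertices — which is precisely what lets us bound its length from below by $x=|\bar{\Omega}|$. Everything after that is the mechanical manipulation of Euler's formula carried out above.
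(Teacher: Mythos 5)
Your proof is correct and arrives at the same key intermediate bound $\|G\|\le 2|\bar{\Omega}|+3|V(G)\setminus\bar{\Omega}|-3$, but by a genuinely different route. The paper adds an apex vertex $w$ outside the disc joined to every vertex of $\bar{\Omega}$ and then invokes the standard planar bound $\|H\|\le 3|H|-6$ for the enlarged graph $H$; you instead apply Euler's formula to the drawing itself, bounding the length of the unbounded face's boundary walk from below by $|\bar{\Omega}|$ and that of each bounded face by $3$. Both are routine; the paper's apex trick is shorter and dodges the face-by-face bookkeeping, while yours is more self-contained. Two small loose ends in your write-up. First, the reduction to connected $G$ is not quite as immediate as ``joining components by an edge inside the disc'': a chord with both ends on $\partial\Delta$ can separate two components inside the disc, so an arbitrary pair cannot always be joined, though one can always join \emph{some} pair. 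Alternatively you can skip the reduction entirely: Euler's formula $|V(G)|-\|G\|+f=1+c$ for a plane graph with $c$ components yields $\|G\|\le 2|\bar{\Omega}|+3|V(G)\setminus\bar{\Omega}|-3c$ by the same face count (the outer-face boundary is a union of closed walks of total length still at least $|\bar{\Omega}|$). Second, the boundary walk of the unbounded face need not meet \emph{precisely} the vertices of $\bar{\Omega}$ --- it may pass through interior vertices as well --- but your argument uses only that it meets all of $\bar{\Omega}$, so the lower bound $\ge|\bar{\Omega}|$ on its length stands; similarly your simplicity reduction only treats loops and parallel edges incident with $\bar{\Omega}$, but this is moot since the paper's graphs are simple throughout.
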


\begin{proof}
Since $(G,\Omega)$ is rural there is a drawing of $G$ in a closed disc $D$ with $V(G)\cap \partial D = \bar{\Omega}$.
Let $H$ be the graph obtained by adding one extra vertex $w$ outside $D$ and joining it by an edge to every vertex on $\partial D$.
Writing $b\coloneq |\bar{\Omega}|$ and $i\coloneq |V(G)\setminus \bar{\Omega}|$, Euler's formula implies
\[\|G\| + b = \|H\| \leq 3|H| - 6 = 3(i+b) - 3\]
and thus $\|G\|\leq 3i+2b-3$.
Our assertion then follows from
\[\sum_{v\in \bar{\Omega}}d_G(v) + 6i\leq \sum_{v\in V(G)}d_G(v)= 2\|G\| \leq 6i+ 4b-6\qedhere\]
\end{proof}

In our main proof we will deal with societies where the permutation $\Omega$ is induced by paths (see \autoref{thm:rural_bridge} and \autoref{thm:rural_cutpath}).
But every inner vertex on such a path that has degree~$2$ in $G$ adds slack to the bound provided by \autoref{thm:euler} as it counts~$2$ on the left side but~$4$ on the right.
This is remedied in the following Lemma which allows us to apply \autoref{thm:euler} to a ``reduced'' society where these vertices are suppressed.

\begin{lem}\label{thm:better_society}
Let $(G,\Omega)$ be a society and let $P$ be a path in $G$ such that all inner vertices of $P$ have degree~$2$ in $G$.
Denote by $G'$ the graph obtained from $G$ by suppressing all inner vertices of $P$ and set $\Omega'\coloneq \Omega|V(G')$.
Then $(G',\Omega')$ is rural if and only if $(G,\Omega)$ is.
\end{lem}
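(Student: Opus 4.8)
The plan is to reduce to suppressing a single vertex and then to transform rural drawings back and forth by a purely local modification. If $P$ has no inner vertex there is nothing to prove, so by induction on the number of inner vertices of $P$ (using that both suppression and the restriction of $\Omega$ compose in the obvious way) it suffices to treat the case where $P$ has exactly one inner vertex $v$; then $v$ has degree~$2$ in $G$ with $N_G(v)=\{x,y\}$, where $x,y$ are the ends of $P$, and $G'=(G-v)+xy$. Thus $G$ is the subdivision of $G'$ obtained by replacing the edge $e\coloneq xy$ by the path $xvy$ (if $xy$ was already an edge of $G$ the argument is the same, with the new arc drawn parallel to the old one). The point of the construction below is that it never moves a vertex other than $v$ onto or off of $\partial D$, so it automatically preserves the boundary condition $V\cap\partial D=\bar\Omega$ (resp.\ $\overline{\Omega'}$), and, since it only deletes or inserts $v$ from the cyclic sequence, it preserves the prescribed cyclic order as well.

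For the direction ``$(G,\Omega)$ rural $\Rightarrow(G',\Omega')$ rural'' I would take a drawing of $G$ in a closed disc $D$ witnessing ruralness, replace the two arcs representing $xv$ and $vy$ by their concatenation, an arc representing $e$, and erase the point $v$. If $v\notin\bar\Omega$ then $v$ was an interior point of $D$ and $\partial D$ is untouched; if $v\in\bar\Omega$ then $v$ lay on $\partial D$, and I push the new arc slightly into the interior in a neighbourhood of the erased point so that it meets $\partial D$ only in $x$ and $y$. Either way the result is a drawing of $G'$ in $D$ in which the vertices on $\partial D$ are exactly $\bar\Omega\setminus\{v\}=\overline{\Omega'}$, met in the cyclic order obtained from $\Omega$ by deleting $v$, which is $\Omega'$. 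Hence $(G',\Omega')$ is rural.

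For the converse I would take a rural drawing of $G'$ in $D$, arranged (as we may) so that every edge meets $\partial D$ only in its ends, and put $v$ back. If $v\notin\bar\Omega$, just place $v$ at an interior point of the arc representing $e$, subdividing it into $xv$ and $vy$; nothing on $\partial D$ changes, and $\Omega=\Omega'$. If $v\in\bar\Omega$, then $v$ occurs in $\Omega$ immediately between $x$ and $y$ (its two $P$-neighbours), so $x$ and $y$ are consecutive in $\Omega'$ and therefore occupy consecutive positions on $\partial D$, bounding a boundary arc $\beta$ whose interior is disjoint from $G'$. I delete the arc representing $e$; in the drawing of $G'-e$ the open arc $\beta$ lies in a single face $F$ on whose boundary both $x$ and $y$ lie, so I can redraw $e$ inside $F$ as an $x$--$y$ arc running alongside $\beta$ and then bend it so that it passes through one point $p$ of $\beta$; declaring $p$ to be the vertex $v$ subdivides this arc into $xv$ and $vy$. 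The result is a drawing of $G$ in $D$ whose boundary vertices are $\overline{\Omega'}\cup\{v\}=\bar\Omega$, met in the cyclic order obtained from $\Omega'$ by inserting $v$ into $\beta$, i.e.\ between $x$ and $y$, which is $\Omega$. Hence $(G,\Omega)$ is rural.

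The smoothing step and the subdivision of an interior arc are routine facts about drawings in a disc. The one place that needs genuine care, and the only place where the hypothesis on $P$ is actually used, is the final case $v\in\bar\Omega$ of the converse: one has to drag a point of the (re-drawn) edge $e$ onto the correct sub-arc of $\partial D$ without creating a crossing, and this works precisely because that sub-arc is the one flanked by $x$ and $y$ --- that is, because $v$ is an inner vertex of $P$ whose neighbours are the two ends of $P$, so that $x,v,y$ occur consecutively in $\Omega$. (For the societies of the main proof, where $\Omega$ is induced by paths and $P$ is one of those paths, this holds automatically.)
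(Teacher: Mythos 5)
Your forward direction ($(G,\Omega)$ rural $\Rightarrow(G',\Omega')$ rural) is fine and matches the paper's approach of directly modifying the drawing, and the induction-on-inner-vertices reduction is harmless. The converse, however, contains a gap that you half-notice but do not close. You assert that if $v\in\bar\Omega$ then ``$v$ occurs in $\Omega$ immediately between $x$ and $y$'', offering as justification that $x,y$ are the only $G$-neighbours of $v$. That inference is invalid: $\Omega$ is an arbitrary cyclic permutation given as part of the society, and the hypotheses of the lemma place no constraint whatsoever relating $\Omega$ to the adjacency structure of $G$. In fact the `only if' direction is \emph{false} as the lemma is stated. Take $G$ to be the $4$-cycle $x$--$v$--$y$--$z$--$x$, let $P=xvy$ (so $v$ has degree~$2$ with neighbours $x,y$), and let $\Omega=(x\,v\,z\,y)$ on $\bar\Omega=V(G)$. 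Then $G'$ is the triangle on $\{x,y,z\}$ and $\Omega'=(x\,z\,y)$, so $(G',\Omega')$ is rural; but in any disc drawing of $G$ with boundary cyclic order $x,v,z,y$ the chords $vy$ and $zx$ interleave and must cross, so $(G,\Omega)$ is not rural.

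To be fair, the paper's own one-sentence proof (``every drawing of $G$ gives a drawing of $G'$ and vice versa'') glosses over exactly the same point, and your closing parenthetical correctly identifies the missing hypothesis: in both places the lemma is invoked, $\Omega$ is built from the very paths being suppressed (it is $P_\alpha P_\beta^{-1}$, resp.\ induced by a cycle $C_i$), so each suppressed vertex really does sit between its $P$-neighbours in $\Omega$. But that is an extra assumption about $\Omega$, not a consequence of ``$v$ is an inner vertex of $P$ of degree $2$'', and it would have to be added to the statement for the biconditional to hold. Since the paper only ever applies the forward implication, the cleanest repair is to drop the `only if' (or, if you want the biconditional, add the hypothesis that $\Omega$ restricted to $V(P)\cap\bar\Omega$ agrees with the linear order of $P$ and occupies a contiguous arc of $\Omega$); your argument for the surviving direction is correct.
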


\begin{proof}
The graph $G$ is a subdivision of $G'$ so every drawing of $G$ gives a drawing of $G'$ and vice versa.
Hence a drawing witnessing that $(G,\Omega)$ is rural can easily be modified to witness that $(G',\Omega')$ is rural and vice versa.
\end{proof}

Two vertices $a$ and $b$ of some graph $H$ are called \emph{twins} if $N_H(a)\setminus\{b\} = N_H(b)\setminus\{a\}$.
Clearly $a$ and $b$ are twins if and only if the transposition $(ab)$ is an automorphism of $H$.

\begin{lem}\label{thm:rural_bridge}
Let $G$ be a $p$-connected graph and let $(\W,\P)$ be a stable regular decomposition of $G$ of length at least~$3$ and attachedness~$p$.
Set $\theta\coloneq\{\alpha\mid P_{\alpha}\text{ is trivial}\}$ and $\lambda\coloneq\{\alpha\mid P_{\alpha}\text{ is non-trivial}\}$.
Let $\alpha\beta$ be an edge of $\Gamma(\W,\P)[\lambda]$ such that $|N(\alpha)\cap\theta|\leq p-3$, $|N(\beta)\cap\theta|\leq p-3$, and for $N_{\alpha\beta}\coloneq N(\alpha)\cap N(\beta)$ we have $N_{\alpha\beta}\subseteq \theta$ and $|N_{\alpha\beta}|\leq p-5$.
If $\alpha$ and $\beta$ are not twins, then the society $(G_{\alpha\beta}^{\P}, P_{\alpha}P_{\beta}^{-1})$ is rural.
\end{lem}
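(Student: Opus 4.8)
The plan is to argue by contradiction. Suppose the society $(G_{\alpha\beta}^{\P},\Omega)$ with $\Omega\coloneq P_{\alpha}P_{\beta}^{-1}$ and $\bar\Omega = V(P_{\alpha})\cup V(P_{\beta})$ is not rural. I would first show that this society is $4$-connected, then invoke \autoref{thm:cross} to obtain a cross $\{R_1,R_2\}$ in it, and finally convert this cross into a linkage from the left to the right adhesion set of a single inner bag of $\W$ whose induced permutation is the transposition $(\alpha\beta)$. Since $\alpha$ and $\beta$ are not twins in $\Gamma(\W,\P)$, that transposition is not an automorphism of $\Gamma(\W,\P)$, so the linkage is a twisting disturbance, contradicting axiom (L10) and hence the stability of $(\W,\P)$.

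Two preliminary observations feed into this. First, by (L6) each $\P[W]$ is $p$-attached in $G[W]$, so each $P_{\gamma}[W]$ is induced; as each edge lies in a bag, $P_{\alpha}$ and $P_{\beta}$ are chordless in $G$. Moreover, if some nontrivial $\P$-bridge $B$ contributing to $G_{\alpha\beta}^{\P}$ (so $B$ lies in an inner bag $W$ and attaches to no path of $\P_{\lambda\setminus\{\alpha,\beta\}}$) attached to $P_{\alpha}$ but to no other nontrivial path, then $p$-attachedness would yield at least $p-2$ trivial paths $P_{\gamma}$ with $\alpha\gamma\in E(\Gamma(\W,\P))$, i.e.\ $|N(\alpha)\cap\theta|\geq p-2$, against the hypothesis $|N(\alpha)\cap\theta|\leq p-3$ (and symmetrically for $\beta$). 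Hence every nontrivial $\P$-bridge contributing to $G_{\alpha\beta}^{\P}$ attaches to both $P_{\alpha}$ and $P_{\beta}$, realises $\alpha\beta$, and --- since $N_{\alpha\beta}\subseteq\theta$ --- has all its remaining attachments on trivial paths indexed by a subset of $N_{\alpha\beta}$.

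For $4$-connectedness, suppose $(A,B)$ is a separation of $G_{\alpha\beta}^{\P}$ of order at most~$3$ with $\bar\Omega\subseteq B\neq V(G_{\alpha\beta}^{\P})$, and pick $x\in A\setminus B$. Then $x$ is an inner vertex of a nontrivial $\P$-bridge, and by the previous paragraph the component $C$ of $G-\P$ containing $x$ lies in $V(G_{\alpha\beta}^{\P})$, induces a connected subgraph of $G_{\alpha\beta}^{\P}$, and satisfies $N_G(C)\subseteq\bar\Omega\cup V(\P_{N_{\alpha\beta}})$. Setting $S\coloneq(A\cap B)\cup V(\P_{N_{\alpha\beta}})$, and using $\bar\Omega\subseteq B$ together with the fact that $C$ lies in one component of $G_{\alpha\beta}^{\P}-(A\cap B)$, one checks that any path in $G$ from $x$ to $V(\P)\setminus S$ meets $S$, so $S$ separates $x$ from $V(\P)\setminus S$. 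But $|S|\leq 3+|N_{\alpha\beta}|\leq p-2$ while $|V(\P)|\geq 4+|\theta|>3+|N_{\alpha\beta}|\geq|S|$, so $V(\P)\setminus S$ is non-empty, and $G-S$ is disconnected for $|S|<p$, contradicting the $p$-connectivity of $G$. Thus the society is $4$-connected and, as it is not rural, \autoref{thm:cross} supplies a cross $\{R_1,R_2\}$.

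The interior of each $R_j$ lies inside a single nontrivial $\P$-bridge realising $\alpha\beta$, hence inside a single inner bag $W_{i_j}$. Re-routing a cross path through its bridge (which attaches to both $P_{\alpha}$ and $P_{\beta}$) reduces us to a standard configuration: $R_1$ runs from $r_1\in P_{\alpha}$ to $r_2\in P_{\beta}$, $R_2$ from $t_1\in P_{\alpha}$ to $t_2\in P_{\beta}$, and, after re-labelling using the symmetry $\alpha\leftrightarrow\beta$ and path reversal, the cross condition becomes $r_1<_{P_{\alpha}}t_1$ and $t_2<_{P_{\beta}}r_2$. Since $P_{\alpha}\cap W_{i_1}$ and $P_{\beta}\cap W_{i_1}$ are intervals of $P_{\alpha}$ and $P_{\beta}$ containing $r_1$ and $r_2$ respectively (and likewise for $W_{i_2}$, $t_1$, $t_2$), comparing positions along $P_{\alpha}$ and along $P_{\beta}$ forces $i_1=i_2$; call this common index $i$, an inner bag. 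Writing $a_0,a_1$ and $b_0,b_1$ for the ends of $P_{\alpha}$ and $P_{\beta}$, I would then set $Q_{\alpha}\coloneq a_0P_{\alpha}r_1R_1r_2P_{\beta}b_1$ and $Q_{\beta}\coloneq b_0P_{\beta}t_2R_2t_1P_{\alpha}a_1$; the two inequalities make $Q_{\alpha}$ and $Q_{\beta}$ disjoint, so together with $\P\setminus\{P_{\alpha},P_{\beta}\}$ they form a foundational linkage $\Q$ for $\W$. Both twists occur strictly inside $W_i$, so $Q_{\alpha}$ and $Q_{\beta}$ still meet every adhesion set exactly once; hence $\Q[W_i]$ is a linkage from the left to the right adhesion set of the inner bag $W_i$, and its induced permutation relative to $\P$ is $(\alpha\beta)$. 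By (L10) this is an automorphism of $\Gamma(\W,\P)$, i.e.\ $\alpha$ and $\beta$ are twins --- the desired contradiction.

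I expect the main obstacle to be this last cross analysis: reducing an arbitrary cross to the standard configuration requires handling cross paths with both ends on the same foundational path (re-routed onto the other path through their bridge), the possibility that $R_1$ and $R_2$ have interiors in the same bridge, and coincidences among $r_1,r_2,t_1,t_2$ and the bridge attachments used in the re-routing; in each such case one exhibits, by an analogous local re-routing inside the single bag $W_i$, a foundational linkage whose restriction to $W_i$ twists $\alpha$ and $\beta$. The remaining checks --- that $\Q$ is genuinely a foundational linkage and that its paths meet each adhesion set once --- are routine because all re-routing takes place inside $W_i$.
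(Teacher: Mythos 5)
Your overall plan — show the society is $4$-connected, extract a cross via \autoref{thm:cross}, and turn the cross into a disturbance contradicting stability — matches the paper, and your treatment of what you call the standard configuration (both cross paths running from $P_{\alpha}$ to $P_{\beta}$, localised to one bag, hence yielding a twisting disturbance) is essentially the paper's third case. But the reduction of the other cross shapes to this one is where a genuine gap sits, and I do not think it can be patched in the way you sketch.

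First, the paper does not apply \autoref{thm:cross} to the full society: it first restricts $\Omega$ to the set $Z$ of vertices on $P_{\alpha}\cup P_{\beta}$ that are end vertices of $P_{\alpha},P_{\beta}$ or have a neighbour outside $G_{\alpha\beta}^{\P}\cup\P_{N_{\alpha\beta}}$, shows this restriction does not change rurality (Claim~\ref{clm:rb_big_society}), and only then argues $4$-connectedness and extracts a cross. This guarantees that every end of the cross lies in $Z$. That is not a cosmetic step. In your full society a cross end $z$ on $P_{\alpha}$ is merely a bridge attachment inside $G_{\alpha\beta}^{\P}$ and need not have any neighbour outside $G_{\alpha\beta}^{\P}\cup\P_{N_{\alpha\beta}}$, so it need not be in $Z$.

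Second, and more importantly, the case where one cross path $R$ has both ends on $P_{\alpha}$ and the other path $S$ has one end on each of $P_{\alpha},P_{\beta}$ does \emph{not} yield a twisting disturbance, and the paper does not try to turn it into one. Rerouting $P_{\alpha}$ along $R$ leaves the induced permutation unchanged; what it does is bypass the end vertex $z$ of $S$ on $P_{\alpha}$. The contradiction then comes from a \emph{bridging} disturbance: because $z\in Z$ there is a $\P$-bridge from $z$ to some $P_{\gamma}$ with $\gamma\in N(\alpha)\setminus N(\beta)$, and attaching $S$ makes $\beta\gamma$ an edge of $\Gamma(\W,\Q)$ that is not in $\Gamma(\W,\P)$, violating \autoref{thm:global_disturbance}. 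Your suggestion to re-route $R$ ``onto the other path through its bridge'' and thereby fall into the standard configuration is not secure: a $P_{\beta}$-attachment $v$ of the bridge hosting $R$ can sit on either side of $s'$ on $P_{\beta}$, so the rerouted path need not form a cross with $S$; one must also keep the reroute disjoint from $S$ inside a possibly shared bridge; and even if one could massage this into a cross of the right shape, the underlying point remains that without the $Z$-restriction you cannot invoke the bridge on $z$ that the paper's argument depends on. The same issue affects the case where both cross paths have both ends on $P_{\alpha}$, which the paper reduces to the mixed case through a careful minimality argument on off-road edges (Claim~\ref{clm:rb_tails}), again within the $Z$-restricted society. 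So the reduction you ``expect'' is where the real work lies, and the paper's route through $Z$ and the distinction between twisting and bridging disturbances is what makes it go through.
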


\begin{proof}~	
\begin{clm}\label{clm:rb_connected}
Every $\P$-bridge with an edge in $G_{\alpha\beta}^{\P}$ must attach to $P_{\alpha}$ and $P_{\beta}$, in particular, $G_{\alpha\beta}^{\P}-P_{\alpha}$ and $G_{\alpha\beta}^{\P}-P_{\beta}$ are both connected.
\end{clm}
\begin{proof}
By \autoref{thm:stable_bridges} every non-trivial $\P$-bridge that attaches to $P_{\alpha}$ or $P_{\beta}$ must attach to another path of $\P_{\lambda}$.
Since $P_{\alpha}$ and $P_{\beta}$ are induced this means that all $\P$-bridges with an edge in $G_{\alpha\beta}^{\P}$ must realise the edge $\alpha\beta$ and hence attach to $P_{\alpha}$ and $P_{\beta}$.
\end{proof}

\begin{clm}\label{clm:rb_ab_separates}
The set $Z$ of all vertices of $G_{\alpha\beta}^{\P}$ that are end vertices of $P_{\alpha}$ or $P_{\beta}$ or have a neighbour in $G-(G_{\alpha\beta}^{\P}\cup \P_{N_{\alpha\beta}})$ is contained in $V(P_{\alpha}\cup P_{\beta})$.
\end{clm}
\begin{proof}
Any vertex $v$ of $G_{\alpha\beta}^{\P}-(P_{\alpha}\cup P_{\beta})$ is an inner vertex of some non-trivial $\P$-bridge $B$ that attaches to $P_{\alpha}$ and $P_{\beta}$.
Since $G_{\alpha\beta}^{\P}$ contains all inner vertices of $B$ the neighbours of $v$ in $G-G_{\alpha\beta}^{\P}$ must be attachments of $B$.
But if $B$ attaches to a path $P_{\gamma}$ with $\gamma\neq\alpha,\beta$, then $\gamma\in N_{\alpha\beta}$ and therefore all neighbours of $v$ are in $G_{\alpha\beta}^{\P}\cup \P_{N_{\alpha\beta}}$.
\end{proof}

\begin{clm}\label{clm:rb_big_society}
The society $(G_{\alpha\beta}^{\P}, P_{\alpha}P_{\beta}^{-1})$ is rural if and only if the society $(G_{\alpha\beta}^{\P}, P_{\alpha}P_{\beta}^{-1}|Z)$ is.
\end{clm}
\begin{proof}
Clearly $(G_{\alpha\beta}^{\P}, P_{\alpha}P_{\beta}^{-1}|Z)$ is rural if $(G_{\alpha\beta}^{\P}, P_{\alpha}P_{\beta}^{-1})$ is.
For the converse suppose that $(G_{\alpha\beta}^{\P}, P_{\alpha}P_{\beta}^{-1}|Z)$ is rural, that is, there is a drawing of $G_{\alpha\beta}^{\P}$ in a closed disc $D$ such that $G_{\alpha\beta}^{\P}\cap \partial D = Z$ and one orientation of $\partial D$ induces the cyclic permutation $P_{\alpha}P_{\beta}^{-1}|Z$ on $Z$.

For the rurality of $(G_{\alpha\beta}^{\P}, P_{\alpha}P_{\beta}^{-1})$ and $(G_{\alpha\beta}^{\P}, P_{\alpha}P_{\beta}^{-1}|Z)$ it does not matter whether the first vertices of $P_{\alpha}$ and $P_{\beta}$ are adjacent in $G_{\alpha\beta}^{\P}$ or not and the same is true for the last vertices of $P_{\alpha}$ and $P_{\beta}$.
So we may assume that both edges exist and we denote the cycle that they form together with the paths $P_{\alpha}$ and $P_{\beta}$ by $C$.

The closed disc $D'$ bounded by $C$ is contained in $D$.
It is not hard to see that the interior of $D'$ is the only region of $D-C$ that has vertices of both $P_{\alpha}$ and $P_{\beta}$ on its boundary.
But every edge of $G_{\alpha\beta}^{\P}$ lies on $C$ or in a $\P$-bridge $B$ with $B-(\P\setminus\{P_{\alpha},P_{\beta}\})\subseteq G_{\alpha\beta}^{\P}$.
By \autoref{clm:rb_connected} such a bridge $B$ must attach to $P_{\alpha}$ and $P_{\beta}$ and in the considered drawing it must therefore be contained in $D'$.
This means $G_{\alpha\beta}^{\P}\subseteq D'$ which implies that $(G_{\alpha\beta}^{\P}, P_{\alpha}P_{\beta}^{-1})$ is rural as desired.
\end{proof}

\begin{clm}\label{clm:rb_4_connected}
For $H\coloneq G_{\alpha\beta}^{\P}$ and $\Omega\coloneq P_{\alpha}P_{\beta}^{-1}|Z$ the society $(H,\Omega)$ is $4$-connected.
\end{clm}
\begin{proof}
Note that $\bar{\Omega} = Z$ since $Z\subseteq V(P_{\alpha}\cup P_{\beta})$ by \autoref{clm:rb_ab_separates}.
Set $T\coloneq V(\P_{N_{\alpha\beta}})$.
Clearly $Z\cup T$ separates $H$ from $G-H$ so for every vertex $v$ of $H-Z$ there is a $v$--$T\cup Z$ fan of size at least $p$ in $G$ as $G$ is $p$-connected.
Since $|T|\leq p-5$ this fan contains a $v$--$Z$ fan of size at least $4$ such that all its paths are contained in $H$.
This means that $(H,\Omega)$ is $4$-connected as desired.
\end{proof}

By the \emph{off-road edges} of a cross $\{R,S\}$ in $(H,\Omega)$ we mean the edges in $E(R\cup S)\setminus E(P_{\alpha}\cup P_{\beta})$.
We call a component of $R\cap(P_{\alpha}\cup P_{\beta})$ that contains an end vertex of $R$ a \emph{tail of $R$}.
We define the \emph{tails of $S$} similarly.

\begin{clm}\label{clm:rb_tails}
If $\{R,S\}$ is a cross in $(H,\Omega)$ whose set $E$ of off-road edges is minimal, then for every $z\in Z\setminus V(R\cup S)$ each $z$--$(R\cup S)$ path in $P_{\alpha}\cup P_{\beta}$ ends in a tail of $R$ or $S$.
\end{clm}
\begin{proof}
Suppose not, that is, there is a $Z$--$(R\cup S)$ path $T$ in $P_{\alpha}\cup P_{\beta}$ such that its last vertex $t$ does not lie in a tail of $R$ or $S$.
W.l.o.g.\ we may assume that $t$ is on $R$.
Since $t$ is not in a tail of $R$ the paths $Rt$ and $tR$ must both contain an edge that is not in $P_{\alpha}\cup P_{\beta}$ so $E(T\cup Rt \cup S)\setminus E(P_{\alpha}\cup P_{\beta})$ and $E(T\cup tR \cup S)\setminus E(P_{\alpha}\cup P_{\beta})$ are both proper subsets of $E$.
But one of $\{T\cup Rt, S\}$ and $\{T\cup tR, S\}$ is a cross in $(H, \Omega)$, a contradiction.
\end{proof}

Suppose now that $\alpha$ and $\beta$ are not twins.
\begin{clm}\label{clm:rb_no_cross}
$(H,\Omega)$ does not contain a cross.
\end{clm}
\begin{proof}
If $(H,\Omega)$ contains a cross, then we may pick a cross $\{R,S\}$ in $(H,\Omega)$ such that its set $E$ of off-road edges is minimal.
Since $Z\subseteq V(P_{\alpha}\cup P_{\beta})$ we may assume w.l.o.g.\ that $\{R,S\}$ satisfies one of the following.
\begin{enumerate}

\item
$R$ and $S$ both have their ends on $P_{\alpha}$.

\item
$R$ has both ends on $P_{\alpha}$. $S$ has one end on $P_{\alpha}$ and one on $P_{\beta}$.

\item
$R$ and $S$ both have one end on $P_{\alpha}$ and one on $P_{\beta}$.
\end{enumerate}

We reduce the first case to the second.
As $P_{\beta}$ contains a vertex of $Z$ but no end of $R$ or $S$ it must be disjoint from $R\cup S$ by \autoref{clm:rb_tails}.
But $R$ and $S$ both contain a vertex outside $P_{\alpha}$ (recall that $P_{\alpha}$ is induced by (L6)) so $R\cup S$ meets $H-P_{\alpha}$ which is connected by \autoref{clm:rb_connected}.
	
Therefore there is a $P_{\beta}$--$(R\cup S)$ in $H-P_{\alpha}$, in particular, there is a $Z$--$(R\cup S)$ path $T$ with its first vertex $z$ in $Z\cap V(P_{\beta})$ and we may assume that its last vertex $t$ is on $S$.
Denote by $v$ the end of $S$ that separates the ends of $R$ in $P_{\alpha}$.
	
Then $\{R, vSt\cup T\}$ is a cross in $(H,\Omega)$ and we may pick a cross $\{R',S'\}$ in $(H,\Omega)$ such that its set $E'$ of off-road edges is minimal and contained in the set $F$ of off-road edges of $\{R, vSt\cup T\}$.
If $R'\cup S'$ contains no edge of $T$, then $E'$ is a proper subset of $E$ as it does not contain $E(S)\setminus E(vSt)$, a contradiction to the minimality of $E$.
Hence $R'\cup S'$ contains an edge of $T$ and hence must meet $P_{\beta}$.
So by \autoref{clm:rb_tails} one of its paths, say $S'$ ends in $P_{\beta}$ as desired.
	
On the other hand, all off-road edges of $\{R',S'\}$ that are incident with $P_{\beta}$ are in $T$ and therefore the remaining three ends of $R'$ and $S'$ must all be on $P_{\alpha}$.
Hence $\{R',S'\}$ is a cross as in the second case.
	
In the second case we reroute $P_{\alpha}$ along $R$, more precisely, we obtain a foundational linkage $\Q$ from $\P$ by replacing the subpath of $P_{\alpha}$ between the two end vertices of $R$ with $R$.
	
The first vertex of $R\cup S$ encountered when following $P_{\beta}$ from either of its ends belongs to a tail of $R$ or $S$ by \autoref{clm:rb_tails}.
Obviously a tail contains precisely one end of $R$ or $S$.
Since $R$ has no end on $P_{\beta}$ and $S$ only one, $(R\cup S)\cap P_{\beta}$ is a tail of $S$, in particular, $R$ is disjoint from $P_{\beta}$ and hence the paths of $\Q$ are indeed disjoint.
	
Clearly $S$ must end in an inner vertex $z$ of $P_{\alpha}$.
By the definition of $Z$ there is a $\P$-bridge $B$ in some inner bag $W$ of $\W$ that attaches to $z$ and to some path $P_{\gamma}$ with $\gamma\in N(\alpha)\setminus N(\beta)$.
But $B\cup S$ is contained in a $\Q$-bridge in $G[W]$ and therefore $\beta\gamma$ is an edge of $B(G[W], \Q[W])$ and thus of $\Gamma(\W,\Q)$ but not of $\Gamma(\W,\P)$.
This contradicts \autoref{thm:global_disturbance}.
	
In the third case \autoref{clm:rb_tails} ensures that the first and last vertex of $P_{\alpha}$ and of $P_{\beta}$ in $R\cup S$ is always in a tail and clearly these tails must all be distinct.
Hence by replacing the tails of $R$ and $S$ with suitable initial and final segments of $P_{\alpha}$ and $P_{\beta}$ we obtain paths $P'_{\alpha}$ and $P'_{\beta}$ such that the foundational linkage $\Q\coloneq(\P\setminus \{P_{\alpha},P_{\beta}\})\cup \{P'_{\alpha},P'_{\beta}\}$ has the induced permutation $(\alpha\beta)$.
Since $P_{\gamma}=Q_{\gamma}$ for all $\gamma\notin\{\alpha,\beta\}$ it is easy to see the there must be an inner bag $W$ of $\W$ such that $\Q[W]$ has induced permutation $(\alpha\beta)$.
But clearly $(\alpha\beta)$ is an automorphism of $\Gamma(\W,\P)$ if and only if $\alpha$ and $\beta$ are twins in $\Gamma(\W,\P)$.
Hence $\Q[W]$ is a twisting disturbance by the assumption that $\alpha$ and $\beta$ are not twins.
This contradicts the stability of $(\W,\P)$ and concludes the proof of \autoref{clm:rb_no_cross}.
\end{proof}

By \autoref{clm:rb_4_connected} and \autoref{thm:cross} the society $(H,\Omega)$ is rural or contains a cross.
But \autoref{clm:rb_no_cross} rules out the latter so $(H,\Omega)$ is rural and by \autoref{clm:rb_big_society} so is $(G_{\alpha\beta}^{\P}, P_{\alpha}P_{\beta}^{-1})$.
\end{proof}

In the previous Lemma we have shown how certain crosses in the graph $H=G_{\alpha\beta}^{\P}$ ``between'' two bridge-adjacent paths $P_{\alpha}$ and $P_{\beta}$ of $\P$ give rise to disturbances.
The next Lemma has a similar flavour;
here the graph $H$ will be the subgraph of $G$ ``between'' $P_{\alpha}$ and $Q_{\alpha}$ where $\alpha$ is a cut-vertex of $\Gamma(\W,\P)[\lambda]$ and $\Q$ a relinkage of $\P$.

\begin{lem}\label{thm:rural_cutpath}
Let $G$ be a $p$-connected graph with a stable regular decomposition $(\W,\P)$ of attachedness $p$ and set $\lambda\coloneq\{\alpha\mid P_{\alpha} \text{ is non-trivial}\}$ and $\theta\coloneq\{\alpha\mid P_{\alpha} \text{ is trivial}\}$.
Let $D$ be a block of $\Gamma(\W,\P)[\lambda]$ and let $\kappa$ be the set of cut-vertices of $\Gamma(\W,\P)[\lambda]$ that are in $D$.
If $|N(\alpha)\cap\theta|\leq p-4$ for all $\alpha\in\lambda$, then there is a $V(D)$-compressed $(\P,V(D))$-relinkage $\Q$ such that  $\Q[W]$ is $p$-attached in $G[W]$ for all inner bags $W$ of $\W$ and for any $\alpha\in\kappa$ and any separation $(\lambda_1,\lambda_2)$ of $\Gamma(\W,\P)[\lambda]$ such that $\lambda_1\cap\lambda_2=\{\alpha\}$ and $N(\alpha)\cap\lambda_2 = N(\alpha)\cap V(D)$ the following statements hold where $H\coloneq G_{\lambda_2}^{\P}\cap G_{\lambda_1}^{\Q}$, $q_1$ and $q_2$ are the first and last vertex of $Q_{\alpha}$, and $Z_1$ and $Z_2$ denote the vertices of $H-\{q_1,q_2\}$ that have a neighbour in $G_{\lambda}-G_{\lambda_2}^{\P}$ and $G_{\lambda}-G_{\lambda_1}^{\Q}$, respectively.

\begin{enumerate}[(i)]

\item\label{itm:rc_Z}
We have $Z_1\subseteq V(P_{\alpha})$ and $Z_2\subseteq V(Q_{\alpha})$.
Furthermore, $Z\coloneq \{q_1,q_2\}\cup Z_1\cup Z_2$ separates $H$ from $G_{\lambda}-H$ in $G-\P_{N(\alpha)\cap\theta}$.

\item\label{itm:rc_connected}
The graph $H$ is connected and contains $Q_{\alpha}$.
The path $P_{\alpha}$ ends in $q_2$.

\item\label{itm:rc_blocks}
Every cut-vertex of $H$ is an inner vertex of $Q_{\alpha}$ and is contained in precisely two blocks of $H$.

\item\label{itm:rc_blockZ}
Every block $H'$ of $H$ that is not a single edge contains a vertex of $Z_1\setminus V(Q_{\alpha})$ and a vertex of $Z_2\setminus V(P_{\alpha})$ that is not a cut-vertex of $H$.
Furthermore, $Q_{\alpha}[W]$ contains a vertex of $Z_2$ for every inner bag~$W$ of~$\W$.

\item\label{itm:rc_new_P}
There is $(\P,V(D))$-relinkage $\P'$ with $\P'=(\Q\setminus\{Q_{\alpha}\})\cup\{P'_{\alpha}\}$ and $P'_{\alpha}\subseteq H$ such that $Z_1\subseteq V(P'_{\alpha})$, $V(P'_{\alpha}\cap Q_{\alpha})$ consists of $q_1$, $q_2$, and all cut-vertices of $H$, and $\P'[W]$ is $p$-attached in $G[W]$ for all inner bags $W$ of $\W$.

\item\label{itm:rc_rural}
Let $H'$ be a block of $H$ that is not a single edge.
Then $P'\coloneq H'\cap P'_{\alpha}$ and $Q'\coloneq H'\cap Q_{\alpha}$ are internally disjoint paths with common first vertex $q'_1$ and common last vertex $q'_2$ and the society $(H',P'Q'^{-1})$ is rural.
\end{enumerate}
\end{lem}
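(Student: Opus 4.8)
The plan is to imitate the proof of \autoref{thm:rural_bridge}: first determine the combinatorial shape of $P'$ and $Q'$, then reduce the society $(H',P'Q'^{-1})$ to one on a small ``relevant'' boundary, check that the reduced society is $4$-connected, and finally exclude crosses, so that \autoref{thm:cross} forces rurality.

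\emph{Shape of $P'$ and $Q'$.} By~(\ref{itm:rc_connected}) the graph $H$ is connected and contains $Q_\alpha$, and by~(\ref{itm:rc_blocks}) every cut-vertex of $H$ is an inner vertex of $Q_\alpha$ lying in exactly two blocks of $H$. Since the single path $Q_\alpha$ passes through every cut-vertex of $H$ and no cut-vertex lies in three blocks, the block-cut tree of $H$ must be a path; hence $H$ is a chain of blocks that $Q_\alpha$ traverses in order, and by~(\ref{itm:rc_new_P}) the path $P'_\alpha$ --- whose intersection with $Q_\alpha$ consists exactly of $q_1$, $q_2$ and the cut-vertices of $H$ --- traverses the same blocks in the same order. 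If $H'$ is the block bounded by the cut-vertices $q'_1$ and $q'_2$ (reading $q_1$ for $q'_1$ in the first block and $q_2$ for $q'_2$ in the last), then $Q'=H'\cap Q_\alpha$ and $P'=H'\cap P'_\alpha$ are exactly the subpaths of $Q_\alpha$ and $P'_\alpha$ between $q'_1$ and $q'_2$; they are internally disjoint because $V(P'_\alpha)\cap V(Q_\alpha)\cap V(H')=\{q'_1,q'_2\}$ by~(\ref{itm:rc_new_P}). Note that~(\ref{itm:rc_blockZ}) forces each of $P'$, $Q'$ to have an inner vertex, so $P'\cup Q'$ is a cycle.

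\emph{Reduction and $4$-connectivity.} Put $Z'\coloneq\{q'_1,q'_2\}\cup\big(V(H')\cap(Z_1\cup Z_2)\big)$. By~(\ref{itm:rc_Z}) and~(\ref{itm:rc_new_P}) we have $Z_1\subseteq V(P_\alpha)\cap V(P'_\alpha)$ and $Z_2\subseteq V(Q_\alpha)$, so $Z'\subseteq V(P'\cup Q')$. Arguing as for \autoref{clm:rb_connected} --- using that $P'_\alpha$ and $Q_\alpha$ are induced (the former by the $p$-attachedness of $\P'$ in~(\ref{itm:rc_new_P}), the latter by that of $\Q$), together with \autoref{thm:stable_bridges} and $|N(\alpha)\cap\theta|\le p-4$ --- one obtains that every bridge of $P'\cup Q'$ inside $H'$ attaches to both $P'$ and $Q'$; in particular $H'-P'$ and $H'-Q'$ are connected. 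Exactly as in \autoref{clm:rb_big_society} this shows that all of $H'$ lies in the disc bounded by the cycle $P'\cup Q'$ in any drawing witnessing rurality, so $(H',P'Q'^{-1})$ is rural if and only if $(H',P'Q'^{-1}|Z')$ is. For $4$-connectivity of the latter: by~(\ref{itm:rc_Z}) and the block-chain structure of $H$, the set $Z'\cup V(\P_{N(\alpha)\cap\theta})$ separates $V(H')$ from the rest of $G$; since the trivial paths in $\P_{N(\alpha)\cap\theta}$ contribute at most $|N(\alpha)\cap\theta|\le p-4$ vertices, the $p$-connectivity of $G$ yields, for every $v\in V(H')\setminus Z'$, a $v$--$Z'$ fan of size at least~$4$ inside $H'$, which is $4$-connectivity of $(H',P'Q'^{-1}|Z')$ as in \autoref{clm:rb_4_connected}.

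\emph{No cross.} This is the main obstacle; granting it, \autoref{thm:cross} completes the proof. I would proceed as in \autoref{clm:rb_no_cross}: assume $(H',P'Q'^{-1}|Z')$ has a cross, choose one, $\{R,S\}$, whose set of off-road edges (those outside $P'\cup Q'$) is minimal, and use the analogue of \autoref{clm:rb_tails} to control how $R\cup S$ meets $P'$ and $Q'$. One then runs through the distributions of the four ends of $R$ and $S$ among $P'$ and $Q'$. Using that $Q'$ carries a vertex of $Z_2\setminus V(P_\alpha)$ and $P'$ a vertex of $Z_1\setminus V(Q_\alpha)$ (both by~(\ref{itm:rc_blockZ})) and that $H'-P'$, $H'-Q'$ are connected, the cases in which some $R$ or $S$ has both ends on one side reduce --- just as the first case of \autoref{clm:rb_no_cross} reduces to its second --- to the fully mixed case. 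There the cross reroutes $P'_\alpha$ along pieces of $Q_\alpha$; the rerouted path (still of index $\alpha$) then meets an attachment vertex on the side opposite to $P_\alpha$, so by \autoref{thm:global_disturbance} it realises a bridge adjacency at $\alpha$ absent from $\Gamma(\W,\P)$, a contradiction --- unless instead the rerouting strictly shrinks $G_{D}^{\Q}$ or $G_{D}^{\P'}$, contradicting the $V(D)$-compressedness of $\Q$ or the minimality of $G_{D}^{\P'}$ underlying the choice of $P'_\alpha$ in~(\ref{itm:rc_new_P}). The real work here is the bookkeeping: verifying in each case that the rerouted path stays inside $G_{\lambda_1}^{\Q}$ or $G_{\lambda_2}^{\P}$, that the bridge witnessing the new adhesion lies on a path on the correct side of $\alpha$, and that the resulting object is again a foundational linkage of the same shape.
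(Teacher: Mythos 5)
Your proposal addresses only part~(\ref{itm:rc_rural}) of the Lemma, treating (\ref{itm:rc_Z})--(\ref{itm:rc_new_P}) as already-established facts to cite. But the Lemma is an existence statement: one must first \emph{construct} the relinkage $\Q$ (and then $\P'$) with the right extremal property, and prove (\ref{itm:rc_Z})--(\ref{itm:rc_new_P}) for it. That construction is not merely bookkeeping. The paper picks $\Q$ by first maximising the ``outlet graph'' among all $(\P,V(D))$-relinkages (the union of components of $\P_{\kappa}-G_D^{\Q}$ reaching $G_{\lambda}-G_D^{\P}$), and only then applying \autoref{thm:make_compressed}; this specific extremal choice is what drives the key structural claims. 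In particular, the crucial intermediate fact --- that every $Z_1$--$Z_2$ path in $H$ separates $q_1$ from $q_2$ in $H$ (\autoref{clm:rc_separator}) --- is proved by rerouting $Q_\alpha$ and invoking (L10)/\autoref{thm:global_disturbance}; the argument that outlets cannot span a ``shortcut'' of $Q_\alpha$ (\autoref{clm:rc_outlet}) uses outlet-maximality, not compressedness. Without constructing $\Q$ and establishing these, your appeals to (\ref{itm:rc_blockZ}), (\ref{itm:rc_new_P}), etc.\ are unsupported.

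Your sketch of the cross-exclusion is also not quite the right contradiction. You suggest that a cross would either realise a new bridge adjacency at $\alpha$ (via \autoref{thm:global_disturbance}) or else ``strictly shrink $G_D^{\Q}$ or $G_D^{\P'}$, contradicting the $V(D)$-compressedness of $\Q$ or the minimality of $G_D^{\P'}$ underlying the choice of $P'_\alpha$''; but the construction in (\ref{itm:rc_new_P}) does not minimise $G_D^{\P'}$, and compressedness is not what is contradicted. The paper's actual contradiction for the no-cross step is via \autoref{clm:rc_separator}: after reducing to a cross with one path meeting $Z_1$ and $Z_2$ and the other providing $q'_1$- and $q'_2$-reachability (\autoref{clm:rc_goodcross}, \autoref{clm:rc_goodpaths}), one obtains a $Z_1$--$Z_2$ path that fails to separate $q_1$ from $q_2$ in $H$. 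Your reduction-to-the-mixed-case step, though plausible in spirit, also needs the analogue of \autoref{clm:rb_tails} and the connectivity claim \autoref{clm:rc_blockstable}, both of which again feed off (\ref{itm:rc_Z})--(\ref{itm:rc_blockZ}) and the separator claim. So the overall architecture you propose is consistent with the paper's, but the extremal framework that makes the separator claim and outlet structure available is missing, and the closing contradiction as stated would not go through.
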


\begin{figure} [htpb]   
\begin{center}
\includegraphics{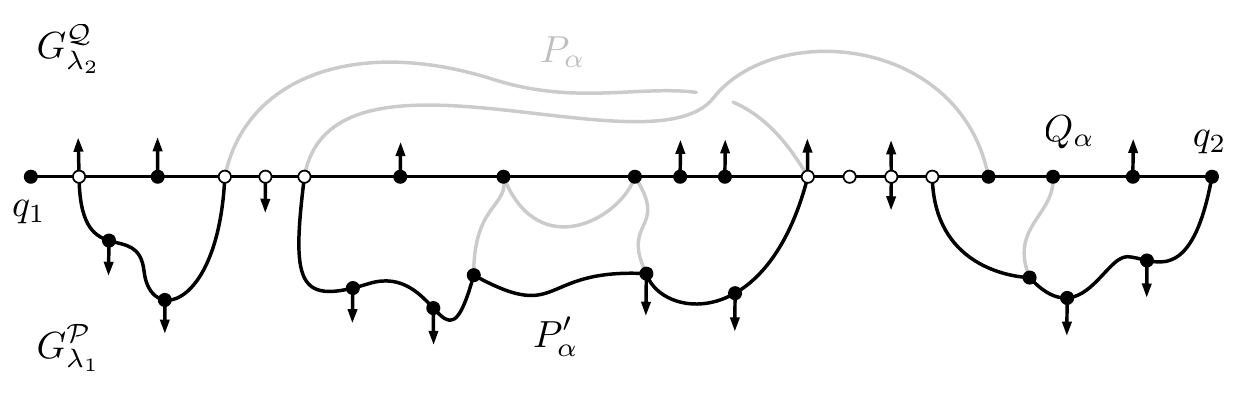}
\caption{The graph $H=G_{\lambda_2}^{\P}\cap G_{\lambda_1}^{\Q}$.}
\label{fig:H}
\end{center}
\end{figure}
\autoref{fig:H} gives an impression of~$H$.
The upper (straight) black $q_1$--$q_2$ path is $Q_{\alpha}$ and everything above it belongs to $G_{\lambda_2}^{\Q}$.
The lower (curvy) black path is $P'_{\alpha}$ and everything below it belongs to $G_{\lambda_1}^{\P}$.
The grey paths are subpaths of $P_{\alpha}$ and, as shown, $P_{\alpha}$ need not be contained in $H$ and need not contain the vertices of $P_{\alpha}\cap P'_{\alpha}$ in the same order as $P'_{\alpha}$.
The white vertices are the cut-vertices of $H$.
The vertices with an arrow up or down symbolise vertices of $Z_2$ and $Z_1$, respectively.
The blocks of $H$ that are not single edges are bounded by cycles in $P'_{\alpha}\cup Q_{\alpha}$ and \autoref{thm:rural_cutpath} (\ref{itm:rc_rural}) states that the part of $H$ ``inside'' such a cycle forms a rural society.

\begin{proof}
For a $(\P,V(D))$-relinkage $\Q$ and $\beta\in\kappa$ any $G_{D}^{\Q}$-path $P\subseteq P_{\beta}$ such that some inner vertex of $P$ has a neighbour in $G_{\lambda}-G_D^{\P}$ is called an \emph{$\beta$-outlet of $\Q$}.
By the \emph{outlet graph of $\Q$} we mean the union of all components of $\P_{\kappa}-G_D^{\Q}$ that have a neighbour in $G_{\lambda}-G_D^{\P}$.
In other words, the outlet graph of $\Q$ is obtained from the union of all $\beta$-outlets for all $\beta\in\kappa$ by deleting the vertices of $G_D^{\Q}$.

Clearly $\P$ itself is a $(\P,V(D))$-relinkage.
Among all $(\P,V(D))$-relinkages pick $\Q'$ such that its outlet graph is maximal.
By \autoref{thm:make_compressed} there is a $V(D)$-compressed $(\Q',V(D))$-relinkage $\Q$ such that $\Q[W]$ is $p$-attached in $G[W]$ for all inner bags $W$ of $\W$.
Note that $G_D^{\Q}\subseteq G_D^{\Q'}$ by \autoref{thm:relinkage}, so the outlet graph of $\Q$ is a supergraph of that of $\Q'$.
Hence by choice of $\Q'$, they must be identical, in particular, the outlet graph of $\Q$ is maximal among the outlet graphs of all $(\P,V(D))$-relinkages.

\begin{clm}\label{clm:rc_stable}
For any foundational linkage $\R$ of $\W$ we have $G_{\lambda_1}^{\R}\cup G_{\lambda_2}^{\R} = G_{\lambda}$ and $G_{\lambda_1}^{\R}\cap G_{\lambda_2}^{\R} = R_{\alpha}$.
\end{clm}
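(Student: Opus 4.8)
The plan is to treat $\{G_{\lambda_1}^{\R},G_{\lambda_2}^{\R}\}$ as a ``separation'' of $G_{\lambda}$ whose ``separator'' is the path $R_{\alpha}$, the whole argument resting on one structural observation. Since $(\lambda_1,\lambda_2)$ is a separation of $\Gamma(\W,\P)[\lambda]$, no edge of $\Gamma(\W,\P)$ joins a vertex of $\lambda_1\setminus\{\alpha\}$ to one of $\lambda_2\setminus\{\alpha\}$; and by \autoref{thm:global_disturbance}, whenever an $\R$-bridge in an inner bag realises an edge $\gamma\delta$ with $\gamma\in\lambda$, that edge $\gamma\delta$ lies in $\Gamma(\W,\P)$. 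Hence no $\R$-bridge in an inner bag attaches both to a path of $\R_{\lambda_1\setminus\{\alpha\}}$ and to a path of $\R_{\lambda_2\setminus\{\alpha\}}$; writing $I$ for the set of indices of the non-trivial paths that a given inner-bag $\R$-bridge $B$ attaches to, this says $I\subseteq\lambda_1$ or $I\subseteq\lambda_2$. For the union: straight from the definition of $G_{\Gamma}^{\R}$, every path $R_{\gamma}$ with $\gamma\in\lambda_i$ and every $\R$-bridge used in forming $G_{\lambda_i}^{\R}$ is also used in forming $G_{\lambda}$, so $G_{\lambda_i}^{\R}\subseteq G_{\lambda}$; conversely a vertex or edge of $G_{\lambda}$ lies either on some $R_{\gamma}$ with $\gamma\in\lambda=\lambda_1\cup\lambda_2$, hence in the corresponding $G_{\lambda_i}^{\R}$, or on an $\R$-bridge $B$ in an inner bag attaching to $\R_{\lambda}$, in which case $I\neq\emptyset$ and, say, $I\subseteq\lambda_1$, so $B$ attaches to $\R_{\lambda_1}$ but to no path of $\R_{\lambda\setminus\lambda_1}=\R_{\lambda_2\setminus\{\alpha\}}$ and thus appears in $G_{\lambda_1}^{\R}$. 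This yields $G_{\lambda_1}^{\R}\cup G_{\lambda_2}^{\R}=G_{\lambda}$.

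For the intersection, $R_{\alpha}\subseteq G_{\lambda_1}^{\R}\cap G_{\lambda_2}^{\R}$ is immediate from $\alpha\in\lambda_1\cap\lambda_2$. For the converse, take $x$ in both graphs. If $x$ lies on some $R_{\gamma}$ with $\gamma\in\lambda_1$, then membership of $x$ in $G_{\lambda_2}^{\R}$ puts it either on a path $R_{\delta}$ with $\delta\in\lambda_2$, forcing $\gamma=\delta=\alpha$ since $\R$ is a linkage, or on an $\R$-bridge meeting $R_{\gamma}$ that appears in $G_{\lambda_2}^{\R}$, which by the structural observation is impossible unless $\gamma=\alpha$ (otherwise such a bridge, having to attach to two paths of $\R_{\lambda_2}$, would attach to a path of $\R_{\lambda_2\setminus\{\alpha\}}$ as well as to $R_{\gamma}$, realising an edge of $\Gamma(\W,\P)$ across the separation); so $x\in R_{\alpha}$. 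Otherwise $x$ is an inner vertex or an edge of an $\R$-bridge $B$ that lies on no path of $\R$ and appears in both $G_{\lambda_1}^{\R}$ and $G_{\lambda_2}^{\R}$; then $I\subseteq\lambda_1$ or $I\subseteq\lambda_2$, and appearing in the \emph{other} $G_{\lambda_i}^{\R}$ squeezes $I$ down to $\{\alpha\}$, that is, $B$ attaches to no non-trivial path except $R_{\alpha}$.

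The heart of the proof — and the main obstacle — is to exclude this last configuration. If $B$ is non-trivial, it is a non-trivial $\R$-bridge in an inner bag attaching to $R_{\alpha}$ and to no other non-trivial path, and its inner vertex would lie in $(G_{\lambda_1}^{\R}\cap G_{\lambda_2}^{\R})\setminus R_{\alpha}$; this is ruled out by \autoref{thm:stable_bridges} applied with $\lambda_0=\{\alpha\}$, using $|N(\alpha)\cap\theta|\le p-4\le p-3$. If $B$ is trivial, it is a single chord of $R_{\alpha}$ realised in an inner bag $W$, which cannot exist because $R_{\alpha}[W]$ is induced in $G[W]$. Both exclusions use that $\R[W]$ is $p$-attached in $G[W]$ for every inner bag $W$ of $\W$ — the property enjoyed by each of the foundational linkages ($\P$, $\Q$, and the linkage $\P'$ constructed below) to which this claim is applied. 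With this, $G_{\lambda_1}^{\R}\cap G_{\lambda_2}^{\R}=R_{\alpha}$.
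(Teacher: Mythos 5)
Your argument is correct and follows the paper's proof essentially verbatim: \autoref{thm:global_disturbance} makes $(\lambda_1,\lambda_2)$ a separation of $\Gamma(\W,\R)[\lambda]$, so no $\R$-bridge in an inner bag crosses it, which gives the union; and a vertex of the intersection off $R_{\alpha}$ would sit in a non-trivial $\R$-bridge attaching only to $R_{\alpha}$ among $\R_{\lambda}$, ruled out by \autoref{thm:stable_bridges}. Your explicit treatment of trivial chords (via inducedness) and your observation that the claim tacitly needs $\R[W]$ to be $p$-attached in each inner bag — true of every linkage the claim is applied to — are correct but minor elaborations, not a different route.
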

\begin{proof}
By \autoref{thm:global_disturbance} we have $\Gamma(\W,\R)[\lambda]\subseteq \Gamma(\W,\P)[\lambda]$, so $(\lambda_1,\lambda_2)$ is also a separation of $\Gamma(\W,\R)[\lambda]$.
Hence each $\R$-bridge in an inner bag of $\W$ has all its attachments in $\R_{\lambda_1\cup\theta}$ or all in $\R_{\lambda_2\cup\theta}$ and thus $G_{\lambda_1}^{\R}\cup G_{\lambda_2}^{\R} = G_{\lambda}$.
The induced path $R_{\alpha}$ is contained in $G_{\lambda_1}^{\R}\cap G_{\lambda_2}^{\R}$ by definition.
If $G_{\lambda_1}^{\R}\cap G_{\lambda_2}^{\R}$ contains a vertex that is not on $R_{\alpha}$, then it must be in a non-trivial $\R$-bridge that attaches to $R_{\alpha}$ but to no other path of $\R_{\lambda}$.
Such a bridge does not exist by \autoref{thm:stable_bridges} (applied to $\lambda_0\coloneq\lambda$).
\end{proof}

\begin{clm}\label{clm:rc_bridges}
For every vertex $v$ of $H-P_{\alpha}$ there is a $v$--$Z_2$ path in $H-P_{\alpha}$ and for every vertex $v$ of $H-Q_{\alpha}$ there is a $v$--$Z_1$ path in $H-Q_{\alpha}$.
\end{clm}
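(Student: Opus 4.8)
The two assertions are symmetric under interchanging $(\P,\lambda_2,Z_1)$ with $(\Q,\lambda_1,Z_2)$, the one asymmetry being that $Q_{\alpha}$ is a subgraph of $H$ whereas $P_{\alpha}$ need not be; so the plan is to prove the first assertion in detail and then indicate the adjustments. As preparation I would record consequences of \autoref{clm:rc_stable}: it gives $G_{\lambda_1}^{\P}\cap G_{\lambda_2}^{\P}=P_{\alpha}$, $G_{\lambda_1}^{\Q}\cap G_{\lambda_2}^{\Q}=Q_{\alpha}$, and $G_{\lambda_1}^{\R}\cup G_{\lambda_2}^{\R}=G_{\lambda}$ for $\R\in\{\P,\Q\}$. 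Since $\alpha\in\kappa\subseteq V(D)$, the hypotheses $\lambda_1\cap\lambda_2=\{\alpha\}$ and $N(\alpha)\cap\lambda_2=N(\alpha)\cap V(D)$ force $V(D)\subseteq\lambda_2$ (as $D-\alpha$ is connected and must lie on the side of $\alpha$ carrying $N(\alpha)\cap V(D)$). Hence $Q_{\alpha}\subseteq G_D^{\Q}\subseteq G_{\lambda_2}^{\Q}$ and, as $\Q$ is a $(\P,V(D))$-relinkage, $Q_{\alpha}\subseteq G_D^{\P}\subseteq G_{\lambda_2}^{\P}$, while $Q_{\alpha}\subseteq G_{\lambda_1}^{\Q}$ is immediate; so $Q_{\alpha}\subseteq H$. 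Likewise $P_{\alpha}\subseteq G_{\lambda_2}^{\P}$, so $H\cap P_{\alpha}=P_{\alpha}\cap G_{\lambda_1}^{\Q}$ is a (possibly empty) union of subpaths of $P_{\alpha}$ and $H-P_{\alpha}$ is meaningful. Finally, as vertex sets $V(H)\subseteq V(G_{\lambda})$ and $V(G_{\lambda})\setminus V(H)=(V(G_{\lambda_1}^{\P})\setminus V(P_{\alpha}))\cup(V(G_{\lambda_2}^{\Q})\setminus V(Q_{\alpha}))$, so a vertex of $H$ with a neighbour in $G_{\lambda}\setminus H$ lies in $Z_1\cup Z_2\cup\{q_1,q_2\}$.

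The core is a structural description: every vertex of $H$ lies on $Q_{\alpha}$ or is an inner vertex of a non-trivial $\Q$-bridge, in an inner bag of $\W$, attaching to $Q_{\alpha}$. I would prove it by dissecting membership in $G_{\lambda_1}^{\Q}$ against membership in $G_{\lambda_2}^{\P}$. A vertex of $H$ on a foundational path $Q_{\gamma}$ with $\gamma\in\lambda_1$ must have $\gamma=\alpha$, for otherwise $\gamma\in\lambda_1\setminus\{\alpha\}\subseteq\lambda\setminus V(D)$ (using $V(D)\cap\lambda_1=\{\alpha\}$), so $Q_{\gamma}=P_{\gamma}$, and this path is deleted when forming $G_{\lambda_2}^{\P}$ because $\gamma\notin\lambda_2$. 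A vertex of $H$ that is an inner vertex of a $\Q$-bridge $B$ contributing to $G_{\lambda_1}^{\Q}$ forces $B$ to attach to $Q_{\alpha}$: if not, $B$ would attach only to paths $P_{\gamma}=Q_{\gamma}$ with $\gamma\in\lambda_1\setminus\{\alpha\}=\lambda\setminus\lambda_2$ together with trivial paths, so $B$ would in fact be a $\P$-bridge neither realising an edge of $\Gamma(\W,\P)[\lambda_2]$ nor attaching to a path of $\P_{\lambda_2}$, hence would have no inner vertex in $G_{\lambda_2}^{\P}$; and $B$ is non-trivial because $Q_{\alpha}$ is induced. If $B$ attaches to $Q_{\alpha}$ and also to some $Q_{\gamma}$ with $\gamma\in\lambda_1\setminus\{\alpha\}$ (so $\gamma\notin V(D)$), then \autoref{thm:block_separates} shows $B$ has no inner vertex in $G_D^{\Q}$, and a further instance of the same bookkeeping shows such a $B$ is a $\P$-bridge avoiding $G_{\lambda_2}^{\P}$ — so this case does not arise for vertices of $H$ either. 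Along the same lines I would also record that every neighbour of a vertex of $H$ lying outside $G_{\lambda}$ (i.e. on a trivial path) is in $\P_{N(\alpha)\cap\theta}$, using \autoref{thm:global_disturbance} to turn such an adjacency into an edge $\alpha t$ of $\Gamma(\W,\P)$ with $t\in\theta$; here $|N(\alpha)\cap\theta|\le p-4$.

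For the first assertion, fix $v\in V(H)\setminus V(P_{\alpha})$ and let $C$ be the component of $v$ in $H-P_{\alpha}$; I want $C\cap Z_2\neq\emptyset$. If $v$ lies in a $\Q$-bridge $B$ attaching to $Q_{\alpha}$, I would use that $B$ is connected and (by \autoref{thm:block_separates} applied to $D$, together with the fact that the attachments of $B$ on $Q_{\alpha}$ lie in $G_D^{\Q}$) trim $B$ to a connected subgraph of $H$ joining $v$ to $Q_{\alpha}$; this subgraph avoids $P_{\alpha}$ unless $P_{\alpha}$ separates $v$ from $Q_{\alpha}$ inside $B$, which I would rule out by inspecting the few attachments of $B$ together with \autoref{thm:stable_bridges} (as $|N(\alpha)\cap\theta|\le p-4\le p-3$, a non-trivial $\Q$-bridge attaching to $Q_{\alpha}$ attaches to a further path of $\Q_{\lambda}$). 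Thus $C$ contains a vertex $u$ of $Q_{\alpha}$ with $u\notin V(P_{\alpha})$. Since $q_1$ is the common first vertex of $P_{\alpha}$ and $Q_{\alpha}$ it lies in $P_{\alpha}$, so $q_1\notin C$; assuming also $q_2\notin C$ (the remaining case being handled symmetrically from the last inner bag), $u$ is an inner vertex of $Q_{\alpha}$ lying in some inner bag $W$. Because $Q_{\alpha}$ is induced, $u$ has at least $p-2-|N(\alpha)\cap\theta|\ge 2$ neighbours off $Q_{\alpha}$, each lying in a $\Q$-bridge realising an edge $\alpha\delta$ with $\delta\in N(\alpha)\setminus\theta$. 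If some such $\delta$ lies in $\lambda_2$, then the corresponding bridge lies in $G_{\lambda_2}^{\Q}$ and meets $G_{\lambda_2}^{\Q}-Q_{\alpha}$, so $u\in Z_2\cap C$ and we are done; the subcase where all such $\delta$ lie in $\lambda_1\setminus\{\alpha\}$ puts $u$ in $Z_1$ rather than $Z_2$ and needs a short separate argument, moving along $Q_{\alpha}$ within $C$ to an inner bag where the edge $\alpha\gamma$ with $\gamma\in N(\alpha)\cap V(D)\subseteq\lambda_2$ — realised in every inner bag by (L8) — is witnessed on the $\lambda_2$-side.

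The second assertion follows by the same scheme with $\P,\lambda_1,Z_1$ in place of $\Q,\lambda_2,Z_2$, the structural description now reading ``every vertex of $H$ lies on $P_{\alpha}$ or in a $\P$-bridge attaching to $P_{\alpha}$'' (and $H-Q_{\alpha}$ is unproblematic since $Q_{\alpha}\subseteq H$); the extra care is that $P_{\alpha}$, unlike $Q_{\alpha}$, is not a subgraph of $H$, so ``walking along $P_{\alpha}$'' must be replaced by walking along its subpaths contained in $H$ — equivalently, along the relinked path $P'_{\alpha}$ of \autoref{thm:rural_cutpath}~(\ref{itm:rc_new_P}) wherever that is already available. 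I expect the main obstacle to be exactly this structural analysis together with the bridge surgery inside it — keeping straight which $\P$- and $\Q$-bridges survive into $G_{\lambda_2}^{\P}$, $G_{\lambda_1}^{\Q}$, $G_D^{\P}$ and $G_D^{\Q}$, and showing that a $\Q$-bridge attaching to $Q_{\alpha}$ really does carry a connected piece of $H-P_{\alpha}$ back to $Q_{\alpha}$; once the structure is in hand, the connectivity-along-$Q_{\alpha}$ endgame and the symmetric statement are comparatively routine.
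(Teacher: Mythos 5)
The paper's proof goes the opposite direction from yours and is considerably shorter: it reads $v\in H-P_{\alpha}\subseteq G_{\lambda_2}^{\P}-P_{\alpha}$ and, using \autoref{thm:stable_bridges}, places $v$ on or bridge-adjacent to some $P_{\beta}$ with $\beta\in\lambda_2\setminus\lambda_1$; this yields a $v$--$p$ path $R$ in $G_{\lambda_2}^{\P}-P_{\alpha}$ where $p$ is the \emph{first} vertex of $P_{\beta}$. The one-line punchline is that $p$ is also the first vertex of $Q_{\beta}$ and therefore, since $\beta\notin\lambda_1$, lies in $G_{\lambda}-G_{\lambda_1}^{\Q}$; so the last vertex $w$ of a maximal initial subpath $Rw\subseteq H$ has its successor on $R$ outside $G_{\lambda_1}^{\Q}$, whence $w\in Z_2$ and $Rw$ is the required path. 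You instead analyse $v$'s position with respect to $\Q$ inside $G_{\lambda_1}^{\Q}$ and try to reach a vertex of $Q_{\alpha}$ outside $P_{\alpha}$, which loses the automatic ``target outside $G_{\lambda_1}^{\Q}$'' and so requires extra work on the other end.

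That extra work is where your proposal has real gaps. First, after placing $v$ inside a $\Q$-bridge $B$ that attaches to $Q_{\alpha}$, you need a $v$--$Q_{\alpha}$ path in $B$ avoiding $P_{\alpha}$; you say you ``would rule out'' $P_{\alpha}$ separating $v$ from $Q_{\alpha}$ in $B$ ``by inspecting the few attachments of $B$ together with \autoref{thm:stable_bridges}'', but this does not follow — $P_{\alpha}$ is not a path of $\Q$, so it can pass freely through the inner vertices of $B$, and neither stable-bridge attachedness nor the attachments of $B$ on $\Q$ control how $P_{\alpha}\cap B$ sits. Even when such a path exists, its endpoint on $Q_{\alpha}$ may lie on $P_{\alpha}\cap Q_{\alpha}$, so the assertion ``Thus $C$ contains a vertex $u$ of $Q_{\alpha}$ with $u\notin V(P_{\alpha})$'' is unjustified. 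Finally, the closing subcase (all $\delta\in\lambda_1\setminus\{\alpha\}$, so $u\in Z_1$ rather than $Z_2$) is left as ``a short separate argument, moving along $Q_{\alpha}$ within $C$'', but moving along $Q_{\alpha}$ need not stay inside $H-P_{\alpha}$ between consecutive inner bags, and you have not explained how to find an inner bag that witnesses $\alpha\gamma$ with $\gamma\in\lambda_2$ while remaining in $C$. I'd recommend abandoning the $\Q$-side structural dissection and instead following the $\P$-side argument above: the key insight you are missing is that taking $p$ to be the first vertex of $P_{\beta}$ for $\beta\in\lambda_2\setminus\lambda_1$ gives you a concrete vertex that is guaranteed to be outside $G_{\lambda_1}^{\Q}$, which is exactly what makes the maximal-initial-subpath trick land in $Z_2$.
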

\begin{proof}
Let $v$ be a vertex of $H-P_{\alpha}\subseteq G_{\lambda_2}^{\P}-P_{\alpha}$.
Then there is $\beta\in\lambda_2\setminus\lambda_1$ such that $v$ is on $P_{\beta}$ or $v$ is an inner vertex of some non-trivial $\P$-bridge attaching to $P_{\beta}$ by \autoref{thm:stable_bridges} and the assumption that $|N(\alpha)\cap \theta|\leq p-4$.
In either case $G_{\lambda_2}^{\P}-P_{\alpha}$ contains a path $R$ from $v$ to the first vertex $p$ of $P_{\beta}$.
But $p$ is also the first vertex of $Q_{\beta}$ and therefore it is contained in $G_{\lambda}-G_{\lambda_1}^{\Q}$.
Pick $w$ on $R$ such that $Rw$ is a maximal initial subpath of $R$ that is still contained in $H$.
Then $w\neq p$ and the successor of $w$ on $R$ must be in $G_{\lambda}-G_{\lambda_1}^{\Q}$.
This means $w\in Z_2$ as desired.
If $v$ is in $H-Q_{\alpha}$, then the argument is similar but slightly simpler as $Q_{\beta}=P_{\beta}$ for all $\beta\in\lambda_1\setminus\lambda_2$.
\end{proof}

\begin{enumerate}[(i)]

\item
Any vertex of $G_{\lambda_2}^{\P}$ that has a neighbour in $G_{\lambda_1}^{\P}-G_{\lambda_2}^{\P}$ must be on $P_{\alpha}$ by \autoref{clm:rc_stable}.
This shows $Z_1\subseteq V(P_{\alpha})$ and by a similar argument $Z_2\subseteq V(Q_{\alpha})$.

A neighbour $v$ of $H$ in $G$ either is in no inner bag of $\W$, it is in $G_{\lambda}$, or it is in $\P_{\theta}$.
In the first case $v$ can only be adjacent to $q_1$ or $q_2$ as these are the only vertices of $H$ in the first and last adhesion set of $\W$.

In the second case, note that $\Q$ is a $(\P,\lambda_2)$-relinkage since $V(D)\subseteq\lambda_2$ and thus \autoref{thm:relinkage} yields $G_{\lambda_2}^{\Q}\subseteq G_{\lambda_2}^{\P}$ which together with \autoref{clm:rc_stable} implies
\begin{align*}
G_{\lambda}
	&= G_{\lambda_1}^{\P}\cup G_{\lambda_2}^{\P}
	= G_{\lambda_1}^{\P}\cup (G_{\lambda_2}^{\P}\cap G_{\lambda_1}^{\Q}) \cup (G_{\lambda_2}^{\P}\cap G_{\lambda_2}^{\Q})\\
	&= G_{\lambda_1}^{\P}\cup H\cup G_{\lambda_2}^{\Q}.
\end{align*}
Hence $v$ is in $G_{\lambda}- G_{\lambda_1}^{\Q}$ or in $G_{\lambda}-G_{\lambda_2}^{\P}$ and thus all neighbours of $v$ in $H$ are in $Z_2$ or $Z_1$, respectively.

In the third case $v$ is the unique vertex of some path $P_{\beta}$ with $\beta\in\theta$.
Let $w$ be a neighbour of $v$ in $H$.
Either $w$ is on $P_{\alpha}$ or there is a $w$--$Z_2$ path in $H$ by \autoref{clm:rc_bridges} which ends on $Q_{\alpha}$ as shown above.
So $\alpha\beta$ is an edge of $\Gamma(\W,\P)$ or of $\Gamma(\W,\Q)$.
The former implies $\beta\in N(\alpha)$ directly and the latter does with the help of \autoref{thm:global_disturbance}.
Hence we have shown that $Z\cup V(\P_{N(\alpha)\cap\theta})$ separates $H$ from the rest of $G$ concluding the proof of (\ref{itm:rc_Z}).

\item
We have $Q_{\alpha}\subseteq G_{\lambda_1}^{\Q}$ by definition and $Q_{\alpha}\subseteq G_{\lambda_2}^{\P}$ since $\Q$ is a $(\P,\lambda_2)$-relinkage.
Hence $Q_{\alpha}\subseteq H$ and some component $C$ of $H$ contains $Q_{\alpha}$.
Suppose that $v$ is a vertex of $H\cap P_{\alpha}$.
Let $w$ be the vertex of $P_{\alpha}$ such that $wP_{\alpha}v$ is a maximal subpath of $P_{\alpha}$ that is still contained in $H$.
Since $P_{\alpha}\subseteq G_{\lambda_2}^{\P}$ we must have $w\in \{q_1\}\cup Z_2\subseteq V(Q_{\alpha})$ and hence $v$ is in $C$.
For any vertex $v$ of $H-P_{\alpha}$ there is a $v$--$Z_2$ path in $H$ by \autoref{clm:rc_bridges} which ends on $Q_{\alpha}$ by (\ref{itm:rc_Z}).
This means that $v$ is in $C$ and hence $H$ is connected.

For every inner bag $W$ of $\W$ the induced permutation $\pi$ of $\Q[W]$ maps each element of $\lambda_1\setminus\lambda_2$ to itself as $\Q$ is $(\P,\lambda_2)$-relinkage.
Moreover, $\pi$ is an automorphism of $\Gamma(\W,\P)$ by (L10) and $\alpha$ is the unique vertex of $\lambda_2$ that has a neighbour in $\lambda_1\setminus\lambda_2$.
This shows $\pi(\alpha)=\alpha$.
Hence $Q_{\alpha}$ and $P_{\alpha}$ must have the same end vertex, namely $q_2$.

\item
Let $v$ be a cut-vertex of $H$.
By (\ref{itm:rc_connected}) it suffices to show that all components of $H-v$ contain a vertex of $Q_{\alpha}$.
First note that every component of $H-v$ contains a vertex of $Z$:
If a vertex $w$ of $H-v$ is not in $Z$, then by (\ref{itm:rc_Z}) and the connectivity of $G$ there is a $w$--$Z$ fan of size at least $p - |N(\alpha)\cap\theta|\geq 2$ in $H$ and at most one of its paths contains $v$.
But any vertex $z\in Z\setminus V(Q_{\alpha})$ is on $P_{\alpha}$ by (\ref{itm:rc_Z}) and the paths $q_1P_{\alpha}z$ and $zP_{\alpha}q_2$ do both meet $Q_{\alpha}$ but at most one can contain $v$ (given that $z\neq v$).
So every component of $H-v$ must contain a vertex of $Q_{\alpha}$ as claimed.
\setcounter{enumi_saved}{\value{enumi}}
\end{enumerate}

\begin{clm}\label{clm:rc_outlet}
A $Q_{\alpha}$-path $P\subseteq P_{\alpha}\cap H$ is an $\alpha$-outlet if and only if some inner vertex of $P$ is in $Z_1$, in particular, every vertex of $Z_1\setminus V(Q_{\alpha})$ lies in a unique $\alpha$-outlet.
Denoting the union of all $\alpha$-outlets by $U$, no two components of $Q_{\alpha}-U$ lie in the same component of $H-U$.
\end{clm}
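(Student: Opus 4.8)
The plan is to prove the three assertions of \autoref{clm:rc_outlet} in turn, after recording standing facts. Since $V(D)\subseteq\lambda_2$ we have $G_D^{\Q}\subseteq G_{\lambda_2}^{\Q}$ and $G_D^{\P}\subseteq G_{\lambda_2}^{\P}$, so by \autoref{clm:rc_stable} $V(G_D^{\Q})\cap V(H)=V(Q_{\alpha})$ and $G_{\lambda}-G_{\lambda_2}^{\P}=G_{\lambda_1}^{\P}-P_{\alpha}\subseteq G_{\lambda}-G_D^{\P}$; also $P_{\alpha}\subseteq G_D^{\P}$ and $P_{\alpha}$, $Q_{\alpha}$ share their ends $q_1,q_2$. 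For the characterisation, note that any $Q_{\alpha}$-path $P\subseteq P_{\alpha}\cap H$ is automatically a $G_D^{\Q}$-path inside $P_{\alpha}$: its ends lie on $Q_{\alpha}\subseteq G_D^{\Q}$, and its inner vertices lie in $P_{\alpha}\cap H$ off $Q_{\alpha}$, hence off $G_D^{\Q}$. So $P$ is an $\alpha$-outlet exactly if some inner vertex of it has a neighbour in $G_{\lambda}-G_D^{\P}$. If an inner vertex lies in $Z_1$ it has such a neighbour in $G_{\lambda_1}^{\P}-P_{\alpha}$; conversely, given an inner vertex $v$ of $P$ with a neighbour $w\in G_{\lambda}-G_D^{\P}$, one shows $w\in G_{\lambda_1}^{\P}-P_{\alpha}$, hence $v\in Z_1$: as $P_{\alpha}\subseteq G_D^{\P}$ we get $w\notin P_{\alpha}$, so $w$ lies in exactly one of $G_{\lambda_1}^{\P},G_{\lambda_2}^{\P}$, and $w\in G_{\lambda_2}^{\P}$ is excluded because $w$ would then sit on a $\P$-bridge $B$ retained in $G_{\lambda_2}^{\P}$, attaching to $P_{\alpha}$ but not retained in $G_D^{\P}$, so attaching to some $P_{\gamma}$ with $\gamma\in\lambda\setminus V(D)$; but $N(\alpha)\cap\lambda_2=N(\alpha)\cap V(D)$ and the fact that $D$ is a block of $\Gamma(\W,\P)[\lambda]$ force every attachment of $B$ on a path $P_{\mu}\neq P_{\alpha}$ ($\mu\in\lambda$) to satisfy $\mu\in\lambda\setminus\lambda_2$ (using \autoref{thm:stable_bridges}, valid since $|N(\alpha)\cap\theta|\leq p-4$), so $B$ realises no edge of $\Gamma(\W,\P)[\lambda_2]$ while attaching to $\P_{\lambda\setminus\lambda_2}$, contradicting its retention in $G_{\lambda_2}^{\P}$.

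For uniqueness, let $v\in Z_1\setminus V(Q_{\alpha})$; by (\ref{itm:rc_Z}) and the definition of $Z_1$, $v\in V(P_{\alpha})\cap V(H)\setminus V(Q_{\alpha})$. Since $(V(G_{\lambda_1}^{\Q}),V(G_{\lambda_2}^{\Q}))$ is a separation of $G_{\lambda}$ with separator $V(Q_{\alpha})$ (\autoref{clm:rc_stable}) and $P_{\alpha}\subseteq G_{\lambda_2}^{\P}$, an edge of $P_{\alpha}$ leaving $H$ leaves $G_{\lambda_1}^{\Q}$, so its end remaining in $H$ lies on $Q_{\alpha}$ (or equals $q_1$ or $q_2$); hence the component of $P_{\alpha}\cap H$ through $v$ has both ends on $Q_{\alpha}$, and the maximal subpath of it with $v$ internal and only its ends on $Q_{\alpha}$ is a $Q_{\alpha}$-path in $P_{\alpha}\cap H$ with an inner vertex in $Z_1$, so an $\alpha$-outlet by the characterisation. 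Two $\alpha$-outlets with a common inner vertex must coincide, since from that vertex one first meets $Q_{\alpha}$, in each direction along $P_{\alpha}$, at their common end.

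For the separation assertion I would argue by contradiction using the maximality of the outlet graph of $\Q$ (inherited from $\Q'$). If two components of $Q_{\alpha}-U$ lay in one component of $H-U$, there would be a $Q_{\alpha}$-path $R\subseteq H-U$, internally disjoint from $Q_{\alpha}$, between vertices $x,y$ in distinct components of $Q_{\alpha}-U$, and $xQ_{\alpha}y$ would contain an end $z$ of some $\alpha$-outlet $P^{*}$. Reroute $Q_{\alpha}$ along $R$: set $Q_{\alpha}^{*}\coloneq q_1Q_{\alpha}x\cup R\cup yQ_{\alpha}q_2$ and $\Q^{*}\coloneq(\Q\setminus\{Q_{\alpha}\})\cup\{Q_{\alpha}^{*}\}$. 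One checks $\Q^{*}$ is again a foundational linkage for $\W$ (as $R\subseteq H$ avoids $\P_{\theta}$, $Q_{\beta}$ for $\beta\in V(D)\setminus\{\alpha\}$, and $P_{\gamma}$ for $\gamma\in\lambda\setminus V(D)$) and a $(\P,V(D))$-relinkage, which needs $Q_{\alpha}^{*}\subseteq G_D^{\P}$, hence $R\subseteq H\subseteq G_D^{\P}$. Then $z\notin V(Q_{\alpha}^{*})$, so $z\notin G_D^{\Q^{*}}$, and along $P^{*}$ the vertex $z$ is joined inside $\P_{\kappa}-G_D^{\Q^{*}}$ to an inner vertex of $P^{*}$ lying in $Z_1$, which has a neighbour in $G_{\lambda}-G_D^{\P}$; so $z$ lies in the outlet graph of $\Q^{*}$, whereas $z\in V(G_D^{\Q})$ shows $z$ is not in that of $\Q$. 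After checking the outlet graph of $\Q^{*}$ contains that of $\Q$, this contradicts the choice of $\Q$.

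I expect the separation assertion to be the main obstacle, and within it two points: proving $H\subseteq G_D^{\P}$, so that rerouting along $R$ stays among $(\P,V(D))$-relinkages — this should follow from $V(D)\subseteq\lambda_2$ and $N(\alpha)\cap\lambda_2=N(\alpha)\cap V(D)$ by an argument parallel to the bridge analysis in the characterisation — and verifying that rerouting along $R$ can only enlarge, never merely displace, the outlet graph. The characterisation and uniqueness should be a fairly direct unwinding of the definitions via \autoref{clm:rc_stable}, \autoref{clm:rc_bridges}, and \autoref{thm:stable_bridges}.
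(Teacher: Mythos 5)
Your proposal is correct and follows essentially the same approach as the paper: the forward implication of the characterisation is immediate from the definitions, the converse rests on a bridge analysis governed by $N(\alpha)\cap\lambda_2 = N(\alpha)\cap V(D)$ together with \autoref{thm:stable_bridges}, and the separating assertion is obtained by rerouting $Q_\alpha$ along the shortcut $R$ and contradicting the maximality of the outlet graph of $\Q$. The one organizational difference worth flagging is that the paper first proves $H\subseteq G_D^{\P}$ (using \autoref{clm:rc_bridges} to route from an arbitrary $v\in H-P_\alpha$ to $Q_\alpha$ and then invoking \autoref{thm:global_disturbance}) and derives both the converse of the characterisation and the validity of the rerouting from that single fact, whereas you prove the converse by a direct edge-level analysis of the neighbour $w$ and then still need $H\subseteq G_D^{\P}$ when setting up the rerouting; establishing that fact is not quite ``parallel'' to your edge-level argument --- it requires the $v$--$Q_\alpha$ path supplied by \autoref{clm:rc_bridges}, which your analysis of the single edge $vw$ does not provide --- so the paper's ordering avoids doing two separate bridge analyses.
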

\begin{proof}
Clearly $Q_{\alpha}\subseteq G_D^{\Q}\cap H\subseteq G_{\lambda_2}^{\Q}\cap G_{\lambda_1}^{\Q} = Q_{\alpha}$ by \autoref{clm:rc_stable}.
Suppose that $P\subseteq P_{\alpha}\cap H$ has some inner vertex $z_1\in Z_1$.
Then $P$ is a $G_D^{\Q}$-path and $z_1$ has a neighbour in $G_{\lambda}-G_{\lambda_2}^{\P}\subseteq G_{\lambda}-G_D^{\P}$ so $P$ is an $\alpha$-outlet.

Before we prove the converse implication let us show that $H\subseteq G_D^{\P}$.
If some vertex $v$ of $H\subseteq G_{\lambda_2}^{\P}$ is not in $G_D^{\P}$, then there is $\beta\in\lambda_2\setminus V(D)$ such that $v$ is on $P_{\beta}$ or $v$ is an inner vertex of a non-trivial $\P$-bridge attaching to $P_{\beta}$.
But $v$ is in $H-P_{\alpha}$ so by \autoref{clm:rc_bridges} and (\ref{itm:rc_Z}) there is a $v$--$Q_{\alpha}$ path in $H$ and hence $\alpha\beta$ is an edge of $\Gamma(\W,\Q)[\lambda]$ and thus of $\Gamma(\W,\P)[\lambda]$ by \autoref{thm:global_disturbance}.
But $(\lambda_1,\lambda_2)$ is chosen such that $N(\alpha)\cap\lambda \subseteq \lambda_1\cup V(D)$, a contradiction.

Suppose that $P$ is an $\alpha$-outlet.
Then some inner vertex $z$ of $P$ has a neighbour in $G_{\lambda}-G_D^{\P}\subseteq G_{\lambda}-H$.
So $z\in Z_1\cup Z_2$ and therefore $z\in Z_1$ as $z\notin V(Q_{\alpha})\supseteq Z_2$ by (\ref{itm:rc_Z}).

To conclude the proof of the claim we may assume for a contradiction that $Q_{\alpha}$ contains vertices $r_1$, $r$, and $r_2$ in this order such that $H-U$ contains an $r_1$--$r_2$ path $R$ and $r$ is the end vertex of an $\alpha$-outlet.
Let $\Q'$ be the foundational linkage obtained from $\Q$ by replacing the subpath $r_1Q_{\alpha}r_2$ of $Q_{\alpha}$ with $R$.
Clearly $\Q'$ is a $(\P, V(D))$-relinkage.
It suffices to show that the outlet graph of $\Q'$ properly contains that of $\Q$ to derive a contradiction to our choice of $\Q$.
By choice of $R$ and the construction of $\Q'$ each $\beta$-outlet of $\Q$ for any $\beta\in\kappa$ is internally disjoint from $\Q'$ and hence is contained in a $\beta$-outlet of $\Q'$.
But $r$ is not on $Q'_{\alpha}$ so it is an inner vertex of some $\alpha$-outlet of $\Q'$ so the outlet graph of $\Q'$ contains that of $\Q$ properly as desired.
\end{proof}

\begin{clm}\label{clm:rc_minsociety}
Let $r_1$ and $r_2$ be the end vertices of an $\alpha$-outlet $P$ of $\Q$.
Then $r_1Q_{\alpha}r_2$ contains a vertex of $Z_2\setminus V(P_{\alpha})$.
\end{clm}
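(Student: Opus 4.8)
The plan is to argue by contradiction. So suppose $P$ is an $\alpha$-outlet of $\Q$ with end vertices $r_1,r_2$ on $Q_{\alpha}$ and that $r_1Q_{\alpha}r_2$ contains no vertex of $Z_2\setminus V(P_{\alpha})$; since $Z_2\subseteq V(Q_{\alpha})$ by \autoref{thm:rural_cutpath}\,(\ref{itm:rc_Z}) this means that no inner vertex of $r_1Q_{\alpha}r_2$ lying off $P_{\alpha}$ has a neighbour in $G_{\lambda}-G_{\lambda_1}^{\Q}$.

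First I would reroute $Q_{\alpha}$ along $P$: set $Q'_{\alpha}\coloneq q_1Q_{\alpha}r_1\cup P\cup r_2Q_{\alpha}q_2$, let $Q'_{\gamma}\coloneq Q_{\gamma}$ for $\gamma\neq\alpha$, and put $\Q'\coloneq(\Q\setminus\{Q_{\alpha}\})\cup\{Q'_{\alpha}\}$. That $Q'_{\alpha}\subseteq G_D^{\P}$ follows from $Q_{\alpha}\subseteq G_D^{\Q}\subseteq G_D^{\P}$ (\autoref{thm:relinkage}) together with $P\subseteq P_{\alpha}\cap H\subseteq G_D^{\P}$ (recall $H\subseteq G_D^{\P}$ and, by \autoref{clm:rc_outlet}, every $\alpha$-outlet is a $Q_{\alpha}$-path in $P_{\alpha}\cap H$). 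The members of $\Q'$ are pairwise disjoint: $P$ is a $G_D^{\Q}$-path, so $P$ meets $G_D^{\Q}\supseteq\Q_{V(D)\setminus\{\alpha\}}$ only in $r_1,r_2\in V(Q_{\alpha})$, which takes care of $Q_{\gamma}$ for $\gamma\in V(D)\setminus\{\alpha\}$, and disjointness from $Q_{\gamma}=P_{\gamma}$ for $\gamma\notin V(D)$ is immediate since $\P$ is a linkage. Hence $\Q'$ is again a $(\P,V(D))$-relinkage.

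The goal is then to show that the outlet graph of $\Q'$ properly contains that of $\Q$, which contradicts the maximality in the choice of $\Q$. The routine half is that every $\beta$-outlet of $\Q$ with $\beta\in\kappa$ lies inside a $\beta$-outlet of $\Q'$: such an outlet is a subpath of $P_{\beta}$, it is internally disjoint from $Q'_{\alpha}$ (for $\beta\neq\alpha$ because $P_{\beta}$ and $P_{\alpha}$ are disjoint, for $\beta=\alpha$ because distinct $\alpha$-outlets are internally disjoint and $r_1,r_2\in V(Q_{\alpha})$), and one of its inner vertices still has a neighbour in $G_{\lambda}-G_D^{\P}$, so the argument from the last paragraph of the proof of \autoref{clm:rc_outlet} applies. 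The decisive point is to exhibit an outlet of $\Q'$ that was not present for $\Q$, and this is exactly where the hypothesis is used: since $r_1Q_{\alpha}r_2$ carries no vertex with a neighbour in $G_{\lambda}-G_{\lambda_1}^{\Q}$, after the rerouting the old segment $r_1Q_{\alpha}r_2$, together with the part of $H$ it cuts off from $P$, cannot be drawn back into $G_D^{\Q'}$, so a subpath of $P_{\alpha}$ that for $\Q$ was ``hidden behind'' $r_1Q_{\alpha}r_2$ becomes an $\alpha$-outlet of $\Q'$. (Should this instead produce a configuration of the shape forbidden by the last line of \autoref{clm:rc_outlet} — an outlet end vertex straddled by a path of $H-U$ — one may appeal directly to that claim for the contradiction.) The bookkeeping needed for this step — which paths a $\Q'$-bridge can attach to, hence whether it lies in $G_D^{\Q'}$ — is supplied by \autoref{clm:rc_stable}, \autoref{clm:rc_bridges}, \autoref{thm:block_separates}, and \autoref{thm:global_disturbance}.

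I expect the main obstacle to be precisely this last step: pinning down how $G_D^{\Q}$ changes to $G_D^{\Q'}$ near the rerouted segment and verifying that the $Z_2$-freeness of $r_1Q_{\alpha}r_2$ is exactly what forces a genuinely new outlet to appear (rather than the outlet graph merely losing $P$), while at the same time confirming that no previously existing outlet gets absorbed into $Q'_{\alpha}$.
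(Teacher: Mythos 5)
Your proposed argument takes a genuinely different route from the paper, and unfortunately it has a fundamental gap. The paper gives a short direct proof: since $P$ is a $Q_\alpha$-path, $P\cup Q$ with $Q\coloneq r_1Q_\alpha r_2$ is a cycle, and since $P_\alpha$ is induced some inner vertex $v$ of $Q$ avoids $P_\alpha$; \autoref{clm:rc_bridges} then yields a $v$--$Z_2$ path $R$ in $H-P_\alpha$, whose endpoint $z_2$ lies on $Q_\alpha$ but off $P_\alpha$ by (\ref{itm:rc_Z}); and since $R\subseteq H-U$, \autoref{clm:rc_outlet} forces $v$ and $z_2$ into the same component of $Q_\alpha-U$, hence $z_2\in V(Q)$ as claimed. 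No rerouting or maximality argument is needed.

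The gap in your proposal lies in the ``routine half.'' Rerouting $Q_\alpha$ through the $\alpha$-outlet $P$ puts $P$ onto $Q'_\alpha\subseteq G_D^{\Q'}$, so the inner vertices of $P$ are no longer in $\P_\kappa-G_D^{\Q'}$ and hence drop out of the outlet graph of $\Q'$. Your claim that every $\beta$-outlet of $\Q$ lies inside a $\beta$-outlet of $\Q'$ is therefore false precisely for $P$ itself (which is anything but internally disjoint from $Q'_\alpha$), and you never address this case. Consequently, even if some ``new'' piece of $P_\alpha$ were to become an outlet of $\Q'$, the outlet graph of $\Q'$ could not properly \emph{contain} that of $\Q$ — it has already lost $P$ — so no contradiction with the maximal choice of $\Q$ can be extracted. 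The contrast with the last paragraph of the proof of \autoref{clm:rc_outlet} is instructive: there the rerouting path $R$ is chosen inside $H-U$, disjoint from every outlet, which is exactly what guarantees that all old outlets survive and at least one grows; replacing $Q$ by an outlet destroys that invariant.
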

\begin{proof}
We assume that $r_1$ and $r_2$ occur on $Q_{\alpha}$ in this order.
Set $Q\coloneq r_1Q_{\alpha}r_2$.
Clearly $P\cup Q$ is a cycle.
Since $P_{\alpha}$ is induced in $G$, some inner vertex $v$ of $Q$ is not on $P_{\alpha}$.
By \autoref{clm:rc_bridges} there is a $v$--$Z_2$ path $R$ in $H-P_{\alpha}$ and its last vertex $z_2$ must be on $Q_{\alpha}$ (see (\ref{itm:rc_Z})) but not on $P_{\alpha}$.
Finally, \autoref{clm:rc_outlet} implies that $v$ and $z_2$ must be in the same component of $Q_{\alpha}-P$ so both are on $Q$ as desired.
\end{proof}

\begin{enumerate}[(i)]
\setcounter{enumi}{\value{enumi_saved}}
\item
Clearly $H'$ contains a cycle.
Since $Q_{\alpha}$ is induced in $G$ there must be a vertex $v$ in $H'-Q_{\alpha}$ and the $v$--$Z_1$ path in $H-Q_{\alpha}$ that exists by \autoref{clm:rc_bridges} avoids all cut-vertices of $H$ by (\ref{itm:rc_blocks}) and thus lies in $H'-Q_{\alpha}$.
So $H'$ contains a vertex of $Z_1-V(Q_{\alpha})$ which lies on $P_{\alpha}$ by (\ref{itm:rc_Z}) and thus also an $\alpha$-outlet by \autoref{clm:rc_outlet}.
So by \autoref{clm:rc_minsociety} we must also have a vertex of $Z_2\setminus V(P_{\alpha})$ in $H'$ that is neither the first nor the last vertex of $Q_{\alpha}$ in $H'$.

For any inner bag $W$ of $\W$ the end vertices of $Q_{\alpha}[W]$ are cut-vertices of $H$.
By (L8) $G[W]$ contains a $\P$-bridge realising some edge of $D$ that is incident with $\alpha$.
So some vertex of $P_{\alpha}$ has a neighbour in $G_{\lambda}-G_{\lambda_1}^{\P}$.
If $Q_{\alpha}[W]=P_{\alpha}[W]$, then $G_{\lambda_1}^{\Q}[W] = G_{\lambda_1}^{\P}[W]$ so this neighbour is also in $G_{\lambda}-G_{\lambda_1}^{\Q}$ and hence $Q_{\alpha}[W]$ contains a vertex of $Z_2$.
If $Q_{\alpha}[W]\neq P_{\alpha}[W]$, then some block of $H$ in $G[W]$ is not a single edge so by the previous paragraph $Q_{\alpha}[W]$ contains a vertex of $Z_2$.
\setcounter{enumi_saved}{\value{enumi}}
\end{enumerate}

\begin{clm}\label{clm:rc_separator}
Every $Z_1$--$Z_2$ path in $H$ is a $q_1$--$q_2$ separator in $H$.
\end{clm}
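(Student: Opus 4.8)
The plan is to prove the equivalent statement that every $q_1$--$q_2$ path $P$ in $H$ meets every $Z_1$--$Z_2$ path $R$ in $H$. Write $z_1\in Z_1$ and $z_2\in Z_2$ for the ends of $R$; by \autoref{thm:rural_cutpath}~(\ref{itm:rc_Z}) we have $z_1\in V(P_\alpha)$ and $z_2\in V(Q_\alpha)$, and since $Z_1$ and $Z_2$ avoid $q_1$ and $q_2$ neither end is one of these. The first step is an auxiliary fact: \emph{every cut-vertex $c$ of $H$ separates $q_1$ from $q_2$ in $H$}. If it did not, some component $K$ of $H-c$ would avoid both $q_1$ and $q_2$; then $Q_\alpha$ could not enter $K$, so $K$ would contain no cut-vertex of $H$ (one would lie on $Q_\alpha$ by (\ref{itm:rc_blocks})) and hence $K\cup\{c\}$ would be a single block $B$. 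If $B$ is a single edge $cv$ then $v$ is isolated in $H-Q_\alpha$, so $v\in Z_1$ by \autoref{clm:rc_bridges}, and then \autoref{clm:rc_outlet} would make $v$ an interior vertex of an $\alpha$-outlet --- a path in $H$ --- hence of degree at least $2$ in $H$, a contradiction; if $B$ is larger, \autoref{thm:rural_cutpath}~(\ref{itm:rc_blockZ}) supplies a non-cut-vertex of $B$ in $Z_2\subseteq V(Q_\alpha)$, again contradicting that $Q_\alpha$ avoids $K$. Combining this with (\ref{itm:rc_connected}) and (\ref{itm:rc_blocks}) I would then record the coarse shape of $H$: the blocks of $H$ form a chain $B_1,\dots,B_r$, consecutive blocks meeting in a single cut-vertex, $q_1\in B_1$, $q_2\in B_r$, each interior cut-vertex separating $q_1$ from $q_2$, and $Q_\alpha$ running through $B_1,\dots,B_r$ in this order.

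The argument now splits. If $V(R)$ contains a cut-vertex $c$ of $H$, we are done, since $\{c\}$ already separates $q_1$ from $q_2$ and $q_1,q_2\notin V(R)$. Otherwise $R$, being connected and disjoint from every cut-vertex, lies inside a single block $B$, which cannot be a single edge (the ends of such a block are cut-vertices or terminals, which $R$ avoids), and $z_1,z_2$ are distinct vertices of $B$. Let $c^-,c^+$ be the two ``ports'' of $B$ --- its cut-vertices, or $q_1$/$q_2$ if $B$ is an end block of the chain. By the chain structure every $q_1$--$q_2$ path in $H$ passes through both $c^-$ and $c^+$ and traverses $B$ by a $c^-$--$c^+$ subpath; hence it suffices to show that $R$ separates $c^-$ from $c^+$ in $B$.

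For this I would use the planar structure of $B$. The society on $B$ whose boundary permutation is induced by the path $Q_\alpha\cap B$ together with the corresponding path through $B$ of a suitable relinkage of $\P$ is rural --- this is the content of \autoref{thm:rural_cutpath}~(\ref{itm:rc_rural}), and if one prefers not to appeal to a later part of the same proof, one re-derives it as in \autoref{thm:rural_bridge}: a cross in $B$ would produce a twisting or bridging disturbance and so contradict the stability of $(\W,\P)$, so by \autoref{thm:cross} the society is rural. Fixing a crossing-free drawing of $B$ in a closed disc with $c^-$ and $c^+$ on the boundary circle and $Q_\alpha\cap B$ one of the two boundary arcs between them, the vertex $z_2$ lies strictly inside that $Q_\alpha$-arc, whereas $z_1$ --- which lies on $P_\alpha$, hence either on the $Q_\alpha$-arc or in the interior of an $\alpha$-outlet, whose two feet bound a subarc of $Q_\alpha\cap B$ by \autoref{clm:rc_outlet} and \autoref{clm:rc_minsociety} --- lies on the complementary side. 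Thus $z_1$ and $z_2$ cut the boundary circle into one arc through $c^-$ and one through $c^+$, so the simple curve $R$ from $z_1$ to $z_2$ through the disc separates $c^-$ from $c^+$; since the drawing has no crossings, no $c^-$--$c^+$ path in $B$ can avoid $V(R)$. Hence $P$ meets $R$.

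The bookkeeping behind the chain picture and the cut-vertex fact is routine. The real obstacle is the single-block case --- specifically, establishing that the relevant society on $B$ is rural (so that $B$ embeds in a disc with $Q_\alpha$ on its boundary) and pinning down that $z_1$ and $z_2$ land on the two different boundary arcs determined by $c^-$ and $c^+$; once that planar picture is available the separation is an immediate Jordan-curve argument, but securing it is where the cross-versus-stability machinery of \autoref{thm:rural_bridge} (or a direct appeal to \autoref{thm:rural_cutpath}~(\ref{itm:rc_rural})) must be brought in.
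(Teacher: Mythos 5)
Your approach has the logical direction reversed, and the circularity you gesture at is fatal rather than cosmetic. The rurality you need for the Jordan-curve argument on the block $B$ is item~(\ref{itm:rc_rural}) of the Lemma, and in the paper that rurality is \emph{derived from} \autoref{clm:rc_separator}: the proof of~(\ref{itm:rc_rural}) rules out a cross in the block society via \autoref{clm:rc_goodcross} and \autoref{clm:rc_goodpaths}, which end precisely by producing a $Z_1$--$Z_2$ path that fails to separate $q_1$ from $q_2$ and then invoking \autoref{clm:rc_separator}. Likewise the boundary path on the $P_\alpha$-side of $B$ that you would need to even state the society is the path $P'_\alpha$ built in~(\ref{itm:rc_new_P}) via \autoref{clm:rc_local_P}, both of which again rest on \autoref{clm:rc_separator}. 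Your proposed workaround --- re-deriving rurality ``as in \autoref{thm:rural_bridge}'' by asserting that ``a cross in $B$ would produce a twisting or bridging disturbance'' --- is not justified and is not obviously true: in \autoref{thm:rural_bridge} a cross sits between two \emph{different} foundational paths $P_\alpha$, $P_\beta$, and rerouting along it swaps or bridge-joins distinct indices; a cross in $B$ sits between $Q_\alpha$ and a reroute of $P_\alpha$ carrying the \emph{same} index $\alpha$, and rerouting merely yields another candidate for $Q_\alpha$. The paper extracts a disturbance from such a cross only by first passing through \autoref{clm:rc_separator} itself.

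The paper's actual proof of \autoref{clm:rc_separator} is short and bypasses the block structure entirely. If $Q'_\alpha$ and $R$ were disjoint, then $\Q'\coloneq(\Q\setminus\{Q_\alpha\})\cup\{Q'_\alpha\}$ is a foundational linkage (using $H\cap\Q=Q_\alpha$), and the definitions of $Z_1$, $Z_2$ let one extend $R$ at both ends inside $G_\lambda-H$ to a $\Q'_{\lambda_1\setminus\lambda_2}$--$\Q'_{\lambda_2\setminus\lambda_1}$ bridge path; the resulting edge of $\Gamma(\W,\Q')$ with ends on both sides of $\alpha$ is forbidden by \autoref{thm:global_disturbance} since $(\lambda_1,\lambda_2)$ separates $\Gamma(\W,\P)[\lambda]$ at $\{\alpha\}$. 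Your opening observations --- that every cut-vertex of $H$ separates $q_1$ from $q_2$, hence the block-cut tree of $H$ is a path, and a $Z_1$--$Z_2$ path missing all cut-vertices lies in one block --- are correct deductions from~(\ref{itm:rc_blocks}), (\ref{itm:rc_blockZ}) and \autoref{clm:rc_outlet}, but they are unnecessary, and the single-block case you flag as ``the real obstacle'' cannot be closed without either \autoref{clm:rc_separator} itself or re-deriving the relinkage-plus-\autoref{thm:global_disturbance} argument that proves it.
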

\begin{proof}
Suppose not, that is, $H$ contains a $q_1$--$q_2$ path $Q'_{\alpha}$ and a $Z_1$--$Z_2$ path $R$ such that $R$ and $Q'_{\alpha}$ are disjoint.
Clearly $H\cap\Q = Q_{\alpha}$ so $\Q'\coloneq (\Q\setminus\{Q_{\alpha}\})\cup\{Q'_{\alpha}\}$ is a foundational linkage.
The last vertex $r_2$ of $R$ is in $Z_2$ and hence has a neighbour in $G_{\lambda} - G_{\lambda_1}^{\Q}$.
So there is an $r_2$--$\Q'_{\lambda_2\setminus\lambda_1}$ path $R_2$ that meets $H$ only in $r_2$.
Similarly, for the first vertex $r_1$ of $R$ there is an $r_1$--$\Q'_{\lambda_1\setminus\lambda_2}$ path $R_1$ that meets $H$ only in $r_1$.
Then $R_1\cup R\cup R_2$ witness that $\Gamma(\W,\Q')$ has an edge with one end in $\lambda_1\setminus\lambda_2$ and the other in $\lambda_2\setminus\lambda_1$, contradicting \autoref{thm:global_disturbance}.
\end{proof}

\begin{clm}\label{clm:rc_local_P}
Let $H'$ be a block of $H$.
Then $Q\coloneq H'\cap Q_{\alpha}$ is a path and its first vertex $q'_1$ equals $q_1$ or is a cut-vertex of $H$ and its last vertex $q'_2$ equals $q_2$ or is a cut-vertex of $H$.
Furthermore, there is a $q'_1$--$q'_2$ path $P\subseteq H'$ that is internally disjoint from $Q$ such that $Z_1\cap V(H')\subseteq V(P)$ and if a $P$-bridge $B$ in $H'$ has no inner vertex on $Q_{\alpha}$, then for every $z_1\in Z_1\cap V(H')$ the attachments of $B$ are either all on $Pz_1$ or all on $z_1P$.
\end{clm}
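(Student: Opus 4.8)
The plan is to first pin down the coarse structure of $H$, then realise $P$ as a subpath of the path $P'_{\alpha}$ supplied by (\ref{itm:rc_new_P}), and finally extract the no‑straddle condition by a rerouting argument.

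\emph{Block structure of $H$.} By (\ref{itm:rc_blocks}) every cut‑vertex of $H$ lies in exactly two blocks, hence has degree $2$ in the block--cut tree of $H$, so all cut‑vertex nodes are internal and all leaves are blocks. The path $Q_{\alpha}$ lies in $H$ by (\ref{itm:rc_connected}), its ends $q_1,q_2$ are not cut‑vertices, and it passes through every cut‑vertex of $H$ by (\ref{itm:rc_blocks}) together with (\ref{itm:rc_connected}). Its trace in the block--cut tree is therefore a path covering every cut‑vertex node; each such node, having degree $2$ and lying internally on this path, has both of its blocks on it, and connectivity then forces the whole block--cut tree to coincide with this path. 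So $H$ is a chain of blocks $H_1,\dots,H_m$ with consecutive blocks sharing a cut‑vertex, $q_1\in V(H_1)$, $q_2\in V(H_m)$, and $Q_{\alpha}$ runs through $H_1,\dots,H_m$ in order; since $V(P'_{\alpha}\cap Q_{\alpha})$ contains every cut‑vertex of $H$ by (\ref{itm:rc_new_P}) while $q_1,q_2$ are its ends, $P'_{\alpha}$ also runs through $H_1,\dots,H_m$ in order. With $H'=H_i$ this gives that $Q:=H'\cap Q_{\alpha}$ is a single subpath of $Q_{\alpha}$ whose ends $q_1',q_2'$ are each an end of $Q_{\alpha}$ or a cut‑vertex of $H$, and that $\{q_1',q_2'\}$ are the only vertices of $H'$ that are cut‑vertices of $H$ or lie in $\{q_1,q_2\}$.

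\emph{The path $P$.} If $H'$ is a single edge then $Q_{\alpha}$ uses it, so $Q$ is that edge, $H'$ has no $P$‑bridges, and $P:=Q$ works. Otherwise $H'$ is $2$‑connected; set $P:=H'\cap P'_{\alpha}$. Since $P'_{\alpha}$ runs through the chain, $P$ is the $q_1'$--$q_2'$ subpath of $P'_{\alpha}$ inside $H'$, and it is internally disjoint from $Q$ because $V(P)\cap V(Q)=V(P'_{\alpha}\cap Q_{\alpha})\cap V(H')=\{q_1',q_2'\}$; moreover $P$ is induced in $G$ (as $\P'[W]$ is $p$‑attached for every inner bag $W$, hence $P'_{\alpha}$ is induced), so $P\ne Q$ even when $Q$ is an edge and, crucially, \emph{every $P$‑bridge in $H'$ is non‑trivial}. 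Finally $Z_1\cap V(H')\subseteq V(P'_{\alpha})\cap V(H')=V(P)$ by $Z_1\subseteq V(P'_{\alpha})$ from (\ref{itm:rc_new_P}).

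\emph{The no‑straddle condition}—this is the main obstacle. I would argue by contradiction: suppose a (necessarily non‑trivial) $P$‑bridge $B$ in $H'$ has no inner vertex on $Q_{\alpha}$ but has an attachment $a_1$ on $Pz_1-z_1$ and one $a_2$ on $z_1P-z_1$ for some $z_1\in Z_1\cap V(H')$. Attachments on an end of $P$ are harmless, so $z_1$ is an inner vertex of $P$; being a vertex of $V(P'_{\alpha})\cap V(H')$ other than $q_1',q_2'$ it is not a cut‑vertex of $H$, and since $P'_{\alpha}\cap Q_{\alpha}$ meets $H'$ only in $\{q_1',q_2'\}$ it is not on $Q_{\alpha}$. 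Hence by \autoref{clm:rc_outlet} the vertex $z_1$ is an inner vertex of a unique $\alpha$‑outlet of $\Q$, so $z_1\notin V(G_D^{\Q})$ and $z_1$ lies in the outlet graph of $\Q$. Now choose an $a_1$--$a_2$ path $R$ through the inner part of $B$; it is internally disjoint from $P'_{\alpha}$, from $Q_{\alpha}$, and—using that $P'_{\alpha}$ avoids $\P_{\theta}$ and \autoref{thm:stable_bridges}—also from $\P_{\theta}$. Replacing the subpath $a_1P'_{\alpha}a_2\subseteq P$ (which passes through $z_1$) by $R$ yields a foundational linkage $\P''=(\Q\setminus\{Q_{\alpha}\})\cup\{P''_{\alpha}\}$ with $P''_{\alpha}\subseteq H\subseteq G_D^{\P}$, so $\P''$ is again a $(\P,V(D))$‑relinkage, but now $z_1\notin V(P''_{\alpha})$. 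Following the pattern of the last paragraph of the proof of \autoref{clm:rc_outlet}, one then checks that every $\beta$‑outlet of $\Q$ ($\beta\in\kappa$) remains internally disjoint from $\P''$ and hence sits inside a $\beta$‑outlet of $\P''$, whereas the detour around $z_1$ releases from $G_D^{\P''}$ a vertex of $\P_{\kappa}$ that lay in $G_D^{\Q}$ and has a neighbour in $G_{\lambda}-G_{\lambda_2}^{\P}\subseteq G_{\lambda}-G_D^{\P}$; thus the outlet graph of $\P''$ properly contains that of $\Q$, contradicting the maximality built into the choice of $\Q$. The genuinely delicate step, where I expect to spend the most effort, is exactly this comparison of outlet graphs: verifying $z_1\notin V(G_D^{\P''})$, identifying the vertex of $\P_{\kappa}$ that becomes newly exposed, and checking that no outlet of $\Q$ gets absorbed into $G_D^{\P''}$—for which I would reuse \autoref{thm:block_separates} and \autoref{thm:relinkage} in the manner of \autoref{clm:rc_outlet}.
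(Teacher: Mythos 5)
Your construction of $P$ is circular. You set $P\coloneq H'\cap P'_{\alpha}$ where $P'_{\alpha}$ is the path supplied by statement (\ref{itm:rc_new_P}), but in the logical order of \autoref{thm:rural_cutpath} statement (\ref{itm:rc_new_P}) is proved \emph{using} \autoref{clm:rc_local_P}: the proof of (\ref{itm:rc_new_P}) opens by applying \autoref{clm:rc_local_P} to every block $H'$ of $H$, unites the resulting paths $P$ into a $q_1$--$q_2$ path $R$, and then performs bridge stabilisation; it is precisely the no-straddle property of \autoref{clm:rc_local_P} that afterwards guarantees $Z_1\subseteq V(P'_{\alpha})$. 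So $\P'$ cannot be taken as given here. The paper instead builds $P$ from scratch inside $H'$: it uses (\ref{itm:rc_blockZ}) to find an inner $Z_2$-vertex of $Q$, uses $2$-connectedness of $H'$ to produce some $q'_1$--$q'_2$ path internally disjoint from $Q$, and then picks $P$ among all such paths minimising the number of edges outside $P_{\alpha}$; the facts $Z_1\cap V(H')\subseteq V(P)$ and the no-straddle condition then follow from that minimality together with \autoref{clm:rc_separator}, \autoref{clm:rc_minsociety}, and \autoref{clm:rc_bridges}. (Your block--cut-tree analysis showing $H$ is a chain of blocks is correct and is indeed what underlies the first sentence of the paper's proof.)

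Even setting the circularity aside, the no-straddle argument you sketch does not close. You want to reroute $P'_{\alpha}$ around $z_1$ and contradict the maximality of the outlet graph of $\Q$. But $z_1$ is already an inner vertex of an $\alpha$-outlet of $\Q$ (since $G_D^{\Q}\cap H=Q_{\alpha}$ and $z_1\notin V(Q_{\alpha})$), so it already lies in the outlet graph; removing it from $P'_{\alpha}$ does not by itself put anything new outside $G_D^{\cdot}$. Moreover, the quantity being maximised concerns $\P_{\kappa}\setminus G_D^{\R}$ as $\R$ ranges over $(\P,V(D))$-relinkages, and $\P''$ modifies $P'_{\alpha}$, whose relation to $G_D^{\P''}$ versus $G_D^{\Q}$ is exactly the kind of control that this Claim is meant to provide; you acknowledge this step is the delicate one, and as written it is a genuine gap rather than a routine verification.
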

\begin{proof}
It follows easily from (\ref{itm:rc_blocks}) that $Q$ is a path and $q'_1$ and $q'_2$ are as claimed.
If $H'$ is the single edge $q'_1q'_2$, then the statement is trivial with $P=Q$ so suppose not.
Our first step is to show the existence of a $q'_1$--$q'_2$ path $R\subseteq H'$ that is internally disjoint from $Q$.

By (\ref{itm:rc_blockZ}) some inner vertex $z_2$ of $Q$ is in $Z_2\setminus V(P_{\alpha})$.
Since $H'$ is $2$-connected there is a $Q$-path $R\subseteq H'-z_2$ with first vertex $r_1$ on $Qz_2$ and last vertex $r_2$ on $z_2Q$.
Pick $R$ such that $r_1Qr_2$ is maximal.
We claim that $r_1=q'_1$ and $r_2=q'_2$.

Suppose for a contradiction that $r_2\neq q'_2$.
By the same argument as before there is $Q$-path $S\subseteq H'-r_2$ with first vertex $s_1$ on $Qr_2$ and last vertex $s_2$ on $r_2Q$.
Note that $s_1$ must be an inner vertex of $r_1Qr_2$ by choice of $R$.
Similarly, $Q$ separates $R$ from $S$ in $H'$ otherwise there was a $Q$-path from $r_1$ to $s_2$ again contradicting our choice of $R$.

But $S$ has an inner vertex $v$ as $Q$ is induced and \autoref{clm:rc_bridges} asserts the existence of a $v$--$Z_1$ path $S'$ in $H-Q_{\alpha}$ which must be disjoint from $R$ as $Q$ separates $S$ from $R$.
So there is a $Z_1$--$Z_2$ path in $z_2Qs_1 \cup s_1Sv \cup S'$ which is disjoint from $Q_{\alpha}r_1Rr_2Q_{\alpha}$ by construction, a contradiction to \autoref{clm:rc_separator}.
This shows $r_2=q'_2$ and by symmetry also $r_1=q'_1$.

Among all $q'_1$--$q'_2$ paths in $H'$ that are internally disjoint from $Q$ pick $P$ such that $P$ contains as few edges outside $P_{\alpha}$ as possible.
To show that $P$ contains all vertices of $Z_1\cap V(H')$ let $z_1\in Z_1\cap V(H')$.
We may assume $z_1\neq q'_1,q'_2$.
If $z_1$ is an inner vertex of $Q$, then $Q$ contains a $Z_1$--$Z_2$ path that is disjoint from $P$, a contradiction to \autoref{clm:rc_separator}.
So there is an $\alpha$-outlet $R$ which has $z_1$ as an inner vertex.
Then $Rz_1\cup Q$ and $z_1R\cup Q$ both contain a $Z_1$--$Z_2$ path and by \autoref{clm:rc_separator} $P$ must intersect both paths.
But $P$ is internally disjoint from $Q$ so it contains a vertex $t_1$ of $Rz_1$ and a vertex $t_2$ of $z_1R$.
If some edge of $t_1Pt_2$ is not on $P_{\alpha}$, then $P'\coloneq q'_1Pt_1P_{\alpha}t_2Pq'_2$ is $q'_1$--$q'_2$ path in $H'$ that is internally disjoint from $Q$ and has fewer edges outside $P_{\alpha}$ than $P$, contradicting our choice of $P$.
This means $t_1Rt_2\subseteq P$ and therefore $z_1$ is on $P$.

Finally, suppose that for some $z_1\in Z_1$ there is a $P$-bridge $B$ in $H'$ with no inner vertex in $Q_{\alpha}$ and attachments $t_1, t_2\neq z_1$ such that $t_1$ is on $Pz_1$ and $t_2$ is on $z_1P$ (this implies $z_1\neq q'_1,q'_2$).
Let $R$ be the $\alpha$-outlet containing $z_1$ and denote its end vertices by $r_1$ and $r_2$.
By \autoref{clm:rc_minsociety} some inner vertex $z_2$ of $r_1Qr_2$ is in $Z_2$.

If $B$ has an attachment in $z_1P-R$, then $z_1Rr_2 \cup z_2Qr_2$ contains a $Z_1$--$Z_2$ path that does not separate $q'_1$ from $q'_2$ in $H'$ and therefore does not separate $q_1$ from $q_2$ in $H$, contradicting \autoref{clm:rc_separator}.
So $B$ has no attachment in $z_1P-R$ and a similar argument implies that $B$ has no attachment in $Pz_1-R$.
So all attachments of $B$ must be in $P\cap R\subseteq P_{\alpha}$.
As $R\cup B$ contains a cycle and $P_{\alpha}$ is induced some vertex $v$ of $B$ is not on $P_{\alpha}$.
But then \autoref{clm:rc_bridges} implies the existence of a $v$--$Z_2$ path that avoids $P_{\alpha}$ and hence uses only inner vertices of $B$, in particular, some inner vertex of $B$ is in $Z_2\subseteq V(Q_{\alpha})$, contradicting our assumption and concluding the proof of this claim.
\end{proof}

\begin{enumerate}[(i)]
\setcounter{enumi}{\value{enumi_saved}}
\item
Applying \autoref{clm:rc_local_P} to every block $H'$ of $H$ and uniting the obtained paths $P$ gives a $q_1$--$q_2$ path $R\subseteq H$ such that $Z_1\subseteq V(R)$ and $V(R\cap Q_{\alpha})$ consists of $q_1$, $q_2$, and all cut-vertices of $H$.
Moreover, for every $z_1\in Z_1$ a $P$-bridge $B$ in $H$ that has no inner vertex in $Q_{\alpha}$ has all its attachments in $Rz_1$ or all in $z_1R$.

Set $\Q'\coloneq (\Q\setminus \{Q_{\alpha}\})\cup\{R\})$.
Let $\P'$ be the foundational linkage obtained by uniting the bridge stabilisation of $\Q'[W]$ in $G[W]$ for all inner bags $W$ of $\W$.
Then $\P'[W]$ is $p$-attached in $G[W]$ for all inner bags $W$ of $\W$ by \autoref{thm:bridge_stabilisation}.

To show $P'_{\beta} = Q_{\beta}$ for all $\beta\in\lambda\setminus\{\alpha\}$ it suffices by \autoref{thm:bridge_stabilisation} to check that every non-trivial $\Q'$-bridge $B'$ that attaches to $Q'_{\beta}$ attaches to at least one other path of $\Q'_{\lambda}$.
If $B'$ is disjoint from $H$ it is also a $\Q$-bridge and thus attaches to some path $Q_{\gamma}=Q'_{\gamma}$ with $\gamma\in\lambda\setminus\{\alpha,\beta\}$ by \autoref{clm:rc_stable}.
If $B'$ contains a vertex of $H$, then it attaches to $Q'_{\alpha}=R$ as $H$ is connected (see (\ref{itm:rc_connected})) and $\Q'\cap H = R$.

To verify $P'_{\alpha}\subseteq H$ we need to show $B'\subseteq H$ for every $\Q'$-bridge $B'$ that attaches to $R$ but to no other path of $\Q'_{\lambda}$.
Clearly for every vertex $v$ of $G_{\lambda_1}^{\P}-P_{\alpha}$ there is a $v$--$\P_{\lambda_1\setminus\{\alpha\}}$ path in $G_{\lambda_1}^{\P}-P_{\alpha}$.
Similarly, for every vertex $v$ of $G_{\lambda_2}^{\Q}-Q_{\alpha}$ there is a $v$--$\Q_{\lambda_2\setminus\{\alpha\}}$ path in $G_{\lambda_2}^{\Q}-Q_{\alpha}$.
But $Q'_{\beta} = P_{\beta}$ for all $\beta\in\lambda_1\setminus\{\alpha\}$ and $Q'_{\beta} = Q_{\beta}$ for all $\beta\in\lambda_2\setminus\{\alpha\}$ and $G_{\lambda} - H = (G_{\lambda_1}^{\P}-P_{\alpha})\cup (G_{\lambda_2}^{\Q}-Q_{\alpha})$.
This means that $B'$ cannot contain a vertex of $G_{\lambda}-H$ and thus $B'\subseteq H$ as desired.

We have just shown that every bridge $B'$ as above is an $R$-bridge in $H$.
By construction and the properties (\ref{itm:rc_Z}) and (\ref{itm:rc_blockZ}) every component of $Q_{\alpha}-R$ contains a vertex of $Z_2$ and hence lies in a $\Q'$-bridge attaching to some path $Q'_{\beta}$ with $\beta\in\lambda_2\setminus\{\alpha\}$.
So $B'$ is an $R$-bridge in $H$ with no inner vertex in $Q_{\alpha}$ and therefore there must be $z_1, z'_1\in Z_1\cup\{q_1,q_2\}$ such that $z_1Rz'_1$ contains all attachments of $B'$ and no inner vertex of $z_1Rz'_1$ is in $Z_1$.
By \autoref{thm:bridge_stabilisation} this implies that $P'_{\alpha}$ contains no vertex of $Q_{\alpha}-R$ and $Z_1\subseteq V(P'_{\alpha})$.
On the other hand, $P'_{\alpha}$ must clearly contain the end vertices of $R$ and all cut-vertices of $H$.
This concludes the proof of (\ref{itm:rc_new_P}).

\item
We first show that $(H',\Omega)$ is rural where $\Omega\coloneq P'Q'^{-1}|Z$ where $Z'\coloneq Z\cap V(H')$.
Since $H$ is connected and $H\cap \P'=P_{\alpha}$ we must have $\beta\in N(\alpha)\cap\theta$ for each path $P_{\beta}$ with $\beta\in\theta$ whose unique vertex has a neighbour in $H$.
So the set $T$ of all vertices of $\P_{\theta}$ that are adjacent to some vertex of $H'$ has size at most $p-4$ by assumption.
Clearly $Z'\cup T$ separates $H'$ from the rest of $G$ so for every vertex $v$ of $H'-Z'$ there is a $v$--$(Z'\cup T)$ fan of size at least $p$ and hence a $v$--$Z$ fan of size at least $4$.
Hence $(H',\Omega)$ is $4$-connected and hence it is rural or contains a cross by \autoref{thm:cross}.

Suppose for a contradiction that $(H',\Omega)$ contains a cross.
By the \emph{off-road edges} of a cross $\{R, S\}$ in $(H',\Omega)$ we mean edge set $E(R\cup S)\setminus E(P'\cup Q')$.
We call a component of $R\cap (P'\cup Q')$ that contains an end of $R$ a \emph{tail of $R$} and define the \emph{tails of $S$} similarly.

\begin{clm}\label{clm:rc_tails}
	If $\{R,S\}$ is a cross in $(H',\Omega)$ such that its set of off-road edges is minimal, then for every $z\in Z$ that is not in $R\cup S$ the two $z$--$(R\cup S)$ paths in $P'\cup Q'$ both end in a tail of $R$ or $S$.
\end{clm}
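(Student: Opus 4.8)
The plan is to mimic the proof of \autoref{clm:rb_tails}, now with the boundary cycle $C\coloneq P'\cup Q'$ and the society $(H',\Omega)$, so that $\bar\Omega=Z'$ and the restriction of $\Omega$ to $Z'$ records the cyclic order in which $C$ meets $Z'$. Suppose the claim fails. Then there are a vertex $z\in Z\cap V(H')$ with $z\notin V(R\cup S)$ and one of its two $z$--$(R\cup S)$ paths $T$ in $C$ whose last vertex $t\in V(R\cup S)$ lies in no tail of $R$ or $S$; among all such pairs $(z,T)$ I would pick one for which $E(T)$ is smallest. Then $T$ has no interior vertex in $\bar\Omega$: such a vertex $w$ lies off $R\cup S$, and the subpath $wT$ of $T$ would be a witness with a strictly shorter path, contradicting minimality. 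Interchanging $R$ and $S$ if necessary, assume $t\in V(R)$. An end of $R$ lies in $\bar\Omega\subseteq V(C)$ and hence in the tail of $R$ containing it, so $t$ is an inner vertex of $R$; in particular $t\notin\bar\Omega$, and both $Rt$ and $tR$ contain an edge not in $E(C)$.

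Now comes the rerouting step. Since $E(T)\subseteq E(C)$ and $R$, $S$ are edge-disjoint, the off-road edge sets $E(T\cup Rt\cup S)\setminus E(C)$ and $E(T\cup tR\cup S)\setminus E(C)$ are each a \emph{proper} subset of the off-road edge set of $\{R,S\}$, the properness witnessed by the off-road edge of $tR$, respectively of $Rt$, that is missing from the other side. So it suffices to show that one of $\{T\cup Rt,\,S\}$ and $\{T\cup tR,\,S\}$ is a cross in $(H',\Omega)$, which contradicts the minimality of the off-road edge set of $\{R,S\}$. I would first note that $T\cup Rt$ is a $\bar\Omega$-path from $z$ to the end $r_1$ of $R$ and $T\cup tR$ one from $z$ to the end $r_2$ — the two pieces share only $t$, $T$ is disjoint from $S$ and from $V(R)\setminus\{t\}$, $t\notin\bar\Omega$, and the interiors of $T$ and of $R$ avoid $\bar\Omega$ — and that both are disjoint from $S$ because $R$ is. Finally, $z$ lies in exactly one of the four arcs of $C$ cut out by $r_1,s_1,r_2,s_2$, and in each of the four cases inspecting the induced cyclic order on $\{z,r_1,s_1,s_2\}$ and on $\{z,r_2,s_1,s_2\}$ shows that precisely one of these two four-element sets is arranged in the interleaving pattern required by the definition of a cross.

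The manipulations with tails and off-road edges are routine; the one step carrying real content is the final cyclic-order check certifying that one of the two reroutings is again a cross, which is the explicit version of the geometric fact invoked tacitly at the end of the proof of \autoref{clm:rb_tails}. No further difficulty is expected, since all the remaining ingredients — that $C=P'\cup Q'$ is a cycle whose traversal induces $\Omega$ on $Z'$, and the $\bar\Omega$-path and disjointness properties of the reroutings — are immediate from the statement of part (vi) and the claims already established.
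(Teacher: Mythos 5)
Your proposal is correct and follows the same route as the paper, which for this claim simply writes ``The proof is the same as for \autoref{clm:rb_tails} so we spare it.'' You have merely spelled out two steps that the paper's one-line proof of \autoref{clm:rb_tails} leaves tacit — taking the offending path $T$ with fewest edges so that it is internally disjoint from $\bar\Omega$ (making the reroutings genuine $\bar\Omega$-paths), and the four-case cyclic-order check that one of $\{T\cup Rt,S\}$ and $\{T\cup tR,S\}$ is again a cross — both of which are handled correctly.
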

The proof is the same as for \autoref{clm:rb_tails} so we spare it.

\begin{clm}\label{clm:rc_blockstable}
Every non-trivial $(P'\cup Q')$-bridge $B$ in $H'$ has an attachment in $P'-Q'$ and in $Q'-P'$.
\end{clm}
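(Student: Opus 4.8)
The plan is to prove the two required attachments separately, each by contradiction; throughout let $C$ denote the set of inner vertices of $B$, which is a non-empty component of $H'-(P'\cup Q')$ because $B$ is non-trivial, and note that $C$ is disjoint from $Q'$ and from $P'$, hence from $Q_\alpha$ (as $Q_\alpha\cap H'=Q'$), so $C$ contains no cut-vertex of $H$ by (\ref{itm:rc_blocks}) and every vertex of $C$ lies in the single block $H'$. To see that $B$ attaches to $P'-Q'$, suppose it does not. By \autoref{clm:rc_local_P} each of $q'_1$ and $q'_2$ is $q_1$, $q_2$, or a cut-vertex of $H$, hence lies on $Q_\alpha$ by (\ref{itm:rc_blocks}); so in this case every attachment of $B$ lies on $Q_\alpha$. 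Pick $v\in C$; since $v\notin Q_\alpha$, \autoref{clm:rc_bridges} yields a $v$--$Z_1$ path $R$ in $H-Q_\alpha$. Because every cut-vertex of $H$ lies on $Q_\alpha$, the graph $H-Q_\alpha$ is the disjoint union of the graphs $H''-Q_\alpha$ over the blocks $H''$ of $H$; hence $R$ lies entirely in $H'-Q'$. Its end vertex $z_1\in Z_1$ lies on $P'_\alpha$ by (\ref{itm:rc_new_P}), hence in $V(P')$, and not on $Q'$, so $z_1\in P'-Q'$; and walking along $R$ from $v$ to the first vertex $w$ it meets on the cycle $P'\cup Q'$, the preceding part of $R$ is disjoint from the cycle and lies in $H'$, so it is contained in $C$. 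Thus $w$ is a neighbour of $C$ on the cycle, i.e.\ an attachment of $B$, and since $R$ avoids $Q'$ we get $w\in P'-Q'$ — a contradiction.

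For the attachment on $Q'-P'$ I would argue via \autoref{thm:stable_bridges}. Suppose $B$ has no attachment on $Q'-P'$; then by the same reasoning every attachment of $B$ lies on $P'_\alpha$, and $C$ is a full component of $H-P'$. The set $C$ is disjoint from $P'_\alpha$ and from every $Q_\beta$ with $\beta\in\lambda\setminus\{\alpha\}$ — the latter because $C\subseteq H=G_{\lambda_2}^{\P}\cap G_{\lambda_1}^{\Q}$ whereas no such $Q_\beta$ meets both $G_{\lambda_2}^{\P}$ and $G_{\lambda_1}^{\Q}$, by the separation $(\lambda_1,\lambda_2)$ and \autoref{thm:global_disturbance}, exactly as in \autoref{clm:rc_stable}. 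Consequently (the non-trivial part of) $B$ is contained in a non-trivial $\P'$-bridge $B^{*}$ of $G$ which lies in a single inner bag of $\W$ and which, among the non-trivial paths of $\P'$, attaches only to $P'_\alpha=\P'_\alpha$; here (\ref{itm:rc_Z}) is used to see that the component of $G-\P'$ through $C$ cannot escape $H$ except possibly along $\P_{N(\alpha)\cap\theta}$. Since $\P'[W]$ is $p$-attached in $G[W]$ for every inner bag $W$ by (\ref{itm:rc_new_P}) and $|N(\alpha)\cap\theta|\le p-4\le p-3$, this contradicts \autoref{thm:stable_bridges} applied to $\P'$ with $\lambda_0\coloneq\{\alpha\}$.

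The hard part is the passage from $B$ to $B^{*}$ in the second argument: one has to verify that $B^{*}$ picks up no attachment on another non-trivial path of $\P'$, which is exactly where the global separating information in (\ref{itm:rc_Z}) and the confinement of $C$ to one block of $H$ are both needed, and one has to treat the case in which $C$ runs through trivial paths of $\P'$ — these lie in $\theta$, so they do not affect the conclusion of \autoref{thm:stable_bridges}, but they must be folded into $B^{*}$ directly, or shown not to occur inside a block. Everything else should be routine from the definitions and the earlier parts of \autoref{thm:rural_cutpath}: that $B^{*}$ is non-trivial and lies in an inner bag; that $B$ has an attachment on $Q_\alpha$ in the first argument and on $P'_\alpha$ in the second, since $H'$ is $2$-connected and therefore $B$ has at least two attachments on the cycle; and that the vertex $z_1$ above exists, e.g.\ by (\ref{itm:rc_blockZ}).
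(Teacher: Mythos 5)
Your proof is correct, and the first half is essentially the paper's own argument: the $v$--$Z_1$ path from \autoref{clm:rc_bridges} avoids $Q_\alpha$ and ends in $Z_1\subseteq V(P'_\alpha)$, so its first vertex on the cycle $P'\cup Q'$ is an attachment of $B$ in $P'-Q'$ (the contradiction framing adds nothing). For the second half the paper argues directly rather than contrapositively: it looks at the $\P'$-bridge $B'$ of $G$ through an inner vertex $v$ of $B$, notes $B'\subseteq G_{\lambda_2}^{\P}$ since $Z_1\cup\{q_1,q_2\}\subseteq V(P'_\alpha)$, deduces from \autoref{clm:rc_stable} and the $p$-attachedness of $\P'$ that $B'$ attaches to some $Q_\beta$ with $\beta\in\lambda_2\setminus\lambda_1$, and from a $v$--$Q_\beta$ path in $B'$ avoiding $P'$ reads off a vertex of $Z_2\subseteq V(Q_\alpha)$, giving the attachment in $Q'-P'$. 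Your contrapositive via \autoref{thm:stable_bridges} uses the same ingredients — $p$-attachedness of $\P'$, the bound $|N(\alpha)\cap\theta|\le p-4$, and the separation from (\ref{itm:rc_Z}) — so the two arguments are really the same idea with a different interface; the direct version additionally exhibits the $v$--$Z_2$ path, which costs nothing here.

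The ``hard part'' you flag does close, and the worry about trivial paths does not arise: since $\P_\theta$ is disjoint from $G_\lambda$ by the definition of $G_\Gamma^{\Q}$, while $C\subseteq H\subseteq G_\lambda$, the set $C$ meets no trivial path of $\P'=\Q\setminus\{Q_\alpha\}\cup\{P'_\alpha\}$. To pin down $B^*$: $C$ has no neighbour in $H-H'$ (an edge $cu$ with $c\in C\subseteq H'$ and $u\notin H'$ would make $c$ a cut-vertex of $H$, but cut-vertices lie on $Q_\alpha$ by (\ref{itm:rc_blocks}) and $C\cap Q_\alpha=\emptyset$); inside $H'$ the neighbours of $C$ lie on $P'\cup Q'$ and, under your hypothesis, on $P'\subseteq P'_\alpha$; and by the separation established in the proof of (\ref{itm:rc_Z}), every neighbour of $C$ outside $H$ lies in $Z\cup V(\P_{N(\alpha)\cap\theta})$, hence in $\P_{N(\alpha)\cap\theta}\subseteq\P'_\theta$ since $Z\subseteq V(H)$. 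So every neighbour of $C$ lies on $\P'$, which makes $C$ a full component of $G-\P'$, and $B^*$ then attaches, among non-trivial paths of $\P'$, only to $P'_\alpha$. It is non-trivial and is contained in a unique bag, necessarily an inner bag since $C\subseteq G_\lambda$ avoids $W_0\cap W_1$ and $W_{l-1}\cap W_l$ (both covered by $\P'$). With (\ref{itm:rc_new_P}) and $|N(\alpha)\cap\theta|\le p-4\le p-3$, applying \autoref{thm:stable_bridges} to $\P'$ with $\lambda_0=\{\alpha\}$ gives the contradiction you want.
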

\begin{proof}
Let $v$ be an inner vertex of $B$.
Then $H-Q_{\alpha}$ contains a $v$--$Z_1$ path by \autoref{clm:rc_bridges} so $B$ must attach to $P'$.
Note that $v$ is in a non-trivial $\P'$-bridge $B'$ and $B'\subseteq G_{\lambda_2}^{\P}$ since $Z_1\subseteq V(P'_{\alpha})$.
Furthermore, $B'$ must attach to a path $P'_{\beta}=Q_{\beta}$ with $\beta\in\lambda_2\setminus\lambda_1$:
This is clear if $B'$ does not attach to $P'_{\alpha}$ and follows from \autoref{clm:rc_stable} if it does.
So $B'$ contains a path $R$ from $v$ to $G_{\lambda_2}^{\Q}-Q_{\alpha}$ that avoids $P'$.
But any such path contains a vertex of $Z_2$ (see (\ref{itm:rc_Z})) and $R$ does not contain $q'_1$ and $q'_2$ so some initial segment of $R$ is a $v$--$Z_2$ path in $H'-P'$ as desired.
\end{proof}

\begin{clm}\label{clm:rc_goodcross}
There is a cross $\{R',S'\}$ in $(H',\Omega)$ such that its set of off-road edges is minimal and neither $P'$ nor $Q'$ contains all ends of $R'$ and $S'$.
\end{clm}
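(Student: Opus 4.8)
The plan is to run the first-case reduction from the proof of \autoref{clm:rb_no_cross}, with the two induced paths $P_\alpha,P_\beta$ there played here by $P'$ and $Q'$. Since $(H',\Omega)$ contains a cross by hypothesis, I would begin by choosing among all crosses in $(H',\Omega)$ one, say $\{R,S\}$, whose set $E$ of off-road edges is inclusion-minimal; if neither $P'$ nor $Q'$ contains all four ends of $R\cup S$, then $\{R',S'\}\coloneq\{R,S\}$ is already as desired. Otherwise, after possibly interchanging the roles of $P'$ and $Q'$ (which only reverses $\Omega$, and is symmetric since by (\ref{itm:rc_blockZ}) the block $H'$ carries a non-cut vertex of $Z_2\setminus V(P_\alpha)$ on $Q'$ and a vertex of $Z_1\setminus V(Q_\alpha)$ on $P'$, both necessarily inner on their respective paths), I may assume that all ends of $R$ and $S$ lie on $P'$, and the task is to transform $\{R,S\}$ into a cross, still with inclusion-minimal off-road edge set, that meets both $P'$ and $Q'$.

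Three structural facts make the reduction go through. First, $P'\subseteq P'_\alpha$ is induced in $G$, because by (\ref{itm:rc_new_P}) the linkage $\P'$ is $p$-attached on every inner bag and so its paths are induced; consequently a $\bar{\Omega}$-path between two vertices of $P'$ with no vertex off $P'$ would be a single $P'$-edge carrying no off-road edge, impossible in a cross, so $R$ and $S$ each reach $H'-P'$. Second, $H'-P'$ is connected: by \autoref{clm:rc_blockstable} every non-trivial $(P'\cup Q')$-bridge of $H'$ attaches to $Q'-P'$, so every vertex of $H'-P'$ is joined to the path $Q'$ inside $H'-P'$. Third, picking $z$ to be the vertex of $Z_2\setminus V(P_\alpha)$ on $Q'$ supplied by (\ref{itm:rc_blockZ}), we have $z\in Z'\coloneq Z\cap V(H')$, and since $P'$ and $Q'$ are internally disjoint, $z$ lies off $P'$ and hence is not an end of $R$ or $S$; \autoref{clm:rc_tails} then forces $Q'$ to be disjoint from $R\cup S$, since a $z$--$(R\cup S)$ path in $P'\cup Q'$ would end in a tail of $R$ or $S$, but every such tail lies on $P'$. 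Combining the first two facts there is a $z$--$(R\cup S)$ path $T$ in $H'-P'$, which I take internally disjoint from $R\cup S$; after relabelling $R$ and $S$ if necessary, its last vertex $t$ is an inner vertex of $S$.

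With $v$ the end of $S$ lying between the two ends of $R$ on $P'$, the pair $\{R,\,vSt\cup T\}$ is a cross in $(H',\Omega)$ — one end of the second path is $v\in V(P')$, between the ends of $R$, and the other is $z\in V(Q')$, which in $\Omega$ follows all of $P'$ — and it already meets both $P'$ and $Q'$. Now I would choose $\{R',S'\}$ to have inclusion-minimal off-road edge set $E'$ among all crosses of $(H',\Omega)$ whose off-road edges lie in the off-road edge set $F$ of $\{R,\,vSt\cup T\}$; this $E'$ is then inclusion-minimal among all crosses, for any cross with off-road edges properly inside $E'$ still has them inside $F$. If $R'\cup S'$ avoided all edges of $T$, then $E'$ would miss the off-road edges of the subpath $tS$ of $S$ (nonempty, as $t\notin P'$), giving $E'\subsetneq E$ and contradicting the minimality of $E$. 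So $R'\cup S'$ uses an edge of $T$; tracing it and using that the part of $T$ strictly off $P'\cup Q'$ can be left only through its end on $Q'$ or through $t$ — and at $t$ no edge of $F$ continues along $S$ — exactly as in the proof of \autoref{clm:rb_no_cross}, $R'\cup S'$ meets $Q'$, so by \autoref{clm:rc_tails} one of $R',S'$ has an end on $Q'$. Its remaining ends still include a vertex of $P'$ (those inherited from $R$ and from $vSt$), so neither $P'$ nor $Q'$ contains all four ends of $R'$ and $S'$, and $\{R',S'\}$ is as required.

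I expect the main obstacle to be precisely this last step: verifying that the minimal cross $\{R',S'\}$ with off-road edges confined to $F$ is genuinely forced to straddle $P'$ and $Q'$ rather than retreating onto $P'$. This is the delicate bookkeeping already carried out in the proof of \autoref{clm:rb_no_cross}, and it should transfer once (\ref{itm:rc_new_P}), (\ref{itm:rc_blockZ}), \autoref{clm:rc_blockstable}, and \autoref{clm:rc_tails} are substituted for the analogous ingredients used there; the remaining parts of the argument are routine.
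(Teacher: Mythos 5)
The overall architecture of your argument matches the paper's: pick a minimal cross, reduce to the case that all four ends lie on $P'$, reroute one path of the cross to reach a vertex of $Q'$, and re-minimise among crosses whose off-road edges lie in the off-road edges of the rerouted cross. However, there is a genuine gap in your first reduction step. You assert that \autoref{clm:rc_tails} ``forces $Q'$ to be disjoint from $R\cup S$, since a $z$--$(R\cup S)$ path in $P'\cup Q'$ would end in a tail of $R$ or $S$, but every such tail lies on $P'$.'' That last assertion is unjustified. A tail of $R$ is by definition a component of $R\cap(P'\cup Q')$ containing an end of $R$, and while the end itself lies on $P'$, nothing prevents the tail from running along $P'$, passing through $q'_1$ or $q'_2$, and continuing onto $Q'$ --- this happens, for instance, if an end of $R$ or $S$ coincides with $q'_1$ or $q'_2$, or if $q'_1, q'_2\notin\bar\Omega$ so that they may occur as inner vertices of $R$ or $S$. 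In such a configuration a $z$--$(R\cup S)$ path along $Q'$ terminates on a tail segment that lies on $Q'$, giving no contradiction, so you cannot conclude $Q'\cap(R\cup S)=\emptyset$.

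The paper does not try to rule this case out; instead it splits on whether $Q'-P'$ meets $R\cup S$. If it does, one routes along $Q'-P'$ from a $Z_2$-vertex to the first vertex of $R\cup S$ encountered (all edges used lie on $Q'$, hence are not off-road), and one of the two resulting rerouted pairs is a cross with off-road edge set contained in $E$, hence minimal, and with an end on $Q'$. Only in the complementary case does it invoke \autoref{clm:rc_blockstable} to find the $Z_2$--$(R\cup S)$ path $T$ in $H'-P'$ and argue as you do. Your gap also propagates forward: to show $F\setminus E(T)\subsetneq E$ you need $tS$ to carry an off-road edge, which requires $t\notin P'\cup Q'$; you obtain $t\notin Q'$ from the flawed disjointness claim, whereas the paper obtains it from the case-2 hypothesis that $Q'-P'$ avoids $R\cup S$ combined with $t\notin P'$. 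Adding the missing case split would repair your proof and bring it essentially in line with the paper's.
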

\begin{proof}
Pick a cross $\{R,S\}$ in $(H',\Omega)$ such that its set $E$ of off-road edges is minimal.
We may assume that $P'$ contains all ends of $R$ and $S$.
By (\ref{itm:rc_blockZ}) some inner vertex $z_2$ of $Q'$ is in $Z_2$.
So if $R\cup S$ contains an inner vertex of $Q'$, then $Q'-P'$ contains a $Z_2$--$(R\cup S)$ path $T$ whose last vertex $t$ is an inner vertex of $R$ say.
Clearly one of $\{Rt\cup T, S\}$ and $\{tR\cup T, S\}$ is a cross in $(H',\Omega)$ whose set of off-road edges is contained in that of $\{R,S\}$ and hence is minimal as well.
So either we find a cross $\{R',S'\}$ as desired or $Q'-P'$ is disjoint from $R\cup S$.

But $(R\cup S) - P'$ must be non-empty as $P'$ is induced in $G$.
So by \autoref{clm:rc_blockstable} there is a $Q'$--$(R\cup S)$ path in $H'-P'$, in particular, there is a $Z_2$--$(R\cup S)$ path $T$ in $H'-P'$ and we may assume that its last vertex $t$ is on $R$.
Again one of $\{Rt\cup T, S\}$ and $\{tR\cup T, S\}$ is a cross in $(H',\Omega)$ and we denote its set of off-road edges by $F$.
Pick a cross $(R',S')$ in $(H,\Omega)$ such that its set $E'$ of off-road edges minimal and $E'\subseteq F$.

Since $t$ is not on $P'$ each of $Rt$ and $tR$ contains an edge that is not in $P'\cup Q'$ so $F\setminus E(T)$ is a proper subset of $E$.
This means that $E'$ must contain an edge of $T$ by minimality of $E$ and hence it must contain $F\cap E(T)$ so $R'\cup S'$ contains a vertex of $Q'-P'$ and we have already seen that we are done in this case, concluding the proof of the claim.
\end{proof}

\begin{clm}\label{clm:rc_goodpaths}
	For $i=1,2$ there is a $q'_i$--$(R'\cup S')$ path $T_i$ in $H'$ such that $T_1$ and $T_2$ end on one path of $\{R',S'\}$ and the other path has its ends in $Z_1$ and $Z_2$.
\end{clm}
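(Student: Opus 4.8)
The plan is to first determine, from the cyclic order alone, which of $R'$ and $S'$ must be the ``crossing'' path whose two ends lie one in $Z_1$ and one in $Z_2$, and then to route $T_1$ and $T_2$ along the cycle $C\coloneq P'\cup Q'$ to the other cross-path.

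Recall $\bar\Omega = Z'\coloneq Z\cap V(H')$, so the four ends of the cross $\{R',S'\}$ all lie in $Z'\subseteq Z=\{q_1,q_2\}\cup Z_1\cup Z_2$. Since $Z_1\subseteq V(P'_\alpha)$ by (\ref{itm:rc_new_P}) and $Z_2\subseteq V(Q_\alpha)$ by (\ref{itm:rc_Z}), any cross-end lying on $V(P')\setminus\{q'_1,q'_2\}$ lies in $Z_1$ and any cross-end on $V(Q')\setminus\{q'_1,q'_2\}$ lies in $Z_2$; a cross-end equal to $q'_1$ or $q'_2$ can occur only when $q'_i$ equals $q_1$ or $q_2$, and I would treat that degenerate possibility separately. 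Traversing $C$ according to $P'Q'^{-1}$, all cross-ends on $P'$ come before all those on $Q'$; writing the four in cyclic order as $v_1,v_2,v_3,v_4$, the cross property forces $\{v_1,v_3\}$ to be the ends of one of $R',S'$ and $\{v_2,v_4\}$ the ends of the other. Let $p$ be the number of the $v_j$ on $P'$, so $1\le p\le 3$ by \autoref{clm:rc_goodcross}. A short check of the cases $p=1,2,3$ shows that in each case at least one of $R',S'$ --- call it $B$, with $A$ the remaining one --- has exactly one end on $P'$ and one on $Q'$, hence (in the non-degenerate case) one end in $Z_1$ and one in $Z_2$, as required; when $p=2$ both cross-paths have this property, and there I would keep the choice of $B$ open.

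To produce $T_1$ and $T_2$, note $A\cap B=\emptyset$, so $A\subseteq H'-V(B)$, and I would assume for the main case that $q'_1,q'_2\notin V(B)$ (when some $q'_i$ lies on $B$ one takes $T_i$ trivial or, if $p=2$, switches the choice of $B$). The two ends of $B$ split $C$ into two arcs; since $A$ crosses $B$ its two ends lie on different arcs, so for each $i$ one of the two sub-arcs of $C$ from $q'_i$ reaches an end of $A$ without meeting an end of $B$. Using \autoref{clm:rc_tails} together with the minimality of the off-road edges of $\{R',S'\}$, I would show such a sub-arc meets $R'\cup S'$ only in that end of $A$, giving the desired $q'_i$--$(R'\cup S')$ path $T_i$ in $H'$; alternatively, one can work inside $H'-V(B)$ and invoke the $2$-connectivity of $H'$ together with the small separator $Z\cup V(\P_{N(\alpha)\cap\theta})$ from (\ref{itm:rc_Z}) to connect $q'_i$ to $A$. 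In the case $p=2$ I would let $B$ be whichever of $R',S'$ leaves the relevant side of $C$ unobstructed.

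I expect the main obstacle to be this last routing: one must control how $R'$ and $S'$ run along $P'\cup Q'$ --- their ``off-road'' behaviour --- so that $T_i$ can be forced to land on $A$ rather than on $B$, and one must dispatch the degenerate configurations where $q'_1$ or $q'_2$ coincides with or lies on a cross-path. I would handle these exactly as in the analogous situation of \autoref{clm:rb_tails} and \autoref{clm:rb_no_cross}, relying on \autoref{clm:rc_tails}, \autoref{clm:rc_blockstable}, the minimality built into \autoref{clm:rc_goodcross}, and the $2$-connectivity of $H'$.
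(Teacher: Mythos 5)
Your plan follows the same strategy as the paper --- identify the cross-path whose ends lie in $Z_1$ and $Z_2$, then use \autoref{clm:rc_tails} to route $T_1,T_2$ along the two arcs of $C$ to the other cross-path --- but the degenerate case is not correctly handled, and this is a genuine gap. First, the claim that a cross-end can equal $q'_1$ or $q'_2$ only when $q'_i\in\{q_1,q_2\}$ is unjustified: a cut-vertex $q'_i$ is an inner vertex of $Q_{\alpha}$ by (\ref{itm:rc_blocks}), and nothing stops it from lying in $Z_2$ and hence in $\bar{\Omega}$. Second, and more seriously, when some $q'_i\in V(B)$ you propose taking $T_i$ trivial (so $T_i$ ends on $B$) or, for $p=2$, switching the roles of $A$ and $B$. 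Taking $T_i$ trivial on $B$ defeats the claim, since you need both $T_1,T_2$ to land on one path while \emph{the other} has ends in $Z_1$ and $Z_2$. The switch is only available when $p=2$, yet $q'_i$ lying on $B$ can certainly occur for $p=1$ or $p=3$ as well, for instance if $q'_i$ is an end of $B$; in that situation your classification of $B$ as the ``$Z_1$--$Z_2$'' path is even internally inconsistent, because $q'_i\notin Z_1\cup Z_2$ by definition.

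The paper resolves this cleanly by fixing the labelling first: by the construction in \autoref{clm:rc_goodcross}, one cross-path (called $S'$) has both ends in $(Z_1\cup Z_2)\setminus\{q'_1,q'_2\}$, so any $q'_i$ lying on $R'\cup S'$ must lie on $R'$, and the trivial $q'_i$--$R'$ path already lands on the correct side. Once that labelling is in place, the paper never needs the case analysis on $p$ at all: in the non-degenerate case the two $q'_i$--$(R'\cup S')$ arcs in $C$ end in tails by \autoref{clm:rc_tails}, and, because $q'_i$ lies strictly between two consecutive cross-ends which belong to different cross-paths, one of the two arcs ends in a tail of $R'$. Also, a small inaccuracy: your intended statement that ``such a sub-arc meets $R'\cup S'$ only in that end of $A$'' is stronger than what you can show or need; the arc first meets $R'\cup S'$ somewhere in the tail of $A$ nearest $q'_i$, not necessarily at an end of $A$. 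That suffices since $T_i$ only has to be a $q'_i$--$(R'\cup S')$ path ending on $A$.
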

\begin{proof}
It is easy to see that by construction one path of $\{R',S'\}$, say $S'$, has one end in $Z_1\setminus \{q'_1,q'_2\}$ and the other in $Z_2\setminus \{q'_1,q'_2\}$.
If for some $i$ the vertex $q'_i$ is in $R'\cup S'$, then it must be on $R'$ and there is a trivial $q'_i$--$R'$ path $T_i$.
We may thus assume that neither of $q'_1$ and $q'_2$ is in $R'\cup S'$.

So $P'\cup Q'$ contains two $q'_1$--$(R'\cup S')$ paths $T_1$ and $T'_1$ that meet only in $q'_1$.
By \autoref{clm:rc_tails} $T_1$ and $T'_1$ must both end in a tail of $R'$ or $S'$.
But $(R',S')$ is a cross and no inner vertex of $T_1\cup T'_1$ is an end of $R'$ or $S'$ so we may assume that $T_1$ meets a tail of $R'$.
By the same argument we find a $q'_2$--$(R'\cup S')$ path $T_2$ that end in a tail of $R'$.
\end{proof}

To conclude the proof that $(H',\Omega)$ is rural note that \autoref{clm:rc_goodpaths} implies the existence of a $Z_1$--$Z_2$ path in $H$ that does not separate $q_1$ from $q_2$ in $H$ and hence contradicts \autoref{clm:rc_separator}.
So $(H',\Omega)$ is rural and (\ref{itm:rc_rural}) follows from this final claim:

\begin{clm}\label{clm:rc_big_society}
The society $(H',\Omega)$ is rural if and only if the society $(H',P'Q'^{-1})$ is.
\end{clm}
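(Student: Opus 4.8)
The plan is to follow the proof of \autoref{clm:rb_big_society} almost line for line, with \autoref{clm:rc_blockstable} taking over the role that \autoref{clm:rb_connected} played there. One implication is routine: if $(H',P'Q'^{-1})$ is rural then, since $\Omega=P'Q'^{-1}|Z'$ and $Z'\subseteq V(P'\cup Q')$ (by \autoref{thm:rural_cutpath} (\ref{itm:rc_Z}) together with \autoref{clm:rc_local_P}), one simply indents $\partial D$ so as to push the vertices of $V(P'\cup Q')\setminus Z'$ slightly off the boundary, obtaining a drawing that witnesses $(H',\Omega)$ rural. For the converse I would start from a drawing of $H'$ in a closed disc $D$ with $H'\cap\partial D=Z'$ and one orientation of $\partial D$ inducing $\Omega$ on $Z'$. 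By \autoref{thm:rural_cutpath} (\ref{itm:rc_rural}) the paths $P'$ and $Q'$ are internally disjoint with common first vertex $q'_1$ and common last vertex $q'_2$, so $C\coloneq P'\cup Q'$ is a cycle; unlike in \autoref{clm:rb_big_society}, no auxiliary edges have to be added to close it up. Since the vertices of $Z'$ lie on $C$ and occur along $\partial D$ in the cyclic order that $P'Q'^{-1}$ induces on $Z'$, the Jordan curve $C$ bounds a closed subdisc $D'\subseteq D$.

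The heart of the argument is to show $H'\subseteq D'$. First I would record that $P'$ and $Q'$ are induced in $G$, being subpaths of the induced paths $P'_\alpha$ and $Q_\alpha$ (these are induced because $\P'[W]$ and $\Q[W]$ are $p$-attached in $G[W]$ for every inner bag $W$, which together with (L1) and (L2) forces induced-ness in all of $G$). Consequently every $(P'\cup Q')$-bridge $B$ of $H'$ with an edge off $C$ has an attachment in $P'-Q'$ and one in $Q'-P'$: for non-trivial $B$ this is exactly \autoref{clm:rc_blockstable}, and a trivial $B$ is a chord of $C$ which by the induced-ness can lie neither entirely in $P'$ nor entirely in $Q'$ nor join $q'_1$ to $q'_2$, so it too joins the two relative interiors. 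As $B$ is connected and edge-disjoint from $C$, it is drawn in the closure of a single region of $D\setminus C$; and the only region of $D\setminus C$ whose boundary meets both $P'-Q'$ and $Q'-P'$ is $\mathrm{int}(D')$, since an outside region is bordered only by a single subpath of $P'$ or of $Q'$. Hence $B\subseteq D'$ for each such bridge, every edge of $H'$ lies in $D'$, and the restriction of the drawing to $D'$ exhibits $(H',P'Q'^{-1})$ as rural.

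The only real work is the last, purely topological, step: verifying that $C$ genuinely bounds a subdisc of $D$ (rather than winding around) from the fact that $Z'$ occurs on $\partial D$ in the order $P'Q'^{-1}|Z'$, and checking the ``only region'' assertion, where the behaviour near the shared end vertices $q'_1$ and $q'_2$ needs care — these are $q_1$, $q_2$ or cut-vertices of $H$, and one has to see that they do not give rise to an outside region touching both relative interiors. These are precisely the points that \autoref{clm:rb_big_society} dispatched with ``it is not hard to see'' (there by routing the two added edges of $C$ along $\partial D$ for exactly this reason), and I would treat them the same way, the one simplification here being that $P'$ and $Q'$ already share their two end vertices so $C$ needs no added edges at all.
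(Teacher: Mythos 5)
Your proof is essentially the same as what the paper intends: run the topological argument from the proof of \autoref{clm:rb_big_society}, with \autoref{clm:rc_blockstable} supplying the attachment to both sides, the key observation being that $P'\cup Q'$ is already a cycle $C$. The additional care you take over trivial $(P'\cup Q')$-bridges (ruling out the chord $q'_1q'_2$ via induced-ness and the block not being a single edge) is a useful detail that the paper leaves implicit.

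One caveat on your final paragraph, though: the fact that $P'$ and $Q'$ already share $q'_1,q'_2$ is only a clean simplification when these two vertices belong to $Z'$, hence sit on $\partial D$; this is automatic when they equal $q_1$ or $q_2$, but when $q'_i$ is a cut-vertex of $H$ nothing in \autoref{thm:rural_cutpath} guarantees $q'_i\in Z_1\cup Z_2$. If, say, $q'_1\notin Z'$, then $q'_1$ lies in the interior of $D$, the arc of $C$ between the extreme $Z'$-vertex of $P'$ and of $Q'$ passes through $q'_1$, and the outer region of $D\setminus C$ bounded by that arc and the corresponding arc of $\partial D$ has vertices of both $P'-Q'$ and $Q'-P'$ on its boundary; so the ``only region'' assertion is not immediate, and a bridge lying in that corner region has to be excluded by a further argument (e.g.\ producing a $Z_1$--$Z_2$ path through it and contradicting \autoref{clm:rc_separator}). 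This is precisely the role that routing the two added edges along $\partial D$ played in \autoref{clm:rb_big_society}, so the situation here is not strictly easier; the paper's one-line proof glosses over this as well, so I would not call it a gap you introduced, but the phrase ``the one simplification here'' is a little too optimistic.
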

This holds by a simpler version of the proof of \autoref{clm:rb_big_society} where \autoref{clm:rc_blockstable} takes the role of \autoref{clm:rb_connected}. 
\end{enumerate}
\end{proof}
\section{Constructing a Linkage}\label{sec:mainproof}

In our main theorem we want to construct the desired linkage in a long stable regular decomposition of the given graph.
That decomposition is obtained by applying \autoref{thm:stablepd} which may instead give a subdivision of $K_{a,p}$.
This outcome is even better for our purpose as stated by the following Lemma.
\begin{lem}\label{thm:link}
Every $2k$-connected graph containing a $TK_{2k,2k}$ is $k$-linked.
\end{lem}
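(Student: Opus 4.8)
The plan is to route the $2k$ terminals onto the branch vertices of the $TK_{2k,2k}$ along $2k$ disjoint paths, and then complete the linkage inside the subdivision, exploiting the fact that a subdivision of $K_{2k,2k}$ contains $(2k)^2$ pairwise internally disjoint paths between its two sides of branch vertices. We may assume $k\ge 2$ (a connected graph is $1$-linked), so $\delta(G)\ge 2k\ge 4$. Write $S$ for the given $TK_{2k,2k}$, with branch sets $A=\{a_1,\dots,a_{2k}\}$ and $B=\{b_1,\dots,b_{2k}\}$, and with $S_{ij}$ the subdivision path joining $a_i$ to $b_j$ (these are pairwise internally disjoint and internally disjoint from $A\cup B$); let $S^{\circ}$ be the set of vertices internal to these paths. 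Let $T=\{s_1,\dots,s_k,t_1,\dots,t_k\}$, to be linked according to the pairing $\{s_i,t_i\}$.

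Step 1 (route $T$ to the branch vertices). Since $G$ is $2k$-connected and $|T|=2k$, every $T$–$(A\cup B)$ separator has at least $2k$ vertices, so by Menger's theorem there are $2k$ disjoint $T$–$(A\cup B)$ paths; truncating each at the first branch vertex it meets, we obtain a linkage $\mathcal{P}=\{P_x:x\in T\}$ in which $P_x$ runs from $x$ to a branch vertex $\ell(x)$ and is internally disjoint from $A\cup B$. The $2k$ landing vertices $\ell(x)$ are distinct, and $F:=(A\cup B)\setminus\{\ell(x):x\in T\}$ is a set of $2k$ free branch vertices. Among all such linkages choose $\mathcal{P}$ minimising, lexicographically, first the number of edges of $\bigcup\mathcal{P}$ not lying on $S$, and then the number of vertices of $\bigcup\mathcal{P}$.

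Step 2 (the interaction of $\mathcal{P}$ with $S$ is tame — the main point). Let $v_x$ be the first vertex of $P_x$ (read from $x$) lying on $S$. I claim that $xP_xv_x$ is internally disjoint from $S$ while $v_xP_x\ell(x)$ is a subpath of a single subdivision path ending in $\ell(x)$. For the first part one shows $v_xP_x\ell(x)\subseteq S$ by rerouting: otherwise let $w$ be the last vertex of $P_x$ with $v_xP_xw\subseteq S$ (so $w\in S^{\circ}$ and the next edge of $P_x$ leaves $S$), and replace the tail $wP_x\ell(x)$ by the segment of the subdivision path through $w$ leading to a branch vertex; this destroys an off-$S$ edge without creating one, contradicting minimality — the only delicacy being that the new $S$-segment may meet another path $P_y$, which is handled by simultaneously rerouting $P_y$ along $S$ (again using $2k$-connectivity to keep the landing vertices distinct). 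Given $v_xP_x\ell(x)\subseteq S$, it cannot contain a branch vertex other than $\ell(x)$: an interior branch vertex would either be some $\ell(y)$, contradicting disjointness, or could serve as the landing vertex of $P_x$ instead (it lies in $F$), contradicting the second minimality clause. A path in $S$ with no interior branch vertex lies on one subdivision path, proving the claim. Hence $\mathcal{P}$ meets $S$ only in the landing vertices together with, near each landing vertex, an initial segment of at most one subdivision path incident to it; in particular $\mathcal{P}$ avoids every vertex of $F$.

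Step 3 (complete the linkage inside $S$). It now suffices to find, for each $i$, a path $Q_i\subseteq S$ from $\ell(s_i)$ to $\ell(t_i)$, with the $Q_i$ pairwise disjoint and, apart from the endpoints $\ell(s_i),\ell(t_i)$, disjoint from $\mathcal{P}$; then $P_{s_i}\cup Q_i\cup P_{t_i}$ is the required $s_i$–$t_i$ path. By Step 2, the only part of $S$ that $Q_i$ must avoid, besides the other landing vertices and the two segments $\mathcal{P}$ uses at $\ell(s_i)$ and $\ell(t_i)$, is a single subdivision-path segment near each other landing vertex; since each such segment lies near only one end of its subdivision path and contains no free branch vertex, it rules out, for demand $i$, only boundedly many choices. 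Connecting $\ell(s_i)$ to $\ell(t_i)$ through one free branch vertex when they lie on the same side of the bipartition, and through the direct subdivision path (or, when that is blocked, a two-hop detour through two free branch vertices) when they lie on opposite sides, a short count shows the number of free branch vertices on each side is at least the number demanded there; a Hall-type argument then supplies the $Q_i$ simultaneously, the internal disjointness of the subdivision paths making them pairwise disjoint. The main obstacle is Step 2: the individual rerouting moves are routine, but coordinating them so that rerouted segments do not collide with the other routing paths — while keeping the landing vertices distinct throughout — is the technical heart of the argument; Steps 1 and 3 are straightforward by comparison.
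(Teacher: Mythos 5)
Your high-level plan --- Menger, minimise off-subdivision edges, route through the subdivision --- parallels the paper's, but Step~2 aims for a stronger structural statement than the paper proves, and the rerouting you sketch does not establish it. The replacement you propose (follow the subdivision path through $w$ to a branch vertex) fails exactly in the case that matters: if $w$ lies on a subdivision path both of whose ends are already landing vertices, then terminating $P_x$ at either end collides with another routing path, and there is no free branch vertex reachable along that subdivision path. Your remedy, ``simultaneously rerouting $P_y$ along $S$'', restates the problem rather than solving it: the displaced path may in turn need to land on yet another occupied branch vertex, and nothing in the sketch shows the cascade terminates or that landing vertices remain distinct throughout --- you flag this as the ``technical heart'' but do not carry it out. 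The paper sidesteps the cascade by rerouting from the \emph{free} side: it follows the subdivision path $Q_{ab}$ from a free branch vertex $a\in A_0$ to the first vertex $v$ it shares with a routing path $P$, and replaces the tail $vP$ by $vQ_{ab}a$. The new tail ends at a free vertex and, by the choice of $v$, is internally disjoint from all routing paths, so the replacement is always legal, no other path is disturbed, and minimality then forces $vP$ to have been entirely on the subdivision.

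Accordingly the paper's structural conclusion is genuinely weaker than yours: a subdivision path with both ends free is disjoint from $\mathcal P$; one with exactly one free end is met by exactly one routing path, which follows it to the used end; one with both ends used may be met by several paths, which are still permitted to weave through it. Its Step~3 is built for exactly that information: each pair $(s_i,t_i)$ is routed to a common free vertex $b_i\in B_0$ (passing through a distinct free $A_0$-vertex, via a bijection $B_1\to A_0$, whenever a terminal lands in $B_1$), and disjointness of the resulting trees follows directly from the trichotomy. Neither your ``$\mathcal P$ avoids all of $F$'' claim nor the unstated Hall-type argument in your Step~3 is needed. In short, the claim you are trying to force in Step~2 is both unsupported as written and stronger than the problem actually requires.
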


\begin{proof}
Let $G$ be a $2k$-connected graph and $S = \{s_1,\ldots, s_k\}$ and $T = \{t_1,\ldots, t_k\}$ two disjoint sets in $V(G)$ of size $k$ each.
We need to find a system of $k$ disjoint $S$--$T$ paths linking $s_i$ to $t_i$ for $i=1,\ldots, k$.

By assumption $G$ contains a subdivision of $K_{2k,2k}$, so there are disjoint sets $A, B\subseteq V(G)$ of size $2k$ each and a system $\Q$ of internally disjoint paths in $G$ such that for every pair $(a,b)$ with $a\in A$ and $b\in B$ there exists a unique $a$--$b$ path in $\Q$ which we denote by $Q_{ab}$.

By the connectivity of $G$, there is a system $\P$ of $2k$ disjoint $(S\cup T)$--$(A\cup B)$ paths (with trivial members if $(S\cup T)\cap (A\cup B)\neq \emptyset$).
Pick $\P$ such that it has as few edges outside of $\Q$ as possible.
Our aim is to find suitable paths of $\Q$ to link up the paths of $\P$ as desired.
We denote by $A_1$ and $B_1$ the vertices of $A$ and $B$, respectively, in which a path of $\P$ ends, and let $A_0 \coloneq A \setminus A_1$ and $B_0 \coloneq B\setminus B_1$.

The paths of $\P$ use the system $\Q$ sparingly:
Suppose that for some pair $(a, b)$ with $a\in A_0$ and $b\in B$, the path $Q_{ab}$ intersects a path of $\P$.
Follow $Q_{ab}$ from $a$ to the first vertex $v$ it shares with any path of $\P$, say $P$.
Replacing $P$ by $Pv \cup Q_{ab}v$ in $\P$ does not give a system with fewer edges outside $\Q$ by our choice of $\P$.
In particular, the final segment $vP$ of $P$ must have no edges outside $\Q$.
This means $vP = vQ_{ab}$, that is, $P$ is the only path of $\P$ meeting $Q_{ab}$ and after doing so for the first time it just follows $Q_{ab}$ to $b$.
Clearly the symmetric argument works if $a\in A$ and $b\in B_0$.
Hence
\begin{enumerate}
	\item $Q_{ab}$ with $a\in A_0$ and $b\in B_0$ is disjoint from all paths of $\P$,
	\item $Q_{ab}$ with $a\in A_1$ and $b\in B_0$ or with $a\in A_0$ and $b\in B_1$ is met by precisely one path of $\P$, and
	\item $Q_{ab}$ with $a\in A_1$ and $b\in B_1$ is met by at least two paths of $\P$.
\end{enumerate}

In order to describe precisely how we link the paths of $\P$, we fix some notation.
Since $|A_0| + |A_1| = |A| = 2k = |\P| = |A_1| + |B_1|$, we have $|A_0| = |B_1|$ and similarly $|A_1| = |B_0|$.
Without loss of generality we may assume that $|B_0| \geq |A_0| = |B_1|$ and therefore $|B_0|\geq k$.
So we can pick $k$ distinct vertices $b_1,\ldots, b_k\in B_0$ and an arbitrary bijection $\varphi: B_1\to A_0$.
For $x\in S\cup T$ denote by $P_x$ the unique path of $\P$ starting in $x$ and by $x'$ its end vertex in $A\cup B$.

For each $i$ and $x=s_i$ or $x=t_i$ set
\[
R_x \coloneq \begin{cases} Q_{x'b_i} & x'\in A_1\\ Q_{\varphi(x')x'} \cup Q_{\varphi(x')b_i} & x'\in B_1\end{cases}.
\]
By construction $R_x$ and $R_y$ intersect if and only if $x,y\in \{s_i,t_i\}$ for some $i$, i.e.\ they are equal or meet exactly in $b_i$.
The paths $P_x$ and $R_y$ intersect if and only if $P_x$ ends in $y'$, that is, if $x = y$.
Thus for each $i=1,\ldots, k$ the subgraph $C_i\coloneq P_{s_i} \cup R_{s_i'} \cup R_{t_i'} \cup P_{t_i}$ of $G$ is a tree containing $s_i$ and $t_i$.
Furthermore, these trees are pairwise disjoint, finishing the proof.
\end{proof}

We now give the proof of the main theorem, \autoref{thm:main}.  We restate the theorem before proceeding with the proof.
\newtheorem*{thmmain}{\autoref{thm:main}}
\begin{thmmain}
For all integers $k$ and $w$ there exists an integer $N$ such that a graph $G$ is $k$-linked if
\[\kappa(G)\geq 2k+3,\qquad \tw(G)<w,\quad\text{and}\quad |G|\geq N.\]
\end{thmmain}

\begin{proof}
Let $k$ and $w$ be given and let $f$ be the function from the statement of \autoref{thm:corecase} with $n\coloneq w$.
Set
\begin{align*}
n_0 &\coloneq (2k+1)(n_1-1) + 1\\
n_1 &\coloneq \max\{(2k-1)\binom{w}{2k}, 2k(k+3) + 1, 12k+ 4, 2f(k)+1\}\\
\end{align*}
We claim that the theorem is true for the integer $N$ returned by \autoref{thm:stablepd} for parameters $a = 2k$, $l = n_0$, $p = 2k + 3$, and $w$.
Suppose that $G$ is a $(2k+3)$-connected graph of tree-width less than $w$ on at least $N$ vertices.
We want to show that $G$ is $k$-linked.
If $G$ contains a subdivision of $K_{2k,2k}$, then this follows from \autoref{thm:link}.
We may thus assume that $G$ does not contain such a subdivision, in particular it does not contain a subdivision of $K_{a,p}$.
	
Let $S = (s_1, \ldots, s_k)$ and $T=(t_1, \ldots, t_k)$ be disjoint $k$-tuples of distinct vertices of $G$.
Assume for a contradiction that $G$ does not contain disjoint paths $P_1,\ldots, P_k$ such that the end vertices of $P_i$ are $s_i$ and $t_i$ for $i=1,\ldots, k$ (such paths will be called the \emph{desired paths} in the rest of the proof).
	
By \autoref{thm:stablepd} there is a stable regular decomposition of $G$ of length at least $n_0$, of adhesion $q\leq w$, and of attachedness at least $2k+3$.
Since this decomposition has at least $(2k+1)(n_1-1)$ inner bags, there are $n_1-1$ consecutive inner bags which contain no vertex of $(S\cup T)$ apart from those coinciding with trivial paths.
In other words, this decomposition has a contraction $(\W, \P)$ of length $n_1$ such that $S\cup T\subseteq W_0\cup W_{n_1}$.
By \autoref{thm:contraction} this contraction has the same attachedness and adhesion as the initial decomposition and the stability is preserved.
Set $\theta\coloneq \{\alpha\mid P_{\alpha}\text{ is trivial}\}$ and $\lambda\coloneq \{\alpha\mid P_{\alpha}\text{ is non-trivial}\}$.
	
\begin{clm}\label{clm:main_path}
$\lambda\neq\emptyset$.
\end{clm}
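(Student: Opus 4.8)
The plan is to assume $\lambda=\emptyset$ and derive a subdivision of $K_{2k,2k}$ in $G$, contradicting the standing assumption that $G$ contains no such subdivision.

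First I would unwind what $\lambda=\emptyset$ means. Then every foundational path $P_\alpha$ is trivial, so the $q$ foundational vertices all lie in every bag of $\W$; hence $C\coloneq\bigcap\W$ has size $q$, and (being contained in every adhesion set, which has size $q$) it equals every adhesion set $W_{i-1}\cap W_i$. Using (L2) I would check that for $i\neq j$ no edge of $G$ joins $W_i\setminus C$ to $W_j\setminus C$: such an edge lies in some bag $W_m$, both endpoints are in $W_m$, and if say $i<j$ and $m\neq i$ then (L2) forces the endpoint in $W_i\setminus C$ into $W_i\cap W_{i+1}=C$, a contradiction (the case $m=i$ is immediate). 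Therefore $G-C$ is the disjoint union of the graphs $G[W_i]-C$ for $i=0,\ldots,n_1$, and by (L4) none of these $n_1+1$ pieces is empty, so $C$ is a cutset of $G$ (here one uses $n_1\geq2$).

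Next I would fix, for each $i$, a vertex $v_i\in W_i\setminus C$. Since every edge leaving $W_i\setminus C$ has its other end in $C$ and since some other piece $W_{j}\setminus C$ is non-empty and disjoint from $W_i$, any $v_i$--$C$ separator $X$ of $G[W_i]$ with $v_i\notin X$ is also a separator of $G$ (the component of $v_i$ in $G[W_i]-X$ cannot escape $W_i$, while the piece $W_j\setminus C$ lies outside it and misses $X$). As $G$ is $(2k+3)$-connected, $G[W_i]$ therefore contains a fan of $2k+3$ paths from $v_i$ to a set $C_i\subseteq C$ of $2k+3$ distinct vertices, with all interior vertices in $W_i\setminus C$; in particular $q\geq 2k+3$. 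For distinct $i$ these fans are internally disjoint because the sets $W_i\setminus C$ are pairwise disjoint.

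Finally I would apply the pigeonhole principle. Assign to each $i\in\{0,\ldots,n_1\}$ a $2k$-element subset $D_i\subseteq C_i$. Since $2k\leq q\leq w$, there are at most $\binom{w}{2k}$ such subsets, while $n_1+1>(2k-1)\binom{w}{2k}$ by the choice of $n_1$; hence some $2k$ of the indices $i$ share the same set $D\eqcolon\{a_1,\ldots,a_{2k}\}$. Taking these $2k$ vertices $v_i$ as one side, $\{a_1,\ldots,a_{2k}\}$ as the other, and the fan-paths $v_i\to a_j$ as the branch paths yields a subdivision of $K_{2k,2k}$ in $G$, the desired contradiction. I expect the one genuinely delicate point to be the Menger step producing the fans, i.e.\ verifying that an internal $v_i$--$C$ separator of $G[W_i]$ is in fact a separator of all of $G$; the remaining steps are bookkeeping with the slim-decomposition axioms.
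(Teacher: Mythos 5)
Your proof is correct and follows essentially the same approach as the paper's: assume $\lambda=\emptyset$, observe that $C=V(\P_\theta)$ equals every adhesion set and separates the bags, use the connectivity of $G$ to produce internally disjoint fans from one vertex of each bag into $C$, and apply the pigeonhole principle over the at most $\binom{w}{2k}$ possible $2k$-subsets of $C$ to assemble a subdivision of $K_{2k,2k}$. The paper extracts fans of size $2k$ directly from $2k$-connectivity rather than size $2k+3$ followed by restriction, but this is a cosmetic difference.
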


\begin{proof}
If $\lambda = \emptyset$, or equivalently, $\P=\P_{\theta}$, then all adhesion sets of $\W$ equal $V(\P_{\theta})$.
So by (L2) no vertex of $G -\P_{\theta}$ is contained in more than one bag of~$\W$.
On the other hand, (L4) implies that every bag $W$ of $\W$ must contain a vertex $w\in W\setminus V(\P_{\theta})$.
Since $V(\P_{\theta})$ separates $W$ from the rest of $G$ and $G$ is $2k$-connected, there is a $w$--$\P_{\theta}$ fan of size $2k$ in $G[W]$.
For different bags, these fans meet only in $\P_{\theta}$.

Since $\W$ has more than $(2k-1)\binom{q}{2k}$ bags, the pigeon hole principle implies that there are $2k$ such fans with the same $2k$ end vertices among the $q$ vertices of $\P_{\theta}$.
The union of these fans forms a $TK_{2k,2k}$ in $G$ which may not exist by our earlier assumption.
\end{proof}

\begin{clm}\label{clm:main_nbhd}
Let $\Gamma_0$ be a component of $\Gamma(\W,\P)[\lambda]$.
The following all hold.
\begin{enumerate}[(i)]

\item\label{itm:nbhd_component}
$|N(\alpha)\cap \theta|\leq 2k-2$ for every vertex $\alpha$ of $\Gamma_0$.

\item\label{itm:nbhd_edge}
$|N(\alpha)\cap N(
\beta)\cap \theta|\leq 2k-4$ for every edge $\alpha\beta$ of $\Gamma_0$.

\item\label{itm:nbhd_vertex}
$2|N(\alpha)\cap \lambda| + |N(\alpha)\cap\theta|\leq 2k$ for every vertex $\alpha$ of $\Gamma_0$.

\item\label{itm:nbhd_block}
$2|D| + |N(D)|\leq 2k+2$ for every block $D$ of $\Gamma_0$ that contains a triangle.
\end{enumerate}
\end{clm}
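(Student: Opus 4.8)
The plan is to prove all four inequalities by contradiction: if one of them fails for a component $\Gamma_0$ of $\Gamma(\W,\P)[\lambda]$, I will exhibit disjoint paths $P_1,\dots,P_k$ linking $s_i$ to $t_i$ for each $i$, contradicting our standing assumption that no such paths exist. Write $\Gamma\coloneq\Gamma(\W,\P)$ and, for the component $\Gamma_0$ in question, set $A_0\coloneq\theta\cap N_\Gamma(V(\Gamma_0))$ and $H_0\coloneq\Gamma[V(\Gamma_0)\cup A_0]$. Then $H_0-A_0=\Gamma_0$ is connected, every vertex of $A_0$ has a neighbour in $\Gamma_0$ by construction, so $A_0$ is a marginal set of the connected graph $H_0$, and $H_0$ has at most $q\le w$ vertices. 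Moreover, since every $\lambda$-neighbour of a vertex of $\Gamma_0$ already lies in $\Gamma_0$, we have $N_{H_0}(\alpha)\setminus A_0=N_\Gamma(\alpha)\cap\lambda$ and $N_{H_0}(\alpha)\cap A_0=N_\Gamma(\alpha)\cap\theta$ for every $\alpha\in V(\Gamma_0)$, and $N_{H_0}(V(D))=N_\Gamma(V(D))$ for every block $D$ of $\Gamma_0$.

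The common reduction to the token game is essentially the one sketched in \autoref{sec:outline}. Because $G$ is $(2k+3)$-connected, every adhesion set of $\W$ has size $q\ge 2k+3$; because the contraction was chosen with $S\cup T\subseteq W_0\cup W_{n_1}$, the terminals sit at the two ends of $\W$. Fixing inner bags $W_a,W_b$ with $a<b$ far apart, two applications of Menger's theorem route the terminals through $W_{[0,a-1]}$ and $W_{[b+1,n_1]}$ to distinct $\alpha$-vertices of the left adhesion set of $W_a$ and the right adhesion set of $W_b$, meeting these adhesion sets only in their end vertices. The ample size of $q$ lets one steer this routing so that all chosen indices lie in $V(H_0)$, so that the resulting index sets $X,Y\subseteq V(H_0)$ satisfy $|X|+|Y|=2k$ and $X\cap Y\cap\theta=\emptyset$, and, in addition, so that $\alpha\notin X\cup Y$ in case (iii) and neither $X$ nor $Y$ contains all of $V(\Gamma_0)$ in case (iv). The requirement ``$P_i$ joins $s_i$ to $t_i$'' becomes an $(X,Y)$-pairing $L$, and concatenating the two routing linkages with any linkage in $G_{H_0}^{\P}[W_{[a,b]}]$ realising $L$ in the sense of \autoref{thm:tokengame} produces the desired $P_i$. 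Hence, by \autoref{thm:tokengame}, it suffices to construct an $L$-movement on $H_0$ of some length $n$ with $2n-1\le n_1-2$ with respect to which $\theta$ is singular; and $\theta$ is singular as soon as $A_0$ is, because no vertex of $\theta\setminus A_0$ ever occurs in a configuration or a move of a movement on $H_0$.

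It remains to produce such a movement in each case by checking that the hypothesis of one of the three movement lemmas of \autoref{sec:tokenmovements} holds for $(H_0,A_0)$. In case (i), a failure of the bound gives $\alpha\in V(\Gamma_0)$ with $|N(\alpha)\cap\theta|\ge 2k-1$, hence $|A_0|\ge 2k-1$, so the first hypothesis of \autoref{thm:stars} is met and yields an $L$-movement of length at most $3k$ with $A_0$ singular; as $n_1\ge 12k+4$ we are done. In case (ii), a failure gives an edge $\alpha\beta$ of $\Gamma_0$ with $|N(\alpha)\cap N(\beta)\cap\theta|\ge 2k-3$; this is an edge of $H_0-A_0$ with $|N_{H_0}(\alpha)\cap N_{H_0}(\beta)\cap A_0|\ge 2k-3$, so the second hypothesis of \autoref{thm:stars} applies and again gives a movement of length at most $3k$. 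In case (iii), a failure gives $\alpha\in V(\Gamma_0)$ with $2|N(\alpha)\cap\lambda|+|N(\alpha)\cap\theta|\ge 2k+1$, i.e.\ $\alpha\in V(H_0)\setminus(X\cup Y\cup A_0)$ with $2|N_{H_0}(\alpha)\setminus A_0|+|N_{H_0}(\alpha)\cap A_0|\ge 2k+1$, so \autoref{thm:maxdeg} yields an $L$-movement of length at most $k(k+2)$ with $A_0$ singular, and $n_1\ge 2k(k+3)+1$ suffices. In case (iv), a failure gives a block $D$ of $\Gamma_0$ containing a triangle with $2|D|+|N(D)|\ge 2k+3$; then $D$ is a block of $H_0-A_0$ with a triangle and $2|D|+|N_{H_0}(V(D))|\ge 2k+3$, and since $w\ge|D|$, \autoref{thm:corecase} (with its parameter equal to $w$, so that its function is the $f$ of the main proof) yields an $L$-movement of length at most $f(k)$ with $A_0$ singular, and $n_1\ge 2f(k)+1$ suffices.

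The main obstacle is the reduction of the second paragraph: one must verify that the $2k$ terminals can be routed so as to meet the middle section $W_{[a,b]}$ exactly in the prescribed $\alpha$-vertices, that the index sets $X,Y$ can be placed entirely inside $V(\Gamma_0)\cup A_0$ while also avoiding $\alpha$ (case (iii)) or omitting a vertex of $\Gamma_0$ (case (iv)), and that the terminal pairing really does become an $(X,Y)$-pairing of the shape demanded by \autoref{thm:stars}, \autoref{thm:maxdeg}, and \autoref{thm:corecase}; all of this rests on the bound $q\ge 2k+3$ and on $\theta\setminus A_0$ being invisible to movements on $H_0$. Everything else is routine bookkeeping: the marginality of $A_0$, the translation of the neighbourhood inequalities into the structural hypotheses of the three lemmas, and the arithmetic showing $3k,\ k(k+2),\ f(k)\le (n_1-1)/2$.
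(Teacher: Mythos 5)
Your proof matches the paper's essentially verbatim: your $H_0$ is the paper's auxiliary graph $\Gamma_1$, and the Menger routing, the encoding of the terminal pairing as an $(X,Y)$-pairing, and the appeals to \autoref{thm:stars}, \autoref{thm:maxdeg}, \autoref{thm:corecase} and finally \autoref{thm:tokengame} proceed exactly as there. The one point you state loosely — arranging $\alpha\notin X\cup Y$ in cases (iii)--(iv) and keeping all chosen indices in $V(H_0)$ — is achieved in the paper not by post-hoc ``steering'' via the adhesion size, but by applying Menger in $G$ minus the two end vertices of $P_\alpha$, with the sink taken to be the first and last adhesion sets intersected with $V(G_{\Gamma_1}^{\P})$.
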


Note that (\ref{itm:nbhd_vertex}) implies (\ref{itm:nbhd_component}) unless $\Gamma_0$ is a single vertex  and (\ref{itm:nbhd_vertex}) implies (\ref{itm:nbhd_edge}) unless $\Gamma_0$ is a single edge.
We need precisely these two cases in the proof of \autoref{clm:main_triangle}.

\begin{proof}
The proof is almost identical for all cases so we do it only once and point out the differences as we go.
Denote by $\Gamma_1$ the union of $\Gamma_0$ with all its incident edges of $\Gamma(\W,\P)$.
Set $L\coloneq W_0\cap W_1\cap V(G_{\Gamma_1})$ and $R\coloneq W_{n_1-1}\cap W_{n_1}\cap V(G_{\Gamma_1})$.
In case (\ref{itm:nbhd_block}) let $\alpha$ be any vertex of $D$.
Let $p$ and $q$ be the first and last vertex of $P_{\alpha}$.
Then $(L\cup R)\setminus\{p,q\}$ separates $G_{\Gamma_1}^{\P}-\{p,q\}$ from $S\cup T$ in $G-\{p,q\}$.
Hence by the connectivity of $G$ there is a set $\Q$ of $2k$ disjoint $(S\cup T)$--$(L\cup R)$ paths in $G-\{p,q\}$, each meeting $G_{\Gamma_1}^{\P}$ only in its last vertex.
For $i=1,\ldots,k$ denote by $s_i'$ the end vertex of the path of $\Q$ that starts in $s_i$ and by $t_i'$ the end vertex of the path of $\Q$ that starts in $t_i$.

Our task is to find disjoint $s'_i$--$t'_i$ paths for $i=1,\ldots,k$ in $G_{\Gamma_1}^{\P}$ and we shall now construct sets $X,Y\subseteq V(\Gamma_1)$ and an $X$--$Y$ pairing $L$ ``encoding'' this by repeating the following step for each $i\in \{1,\ldots, k\}$.
Let $\beta,\gamma\in V(\Gamma_1)$ such that $s'_i$ lies on $P_{\beta}$ and $t'_i$ lies on $P_{\gamma}$.
If $s'_i\in L$, then add $\beta$ to $X$ and set $\bar{s}_i\coloneq (\beta,0)$.
Otherwise $s'_i\in R\setminus L$ and we add $\beta$ to $Y$ and set $\bar{s}_i\coloneq (\beta,\infty)$.
Note that $s'_i \in L\cap R$ if and only if $\beta\in\theta$.
In this case our decision to add $\beta$ to $X$ is arbitrary and we could also add it to $Y$ instead (and setting $\bar{s}_i$ accordingly) without any bearing on the proof.
Handle $\gamma$ and $t'_i$ similarly.
Then $\{\bar{s}_i\bar{t}_i\mid i=1,\ldots, k\}$ is the edge set of an $(X,Y)$-pairing which we denote by~$L$.

We claim that there is an $L$-movement of length at most $(n_1-1)/2\geq f(k)$ on $H\coloneq\Gamma_1$ such that the vertices of $A\coloneq V(\Gamma_1)\cap \theta$ are singular.
Clearly $H-A = \Gamma_0$ is connected and every vertex of $A$ has a neighbour in $\Gamma_0$ so $A$ is marginal in $H$.
The existence of the desired $L$-movement follows from \autoref{thm:stars} if (\ref{itm:nbhd_component}) or (\ref{itm:nbhd_edge}) are violated, from \autoref{thm:maxdeg} if (\ref{itm:nbhd_vertex}) is violated, and from \autoref{thm:corecase} if (\ref{itm:nbhd_block}) is violated (note that $|H|\leq w$).
But then \autoref{thm:tokengame} applied to $L$ implies the existence of disjoint $s'_i$--$t'_i$ paths in $G_{\Gamma_1}^{\P}$ for $i=1,\ldots,k$ contradicting our assumption that $G$ does not contain the desired paths.
This shows that all conditions must hold.
\end{proof}
	
\begin{clm}\label{clm:main_minsize}
We have $2|\Gamma_0|+|N(\Gamma_0)| \geq 2k+3$ (and necessarily $N(\Gamma_0)\subseteq \theta$) for every component $\Gamma_0$ of $\Gamma(\W,\P)[\lambda]$.
\end{clm}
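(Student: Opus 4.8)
The inclusion $N(\Gamma_0)\subseteq\theta$ is immediate: a vertex $\beta\in N(\alpha)\setminus V(\Gamma_0)$ with $\alpha\in V(\Gamma_0)$ cannot lie in $\lambda$, for then $\beta$ would lie in the same component of $\Gamma(\W,\P)[\lambda]$ as $\alpha$, namely $\Gamma_0$. For the inequality the plan is to exhibit a separation of $G$ whose separator has order exactly $2|\Gamma_0|+|N(\Gamma_0)|$ and both of whose sides are proper; then the $(2k+3)$-connectivity of $G$ forces $2|\Gamma_0|+|N(\Gamma_0)|\geq 2k+3$.

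Let $\Gamma_1$ be the union of $\Gamma_0$ with all edges of $\Gamma(\W,\P)$ incident with $V(\Gamma_0)$, so $V(\Gamma_1)=V(\Gamma_0)\cup N(\Gamma_0)$, and — just as in the proof of \autoref{clm:main_nbhd} — put $A\coloneq V(G_{\Gamma_1}^{\P})$, $L\coloneq W_0\cap W_1\cap A$, and $R\coloneq W_{n_1-1}\cap W_{n_1}\cap A$. Since every adhesion set of $\W$ consists precisely of the $\alpha$-vertices for $\alpha\in\{1,\ldots,q\}$, no inner vertex of a $\P$-bridge lies in an adhesion set, so $L$ and $R$ are exactly the sets of first and of last vertices of the paths $P_\alpha$ with $\alpha\in V(\Gamma_1)$. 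The paths in $\P_{N(\Gamma_0)}$ are trivial (so their member of $L$ equals their member of $R$) while the paths $P_\alpha$, $\alpha\in V(\Gamma_0)$, are non-trivial with pairwise distinct ends; hence $|L|=|R|=|\Gamma_0|+|N(\Gamma_0)|$, $L\cap R=V(\P_{N(\Gamma_0)})$, and therefore $|L\cup R|=2|\Gamma_0|+|N(\Gamma_0)|$.

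The crux is to verify that $\bigl(A,\,(V(G)\setminus A)\cup(L\cup R)\bigr)$ is a separation of $G$ with separator $L\cup R$, i.e.\ that no edge of $G$ joins $A\setminus(L\cup R)$ to $V(G)\setminus A$. A vertex $v\in A\setminus(L\cup R)$ is either an inner vertex of some $P_\alpha$ with $\alpha\in V(\Gamma_0)$, or an inner vertex of a $\P$-bridge $B$ lying in an inner bag of $\W$ that attaches to some $P_\alpha$ with $\alpha\in V(\Gamma_0)$ (trivial paths of $\P_{N(\Gamma_0)}$ lie in $L\cap R$, hence are excluded). In either case $v$ lies only in inner bags, so any incident edge lies in an inner bag, and its other end lies on a foundational path or inside a $\P$-bridge in an inner bag attaching to $P_\alpha$. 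The decisive observation is that such a bridge cannot attach to any $P_\gamma$ with $\gamma\in\lambda\setminus V(\Gamma_0)$, since it would then realise an edge of $\Gamma(\W,\P)[\lambda]$ joining $V(\Gamma_0)$ to its complement, impossible as $\Gamma_0$ is a whole component; and an attachment of such a bridge on a trivial path $P_\gamma$ forces $\gamma\in N(\alpha)\cap\theta\subseteq N(\Gamma_0)$. Consequently every neighbour of $v$ lies on a path $P_\gamma$ with $\gamma\in V(\Gamma_1)$, or is an inner vertex of a bridge falling under the definition of $G_{\Gamma_1}^{\P}$, so it lies in $A$. This case analysis is the step that needs care; the rest is bookkeeping.

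Finally the separation is proper on both sides: $A\setminus(L\cup R)$ contains an inner vertex of $P_\alpha$ for any $\alpha\in V(\Gamma_0)$ — which exists, since $\lambda\neq\emptyset$ by \autoref{clm:main_path} and $P_\alpha$ is non-trivial — while $W_0\setminus W_1$ is non-empty by (L4) and disjoint from $A$, because by (L2) a vertex of $W_0\setminus W_1$ lies in no bag other than $W_0$ and hence neither on a foundational path nor in a $\P$-bridge contained in an inner bag. Since $\kappa(G)\geq 2k+3$ and we have produced a separation of $G$ with separator $L\cup R$ and both sides proper, its separator has order at least $2k+3$, that is, $2|\Gamma_0|+|N(\Gamma_0)|=|L\cup R|\geq 2k+3$, which proves the claim.
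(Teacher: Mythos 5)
Your proof is correct and takes essentially the same approach as the paper: both separate $G$ via the set $L\cup R$ inside the first and last adhesion sets of $\W$ and then invoke the $(2k+3)$-connectivity of $G$. Your explicit observation that $W_0\setminus W_1$ is non-empty and disjoint from $A=V(G_{\Gamma_1}^{\P})$ shows in passing that the paper's second case (where $G=G_{\Gamma_1}^{\P}$ and $M$ is used as the separator instead) can in fact never arise, so your version is a mild streamlining of the same argument.
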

\begin{proof}
Let $\Gamma_1$ be the union of $\Gamma_0$ with all incident edges of $\Gamma(\W,\P)$.
Set $L\coloneq W_0\cap W_1\cap V(G_{\Gamma_1}^{\P})$, $M\coloneq W_1\cap W_2\cap V(G_{\Gamma_1}^{\P})$, and $R\coloneq W_{n_1-1}\cap W_{n_1}\cap V(G_{\Gamma_1}^{\P})$.
If $G-G_{\Gamma_1}^{\P}$ is non-empty, then $L\cup R$ separates it from $M$ in $G$.
Otherwise $M$ separates $L$ from $R$ in $G=G_{\Gamma_1}^{\P}$.
By the connectivity of $G$ we have $2|\Gamma_0|+|N(\Gamma_0)| = |L\cup R| \geq 2k+3$ in the former case and $|M| = |\Gamma_0|+|N(\Gamma_0)|\geq 2k+3$ in the latter.
\end{proof}

We now want to apply \autoref{thm:rural_bridge} and \autoref{thm:rural_cutpath}.
At the heart of both is the assertion that a certain society is rural and we already limited the number of their ``ingoing'' edges by \autoref{thm:euler}.
To obtain a contradiction we shall find societies exceeding this limit.
Tracking these down is the purpose of the notion of ``richness'' which we introduce next.

Let $\Gamma\subseteq \Gamma(\W,\P)[\lambda]$.
We say that $\alpha\in V(\Gamma)$ is \emph{rich in $\Gamma$} if the inner vertices of $P_{\alpha}$ that have a neighbour in both $G_{\lambda}-G_{\Gamma}^{\P}$ and $G_{\Gamma}^{\P}-P_{\alpha}$ have average degree at least $2+|N_{\Gamma}(\alpha)|(2+\varepsilon_{\alpha})$ in $G_{\Gamma}^{\P}$ where $\varepsilon_{\alpha}\coloneq 1/|N(\alpha)\cap\lambda|$.
A subgraph $\Gamma\subseteq \Gamma(\W,\P)[\lambda]$ is called \emph{rich} if every vertex $\alpha\in V(\Gamma)$ is rich in $\Gamma$.
	
\begin{clm}\label{clm:main_outdeg}
For $\Gamma\subseteq \Gamma(\W,\P)[\lambda]$ and $\alpha\in V(\Gamma)$ the following is true.
\begin{enumerate}[(i)]

\item\label{itm:out_all_rich}
If $\Gamma$ contains all edges of $\Gamma(\W,\P)[\lambda]$ that are incident with $\alpha$, then $\alpha$ is rich in $\Gamma$.

\item\label{itm:out_average_rich}
If $\alpha$ is rich in $\Gamma$, then the inner vertices of $P_{\alpha}$ that have a neighbour in $G_{\Gamma}^{\P}-P_{\alpha}$ have average degree at least $2+|N_{\Gamma}(\alpha)|(2+\varepsilon_{\alpha})$ in $G_{\Gamma}^{\P}$. 

\item\label{itm:out_union_rich}
Suppose that $\Gamma$ is induced in $\Gamma(\W,\P)[\lambda]$ and that there are subgraphs $\Gamma_1,\ldots,\Gamma_m\subseteq \Gamma$ such that $\alpha$ separates any two of them in $\Gamma(\W,\P)[\lambda]$ and $\bigcup_{i=1}^m\Gamma_i$ contains all edges of $\Gamma$ that are incident with $\alpha$.
If $\alpha$ is rich in $\Gamma$, then there is $j\in\{1,\ldots,m\}$ such that $\alpha$ is rich in $\Gamma_j$.
\end{enumerate}
\end{clm}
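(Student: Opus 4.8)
The plan is to argue each part by a direct counting argument, using the definition of richness and the fact that the relevant subgraphs $G_\Gamma^\P$ arise from the same foundational linkage $\P$. For part (\ref{itm:out_all_rich}), the point is that if $\Gamma$ contains \emph{all} edges of $\Gamma(\W,\P)[\lambda]$ incident with $\alpha$, then $N_\Gamma(\alpha)=N(\alpha)\cap\lambda$ and, crucially, every $\P$-bridge in an inner bag that attaches to $P_\alpha$ either attaches to some path $P_\beta$ with $\beta\in\lambda$, realising an edge $\alpha\beta$ of $\Gamma$, or attaches only to paths of $\P_\theta$; in the latter case by $p$-attachedness (L6) it contributes to at least $p-2$ bridge adjacencies with $\theta$-paths, so those edges $\alpha\gamma$ with $\gamma\in\theta$ are all present and again lie in $\Gamma$. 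Hence $G_\Gamma^\P$ retains every $\P$-bridge attaching to $P_\alpha$; in particular, an inner vertex of $P_\alpha$ with a neighbour in $G_\lambda-G_\Gamma^\P$ cannot exist, so the set of inner vertices over which we average in the definition of ``rich'' is empty, and the average-degree condition is vacuously satisfied. (If one prefers a non-vacuous reading, the same observation shows $G_\Gamma^\P$ and $G_\lambda$ agree near $P_\alpha$, so the degrees are as large as possible.)

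For part (\ref{itm:out_average_rich}), let $V_{\mathrm{both}}$ be the set of inner vertices of $P_\alpha$ with a neighbour in both $G_\lambda-G_\Gamma^\P$ and $G_\Gamma^\P-P_\alpha$, and let $V_{\mathrm{out}}\supseteq V_{\mathrm{both}}$ be the set of inner vertices of $P_\alpha$ with a neighbour in $G_\Gamma^\P-P_\alpha$. Every vertex of $V_{\mathrm{out}}\setminus V_{\mathrm{both}}$ has \emph{no} neighbour in $G_\lambda-G_\Gamma^\P$, hence its degree in $G_\Gamma^\P$ equals its degree in $G_\lambda$, which is at least $2+|N(\alpha)\cap\lambda|\cdot 3\ge 2+|N_\Gamma(\alpha)|(2+\varepsilon_\alpha)$ — here I would invoke that each neighbour $\beta$ of $\alpha$ in $\lambda$ forces at least one $\P$-bridge adjacency, and that a degree-$2$-in-$P_\alpha$ vertex with such an extra neighbour has degree $\ge 3$; a small case check handles whether this bound is tight. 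So every vertex of $V_{\mathrm{out}}\setminus V_{\mathrm{both}}$ individually meets the threshold, while the vertices of $V_{\mathrm{both}}$ meet it on average by the hypothesis that $\alpha$ is rich; averaging a family each of whose members is $\ge c$ together with a sub-family whose average is $\ge c$ yields average $\ge c$ over the union. This is exactly (\ref{itm:out_average_rich}).

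For part (\ref{itm:out_union_rich}), suppose $\alpha$ is rich in $\Gamma$ but not rich in any $\Gamma_j$. Since $\alpha$ separates the $\Gamma_j$ in $\Gamma(\W,\P)[\lambda]$, Lemma \ref{thm:block_separates} (applied with $D$ replaced by the appropriate $\Gamma_j$, or more precisely its block structure) gives that the graphs $G_{\Gamma_j}^\P$ overlap only along $P_\alpha$, and that a $\P$-bridge realising an edge incident with $\alpha$ lies in exactly one $G_{\Gamma_j}^\P$; moreover $G_\Gamma^\P$ is covered by $P_\alpha$ together with the $G_{\Gamma_j}^\P$ (using that $\bigcup\Gamma_j$ contains all $\Gamma$-edges at $\alpha$ and that $\Gamma$ is induced). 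Consequently the inner vertices of $P_\alpha$ with a neighbour in $G_\Gamma^\P-P_\alpha$ split, according to which $G_{\Gamma_j}^\P$ that neighbour lies in, and likewise $N_\Gamma(\alpha)=\bigsqcup_j N_{\Gamma_j}(\alpha)$. If $\alpha$ fails to be rich in $\Gamma_j$ for every $j$, then on each ``local'' vertex set the average degree in $G_{\Gamma_j}^\P$ falls short of $2+|N_{\Gamma_j}(\alpha)|(2+\varepsilon_\alpha)$; I would sum the degree deficits over $j$ and, being careful that the constant summand $2$ is counted once while the $|N_{\Gamma_j}(\alpha)|$ terms add up to $|N_\Gamma(\alpha)|$, derive that the average degree in $G_\Gamma^\P$ over $V_{\mathrm{both}}$ is below $2+|N_\Gamma(\alpha)|(2+\varepsilon_\alpha)$, contradicting richness in $\Gamma$.

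The main obstacle I anticipate is the bookkeeping in part (\ref{itm:out_union_rich}): one must verify that the partition of the relevant inner vertices of $P_\alpha$ into the pieces living ``on the $\Gamma_j$ side'' is genuine — i.e. that no such vertex is simultaneously seen by two different $G_{\Gamma_j}^\P$ except through $P_\alpha$ itself, and that the ``neighbour in $G_\lambda - G_\Gamma^\P$'' condition is compatible with the ``neighbour in $G_\lambda-G_{\Gamma_j}^\P$'' condition used in the definition of richness in $\Gamma_j$. Both facts should follow from Lemma \ref{thm:block_separates} and the inducedness of $\Gamma$, but getting the additive constant ``$2$'' accounted for correctly (it should appear once, not $m$ times) is the subtle arithmetic point; keeping the $\varepsilon_\alpha$ term attached to the \emph{global} quantity $|N(\alpha)\cap\lambda|$ rather than to $|N_{\Gamma_j}(\alpha)|$ is exactly what makes the telescoping work.
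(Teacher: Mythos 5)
The proposal reaches the right conclusion for part (\ref{itm:out_all_rich}), essentially by the same observation as the paper (the set of vertices over which one averages in the definition of richness is empty, so the condition holds vacuously). But the aside about bridges attaching only to $\theta$-paths is confused: you claim "those edges $\alpha\gamma$ with $\gamma\in\theta$ \ldots lie in $\Gamma$", which cannot be, since $\Gamma\subseteq\Gamma(\W,\P)[\lambda]$ has no $\theta$-vertices. Such a bridge is in $G_\Gamma^{\P}$ because of the clause in the definition of $G_\Gamma^{\P}$ that keeps bridges attaching to $\P_{V(\Gamma)\cap\lambda}$ but to no path of $\P_{\lambda\setminus V(\Gamma)}$, not because of any $\theta$-edges.

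For part (\ref{itm:out_average_rich}) there is a real gap. You need to show that an inner vertex $v$ of $P_\alpha$ which has a neighbour in $G_\Gamma^{\P}-P_\alpha$ but none in $G_\lambda-G_\Gamma^{\P}$ has degree at least $2|N(\alpha)\cap\lambda|+3$ in $G_\Gamma^{\P}$. Your justification — that "each neighbour $\beta$ of $\alpha$ in $\lambda$ forces at least one $\P$-bridge adjacency" — is a statement about the decomposition as a whole, not about the specific vertex $v$: a bridge realising $\alpha\beta$ exists somewhere, but $v$ itself need not touch it. The claimed bound $2+3|N(\alpha)\cap\lambda|$ has no visible derivation. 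The correct argument in the paper goes through the $(2k+3)$-connectivity of $G$: $d_G(v)\geq 2k+3$, the neighbours of $v$ not in $G_\lambda$ all lie on $\theta$-paths and their number is at most $|N(\alpha)\cap\theta|$, and \autoref{clm:main_nbhd}~(\ref{itm:nbhd_vertex}) gives $|N(\alpha)\cap\theta|\leq 2k-2|N(\alpha)\cap\lambda|$, so $d_{G_\Gamma^{\P}}(v)=d_{G_\lambda}(v)\geq 2|N(\alpha)\cap\lambda|+3$. This is precisely where $(2k+3)$-connectivity (and not just $2k+2$) is used, as the paper emphasises in \autoref{sec:discussion}; your argument never invokes connectivity at all, so it cannot produce the $\varepsilon_\alpha$-bonus.

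For part (\ref{itm:out_union_rich}) you propose a contradiction by summing deficits, and you yourself flag the partition issue as the "main obstacle" — and indeed it is not resolved. The local richness conditions for the $\Gamma_j$ are averages taken over the sets $Z_j$ (inner vertices of $P_\alpha$ with a neighbour in both $G_\lambda-G_{\Gamma_j}^{\P}$ and $G_{\Gamma_j}^{\P}-P_\alpha$), and these $Z_j$ are neither disjoint nor do they partition the set $Z_0$ relevant to richness in $\Gamma$. A vertex can lie in several $Z_i$, and there can be vertices in $Z_j$ that are not in $Z_0$. Summing per-vertex deficits therefore does not translate cleanly into a deficit for $\Gamma$. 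The paper avoids this entirely: it first shows $Z_0\subseteq Z\coloneq\bigcup_i Z_i$ and that the vertices of $Z$ have the required average degree in $G_\Gamma^{\P}$ (again via the connectivity argument of part (\ref{itm:out_average_rich})), which yields at least $|Z|\,|N_\Gamma(\alpha)|(2+\varepsilon_\alpha)$ edges from $P_\alpha$ into $G_\Gamma^{\P}-P_\alpha$; it then distributes those edges among the $\Gamma_j$ (which is well-defined because $P_\alpha$ separates the $G_{\Gamma_i}^{\P}$ from one another in $G_\lambda$) and applies the pigeonhole principle weighted by $|N_{\Gamma_j}(\alpha)|$, finally observing that all the selected edges must be incident with $Z_j$ and that $|Z|\geq|Z_j|$. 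Your proposal would need this edge-counting reformulation to close the gap.
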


\begin{proof}~
\begin{enumerate}[(i)]
\item
The assumption implies that $G_{\Gamma}^{\P}$ contains every edge of $G_{\lambda}$ that is incident with $P_{\alpha}$ so no vertex of $P_{\alpha}$ has a neighbour in $G_{\lambda}-G_D^{\P}$ and therefore the statement is trivially true.

\item
The inner vertices of $P_{\alpha}$ that have a neighbour in $G_{\lambda}-G_{\Gamma}^{\P}$ and in $G_{\Gamma}^{\P}-P_{\alpha}$ have the desired average degree by assumption.
We show that each inner vertex of $P_{\alpha}$ that has no neighbour in $G_{\lambda}-G_{\Gamma}^{\P}$ has at least the desired degree.
Clearly we have $d_{G_{\Gamma}^{\P}}(v) = d_{G_{\lambda}}(v)$ for such a vertex $v$.
Furthermore, $d_G(v)\geq 2k+3$ since $G$ is $(2k+3)$-connected.
Every neighbour of $v$ in $\P_{\theta}$ gives rise to a neighbour of $\alpha$ in $\theta$ and by \autoref{clm:main_nbhd} (\ref{itm:nbhd_vertex}) there can be at most $|N(\alpha)\cap\theta|\leq 2k - 2|N(\alpha)\cap\lambda|$ such neighbours.
This means
\[d_{G_{\Gamma}^{\P}}(v) = d_{G_{\lambda}}(v) \geq 2k+3 - |N(\alpha)\cap\theta| \geq 2|N(\alpha)\cap\lambda| + 3\]
so (\ref{itm:out_average_rich}) clearly holds.

\item
We may assume that $\alpha$ is not isolated in $\Gamma$ and that each of the graphs $\Gamma_1,\ldots, \Gamma_m$ contains an edge of $\Gamma$ that is incident with $\alpha$ by simply forgetting those graphs that do not.

For $i=0,\ldots, m$ denote by $Z_i$ the inner vertices of $P_{\alpha}$ that have a neighbour in $G_{\lambda}-G_{\Gamma_i}^{\P}$ and in $G_{\Gamma_i}^{\P}-P_{\alpha}$ where $\Gamma_0\coloneq \Gamma$ and set $Z\coloneq \bigcup_{i=1}^m Z_i$.
Clearly $P_{\alpha}\subseteq G_{\Gamma_i}^{\P}$ for all $i$.
Each edge $e$ of $G_{\Gamma}^{\P}$ that is incident with an inner vertex of $P_{\alpha}$ but does not lie in $P_{\alpha}$ is in a $\P$-bridge that realises an edge of $\Gamma$ by (L6) and \autoref{thm:stable_bridges} since \autoref{clm:main_nbhd} (\ref{itm:nbhd_vertex}) implies that $|N(\alpha)\cap\theta|\leq 2k-2$.
So at least one of the graphs $G_{\Gamma_i}^{\P}$ contains $e$.
On the other hand, we have $G_{\Gamma_i}^{\P}\subseteq G_{\Gamma}^{\P}$ for $i=1,\ldots, m$.
This implies $Z_0\subseteq Z$.

By the same argument as in the proof of (\ref{itm:out_average_rich}) the vertices of $Z$ have average degree at least $2 + |N_{\Gamma}(\alpha)|(2+\varepsilon_{\alpha})$ in $G_{\Gamma}^{\P}$.
In other words, $G_{\Gamma}^{\P}$ contains at least $|Z| |N_{\Gamma}(\alpha)|(2+\varepsilon_{\alpha})$ edges with one end on $P_{\alpha}$ and the other in $G_{\Gamma}^{\P}-P_{\alpha}$.

By assumption we have $|N_{\Gamma}(\alpha)| = \sum_{i=1}^m |N_{\Gamma_i}(\alpha)|$ and so the pigeon hole principle implies that there is $j\in\{1,\ldots, m\}$ such that $G_{\Gamma_j}^{\P}$ contains a set $E$ of at least $|Z| |N_{\Gamma_j}(\alpha)|(2+\varepsilon_{\alpha})$ edges with one end on $P_{\alpha}$ and the other in $G_{\Gamma_j}^{\P}-P_{\alpha}$.

By assumption and \autoref{clm:rc_stable} the path $P_{\alpha}$ separates $G_{\Gamma_i}^{\P}$ from 
$G_{\Gamma_j}^{\P}$ in $G_{\lambda}$ for $i\neq j$.
For any vertex $z\in Z\setminus Z_j$ there is $i\neq j$ with $z\in Z_i$, so $z$ has a neighbour in $G_{\Gamma_i}^{\P}-P_{\alpha}\subseteq G_{\lambda}-G_{\Gamma_j}^{\P}$.
Then the only reason that $z$ is not also in $Z_j$ is that it has no neighbour in $G_{\Gamma_j}^{\P}-P_{\alpha}$, in particular, it is not incident with an edge of $E$.
So the vertices of $Z_j$ have average degree at least $2+ \frac{|Z|}{|Z_j|} |N_{\Gamma_j}(\alpha)|(2+\varepsilon_{\alpha})$ in $G_{\Gamma_j}^{\P}$ which obviously implies the claimed bound.\end{enumerate}
\end{proof}
	
\begin{clm}\label{clm:main_rich}
Every component of $\Gamma(\W,\P)[\lambda]$ contains a rich block.
\end{clm}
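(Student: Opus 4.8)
The plan is to descend through the block--cut tree of a component $\Gamma_0$ of $\Gamma(\W,\P)[\lambda]$, discarding parts of it while keeping richness, until a single block remains. First observe that $\Gamma_0$ itself is rich: each $\alpha\in V(\Gamma_0)$ lies, together with all its incident edges of $\Gamma(\W,\P)[\lambda]$, inside the component $\Gamma_0$, so $\alpha$ is rich in $\Gamma_0$ by \autoref{clm:main_outdeg} (\ref{itm:out_all_rich}). I would then prove, by induction on the number of blocks, the following strengthening: every connected subgraph $\Gamma$ of $\Gamma(\W,\P)[\lambda]$ which is induced, rich, a union of blocks of $\Gamma_0$, and whose \emph{boundary} $\partial\Gamma\coloneq\{v\in V(\Gamma): v\text{ has a neighbour in }\Gamma(\W,\P)[\lambda]-V(\Gamma)\}$ is contained in the vertex set of a single block of $\Gamma$, contains a rich block. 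Since $\partial\Gamma_0=\emptyset$, the Claim is the case $\Gamma=\Gamma_0$.

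If $\Gamma$ has only one block then that block equals $\Gamma$, is a block of $\Gamma_0$ because $\Gamma$ is a union of blocks of $\Gamma_0$, and is rich, so we are done. Otherwise let $D_0$ be a block of $\Gamma$ containing $\partial\Gamma$; as $\Gamma$ is connected with more than one block, $D_0$ is incident in the block--cut tree to a cut vertex $\alpha$ of $\Gamma$, so $\alpha\in V(D_0)$. Let $C_1,\dots,C_m$ ($m\ge 2$) be the components of $\Gamma-\alpha$, put $\Gamma_i\coloneq\Gamma[C_i\cup\{\alpha\}]$, and let $s$ be the unique index with $V(D_0)\subseteq V(\Gamma_s)$ (using that $D_0-\alpha$ is connected). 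The decisive point is that $\alpha$ separates any two of $\Gamma_1,\dots,\Gamma_m$ not merely in $\Gamma$ but in all of $\Gamma(\W,\P)[\lambda]$: a path of $\Gamma(\W,\P)[\lambda]-\alpha$ joining $C_i$ to $C_{i'}$ can leave $\Gamma$ only at vertices of $\partial\Gamma\subseteq V(D_0)$, hence, since the path avoids $\alpha$, only at vertices of $V(D_0)\setminus\{\alpha\}\subseteq C_s$; therefore each maximal subpath inside $\Gamma$ lies in a single component of $\Gamma-\alpha$, and if there is more than one such subpath the first and last must both meet $C_s$, forcing $i=i'=s$, while if there is only one it lies entirely in one component. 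Thus the hypotheses of \autoref{clm:main_outdeg} (\ref{itm:out_union_rich}) are met (recall $\Gamma$ is induced, $\bigcup_i\Gamma_i=\Gamma$ contains every edge at $\alpha$, and $\alpha$ is rich in $\Gamma$), and it supplies an index $j$ with $\alpha$ rich in $\Gamma^\ast\coloneq\Gamma_j$.

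It remains to check that $\Gamma^\ast$ again meets the induction hypothesis while having strictly fewer blocks (it omits at least the blocks of $\Gamma$ lying inside the $C_i$ with $i\ne j$). It is connected, induced, and a union of blocks of $\Gamma_0$. Every vertex $\beta\in C_j$ with $\beta\notin\partial\Gamma$ has all its $\Gamma(\W,\P)[\lambda]$-edges inside $\Gamma$, hence -- since $\beta\in C_j$ -- inside $\Gamma^\ast$, so $\beta$ is rich in $\Gamma^\ast$ by \autoref{clm:main_outdeg} (\ref{itm:out_all_rich}); with $\alpha$ rich in $\Gamma^\ast$ this settles the case $j\ne s$, where moreover $C_j$ contains no boundary vertex of $\Gamma$, so $\partial\Gamma^\ast\subseteq\{\alpha\}$, which lies in a block of $\Gamma^\ast$. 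When $j=s$ the only untreated vertices are the $\beta\in C_s\cap\partial\Gamma\subseteq V(D_0)$; for these one verifies that $\Gamma$ and $\Gamma_s$ carry exactly the same $\P$-bridges attaching to $P_\beta$ -- because such a bridge can attach to no path $P_\delta$ with $\delta\in\bigcup_{i\ne s}C_i$, as this would make $\delta$ a $\Gamma$-neighbour of $\beta$ in a component of $\Gamma-\alpha$ other than $C_s$ -- and that $N_\Gamma(\beta)=N_{\Gamma_s}(\beta)$; since richness of $\beta$ is a condition only on $P_\beta$, its incident $\P$-bridges and these neighbourhoods, $\beta$ is rich in $\Gamma_s$ because it is rich in $\Gamma$. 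Finally, for $j=s$ we have $\partial\Gamma_s\subseteq\partial\Gamma\cup\{\alpha\}\subseteq V(D_0)$ and $D_0$ is still a block of $\Gamma_s$, so the boundary condition is preserved. Iterating yields, in finitely many steps, a one-block $\Gamma$, that is, a rich block of $\Gamma_0$.

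The only genuinely delicate point is the transfer of richness in the case $j=s$: one must confirm that replacing $\Gamma$ by $\Gamma_s$ changes nothing about $G_\Gamma^\P$ near $P_\beta$ -- the same $\P$-bridges are retained, degrees in $G_\Gamma^\P$ are unchanged along $P_\beta$, and the sets of vertices having a neighbour in $G_\lambda-G_\Gamma^\P$, respectively in $G_\Gamma^\P-P_\beta$, are unchanged -- so that the inequality defining ``$\beta$ is rich'' is literally the same for $\Gamma$ and for $\Gamma_s$. Everything else is routine bookkeeping with block--cut trees, the organising idea being to always split off pieces at a cut vertex lying in the block that currently carries the boundary, which keeps the boundary confined to a single block throughout the descent.
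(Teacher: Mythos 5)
Your argument is correct but considerably more circuitous than the paper's. The paper avoids any descent through nested subgraphs: for each cut-vertex $\alpha$ of the component $\Gamma_0$, it takes $D_1,\dots,D_m$ to be the blocks of $\Gamma_0$ containing $\alpha$, observes that $\bigcup_i D_i$ contains every edge of $\Gamma(\W,\P)[\lambda]$ at $\alpha$ (so $\alpha$ is rich in $\bigcup_i D_i$ by \autoref{clm:main_outdeg}~(\ref{itm:out_all_rich})), and then gets from~(\ref{itm:out_union_rich}) a block $D_j$ in which $\alpha$ is rich. It then orients the block--cut tree of $\Gamma_0$ by directing the edge $\alpha D$ towards $D$ exactly when $\alpha$ is rich in $D$; every cut-vertex has an outgoing edge, the oriented tree has a sink, and that sink is a block in which all its incident cut-vertices are rich, while the remaining vertices of that block are rich vacuously by \autoref{thm:block_separates}. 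This one-shot tree argument sidesteps exactly the step you flag as delicate: the paper only ever invokes~(\ref{itm:out_union_rich}) starting from a full star of blocks around a cut-vertex of $\Gamma_0$, where the richness hypothesis comes for free from~(\ref{itm:out_all_rich}) and the separation hypothesis is immediate since $\alpha$ is a cut-vertex of the whole component. Your induction buys nothing new but pays for its flexibility with the ``boundary in a single block'' invariant, the verification that $\alpha$ still separates the pieces in $\Gamma(\W,\P)[\lambda]$ rather than merely in $\Gamma$, and the bridge-by-bridge check that richness of a boundary vertex passes unchanged from $\Gamma$ to $\Gamma_s$. All of that does go through (I checked your sketched transfer argument: the bridges attaching to $P_\beta$ inside $G_{\Gamma}^{\P}$ and inside $G_{\Gamma_s}^{\P}$ do coincide, so both the relevant vertex set and the degrees along $P_\beta$ are literally the same), but it is extra machinery the sink argument renders unnecessary.
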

\begin{proof}
Let $\Gamma_0$ be a component of $\Gamma(\W,\P)[\lambda]$.
Suppose that $\alpha$ is a cut-vertex of $\Gamma_0$ and let $D_1,\ldots, D_m$ be the blocks of $\Gamma_0$ that contain $\alpha$. 
Clearly $N(\alpha)\cap \lambda\subseteq V(\bigcup_{i=1}^m D_i)$ so \autoref{clm:main_outdeg} implies that $\alpha$ is rich in $\bigcup_{i=1}^m D_i$ by (\ref{itm:out_all_rich}) and hence there is $j\in\{1,\ldots, m\}$ such that $\alpha$ is rich in $D_j$ by (\ref{itm:out_union_rich}).

We define an oriented tree $R$ on the set of blocks and cut-vertices of $\Gamma_0$ as follows.
Suppose that $D$ is a block of $\Gamma_0$ and $\alpha$ a cut-vertex of $\Gamma_0$ with $\alpha\in V(D)$.
If $\alpha$ is rich in $D$, then we let $(\alpha, D)$ be an edge of $R$.
Otherwise we let $(D,\alpha)$ be an edge of $R$.
Note that the underlying graph of $R$ is the block-cut-vertex tree of $\Gamma_0$ and by the previous paragraph every cut-vertex is incident with an outgoing edge of $R$.
But every directed tree has a sink, so there must be a block $D$ of $\Gamma_0$ such that every $\alpha\in\kappa$ is rich in $D$ where $\kappa$ denotes the set of all cut-vertices of $\Gamma_0$ that lie in $D$.

But the only vertices of $G_D^{\P}$ that may have a neighbour in $G_{\lambda}-G_D^{\P}$ are on paths of $\P_{V(D)}$ by \autoref{thm:block_separates} and of these clearly only the paths of $\P_{\kappa}$ may have neighbours in $G_{\lambda}-G_D^{\P}$.
So all vertices of $V(D)\setminus\kappa$ are trivially rich in $D$ and hence $D$ is a rich block.
\end{proof}

\begin{clm}\label{clm:main_triangle}
	Every rich block $D$ of $\Gamma(\W,\P)[\lambda]$ contains a triangle.
\end{clm}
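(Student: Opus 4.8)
The plan is to assume for contradiction that $D$ is rich but contains no triangle, and to violate the Euler‑type bound of \autoref{thm:euler} on a rural society produced by \autoref{thm:rural_bridge} (and, for the vertices of $\kappa$, by \autoref{thm:rural_cutpath}). First I dispose of the degenerate cases. By \autoref{clm:main_minsize} together with \autoref{clm:main_nbhd}~(\ref{itm:nbhd_component}) the block $D$ is not a single vertex, so it has an edge $\alpha\beta$. If $\alpha$ and $\beta$ are twins in $\Gamma(\W,\P)$, then since $D$ is triangle‑free $\alpha$ has no neighbour in $\lambda$ other than $\beta$ (a common neighbour in $\lambda$ would lie in $D$ and complete a triangle), whence $\Gamma_0=D=\{\alpha\beta\}$ and $N(\alpha)\cap\theta=N(\beta)\cap\theta=N(\Gamma_0)$; but then $|N(\Gamma_0)|\ge 2k-1$ by \autoref{clm:main_minsize} while $|N(\alpha)\cap\theta|\le 2k-2$ by \autoref{clm:main_nbhd}~(\ref{itm:nbhd_component}), a contradiction. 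So from now on no edge of $D$ joins two twins.

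For each edge $\alpha\beta$ of $D$ I verify the hypotheses of \autoref{thm:rural_bridge}: triangle‑freeness gives $N(\alpha)\cap N(\beta)\subseteq\theta$ (any common neighbour in $\lambda$ lies in $D$ and forms a triangle), and \autoref{clm:main_nbhd}~(\ref{itm:nbhd_component}) and~(\ref{itm:nbhd_edge}) give $|N(\alpha)\cap\theta|,|N(\beta)\cap\theta|\le 2k-2=p-3$ and $|N(\alpha)\cap N(\beta)|\le 2k-4\le p-5$; the ends are not twins by the previous paragraph. Hence $(G_{\alpha\beta}^{\P},P_{\alpha}P_{\beta}^{-1})$ is rural. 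I then suppress the degree‑$2$ inner vertices of $P_{\alpha}$ and $P_{\beta}$ by repeated application of \autoref{thm:better_society}, obtaining an equally rural reduced society whose interior vertices are exactly the inner vertices of $\P$‑bridges realising $\alpha\beta$; each of these loses at most $|N(\alpha)\cap N(\beta)\cap\theta|\le 2k-4$ of its at least $2k+3$ neighbours in passing from $G$ to $G_{\alpha\beta}^{\P}$, so it has degree at least $7$ there. Therefore \autoref{thm:euler} yields $\sum_{v}d_{G_{\alpha\beta}^{\P}}(v)\le 4N-6$, the sum and the cardinality $N$ ranging over the end vertices of $P_{\alpha}$ and $P_{\beta}$ together with all their inner vertices that are incident with a $\P$‑bridge realising $\alpha\beta$. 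For $\alpha\in\kappa$ the portions of $P_{\alpha}$ that protrude from $G_D^{\P}$ are handled the same way, feeding the block societies of \autoref{thm:rural_cutpath}~(\ref{itm:rc_rural}) into \autoref{thm:better_society} and \autoref{thm:euler}.

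The contradiction comes from combining these inequalities with richness and the length of the decomposition. Fix $\alpha\in V(D)$ and an inner vertex $v$ of $P_{\alpha}$; since $D$ is triangle‑free the $\P$‑bridges touching $P_{\alpha}$ attach (within $\lambda$) to only one further path each, so with $r(v)$ the number of neighbours $\gamma$ of $\alpha$ in $\lambda$ for which $v$ meets a bridge realising $\alpha\gamma$ one gets $\sum_{\gamma}d_{G_{\alpha\gamma}^{\P}}(v)=2r(v)+d_{G_D^{\P}}(v)-2$. By (L8) every inner bag of $\W$ contains, for every such $\gamma$, a $\P$‑bridge realising $\alpha\gamma$, and these attach to pairwise distinct inner vertices of $P_{\alpha}$ lying in pairwise different bags; hence $P_{\alpha}$ has at least $n_{1}-1$ inner vertices incident with a bridge realising $\alpha\gamma$. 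For inner vertices $v$ of $P_{\alpha}$ with no neighbour outside $G_D^{\P}$ one has $d_{G_D^{\P}}(v)\ge 2k+3-|N(\alpha)\cap\theta|\ge 5$ by \autoref{clm:main_nbhd}~(\ref{itm:nbhd_vertex}), while for those with such a neighbour richness of $\alpha$ in $D$ forces their average degree in $G_D^{\P}$ to be at least $2+|N_D(\alpha)|(2+\varepsilon_{\alpha})$, where $\varepsilon_{\alpha}=1/|N(\alpha)\cap\lambda|$. Substituting these into the Euler inequalities of the previous paragraph (summed appropriately over the edges of $D$ at $\alpha$, so that each such inner vertex is charged $2r(v)+d_{G_D^{\P}}(v)-2$ in total against $4r(v)$ from the right‑hand sides plus the four end‑vertex terms and the slack $-6$ per edge) leaves a strictly positive surplus of roughly $\varepsilon_{\alpha}$ per inner branch vertex of $P_{\alpha}$, so the number of those vertices is at most $6|N(\alpha)\cap\lambda|\le 6(w-1)$; but we just saw it is at least $n_{1}-1$, contradicting the choice of $n_{1}$.

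The main obstacle is precisely the bookkeeping in the last paragraph: one must track, for each inner vertex of $P_{\alpha}$, how its degree in $G$ is split between the bridges realising the various edges at $\alpha$, the trivial paths in $N(\alpha)\cap\theta$, and (for $\alpha\in\kappa$) the outlets into $G_{\lambda}-G_D^{\P}$, and one must run the \autoref{thm:rural_cutpath} reduction so that these outlet pieces are charged consistently with the pieces handled by \autoref{thm:rural_bridge}. Making the averaging honest — in particular ensuring that the $\varepsilon_{\alpha}$ in the definition of richness is exactly what is needed to absorb the constant terms from the path end vertices and the $-6$ in \autoref{thm:euler} — is where the care is required.
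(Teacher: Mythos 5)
Your overall strategy — upper‑bound the total "bridge degree" along $\P_{V(D)}$ via \autoref{thm:rural_bridge} and \autoref{thm:euler} edge by edge, lower‑bound it via richness, and let the slack $\varepsilon_{\alpha}$ win — is exactly what the paper does, and the degenerate‑cases paragraph is essentially correct. But you do not need \autoref{thm:rural_cutpath} here, and invoking it is a genuine wrong turn: there are no "portions of $P_{\alpha}$ protruding from $G_D^{\P}$" (the whole path $P_{\alpha}$ lies in $G_D^{\P}$ since $\alpha\in V(D)$), and in any case \autoref{thm:rural_cutpath} speaks of a relinkage $\Q$ and of $Q_{\alpha}$ rather than $P_{\alpha}$, so its block societies do not plug into the same count. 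The paper handles the cut‑vertex paths purely through the definition of richness, via \autoref{clm:main_outdeg}~(\ref{itm:out_average_rich}): for $\alpha\in\kappa$ the inner vertices of $P_{\alpha}$ that have a neighbour outside $G_D^{\P}$ are exactly the ones that the richness hypothesis averages over, and those that do not are handled by the connectivity bound. On that connectivity bound, your stated "$\geq 5$" is too weak — you need the sharper estimate $d_{G_D^{\P}}(v)\geq 2k+3-|N(\alpha)\cap\theta|\geq 2|N(\alpha)\cap\lambda|+3$ from \autoref{clm:main_nbhd}~(\ref{itm:nbhd_vertex}), since the target threshold $2+|N_D(\alpha)|(2+\varepsilon_{\alpha})$ can comfortably exceed $5$ when $|N_D(\alpha)|\geq 2$. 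Finally, the count in your third paragraph is left at the level of a sketch; the paper makes it precise by working with the single quantity $x=\sum_{v\in V(\P_{V(D)})}\bigl(d_{G_D^{\P}}(v)-d_{\P_{V(D)}}(v)\bigr)$ rather than charging vertices individually, and you would do well to adopt that bookkeeping — the per‑vertex ledger you propose (with $r(v)$) is workable but more delicate than it needs to be.
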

\begin{proof}
Suppose that $D$ does not contain a triangle.
By \autoref{clm:main_minsize} and \autoref{clm:main_nbhd} (\ref{itm:nbhd_component}) we may assume $D$ is not an isolated vertex of $\Gamma(\W,\P)[\lambda]$, that is, $D$ contains an edge.
We shall obtain contradicting upper and lower bounds for the number
\[x\coloneq \sum_{v\in V(\P_{V(D)})}(d_{G_D^{\P}}(v) - d_{\P_{V(D)}}(v)).\]
For every $\alpha\in V(D)$ denote by $V_{\alpha}$ the subset of $V(P_{\alpha})$ that consists of the ends of $P_{\alpha}$ and all inner vertices of $P_{\alpha}$ that have a neighbour in $G_D^{\P}-P_{\alpha}$.
Set $V\coloneq \bigcup_{\alpha\in V(D)}V_{\alpha}$.

For the upper bound let $\alpha\beta$ be an edge of $D$.
Then $N_{\alpha\beta} \coloneq N(\alpha)\cap N(\beta)\subseteq \theta$ as a common neighbour of $\alpha$ and $\beta$ in $\lambda$ would give rise to a triangle in~$D$.
Furthermore, $|N_{\alpha\beta}|\leq 2k-4$ by \autoref{clm:main_nbhd} (\ref{itm:nbhd_edge}).
By \autoref{thm:rural_bridge} the society $(G_{\alpha\beta}^{\P},P_{\alpha}P_{\beta}^{-1})$ is rural if $\alpha$ and $\beta$ are not twins.
But if they are, then $N(\alpha)\cup N(\beta) = N_{\alpha\beta}\cup \{\alpha,\beta\}$.
This means that $D$ is a component of $\Gamma(\W,\P)[\lambda]$ that consists only of the single edge $\alpha\beta$.
So by \autoref{clm:main_minsize} we have $|N_{\alpha\beta}|=|N(D)|\geq 2k-1$, a contradiction.
Hence $(G_{\alpha\beta}^{\P},P_{\alpha}P_{\beta}^{-1})$ is rural.

The graph $G-\P_{N_{\alpha\beta}}$ contains $G_{\alpha\beta}^{\P}$ and has minimum degree at least $2k+3-|\P_{N_{\alpha\beta}}|\geq 6$ by the connectivity of $G$.
By \autoref{clm:main_nbhd} (\ref{itm:nbhd_component}) we have $|N(\gamma)\cap\theta|\leq 2k-2$ for every $\gamma\in\lambda$ so \autoref{thm:stable_bridges} implies that every non-trivial $\P$-bridge in an inner bag of $\W$ attaches to at least two paths of $\P_{\lambda}$ or to none.
A vertex $v$ of $G_{\alpha\beta}^{\P} - (P_{\alpha}\cup P_{\beta})$ is therefore an inner vertex of some non-trivial $\P$-bridge $B$ that attaches to $P_{\alpha}$ and $P_{\beta}$ and has all its inner vertices in $G_{\alpha\beta}^{\P}$.
This means that a neighbour of $v$ outside $G_{\alpha\beta}^{\P}$ must be an attachment of $B$ on some path $P_{\gamma}$ and hence $\gamma\in N_{\alpha\beta}\subseteq \theta$.
So all vertices of $G_{\alpha\beta}^{\P}- (P_{\alpha}\cup P_{\beta})$ have the same degree in $G_{\alpha\beta}^{\P}$ as in $G-\P_{N_{\alpha\beta}}$, namely at least~$6$.

The vertices of $G_{\alpha\beta}^{\P}-(P_{\alpha}\cup P_{\beta})$ retain their degree if we suppress all inner vertices of $P_{\alpha}$ and $P_{\beta}$ that have degree~$2$ in $G_D^{\P}$.
Since the paths of $\P$ are induced by (L6) an inner vertex of $P_{\alpha}$ has degree~$2$ in $G_D^{\P}$ if and only if it has no neighbour in $G_D^{\P}-P_{\alpha}$.
So we suppressed precisely those inner vertices of $P_{\alpha}$ and $P_{\beta}$ that are not in $V_{\alpha}$ or $V_{\beta}$.
By \autoref{thm:better_society} the society obtained from $(G_{\alpha\beta}^{\P}, P_{\alpha}P_{\beta}^{-1})$ in this way is still rural so \autoref{thm:euler} implies

\[\sum_{v\in V_{\alpha}\cup V_{\beta}}d_{G_{\alpha\beta}^{\P}}(v)\leq 4|V_{\alpha}| + 4|V_{\beta}|-6.\]

Clearly $G_D^{\P} = \bigcup_{\alpha\beta\in E(D)} G_{\alpha\beta}^{\P}$ and $P_{\alpha}\subseteq G_{\alpha\beta}^{\P}$ for all $\beta\in N_D(\alpha)$ and thus
\begin{align*}
x	&=\sum_{v\in V}\left(d_{G_D^{\P}}(v) -d_{\P_{V(D)}}(v)\right)\\
	&\leq\sum_{\alpha\in V(D)}\sum_{\beta\in N_D(\alpha)}\sum_{v\in V_{\alpha}} (d_{G_{\alpha\beta}^{\P}}(v) -d_{P_{\alpha}}(v))\\
	&=\sum_{\alpha\beta\in E(D)} \sum_{v\in V_{\alpha}\cup V_{\beta}} d_{G_{\alpha\beta}^{\P}}(v) - \sum_{\alpha\in V(D)}|N_D(\alpha)|\cdot(2 |V_{\alpha}| - 2)\\
	&\leq \sum_{\alpha\beta\in E(D)} \left(4|V_{\alpha}| + 4|V_{\beta}| - 6\right) - \sum_{\alpha\in V(D)}|N_D(\alpha)|\cdot(2 |V_{\alpha}| - 2)\\
	&=\sum_{\alpha\in V(D)} |N_D(\alpha)|\left(4|V_{\alpha}|-3\right) - \sum_{\alpha\in V(D)}|N_D(\alpha)|\cdot(2 |V_{\alpha}| - 2)\\
	&< \sum_{\alpha\in V(D)} 2|N_D(\alpha)|\cdot |V_{\alpha}|.
\end{align*}
To obtain the lower bound for $x$ note that \autoref{clm:main_outdeg} (\ref{itm:out_average_rich}) says that for any $\alpha\in V(D)$ the vertices of $V_{\alpha}$ without the two end vertices of $P_{\alpha}$ have average degree $2+|N_D(\alpha)|(2+\varepsilon_{\alpha})$ in $G_D^{\P}$ where $\varepsilon_{\alpha}\geq 1/k$ by \autoref{clm:main_nbhd} (\ref{itm:nbhd_vertex}).
Clearly every inner bag of $\W$ must contain a vertex of $V_{\alpha}$ as it contains a $\P$-bridge realising some edge $\alpha\beta\in E(D)$.
This means $|V_{\alpha}|\geq n_1/2\geq 4k+2$ and thus
\begin{align*}
x	&= \sum_{\alpha\in V(D)}\sum_{v\in V_{\alpha}} \left(d_{G_D^{\P}}(v) -d_{P_{\alpha}}(v)\right)\\
	&\geq \sum_{\alpha\in V(D)} (|V_{\alpha}| - 2) \cdot|N_D(\alpha)| \cdot(2 + \varepsilon_{\alpha})\\
	&\geq \sum_{\alpha\in V(D)} |N_D(\alpha)| \cdot\left(2|V_{\alpha}| - 4 + 4k\varepsilon_{\alpha}\right)\\
	&\geq \sum_{\alpha\in V(D)} 2|N_D(\alpha)|\cdot |V_{\alpha}|.\qedhere
\end{align*}
\end{proof}

\begin{clm}\label{clm:main_largeblock}
Every rich block $D$ of $\Gamma(\W,\P)[\lambda]$ satisfies $2|D| + |N(D)|\geq 2k+3$.
\end{clm}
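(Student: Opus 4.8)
The plan is to argue by contradiction: suppose $2|D|+|N(D)|\le 2k+2$ and derive a contradiction. Together with \autoref{clm:main_triangle} and \autoref{clm:main_nbhd}~(iv) — which give $2|D|+|N(D)|\le 2k+2$ for every rich block — this shows that $\Gamma(\W,\P)[\lambda]$ has no rich block, contradicting \autoref{clm:main_rich} and \autoref{clm:main_path}, and thereby finishes the proof of \autoref{thm:main}. First I would dispose of the case $\kappa=\emptyset$: then $D$ is a whole component of $\Gamma(\W,\P)[\lambda]$, so $2|D|+|N(D)|\ge 2k+3$ by \autoref{clm:main_minsize}. Hence assume $\kappa\ne\emptyset$. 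Since $|N(\gamma)\cap\theta|\le 2k-2=(2k+3)-5\le (2k+3)-4$ for every $\gamma\in\lambda$ by \autoref{clm:main_nbhd}~(i), \autoref{thm:rural_cutpath} (with $p=2k+3$) supplies a $V(D)$-compressed $(\P,V(D))$-relinkage $\Q$ with $\Q[W]$ $p$-attached in $G[W]$ for all inner bags $W$. Because $2|D|+|N(D)\cap\theta|\le 2|D|+|N(D)|\le 2k+2<p$, \autoref{thm:compressed}~(i) forces $V(G_D^{\Q})=V(\Q_{V(D)})$, so every $\Q$-bridge of an inner bag realising an edge of $D$ is trivial; and since $2|D|+|N(D)|<p$, \autoref{thm:compressed}~(ii) produces $\alpha\in\kappa$ with $|Q_\beta|\le |V\cap V(Q_\alpha)|+1$ for all $\beta\in V(D)\setminus\kappa$, where $V$ is the set of inner vertices of $\Q_\kappa$ of degree at least $3$ in $G_D^{\Q}$.

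Next I would expose the cut-path geometry at $\alpha$. As $\alpha$ is a cut-vertex of $\Gamma(\W,\P)[\lambda]$ lying in the block $D$, there is a separation $(\lambda_1,\lambda_2)$ of $\Gamma(\W,\P)[\lambda]$ with $\lambda_1\cap\lambda_2=\{\alpha\}$ and $N(\alpha)\cap\lambda_2=N(\alpha)\cap V(D)$ (put $\alpha$ together with everything on the $D$-side of the block-cut tree into $\lambda_2$). Applying \autoref{thm:rural_cutpath} to $\alpha$ and $(\lambda_1,\lambda_2)$ gives the graph $H=G_{\lambda_2}^{\P}\cap G_{\lambda_1}^{\Q}$, the path $Q_\alpha\subseteq H$ with first vertex $q_1$ and last vertex $q_2$, a replacement path $P'_\alpha\subseteq H$ such that $\P'\coloneq(\Q\setminus\{Q_\alpha\})\cup\{P'_\alpha\}$ is a $p$-attached $(\P,V(D))$-relinkage, the sets $Z_1\subseteq V(P_\alpha)\cap V(P'_\alpha)$ and $Z_2\subseteq V(Q_\alpha)$, and the crucial fact that every block $H'$ of $H$ that is not a single edge — with $P'\coloneq H'\cap P'_\alpha$ and $Q'\coloneq H'\cap Q_\alpha$ — gives a rural society $(H',P'Q'^{-1})$, while by \autoref{thm:rural_cutpath}~(iv) each such $H'$ meets $Z_1\setminus V(Q_\alpha)$ and $Q_\alpha[W]$ meets $Z_2$ for every inner bag $W$. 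Because $\alpha\in\kappa$ has a neighbour inside $D$ (which has an edge, being a rich block and hence containing a triangle by \autoref{clm:main_triangle}) and one outside $D$, \autoref{clm:main_nbhd}~(iii) gives $|N(\alpha)\cap\lambda|\ge 2$, so $|N(\alpha)\cap\theta|\le 2k-4$; with the separation statement of \autoref{thm:rural_cutpath}~(i) this makes every vertex of $H'$ lying off $P'\cup Q'$ have degree at least $(2k+3)-(2k-4)\ge 6$ in $H'$. Suppressing the degree-two vertices of $P'$ and $Q'$ (\autoref{thm:better_society}) and applying \autoref{thm:euler} then yields, for each non-trivial block $H'$, a bound $\sum_v d_{H'}(v)\le 4\,b_{H'}-6$ where $b_{H'}$ is the number of surviving vertices of $P'\cup Q'$; summing over the blocks bounds from above the number of edges of $H$ incident with the surviving vertices of $P'_\alpha$.

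The heart of the argument — the step I expect to be the main obstacle — is the double count opposing this Euler bound to the richness of $D$. By \autoref{clm:main_outdeg}~(ii) applied to $\alpha$, the inner vertices of $P_\alpha$ with a neighbour in $G_D^{\P}-P_\alpha$ have average degree at least $2+|N_D(\alpha)|(2+\varepsilon_\alpha)$ in $G_D^{\P}$, with $\varepsilon_\alpha\ge 1/k$ by \autoref{clm:main_nbhd}~(iii); since $V(G_D^{\Q})=V(\Q_{V(D)})$, all incidences realising this surplus come from trivial bridges realising edges of $D$ at $\alpha$, which the separation $(\lambda_1,\lambda_2)$ confines to $H$. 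The delicate bookkeeping is to show that, after relinking $\P$ to $\Q$ and then to $\P'$, these surplus-carrying vertices lie on $P'_\alpha$ inside the blocks $H'$ — here one uses that $P'_\alpha\subseteq G_D^{\P}$ (as $\P'$ is a $(\P,V(D))$-relinkage), \autoref{thm:block_separates}, and parts (i) and (iv) of \autoref{thm:rural_cutpath} — so that their average degree inside the blocks $H'$ still exceeds the Euler allowance of essentially $4$ per vertex by at least $|N_D(\alpha)|\varepsilon_\alpha\ge |N_D(\alpha)|/k$. The inequality $|Q_\beta|\le |V\cap V(Q_\alpha)|+1$ keeps $\sum_{\beta\in V(D)}|Q_\beta|$, hence the edge count of $G_D^{\Q}$ and thus the total trivial-bridge degree, proportional to $|V\cap V(Q_\alpha)|$; and \autoref{thm:rural_cutpath}~(iv) makes $|Z_2|$, and with it the length of $Q_\alpha$ and the number of non-trivial blocks of $H$, at least of order $n_1/2\ge f(k)$ — large enough that the accumulated surplus over the blocks beats their constant defects $-6$, producing the contradiction. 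Reconciling the degree accounting across $G_D^{\P}$, $H$, and the individual blocks $H'$ is where the real work lies.
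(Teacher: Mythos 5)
Your proposal correctly assembles the tools the paper uses — \autoref{thm:rural_cutpath}, \autoref{thm:compressed}, \autoref{thm:euler}, \autoref{thm:better_society}, the cut-path geometry at a chosen $\alpha\in\kappa$, and the $|Q_\beta|$-bound — but it misses the actual source of the contradiction and misstates what the block-by-block Euler count establishes. The paper's argument is in two distinct halves. First, it introduces a $\Q$-analogue of richness (call it ``rich w.r.t.\ $\Q$'': the relevant inner vertices of $Q_\alpha$ have average degree at least $2+|N_D(\alpha)|(2+\varepsilon_\alpha)$ in $G_D^{\Q}$) and shows that \emph{if} $D$ is rich w.r.t.\ $\Q$ then one gets a contradiction: by \autoref{thm:compressed} every vertex of $G_D^{\Q}$ lies on $\Q_{V(D)}$ and $|Q_\beta|<|V_\alpha|$ for $\beta\notin\kappa$, so the richness surplus forces more than $2|V_\alpha|$ edges between $Q_\alpha$ and some single $Q_\beta$; after suppressing degree-two vertices (\autoref{thm:better_society}), this society violates \autoref{thm:euler}, hence has a cross by \autoref{thm:cross}, and that cross manufactures a twisting disturbance in some inner bag, contradicting stability of $(\W,\P)$. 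That final step — the cross producing a disturbance — is entirely absent from your sketch, yet it is where the contradiction actually lands.

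Second, the paper shows $D$ \emph{is} rich w.r.t.\ $\Q$ by the double count you gesture at, but the count needs \emph{both} the $\P$-richness of $\alpha$ in $D$ \emph{and} the assumed failure of $\Q$-richness. The quantity $x=\sum_{v\in V(P'_\alpha\cup Q_\alpha)}\bigl(d_H(v)-d_{P'_\alpha\cup Q_\alpha}(v)\bigr)$ is bounded from above by $\sum_i(2|V_i|-6)$ via Euler on the blocks $H_i$ of $H$ (your Euler step), and from below by a combination of the two degree identities $d_H=d_{G_{\lambda_2}^{\P}}+d_{Q_\alpha}-d_{G_{\lambda_2}^{\Q}}$, with $\P$-richness giving $\sum_{v\in V_P}d_{G_D^{\P}}(v)\ge|V_P|\bigl(2+|N_D(\alpha)|(2+\varepsilon_\alpha)\bigr)$ and $\Q$-non-richness giving $\sum_{v\in V_Q}\bigl(d_{G_D^{\Q}}(v)-d_{Q_\alpha}(v)\bigr)<|V_Q|\,|N_D(\alpha)|(2+\varepsilon_\alpha)$. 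Your sketch only brings in the $\P$-richness surplus and tries to get the contradiction directly; without the negative $\Q$-side term the lower bound does not beat the Euler upper bound. Also, your remark that ``all incidences realising this surplus come from trivial bridges'' conflates $G_D^{\P}$ with $G_D^{\Q}$: the statement $V(G_D^{\Q})=V(\Q_{V(D)})$ from \autoref{thm:compressed} constrains $\Q$-bridges, not $\P$-bridges, and the $\P$-richness surplus can live in non-trivial $\P$-bridges. In short: the right lemmas, the right geometry, but the cross/disturbance half is missing and the double count as sketched only proves the intermediate richness statement, not the claim.
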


\begin{proof}
Suppose for a contradiction that $2|D| + |N(D)|\leq 2k+2$.
By \autoref{thm:rural_cutpath} there is a $V(D)$-compressed $(\P, V(D))$-relinkage $\Q$ with properties as listed in the statement of \autoref{thm:rural_cutpath}.
Let us first show that we are done if $D$ is \emph{rich w.r.t.\ to $\Q$}, that is, for every $\alpha\in V(D)$ the inner vertices of $Q_{\alpha}$ that have a neighbour in $G_{\lambda}-G_D^{\Q}$ and in $G_D^{\Q}-Q_{\alpha}$ have average degree at least $2+|N_D(\alpha)|(2+\varepsilon_{\alpha})$ in $G_D^{Q}$.

Denote the cut-vertices of $\Gamma(\W,\P)[\lambda]$ that lie in $D$ by $\kappa$.
For $\alpha\in\kappa$ let $V_{\alpha}$ be the set consisting of the ends of $Q_{\alpha}$ and of all inner vertices of $Q_{\alpha}$ that have a neighbour in $G_D^{\Q}-Q_{\alpha}$ and set $V\coloneq \bigcup_{\alpha\in\kappa}V_{\alpha}$.
Pick $\alpha\in\kappa$ such that $|V_{\alpha}|$ is maximal.
By \autoref{thm:compressed} (with $p=2k+3$) every vertex of $G_D^{\Q}$ lies on a path of $\Q_{V(D)}$ and we have $|Q_{\beta}| < |V_{\alpha}|$ for all $\beta\in V(D)\setminus\kappa$.

The paths of $\Q$ are induced in $G$ as $\Q[W]$ is $(2k+3)$-attached in $G[W]$ for every inner bag $W$ of $\W$.
Hence $V_{\alpha}$ contains precisely the vertices of $Q_{\alpha}$ that are not inner vertices of degree~$2$ in $G_D^{\Q}$.
By the same argument as in the proof of \autoref{clm:main_outdeg} (\ref{itm:out_average_rich}) the vertices of $V_{\alpha}$ that are not ends of $Q_{\alpha}$ have average degree at least $2 + |N_D(\alpha)|(2+\varepsilon_{\alpha})$ in $G_D^{\Q}$.

We want to show that the average degree in $G_D^{\Q}$ taken over all vertices of $V_{\alpha}$ is larger than $2+2|N_D(\alpha)|$.
Clearly the end vertices of $Q_{\alpha}$ have degree at least~$1$ in $G_D^{\Q}$ so both lack at most $1+2|N_D(\alpha)|\leq 3|N_D(\alpha)|$ incident edges to the desired degree.
On the other hand, the degree of every vertex of $V_{\alpha}$ that is not an end of $Q_{\alpha}$ is on average at least $|N_D(\alpha)|\cdot\varepsilon_{\alpha}$ larger than desired.
But $\varepsilon_{\alpha}\geq 1/k$ by \autoref{clm:main_nbhd} (\ref{itm:nbhd_vertex}) and by \autoref{thm:rural_cutpath} (\ref{itm:rc_blockZ}) the path $Q_{\alpha}[W]$ contains a vertex of $V_{\alpha}$ for every inner bag of $\W$, in particular, $|V_{\alpha}|\geq n_1/2 > 6k+2$ and hence $(|V_{\alpha}|-2)\varepsilon_{\alpha}> 6$.

This shows that there are more than $2|V_{\alpha}|\cdot |N_D(\alpha)|$ edges in $G_D^{\Q}$ that have one end on $Q_{\alpha}$ and the other on another path of $\Q_{V(D)}$.
By \autoref{thm:global_disturbance} these edges can only end on paths of $\Q_{N_D(\alpha)}$ so by the pigeon hole principle there is $\beta\in N_D(\alpha)$ such that $G_D^{\Q}$ contains more than $2|V_{\alpha}|$ edges with one end on $Q_{\alpha}$ and the other on $Q_{\beta}$.

Hence the society $(H,\Omega)$ obtained from $(G_{\alpha\beta}^{\Q}, Q_{\alpha}Q_{\beta}^{-1})$ by suppressing all inner vertices of $Q_{\alpha}$ and $Q_{\beta}$ that have degree~$2$ in $G_{\alpha\beta}^{\Q}$ has more than $2|V_{\alpha}| + 2|V_{\beta}| - 2$ edges and all its $|V_{\alpha}|+|V_{\beta}|$ vertices are in $\bar{\Omega}$.
So by \autoref{thm:euler} $(H,\Omega)$ cannot be rural.
But it is trivially $4$-connected as all its vertices are in $\bar{\Omega}$ and must therefore contain a cross by \autoref{thm:cross}.
The paths of $\Q$ are induced so this cross consists of two edges which both have one end on $Q_{\alpha}$ and the other on $Q_{\beta}$.
Such a cross gives rise to a linkage $\Q'$ from the left to the right adhesion set of some inner bag $W$ of $\W$ such that the induced permutation of $\Q'$ maps some element of $V(D)\setminus\{\alpha\}$ (not necessarily $\beta$) to $\alpha$ and maps every $\gamma\notin V(D)$ to itself.
Since $\alpha$ has a neighbour outside $D$ this is not an automorphism of $\Gamma(\W,\P)[\lambda]$ and therefore $\Q'$ is a twisting disturbance contradicting the stability of $(\W,\P)$.

It remains to show that $D$ is rich w.r.t.\ $\Q$.
Suppose that it is not.
By the same argument as for \autoref{clm:main_outdeg} (\ref{itm:out_all_rich}) there must be $\alpha\in\kappa$ such that the inner vertices of $Q_{\alpha}$ that have a neighbour in $G_{\lambda}-G_D^{\Q}$ and in $G_D^{\Q}-Q_{\alpha}$ have average degree less than $2+|N_D(\alpha)|(2+\varepsilon_{\alpha})$ in $G_D^{Q}$.
Let $(\lambda_1,\lambda_2)$ be a separation of $\Gamma(\W,\P)[\lambda]$ with $\lambda_1\cap\lambda_2 =\{\alpha\}$ and $N(\alpha)\cap \lambda_2 = N(\alpha)\cap V(D)$.
Let $H$, $Z_1$, $Z_2$, $\P'$, $q_1$, and $q_2$ be as in the statement of \autoref{thm:rural_cutpath}.
We shall obtain contradicting upper and lower bounds for the number
\[x\coloneq \sum_{v\in V(P'_{\alpha}\cup Q_{\alpha})}\left(d_H(v)-d_{P'_{\alpha}\cup Q_{\alpha}}(v)\right).\]
Denote by $H_1,\ldots, H_m$ the blocks of $H$ that are not a single edge and for $i=1,\ldots, m$ let $V_i$ be the set of vertices of $C_i\coloneq H_i\cap (P'_{\alpha}\cup Q_{\alpha})$ that are a cut-vertex of $H$ or are incident with some edge of $H_i$ that is not in $P'_{\alpha}\cup Q_{\alpha}$ and set $V\coloneq \bigcup_{i=1}^m V_i$.
By definition we have $d_H(v)=d_{P'_{\alpha}\cup Q_{\alpha}}(v)$ for all vertices $v$ of $(P'_{\alpha}\cup Q_{\alpha})-V$.

Note that $H$ is adjacent to at most $|N(\alpha)\cap\theta|$ vertices of $\P_{\theta}$ by \autoref{thm:rural_cutpath} (\ref{itm:rc_connected}) and \autoref{thm:global_disturbance}.
So \autoref{clm:main_nbhd} (\ref{itm:nbhd_vertex}) and the connectivity of $G$ imply that every vertex of $H$ has degree at least $2k+3-|N(\alpha)\cap\theta|\geq 2|N(\alpha)\cap\lambda| + 3$ in $G_{\lambda}$.

To obtain an upper bound for $x$ let $i\in \{1,\ldots, m\}$.
By \autoref{thm:rural_cutpath} (\ref{itm:rc_rural}) $C_i$ is a cycle and the society $(H_i, \Omega(C_i))$ is rural where $\Omega(C_i)$ denotes one of two cyclic permutations that $C_i$ induces on its vertices.
Since $|N(\alpha)\cap\lambda|\geq 2$ every vertex of $H_i-C_i$ has degree at least~$6$ in $H_i$ by the previous paragraph.
This remains true if we suppress all vertices of $C_i$ that have degree~$2$ in $H_i$.
The society obtained in this way is still rural by \autoref{thm:better_society}.
Since we suppressed precisely those vertices of $C_i$ that are not in $V_i$ \autoref{thm:euler} implies $\sum_{v\in V_i}d_{H_i}(v)\leq 4|V_i| - 6$.
By definition of $V$ we have $d_H(v) = d_{P'_{\alpha}\cup Q_{\alpha}}(v)$ for all vertices $v$ of $P'_{\alpha}\cup Q_{\alpha}$ that are not in $V$.
Hence we have
\[x = \sum_{v\in V}\left(d_H(v)-d_{P'_{\alpha}\cup Q_{\alpha}}(v)\right)
	= \sum_{i=1}^m \sum_{v\in V_i} \left(d_{H_i}(v) - d_{C_i}(v)\right)
	\leq\sum_{i=1}^m \left(2|V_i| - 6\right).\]

Let us now obtain a lower bound for $x$.
Clearly $G_D^{\P}\subseteq G_{\lambda_2}^{\P}$ and $G_D^{\Q}\subseteq G_{\lambda_2}^{\Q}$.
To show that $d_{G_D^{\Q}}(v) = d_{G_{\lambda_2}^{\Q}}(v)$ for all $v\in V(H)$ (we follow the general convention that a vertex has degree~$0$ in any graph not containing it) it remains to check that an edge of $G_{\lambda}$ that has precisely one end in $H$ but is not in $G_D^{\Q}$ cannot be in $G_{\lambda_2}^{\Q}$.
Such an edge $e$ must be in a $\Q$-bridge that attaches to $Q_{\alpha}$ and some $Q_{\beta}$ with $\beta\in\lambda\setminus V(D)$.
But $N(\alpha)\cap\lambda_2 = V(D)$ and hence $\beta\in\lambda_1$.
So $e$ is an edge of $G_{\lambda_1}^{\Q}$ but not on $Q_{\alpha}$ and therefore not in $G_{\lambda_2}^{\Q}$.
This already implies $d_{G_D^{\P}}(v) = d_{G_{\lambda_2}^{\P}}(v)$ for all $v\in V(H)$ since $G_D^{\Q}\subseteq G_D^{\P}$ and $G_{\lambda_2}^{\P} = H \cup G_{\lambda_2}^{\Q}$ (see the proof of \autoref{thm:rural_cutpath} (\ref{itm:rc_Z}) for the latter identity).
The next equality follows directly from the definition of $H$.
\[d_H(v) + d_{G_{\lambda_2}^{\Q}}(v) = d_{G_{\lambda_2}^{\P}}(v) + d_{Q_{\alpha}}(v)\quad\forall v\in V(H).\]

Denote by $U_1$ the set of inner vertices of $P_{\alpha}$ that have a neighbour in both $G_{\lambda}-G_D^{\P}$ and $G_D^{\P}-P_{\alpha}$ and by $U_2$ the set of inner vertices of $Q_{\alpha}$ that have a neighbour in both $G_{\lambda}-G_D^{\Q}$ and $G_D^{\Q}-Q_{\alpha}$.
In other words, $U_1$ and $U_2$ are the sets of those vertices of $P_{\alpha}$ and $Q_{\alpha}$, respectively, that are relevant for the richness of $\alpha$ in $D$.
Set $V'\coloneq (V\setminus\{q_1,q_2\})\cup (Z_1\cap Z_2))$, $V_P\coloneq V'\cap V(P'_{\alpha})$, and $V_Q\coloneq V'\cap V(Q_{\alpha})$.
Then $U_1 = (V\cap Z_1)\cup (Z_1\cap Z_2) = V'\cap Z_1\subseteq V_P$ and $U_2 = (V\cap Z_2)\cup (Z_1\cap Z_2)\subseteq V_Q$.

By our earlier observation every vertex of $H$ has degree at least $2|N(\alpha)\cap\lambda|+3$ in $G_{\lambda}$ and therefore every vertex of $V_P\setminus Z_1$ must have at least this degree in $G_{\lambda_2}^{\P}$.
Since $U_1\subseteq V_P$ and $\alpha$ is rich in $D$ this means that
\[\sum_{v\in V_P}d_{G_D^{\P}}(v) \geq |V_P|\left(2 + |N_D(\alpha)|\cdot(2+\varepsilon_{\alpha})\right).\]
Similarly, we have $U_2\subseteq V_Q\subseteq V(Q_{\alpha})$ and every vertex $v\in V_Q\setminus Z_2$ satisfies $d_{G_{D}^{\Q}}(v) = 2 = d_{Q_{\alpha}}(v)$.
So by the assumption that $\alpha$ is not rich in $D$ w.r.t.~$\Q$ we have
\[\sum_{v\in V_Q}\left(d_{G_D^{\Q}}(v) - d_{Q_{\alpha}}(v)\right) < |V_Q|\cdot|N_D(\alpha)|\cdot (2+\varepsilon_{\alpha}).\]
Observe that
\[2|N(\alpha)\cap\lambda| + 3 = 2 + |N(\alpha)\cap\lambda_1|\cdot(2+\varepsilon_{\alpha}) + |N(\alpha)\cap\lambda_2|\cdot(2+\varepsilon_{\alpha})\]
and recall that $N_D(\alpha) = N(\alpha)\cap\lambda_2$.
Combining all of the above we get

\begin{align*}
x	&\geq \sum_{v\in V'}\left(d_H(v)-d_{P'_{\alpha}\cup Q_{\alpha}}(v)\right)\\
	&=\sum_{v\in V'} \left(d_{G_D^{\P}}(v) - d_{G_D^{\Q}}(v) + d_{Q_{\alpha}}(v) - d_{P'_{\alpha}\cup Q_{\alpha}}(v)\right)\\
	&=\sum_{v\in V_P}d_{G_D^{\P}}(v) + \sum_{v\in V'\setminus V_P} d_{G_D^{\P}}(v) - \sum_{v\in V_Q}\left(d_{G_D^{\Q}}(v) - d_{Q_{\alpha}}(v)\right)-2|V'|-2m\\
	&>|V_P|\cdot |N_D(\alpha)|\cdot (2+\varepsilon_{\alpha}) + 2|V_P| + |V'\setminus V_P|\cdot (2|N(\alpha)\cap\lambda| + 3)\\
	&\quad -|V_Q|\cdot|N_D(\alpha)|\cdot (2+\varepsilon_{\alpha})- 2|V'| - 2m\\
	&=|V'\setminus V_Q|\cdot |N_D(\alpha)|\cdot (2+\varepsilon_{\alpha}) + |V'\setminus V_P|\cdot |N(\alpha)\cap\lambda_1|\cdot(2+\varepsilon_{\alpha})-2m\\
	&> 2|V'\setminus V_Q| + 2|V'\setminus V_P| - 2m = \sum_{i=1}^m \left(2|V_i|-6\right)
\end{align*}
This shows that $D$ is rich w.r.t.\ $\Q$ as defined above.
So \autoref{clm:main_largeblock} holds.
\end{proof}

By \autoref{clm:main_path} the graph $\Gamma(\W,\P)[\lambda]$ has a component.
This component has a rich block $D$ by \autoref{clm:main_rich}.
By \autoref{clm:main_triangle} and \autoref{clm:main_largeblock} we have a triangle in $D$ and $|D|+|N(D)|\geq 2k+3$.
This contradicts \autoref{clm:main_nbhd} (\ref{itm:nbhd_block}) and thus concludes the proof of \autoref{thm:main}.
\end{proof}

\section{Discussion}\label{sec:discussion}
In this section we first show that \autoref{thm:main} is almost best possible (see \autoref{thm:tightness} below) and then summarise where our proof uses the requirement that the graph $G$ is $(2k+3)$-connected.

\begin{pro}\label{thm:tightness}
	For all integers $k$ and $N$ with $k\geq 2$ there is a graph $G$ which is not $k$-linked such that
	\[\kappa(G)\geq 2k+1,\qquad \tw(G)\leq 2k + 10,\quad\text{and}\quad |G|\geq N.\]
\end{pro}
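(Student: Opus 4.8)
The plan is to exhibit an explicit family of graphs $G=G(k,M)$, where $M$ is a ``length'' parameter taken large enough that $|G|\ge N$. The building block is a \emph{rigid cross‑section}: a graph $F$ on roughly $2k+2$ vertices obtained from a clique by deleting a perfect matching (a J\o rgensen‑type gadget), which is $2k$‑connected but, crucially, is \emph{not} a clique, so it cannot route an arbitrary permutation of $k$ ``tracks'' in a single step. One forms $G$ as a sliding‑window chain: the vertex set is $\{w_0,w_1,\dots,w_M\}$, the $j$‑th window is $B_j=\{w_j,\dots,w_{j+2k+1}\}$, and inside each $B_j$ one places a copy of $F$ whose deleted matching is an \emph{(anti)reflection} of the cyclic order on $B_j$ whose phase advances with $j$. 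Consecutive windows overlap in $2k+1$ vertices and every edge lies in some window, so $(B_0,B_1,\dots)$ is a path‑decomposition of width $2k+1$; gluing small $(2k+1)$‑connected gadgets onto the two ends (to repair the vertices of low degree there) costs at most a constant in width, and the first thing I would do is check that the resulting bags satisfy the path‑decomposition axioms and read off $\tw(G)\le 2k+10$, with the ``$+10$'' absorbing the end‑gadgets.

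Next I would verify $\kappa(G)\ge 2k+1$. The intersection $B_j\cap B_{j+1}$ (a set of $2k+1$ vertices) is a separator, so $\kappa(G)\le 2k+1$. For the lower bound one argues that a separator $S$ with $|S|\le 2k$ cannot work: fix a component‑pair it is meant to separate, pick a window $B_j$ lying ``between'' them, and use that $F[B_j]$ is $2k$‑connected together with the fact that $B_j$ and $B_{j\pm1}$ are (essentially) completely joined on their overlap, to show that after deleting $S$ the left part and right part of the chain are still joined through $B_j$. With the end‑gadgets chosen $(2k+1)$‑connected this yields $\kappa(G)=2k+1$. The one technical point is to rule out that combining a ``straight'' cut with the internal non‑edges of $F$ produces a separator of size $<2k+1$; this follows from the exact connectivity of $F$ and is best handled by a short case check.

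The heart of the proof is that $G$ is \emph{not} $k$‑linked. Put $s_1,\dots,s_k$ in the first window and $t_1,\dots,t_k$ in the last window, matched so that the target permutation of the $k$ tracks is an essential reflection (for $k=2$ this is precisely a pair of terminals in the ``crossing'' cyclic order on a $4$‑set of a planar cross‑section). Any system of disjoint $s_i$–$t_i$ paths meets every window $B_j$ in exactly $k$ vertices, and I would track how the cyclic order of these $k$ vertices changes from $B_j$ to $B_{j+1}$: since the two windows share $2k+1$ vertices and the only non‑edges of $F$ form the slowly advancing reflection matching, the induced permutation between consecutive windows is forced into a small explicitly described set — rotations, essentially — and in particular is orientation‑preserving on the tracks. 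Composing along the whole chain shows the net permutation realised between the two ends is orientation‑preserving, contradicting the choice of terminals. The main obstacle is exactly this last step: one must pin down precisely which ``moves'' the rigid cross‑section allows and show that a path cannot buy extra freedom by temporarily leaving its track or by exploiting the overlap vertices. I expect this to need a careful Menger‑type bookkeeping argument in the same spirit as the ``induced permutation'' and ``twisting disturbance'' analysis used earlier in the paper.
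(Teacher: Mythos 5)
Your proposal is a genuinely different route from the paper's, but the central step --- proving $G$ is not $k$-linked --- has a real gap. Your ``rigid cross-section'' $F = K_{2k+2}$ minus a perfect matching is the complete $(k+1)$-partite graph $K_{2,\ldots,2}$. For $k\ge 3$ this graph is \emph{not} planar, and it is not at all rigid: it is $2k$-connected, every $4$-set of its vertices spans an edge, and using the two ``spare'' vertices one can route a transposition of two of the tracked vertices across a single window. There is therefore no invariant (no ``cyclic order'' of the $k$ tracks, no ``orientation'') that the window-to-window induced permutations are forced to preserve; composing a few such windows can realise any permutation of the tracks, so the sliding-window argument does not forbid the linkage. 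The cyclic-order / crossing argument you sketch is really a topological one and needs a planar (or near-planar) cross-section to carry it; your gadget supplies that only when $k=2$ (where $K_{2,2,2}$ is the planar octahedron). Two secondary issues: the claim that a system of disjoint $s_i$--$t_i$ paths ``meets every window $B_j$ in exactly $k$ vertices'' is false in general (paths may re-enter a bag), and the $(2k+1)$-connectivity of the chain with phase-advancing deleted matchings is asserted but not established --- it is easy to accidentally create a vertex whose non-neighbours inside its own bag, together with one boundary, give a cut of size $<2k+1$.

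The paper sidesteps all of this by first proving the statement for $k=2$ using a \emph{planar} construction (nested $5$-cycles with fans between them, stacked repeatedly), where non-$2$-linkability is immediate from planarity once $s_1,s_2,t_1,t_2$ are placed in alternating cyclic order on the outer face, and $5$-connectivity and tree-width $\le 14$ are checked directly. It then lifts to general $k$ by taking $G:=H*\overline{K_{2k-4}}$, the join with an edgeless graph on $2k-4$ vertices: this raises the connectivity to $\ge 2k+1$ and the tree-width by exactly $2k-4$, and it stays non-$k$-linked because the $2k-4$ new vertices must themselves be terminals, forcing the original two crossing pairs to be linked inside $H$. If you want to salvage your direct approach, the cleanest fix is to adopt this same join-lift and restrict your chain construction to a planar $k=2$ cross-section; otherwise you would need to replace the ``orientation-preserving'' claim by an actual invariant valid in $K_{2,\ldots,2}$, and I do not see one.
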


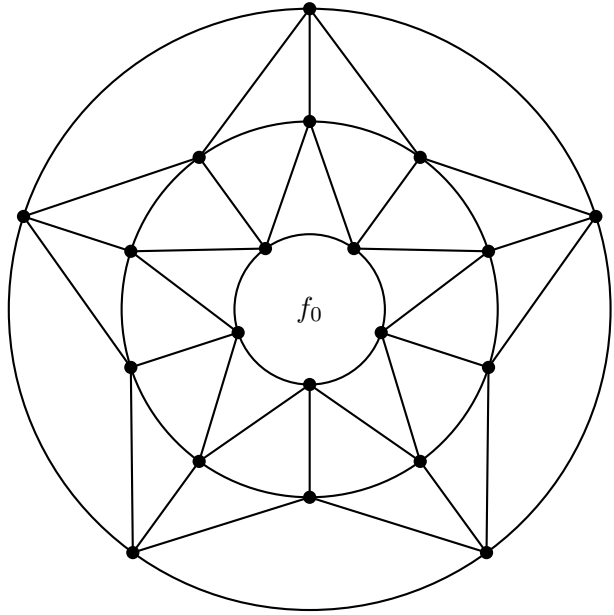
\begin{figure}[htpb]
	\begin{center}
	\begin{tikzpicture}[style=thick]
	\def\iradius{1cm}
	\def\mradius{2.5cm}
	\def\oradius{4cm}
	
	\foreach \x in {18,90,...,360}{
	\node [fill,circle,inner sep=0pt,minimum size=5pt] at (\x:\mradius){};
	\node [fill,circle,inner sep=0pt,minimum size=5pt] at (\x:\oradius){};
	\draw (\x:\mradius) -- (\x:\oradius);
	\draw (\x-36:\mradius) -- (\x:\oradius) -- (\x+36:\mradius);
	}
	\foreach \x in {54,126,...,360}{
	\node [fill,circle,inner sep=0pt,minimum size=5pt] at (\x:\mradius){};
	\node [fill,circle,inner sep=0pt,minimum size=5pt] at (\x:\iradius){};
	\draw (\x:\mradius) -- (\x:\iradius);
	\draw (\x-36:\mradius) -- (\x:\iradius) -- (\x+36:\mradius);
	}
	\draw circle(\iradius);
	\draw circle(\mradius);
	\draw circle(\oradius);
	\node {$f_0$};

	\end{tikzpicture}
	\caption{The $5$-connected graph $H_0$ and its inner face $f_0$.}
	\label{fig:fatstar}
	\end{center}
\end{figure}

\begin{proof}
We reduce the assertion to the case $k=2$, that is, to the claim that there is a graph $H$ which is not $2$-linked but satisfies
\[\kappa(H) = 5,\qquad \tw(H)\leq 14,\quad\text{and}\quad |H|\geq N.\]
For any $k\geq 3$ let $K$ be the graph with $2k-4$ vertices and no edges.
We claim that $G\coloneq H * K$ (the disjoint union of $H$ and $K$ where every vertex of $H$ is joined to every vertex of $K$ by an edge) satisfies the assertion for $k$.

Clearly $|G| = |H| + 2k - 4 \geq N$.
Taking a tree-decomposition of $H$ of minimal width and adding $V(K)$ to every bag gives a tree-decomposition of~$G$, so $\tw(G)\leq\tw(H) + 2k - 4\leq 2k + 10$.
To see that $G$ is $(2k+1)$-connected, note that it contains the complete bipartite graph with partition classes $V(H)$ and $V(K)$, so any separator $X$ of $G$ must contain $V(H)$ or $V(K)$.
In the former case we have $|X|\geq N$ and we may assume that this is larger than $2k$.
In the latter case we know that $G-X\subseteq H$, in particular $X\cap V(H)$ is a separator of $H$ and hence must have size at least $5$, implying $|X|\geq |K| + 5 = 2k+1$ as required.

Finally, $G$ is not $k$-linked:
By assumption there are vertices $s_1$, $s_2$, $t_1$, $t_2$ of $H$ such that $H$ does not contain disjoint paths $P_1$ and $P_2$ where$P_i$ ends in $s_i$ and $t_i$ for $i=1,2$.
If $G$ was $k$-linked, then for any enumeration $s_3,\ldots,s_k,t_3,\ldots,t_k$ of the $2k-4$ vertices of $V(K)$
there were disjoint paths $P_1, \ldots, P_k$ in $G$ such that $P_i$ has end vertices $s_i$ and $t_i$ for $i=1,\ldots, k$.
In particular, $P_1$ and $P_2$ do not contain a vertex of $K$ and are hence contained in $H$, a contradiction.

It remains to give a counterexample for $k=2$.
The planar graph $H_0$ in \autoref{fig:fatstar} is $5$-connected.
Denote the $5$-cycle bounding the outer face of $H_0$ by $C_1$ and the $5$-cycle bounding $f_0$ by $C_0$.
Then $(V(H_0-C_0), V(H_0-C_1))$ forms a separation of $H_0$ of order $10$, in particular, $H_0$ has a tree-decomposition of width $14$ where the tree is $K_2$.
Draw a copy $H_1$ of $H_0$ into $f_0$ such that the cycle $C_0$ of $H_0$ gets identified with the copy of $C_1$ in $H_1$.
Since $H_0\cap H_1$ has $5$ vertices, the resulting graph is still $5$-connected and has a tree-decomposition of width~$14$.
We iteratively paste copies of $H_0$ into the face $f_0$ of the previously pasted copy as above until we end up with a planar graph $H$ such that
\[\kappa(H) = 5,\qquad \tw(H)\leq 14,\quad\text{and}\quad |H|\geq N.\]
Still the outer face of $H$ is bounded by a $5$-cycle $C_1$, so we can pick vertices $s_1,s_2,t_1,t_2$ in this order on $C_1$ to witness that $H$ is not $2$-linked (any $s_1$--$t_1$ path must meet any $s_2$--$t_2$ path by planarity).
\end{proof}

Where would our proof of \autoref{thm:main} fail for a $(2k+2)$-connected graph~$G$?
There are several instances where we invoke $(2k+3)$-connectivity as a substitute for a minimum degree of at least $2k+3$.
The only place where minimum degree $2k+2$ does not suffice is the proof of \autoref{clm:main_outdeg}.
We need minimum degree $2k+3$ there to get the small ``bonus'' $\varepsilon_{\alpha}$ in our notion of richness.
Richness only allows us to make a statement about the inner vertices of a path and the purpose of this bonus is to compensate for the end vertices.
Therefore the arguments involving richness in the proofs of \autoref{clm:main_triangle} and \autoref{clm:main_largeblock} would break down if we only had minimum degree $2k+2$.

But even if the suppose that $G$ has minimum degree at least $2k+3$ there are still two places where our proof of \autoref{thm:main} fails:
The first is the proof of \autoref{clm:main_minsize} and the second is the application of \autoref{thm:compressed} in the proof of \autoref{clm:main_largeblock}.

We use \autoref{clm:main_minsize} in the proof of \autoref{clm:main_triangle}, to show that no component of $\Gamma(\W,\P)[\lambda]$ can be a single vertex or a single edge.
In both cases we do not use the full strength of \autoref{clm:main_minsize}.
So although we formally rely on $(2k+3)$-connectivity for \autoref{clm:main_minsize} we do not really need it here.

However, the application of \autoref{thm:compressed} in the proof of \autoref{clm:main_largeblock} does need $(2k+3)$-connectivity.
Our aim there is to get a contradiction to \autoref{clm:main_nbhd} (\ref{itm:nbhd_block}) which gets the bound $2k+3$ from the token game in \autoref{thm:corecase}.
This bound is sharp:
Let $H$ be the union of a triangle $D=d_1d_2d_3$ and two edges $d_1a_1$ and $d_2a_2$ and set $A\coloneq\{a_1,a_2\}$.
Clearly $H-A=D$ is connected and $A$ is marginal in $H$.
For $k=3$ we have $2|D|+ |N(D)| = 8 = 2k+2$.
Let $L$ be the pairing with edges $(a_1,0)(a_2,0)$ and $(d_i,0)(d_i,\infty)$ for $i=1,2$.
It is not hard to see that there is no $L$-movement on $H$ as the two tokens from $A$ can never meet.

So the best hope of tweaking our proof of \autoref{thm:main} to work for $(2k+2)$-connected graphs is to provide a different proof for \autoref{clm:main_largeblock}.
This would also be a chance to avoid relinkages, that is, most of \autoref{sec:relinkage}, and the very technical \autoref{thm:rural_cutpath} altogether as they only serve to establish \autoref{clm:main_largeblock}.

%
%

\end{document}